\renewcommand\a{\alpha}
\renewcommand\d{\delta}
\newcommand\la{\lambda}
\newcommand\z{\zeta}
\newcommand\e{\eta}
\renewcommand\th{\theta}
\newcommand\s{\sigma}
\newcommand\x{\chi}
\newcommand\f{\phi}
\newcommand\vf{\varphi}
\newcommand\p{\psi}
\renewcommand\t{\tau}
\renewcommand\r{\rho}
\newcommand\Om{\Omega}
\newcommand\w{\omega}
\newcommand\vL{\varLambda}
\newcommand{\vT}{\varTheta}
\newcommand\ve{\varepsilon}
\newcommand\Fq{{\mathbf F}_q}
\newcommand\Ql{\bar{\mathbf Q}_l}
\newcommand\BQ{\mathbf Q}
\newcommand\BF{\mathbf F}
\newcommand\BZ{\mathbf Z}
\newcommand\BH{\mathbf H}
\newcommand\BV{\mathbf V}
\newcommand\Ba{\mathbf a}
\newcommand\Bm{\mathbf m}
\newcommand\Bv{\mathbf v}
\newcommand\Bk{\mathbf k}
\newcommand\Bla{\boldsymbol\lambda}
\newcommand\Bmu{\boldsymbol\mu}
\newcommand\Bnu{\boldsymbol\nu}
\newcommand\CE{\mathcal{E}}
\newcommand\CP{\mathcal{P}}
\newcommand\CX{ \mathcal{X}}
\newcommand\CY{ \mathcal{Y}}
\newcommand\SB{\mathscr{B}}
\newcommand\SC{\mathscr{C}}
\newcommand\SD{\mathscr{D}}
\newcommand\SE{\mathscr{E}}
\newcommand\SL{\mathscr{L}}
\newcommand\SM{\mathscr{M}}
\newcommand\SO{\mathscr{O}}
\newcommand\SP{\mathscr{P}}
\newcommand\SQ{\mathscr{Q}}
\newcommand\SH{\mathscr{H}}
\newcommand\SX{\mathscr{X}}
\newcommand\SY{\mathscr{Y}}
\newcommand\FS{\mathfrak S}
\newcommand\iv{^{-1}}
\newcommand\wh{\widehat}
\newcommand\wt{\widetilde}
\newcommand\wg{^{\wedge}}
\newcommand\ol{\overline}
\newcommand\lra{\leftrightarrow}
\newcommand\vl{\prec}
\newcommand\vg{\succ}
\newcommand\vle{\preceq}
\newcommand\IC{\operatorname{IC}}
\newcommand\Hom{\operatorname{Hom}}
\newcommand\End{\operatorname{End}}
\newcommand\Aut{\operatorname{Aut}}
\newcommand\Ind{\operatorname{Ind}}
\newcommand\Tr{\operatorname{Tr}\,}
\newcommand\ch{\operatorname{ch}}
\newcommand\ad{\operatorname{ad}}
\newcommand\reg{_{\operatorname{reg}}}
\newcommand\unip{\operatorname{uni}}
\newcommand\uni{_{\operatorname{uni}}}
\newcommand\id{\operatorname{id}}
\newcommand\lp{\operatorname{\!\langle\!}}
\newcommand\rp{\operatorname{\!\rangle\!}}
\renewcommand\Im{\operatorname{Im}}
\newcommand\Diag{\operatorname{Diag}}
\newcommand\dw{\dot w}
\newcommand{\isom}{\,\raise2pt\hbox{$\underrightarrow{\sim}$}\,}
\numberwithin{equation}{section}
\newtheorem{thm}{Theorem}[section]
\newtheorem{lem}[thm]{Lemma}
\newtheorem{cor}[thm]{Corollary}
\newtheorem{prop}[thm]{Proposition}
\def \para#1{\par\medskip\textbf{#1}
              \addtocounter{thm}{1}}
\def \remark#1{\par\medskip\noindent
                \textbf{Remark #1}
                \addtocounter{thm}{1}}
\def \remarks#1{\par\medskip\noindent
                \textbf{Remarks #1}
                \addtocounter{thm}{1}}
\begin{document}
\setlength{\baselineskip}{4.9mm}
\setlength{\abovedisplayskip}{4.5mm}
\setlength{\belowdisplayskip}{4.5mm}
\renewcommand{\theenumi}{\roman{enumi}}
\renewcommand{\labelenumi}{(\theenumi)}
\renewcommand{\thefootnote}{\fnsymbol{footnote}}
\renewcommand{\thefootnote}{\fnsymbol{footnote}}
\allowdisplaybreaks[2]
\parindent=20pt
\medskip
\begin{center}
 {\bf Enhanced variety of higher level and  \\
     Kostka functions associated to complex reflection groups}

\par\medskip
\par
\vspace{1cm}
Toshiaki Shoji



\title{}
\end{center}

\begin{abstract}
Let $V$ be an $n$-dimensional vector space over an algebraic closure of a finite field 
$\BF_q$, and 
$G = GL(V)$.  A variety $\SX = G \times V^{r-1}$ is called an enhanced variety 
of level $r$. Let $\SX\uni = G\uni \times V^{r-1}$ be the unipotent variety of 
$\SX$. We have a partition 
$\SX\uni = \coprod_{\Bla}X_{\Bla}$ indexed by $r$-partitions $\Bla$ of $n$.  
In the case where $r = 1$ or 2, $X_{\Bla}$ is a single $G$-orbit, but if $r \ge 3$, 
$X_{\Bla}$ is, in general, a union of infinitely many $G$-orbits.  
In this paper, we prove certain orthogonality relations 
for the characteristic functions (over $\BF_q$) of the intersection cohomology 
$\IC(\ol X_{\Bla},\Ql)$, and show some results, 
which suggest a close relationship between those characteristic functions and 
Kostka functions associated to the complex reflection group $S_n\ltimes (\BZ/r\BZ)^n$.   
\end{abstract}

\maketitle
\markboth{SHOJI}{KOSTKA POLYNOMIALS}
\pagestyle{myheadings}

\begin{center}
{\sc Introduction}
\end{center}
Let $V$ be an $n$-dimensional vector space over an algebraic closure of a finite field 
$\BF_q$, 
and $G = GL(V) \simeq GL_n$.  In 1981, Lusztig showed in [L1] that Kostka polynomials 
$K_{\la,\mu}(t)$ have a geometric interpretation in terms of the intersection cohomology associated to
the closure of unipotent classes in $G$ in the following sense.   
Let $C_{\la}$ be the unipotent class corresponding to a partition $\la$ of $n$, 
and $K = \IC(\ol C_{\la}, \Ql)$ be the intersection cohomology complex on the closure 
$\ol C_{\la}$ of $C_{\la}$.
He proved that $\SH^iK = 0$ for odd $i$, and that for partitions $\la, \mu$ of $n$,  
\begin{equation*}
\tag{*}
t^{n(\mu)}K_{\la,\mu}(t\iv) = t^{n(\la)}\sum_{i \ge 0}(\dim \SH^{2i}_xK)t^i, 
\end{equation*}
where $x \in C_{\mu} \subset \ol C_{\la}$, and $n(\la)$ is the usual $n$-function. 
\par
Kostka polynomials are polynomials indexed by a pair of partitions.  In [S1], [S2],
as a generalization of Kostka polynomials, 
Kostka functions $K_{\Bla, \Bmu}(t)$ associated to the complex reflection group 
$S_n \ltimes (\BZ/r\BZ)^n$ were introduced, which are a-priori rational functions 
in $\BQ(t)$ indexed by $r$-partitions $\Bla, \Bmu$ of $n$ (see 3.10 for the definition of 
$r$-partitions of $n$).  
It is known by [S2] that $K_{\Bla, \Bmu}(t)$ are actually polynomials if $r = 2$. 
 Although those Kostka functions 
are defined in a purely combinatorial way, and have no geometric background, recently
various generalizations of Lusztig's result for those Kostka functions were found. 
Under the notation above, consider a variety $\SX = G \times V$, which  is called the enhanced variety, 
and its subvariety $\SX\uni = G\uni \times V$ is isomorphic to the enhanced nilpotent cone introduced by 
Achar-Henderson [AH] (here $G\uni$ is the unipotent variety of $G$).  
The set of $G$-orbits under the diagonal action of $G$ on $\SX\uni$ 
is parametrized 
by double partitions of $n$ ([AH], [T]).  In [AH], they proved that Kostka polynomials indexed by 
double partitions have a geometric interpretation as in (*) in terms of the intersection cohomology 
associated to the closure of $G$-orbits in $\SX\uni$.   
\par
On the other hand, let $V$ be a $2n$-dimensional symplectic 
vector space over an algebraic closure of $\BF_q$ with $\ch \BF_q \ne 2$, 
and consider $G = GL(V) \supset H = Sp(V)$.  The variety 
$\SX = G/H \times V$ is called the exotic symmetric space, and its 
``unipotent variety'' $\SX\uni$ is isomorphic to the exotic nilpotent 
cone introduced by Kato [K1]. $H$ acts diagonally on $\SX\uni$, and 
the set of $H$-orbits on $\SX\uni$ is parametrized by double partitions of $n$ ([K1]).
As in the enhanced case, it is proved by [K2], and [SS1], [SS2], independently, 
that Kostka polynomials indexed by double partitions have a geometric interpretation 
in terms of the intersection cohomology associated to the closure of $H$-orbits in 
$\SX\uni$.   
\par
As a generalization of the enhanced variety $G \times V$ or the exotic symmetric 
space $G/H \times V$, we consider $G \times V^{r-1}$ or  $G/H \times V^{r-1}$
for any $r \ge 1$. 
$G$ acts diagonally on $G \times V^{r-1}$, and $H$ acts diagonally on $G/H \times V^{r-1}$.
$\SX = G \times V^{r-1}$ is called the enhanced variety of level $r$, and 
a certain $H$-stable subvariety $\SX$ of $G/H \times V^{r-1}$ 
is called the exotic symmetric space of level $r$.
For those varieties $\SX$, one can consider $G$-stable subvariety $\SX\uni$ (unipotent variety). 
The crucial difference for the general case is that the number of $G$-orbits (or $H$-orbits)
on $\SX\uni$ is no longer finite if $r \ge 3$.   
Nevertheless, it was shown in [S3], for the exotic case or the enhanced case, 
 that one can construct subvarieties $X_{\Bla}$ of $\SX\uni$ indexed by $r$-partitions $\Bla$ 
of $n$, and the intersection cohomologies $\IC(\ol X_{\Bla}, \Ql)$ enjoy similar properties 
as in the case $r = 1$ or 2, more precisely, a generalization of the Springer correspondence 
holds for $\SX\uni$.   
\par
So it is natural to expect that those intersection cohomologies will 
have a close relation with Kostka functions indexed by $r$-partitions of $n$. 
In this paper, we consider this problem  in the case where $\SX$ is the enhanced 
variety of level $r$.  In this case, we have a partition $\SX\uni = \coprod_{\Bla}X_{\Bla}$ 
parametrized by $r$-partitions $\Bla$ of $n$.  In the case where $r = 1$ or 2, 
$X_{\Bla}$ is a single $G$-orbit, but if $r \ge 3$, $X_{\Bla}$ is, in general, a union of
infinitely many $G$-orbits.  
By applying the strategy employed in the theory 
of character sheaves in [L2], [L3] (and in [SS2]),  
we show (Theorem 5.5) that the characteristic functions of the Frobenius trace over $\BF_q$ 
associated to 
$\IC(\ol X_{\Bla}, \Ql)$ satisfy certain orthogonality relations, which are quite similar 
to the case of $r = 1$ or 2.  In fact, in the case where $r = 1$ or 2, the geometric interpretation 
is deduced from this kind of orthogonality relations. 
However, in the case where $r \ge 3$, 
these orthogonality relations are not
enough to obtain the required formula.  
We show some partial results which suggest an interesting  relationship between those 
characteristic functions and Kostka functions.   
\par
The author is very grateful to Jean Michel for making the GAP program for computing 
Kostka functions.  The examples in 7.3 and 7.4 were computed by his program.

\par\bigskip
\section{Complexes on the enhanced variety}

\para{1.1.}
Let $\Bk$ be an algebraic closure of a finite field $\Fq$. Let $V$ be an $n$-dimensional
vector space over $\Bk$, and put $G = GL(V)$. 
For a fixed integer $r \ge 2$, we consider the variety $G \times V^{r-1}$, on which 
$G$ acts diagonally. We call $G \times V^{r-1}$ the enhanced variety of level $r$. 
Put $\SQ_{n,r} = \{ \Bm = (m_1, \dots, m_r) \in \BZ^r_{\ge 0} \mid \sum_im_i = n\}$.
For each $\Bm \in \SQ_{n,r}$, we define integers $p_i = p_i(\Bm)$ by 
$p_i = m_1 + \cdots + m_i$ for $i =1, \dots, r$.  
Let $B = TU$ be a Borel subgroup of $G$, $T$ a maximal torus of $B$, and $U$ the unipotent
radical of $B$. Let $(M_i)_{1 \le i \le n}$ be the total flag in $V$ such that the stabilizer 
of $(M_i)$ in $G$ coincides with $B$.  
We define varieties 
\begin{align*}
\wt\SX_{\Bm} &= \{ (x, \Bv, gB) \in G \times V^{r-1} \times G/B \mid g\iv xg \in B, 
                      g\iv \Bv \in \prod_{i=1}^{r-1}M_{p_i} \}, \\
\SX_{\Bm} &=  \bigcup_{g \in G}g(B \times \prod_{i=1}^{r-1}M_{p_i}),
\end{align*}
and the map $\pi_{\Bm} : \wt \SX_{\Bm} \to \SX_{\Bm}$ by $(x,\Bv, gB) \mapsto (x,\Bv)$.
Then $\wt\SX_{\Bm}$ is smooth and irreducible, and $\pi_{\Bm}$ is a proper surjective map.
In particular, $\SX_{\Bm}$ is a closed subvariety of $G \times V^{r-1}$.
In the case where $\Bm = (n, 0, \dots, 0)$, we denote $\wt\SX_{\Bm}, \SX_{\Bm}$ by 
$\wt\SX, \SX$. Hence $\SX = G\times V^{r-1}$. 
\par
Let $G\uni$ be the set of unipotent elements in $G$.  Similarly to the above, we define 
\begin{align*}
\wt\SX_{\Bm, \unip} &= \{ (x, \Bv, gB) \in G \times V^{r-1} \times G/B \mid g\iv xg \in U, 
                      g\iv \Bv \in \prod_{i=1}^{r-1}M_{p_i} \}, \\
\SX_{\Bm,\unip} &=  \bigcup_{g \in G}g(U \times \prod_{i=1}^{r-1}M_{p_i}),
\end{align*}
and the map 
$\pi_{\Bm,1}: \wt\SX_{\Bm,\unip} \to \SX_{\Bm,\unip}$ by $(x, \Bv, gB) \mapsto (x, \Bv)$.
Note that $\wt\SX_{\Bm,\unip} = \pi_{\Bm}\iv(\SX_{\Bm,\unip})$, and $\pi_{\Bm,1}$
is the restriction of $\pi_{\Bm}$ on $\wt\SX_{\Bm,\unip}$. 
Hence $\pi_{\Bm,1}$ is proper, and $\SX_{\Bm, \unip}$ is a closed subvariety of 
$G\uni \times V^{r-1}$.  
In the case where $\Bm = (n, 0, \dots,0)$, we denote $\wt\SX_{\Bm,\unip}, \SX_{\Bm, \unip}$
by $\wt\SX\uni, \SX\uni$. 
\par
Let $T\reg$ be the set of regular semisimple elements in $T$, and put 
$G\reg = \bigcup_{g \in G}gT\reg g\iv$, $B\reg = G\reg \cap B$. 
We define varieties

\begin{align*}
\wt\SY_{\Bm} &= \{ (x,\Bv, gB) \in G\reg \times V^{r-1} \times G/B
                    \mid g\iv xg \in B\reg, g\iv\Bv \in \prod_i M_{p_i} \}, \\
   \SY_{\Bm} &= \bigcup_{g \in G}g(B\reg \times \prod_iM_{p_i})
                  = \bigcup_{g \in G}g(T\reg \times \prod_iM_{p_i}).  
\end{align*} 
Then $\wt\SY_{\Bm} = \pi_{\Bm}\iv(\SY_{\Bm})$ and we define 
the map $\psi_{\Bm} : \wt\SY_{\Bm} \to \SY_{\Bm}$ by the restriction of $\pi_{\Bm}$ 
on $\wt\SY_{\Bm}$. 
It is known that 

\begin{lem}[{[S3, Lemma 4.2]}]  
\begin{enumerate}
\item
$\SY_{\Bm}$ is open dense in $\SX_{\Bm}$ and $\wt\SY_{\Bm}$ is open dense in $\wt\SX_{\Bm}$. 
\item
$\dim \SX_{\Bm} = \dim \wt\SX_{\Bm} = n^2 + \sum_{i=1}^r(r-i)m_i$.
\end{enumerate}
\end{lem}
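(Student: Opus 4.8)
The plan is to reduce everything to understanding the fibers of $\pi_{\Bm}$ and to a dimension count on the "building blocks" $B \times \prod_i M_{p_i}$ and $B\reg \times \prod_i M_{p_i}$. First I would observe that $\wt\SX_{\Bm}$ carries a transitive $G$-action on the $G/B$-factor, so the projection $\wt\SX_{\Bm} \to G/B$, $(x,\Bv,gB) \mapsto gB$, is a $G$-equivariant fiber bundle whose fiber over $B$ is exactly $B \times \prod_{i=1}^{r-1} M_{p_i}$. Hence $\wt\SX_{\Bm}$ is smooth and irreducible (being a fiber bundle over the smooth irreducible base $G/B$ with smooth irreducible fiber), and
\begin{equation*}
\dim \wt\SX_{\Bm} = \dim G/B + \dim B + \sum_{i=1}^{r-1}\dim M_{p_i} = \dim G + \sum_{i=1}^{r-1} p_i(\Bm),
\end{equation*}
using $\dim G/B + \dim B = \dim G = n^2$ and $\dim M_{p_i} = p_i$. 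Then I would rewrite $\sum_{i=1}^{r-1} p_i(\Bm) = \sum_{i=1}^{r-1}(m_1+\cdots+m_i) = \sum_{j=1}^{r-1}(r-j)m_j = \sum_{j=1}^{r}(r-j)m_j$ (the $j=r$ term vanishes), which gives the asserted formula $\dim\wt\SX_{\Bm} = n^2 + \sum_{i=1}^r (r-i)m_i$.

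Next, for (i) and the equality $\dim\SX_{\Bm} = \dim\wt\SX_{\Bm}$, I would show that $\psi_{\Bm}\colon \wt\SY_{\Bm}\to\SY_{\Bm}$ is an isomorphism, or at least a bijective morphism, so that $\wt\SY_{\Bm}$ and $\SY_{\Bm}$ have the same dimension and $\SY_{\Bm}$ is irreducible. The key point is that if $x \in G\reg$ is regular semisimple, then the condition $g\iv x g \in B\reg$ forces $g\iv x g \in T\reg$ and pins down the flag $gB$ uniquely: a regular semisimple element lies in exactly one Borel subgroup's "semisimple part" in the sense that its centralizer is a maximal torus contained in a unique Borel once we also remember... more carefully, the Borel subgroups containing a fixed regular semisimple $x$ are permuted simply transitively by $N_G(T)/T$, but among the finitely many flags fixed by $x$, the extra datum $g\iv\Bv \in \prod_i M_{p_i}$ together with openness/density considerations will force uniqueness generically; in any case $\psi_{\Bm}$ is a finite morphism, hence $\dim\SY_{\Bm} = \dim\wt\SY_{\Bm} = \dim\wt\SX_{\Bm}$. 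Combined with $\SY_{\Bm} \subseteq \SX_{\Bm}$ and $\SX_{\Bm} = \overline{\pi_{\Bm}(\wt\SX_{\Bm})}$ being irreducible of dimension $\le \dim\wt\SX_{\Bm}$ (as the image of a proper morphism from an irreducible variety), we get $\dim\SX_{\Bm} = \dim\wt\SX_{\Bm}$ and that $\SY_{\Bm}$ is open dense in $\SX_{\Bm}$. The density of $\wt\SY_{\Bm}$ in $\wt\SX_{\Bm}$ then follows since $\wt\SY_{\Bm} = \pi_{\Bm}\iv(\SY_{\Bm})$ is a nonempty open subset of the irreducible variety $\wt\SX_{\Bm}$.

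The step I expect to be the main obstacle is the precise claim that $\psi_{\Bm}$ is finite (equivalently that $\SY_{\Bm}$ has the full dimension $\dim\wt\SX_{\Bm}$ rather than something smaller). One has to rule out positive-dimensional fibers: over a point $(x,\Bv) \in \SY_{\Bm}$ with $x$ regular semisimple, the fiber consists of those Borels $gB$ with $g\iv x g \in B$ and $g\iv\Bv$ in the prescribed partial flag, and while $x$ already cuts the $G/B$-factor down to a finite set, one must check this remains the case and is not somehow thickened — this is exactly where I would lean on [S3, Lemma 4.2] being available, and in a self-contained argument I would argue that the incidence variety $\wt\SY_{\Bm}\to G\reg \times V^{r-1}$ restricted to the fiber over $x$ is a subvariety of the finite set $\{gB : g\iv xg \in T\}$, hence finite, uniformly in $x$. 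Everything else is the routine fiber-bundle dimension bookkeeping sketched above, together with the standard fact that the image of a proper map from an irreducible variety is irreducible and closed of no larger dimension.
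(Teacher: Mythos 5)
The paper gives no proof of this lemma (it is quoted directly from [S3, Lemma~4.2]), so there is nothing internal to compare against; judged on its own, your argument is correct. The fiber-bundle computation of $\dim\wt\SX_{\Bm}$ and the reindexing $\sum_{i=1}^{r-1}p_i(\Bm)=\sum_{j=1}^{r}(r-j)m_j$ are right, and you correctly isolate the crux: the fiber of $\psi_{\Bm}$ over $(x,\Bv)$ with $x$ regular semisimple lies inside the fixed-point set of $x$ on $G/B$, which is finite, and the additional constraint $g\iv\Bv\in\prod_i M_{p_i}$ can only shrink it. That gives $\dim\SY_{\Bm}=\dim\wt\SY_{\Bm}=\dim\wt\SX_{\Bm}$, and since $\SX_{\Bm}=\pi_{\Bm}(\wt\SX_{\Bm})$ is irreducible of dimension $\le\dim\wt\SX_{\Bm}$, equality follows. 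Two small corrections: (a) your initial guess that $\psi_{\Bm}$ might be an isomorphism or bijection is wrong---it is a finite map of generic degree $|W_{\Bm}|$, as one sees already from the later statement that $\psi^0_{\Bm}$ is a Galois covering with group $W_{\Bm}$---but your actual argument only uses quasi-finiteness, so this does no harm; (b) you use that $\SY_{\Bm}$ is \emph{open} in $\SX_{\Bm}$ without saying why; the clean justification is $\SY_{\Bm}=\SX_{\Bm}\cap(G\reg\times V^{r-1})$, and $G\reg$ is open in $G$. With these noted, the proof is complete, and openness of $\wt\SY_{\Bm}$ in the irreducible $\wt\SX_{\Bm}$ gives its density as you say.
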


\para{1.3.}
We fix a basis $e_1, \dots, e_n$ of $V$ such that $e_i$ are weight vectors of $T$ 
and that $M_i$ is spanned by $e_1, \dots, e_i$. Let $W = N_G(T)/T$ be the Weyl group of 
$G$, which is isomorphic to the permutation group $S_n$ of the basis $\{e_1, \dots, e_n\}$.
We denote by $W_{\Bm}$ the subgroup of $W$ which permutes the basis $\{e_j\}$ of $M_{p_i}$
for each $i$.  Hence $W_{\Bm}$ is isomorphic to the Young subgroup 
$S_{\Bm} = S_{m_1} \times \cdots \times S_{m_r}$ of $S_n$. 
Let $M_{p_i}^0$ be the set of $v = \sum_ja_je_j \in M_{p_i}$ such that $a_j \ne 0$ for 
$p_{i-1} + 1 \le j \le p_i$. We define a variety $\wt\SY_{\Bm}^0$ by

\begin{equation*}
\wt\SY_{\Bm}^0 = G \times^T(T\reg \times \prod_iM_{p_i}^0). 
\end{equation*} 
Since $\wt\SY_{\Bm} \simeq G \times^T(T\reg \times \prod_iM_{p_i})$, $\wt\SY_{\Bm}^0$ 
is identified with the open dense subset of $\wt\SY_{\Bm}$. 
Then $\SY_{\Bm}^0 = \psi_{\Bm}(\wt\SY^0_{\Bm})$ is an open dense smooth subset of $\SY_{\Bm}$. 
The map $\psi_{\Bm}^0 : \wt\SY_{\Bm}^0 \to \SY^0_{\Bm}$ 
obtained from the restriction of 
$\psi_{\Bm}$ turns out to be a finite Galois covering with group $W_{\Bm}$
(apply the discussion in [S3, 1.3] to the enhanced case).
\par
We consider the diagram

\begin{equation*}
\begin{CD}
T  @<\a <<  \wt\CX_{\Bm} @>\pi_{\Bm}>>  \CX_{\Bm},   
\end{CD}
\end{equation*} 
where $\a$ is the map defined by $(x,\Bv, gB) \mapsto (p_T(g\iv xg))$ 
($p_T : B \to T$ is the natural projection).  Let $\SE$ be a tame local system on $T$.
We denote by $W_{\Bm, \SE}$ the stabilizer of $\SE$ in $W_{\Bm}$.
Let $\a_0$ be the restriction of $\a$ on $\wt\SY_{\Bm}$.  We also denote by $\a_0$ 
the restriction of $\a$ on $\wt\SY_{\Bm}^0$. Since $\psi_{\Bm}^0$ is a finite Galois 
covering, $(\psi^0_{\Bm})_!\a_0^*\SE$ is a local system  on  $\SY^0_{\Bm}$ 
equipped with $W_{\Bm,\SE}$-action, 
and is decomposed as 

\begin{equation*}
\tag{1.3.1}
(\psi^0_{\Bm})_!\a_0^*\SE \simeq \bigoplus_{\r \in W_{\Bm,\SE}\wg} \r \otimes \SL_{\r},
\end{equation*}
where $\SL_{\r} = \Hom (\r, (\psi^0_{\Bm})_!\a_0^*\SE)$ is 
the simple local system  on $\CY^0_{\Bm}$. 
The following results were proved in Proposition 4.3 and Theorem 4.5 in [S3].

\begin{thm}[{[S3]}] 
Take $\Bm \in \SQ_{n,r}$, and put $d_{\Bm} = \dim \SX_{\Bm}$. 
\begin{enumerate}
\item
$(\psi_{\Bm})_!\a_0^*\SE[d_{\Bm}]$ is a semisimple perverse sheaf on $\SY_{\Bm}$
equipped with $W_{\Bm,\SE}$-action, and is decomposed as

\begin{equation*}
\tag{1.4.1}
(\psi_{\Bm})_!\a_0^*\SE[d_{\Bm}] \simeq \bigoplus_{\r \in W_{\Bm,\SE}\wg}
                    \r \otimes \IC(\SY_{\Bm}, \SL_{\r})[d_{\Bm}].
\end{equation*}
\item
$(\pi_{\Bm})_!\a^*\SE[d_{\Bm}]$ is a semisimple perverse sheaf on $\SX_{\Bm}$ 
equipped with $W_{\Bm,\SE}$-action, and is decomposed as 

\begin{equation*}
\tag{1.4.2}
(\pi_{\Bm})_!\a^*\SE[d_{\Bm}] \simeq \bigoplus_{\r \in W\wg_{\Bm,\SE}}
             \r \otimes \IC(\SX_{\Bm}, \SL_{\r})[d_{\Bm}]. 
\end{equation*}
\end{enumerate}
\end{thm}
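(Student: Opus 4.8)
Both parts concern the proper pushforward of $\a^*\SE[d_{\Bm}]$, which is a semisimple perverse sheaf on $\wt\SX_{\Bm}$: by Lemma 1.2 the variety $\wt\SX_{\Bm}$ is smooth and irreducible of dimension $d_{\Bm}$, and $\a^*\SE$ is the pullback of the tame --- hence semisimple and pure --- local system $\SE$ on $T$. By construction, over the open dense subset $\SY^0_{\Bm}$ of $\SX_{\Bm}$ one has $\pi_{\Bm}\iv(\SY^0_{\Bm}) = \wt\SY^0_{\Bm}$ and $\pi_{\Bm}$ restricts there to the finite Galois covering $\psi^0_{\Bm}$ with group $W_{\Bm}$, so the pushforward restricted to $\SY^0_{\Bm}$ equals $(\psi^0_{\Bm})_!\a_0^*\SE[d_{\Bm}] = \bigoplus_{\r}\r\otimes\SL_{\r}[d_{\Bm}]$ by (1.3.1). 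I would prove (i) first, where the geometry is simpler. The key observation is that $\psi_{\Bm} = \pi_{\Bm}|_{\wt\SY_{\Bm}}$ is finite: it is proper, being the base change of $\pi_{\Bm}$ along $\SY_{\Bm}\hra\SX_{\Bm}$, and quasi-finite, since for regular semisimple $x \in G$ only finitely many Borel subgroups contain $x$. Hence $(\psi_{\Bm})_!$ is exact for the perverse $t$-structure and $(\psi_{\Bm})_!\a_0^*\SE[d_{\Bm}]$ is a semisimple perverse sheaf; being the pushforward along a finite (so trivially semismall) map whose only relevant stratum is the open dense one, it is the intersection cohomology extension of its restriction to $\SY^0_{\Bm}$, namely $\IC(\SY_{\Bm},(\psi^0_{\Bm})_!\a_0^*\SE)[d_{\Bm}] = \bigoplus_{\r}\r\otimes\IC(\SY_{\Bm},\SL_{\r})[d_{\Bm}]$; this is (1.4.1).

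For (ii) the map $\pi_{\Bm}$ is only proper, so I would invoke the decomposition theorem: $K_{\Bm} := (\pi_{\Bm})_!\a^*\SE[d_{\Bm}]$, a proper pushforward of a pure perverse sheaf, is a semisimple complex --- a direct sum of shifts of intersection cohomology complexes on the irreducible variety $\SX_{\Bm}$. The essential input is that $\pi_{\Bm}$ is a small map. Granting this, $K_{\Bm}$ is perverse, each of its summands is an intersection cohomology complex of $\SX_{\Bm} = \ol{\SY^0_{\Bm}}$, and each is determined by its restriction to $\SY^0_{\Bm}$; hence $K_{\Bm} = \IC(\SX_{\Bm},(\psi^0_{\Bm})_!\a_0^*\SE)[d_{\Bm}] = \bigoplus_{\r}\r\otimes\IC(\SX_{\Bm},\SL_{\r})[d_{\Bm}]$, which is (1.4.2). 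In both (i) and (ii) the $W_{\Bm,\SE}$-action is carried along formally: the deck transformations of $\psi^0_{\Bm}$ lying in the stabilizer $W_{\Bm,\SE}$ of $\SE$ act on $\a_0^*\SE$, hence on $(\psi^0_{\Bm})_!\a_0^*\SE$, and by functoriality of proper pushforward and of the intersection cohomology functor they act on the perverse sheaves above, producing the isotypic decompositions indexed by $W_{\Bm,\SE}\wg$. Alternatively, once (ii) is established, (i) follows by restricting (1.4.2) along $\SY_{\Bm}\hra\SX_{\Bm}$, using $\pi_{\Bm}\iv(\SY_{\Bm}) = \wt\SY_{\Bm}$ and $\IC(\SX_{\Bm},\SL_{\r})|_{\SY_{\Bm}} = \IC(\SY_{\Bm},\SL_{\r})$.

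The one step that requires real work is the smallness of $\pi_{\Bm}$. The plan is to stratify $\SX_{\Bm}$ by the $G$-invariants attached to a pair $(x,\Bv)$ --- roughly the semisimple and unipotent Jordan types of $x$, together with the relative position of $\Bv$ with respect to the flags $\prod_iM_{p_i}$ --- and, on each stratum, to estimate the dimension of the fibre of $\pi_{\Bm}$, a Springer-type fibre cut down by the flag conditions on $\Bv$, against the codimension of the stratum: over the regular semisimple locus the fibre is finite, while in general the codimension should strictly exceed twice the fibre dimension, the purely unipotent part of the estimate reducing to the classical smallness of the Grothendieck--Springer resolution. Carrying out this dimension count is the technical heart of the argument (it is done in [S3]) and the step I expect to be the main obstacle; everything else, as above, is formal.
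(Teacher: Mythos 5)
The paper does not actually prove Theorem 1.4: it cites [S3, Prop.\ 4.3 and Thm.\ 4.5] and then, in \S1.5--1.6, develops a parabolic factorization $\pi_{\Bm} = \pi''\circ\pi'$ and Proposition 1.6 as refinements. So there is no in-paper proof to compare against; your proposal must be judged as a reconstruction.

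Part (i) is sound. Your finiteness claim is correct: over $(x,\Bv)\in\SY_{\Bm}$ the fibre of $\psi_{\Bm}$ sits inside the (at most $|W|$) Borels containing the regular semisimple $x$, so $\psi_{\Bm}$ is quasi-finite and proper, hence finite. The cleanest way to then get the IC identification is not ``trivially semismall'' but rather the stronger observation that $R^j(\psi_{\Bm})_!=0$ for $j>0$, so $(\psi_{\Bm})_!\a_0^*\SE[d_{\Bm}]$ is concentrated in the single cohomological degree $-d_{\Bm}$; any $\IC(Z,-)[\dim Z]$ with $Z\subsetneq\SY_{\Bm}$ has nonzero cohomology in degree $-\dim Z>-d_{\Bm}$, so no boundary summand can occur and the complex must be the IC extension of its restriction to $\SY^0_{\Bm}$. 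Your alternative route, deducing (i) from (ii) by restricting along the open immersion $\SY_{\Bm}\hra\SX_{\Bm}$, is also correct.

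For (ii) you correctly reduce the statement to the smallness of $\pi_{\Bm}$ (semismallness gives perversity; the strict inequalities are what force all IC summands to have full support $\SX_{\Bm}$ with coefficients prescribed over $\SY^0_{\Bm}$). But this smallness is precisely the content of the theorem, and you have not proved it; you only sketch a stratification strategy and explicitly flag it as the ``main obstacle.'' That is a genuine gap, not a formality: the dimension estimate is nontrivial (the fibres $\SB^{(\Bm)}_z$ can be as large as $\dim G/B$ on high-codimension loci, and one must check the inequality uniformly over a stratification indexed by Jordan data of $x$ together with the relative position of $\Bv$). It is also worth noting that the development in \S1.5 of this paper --- the diagram (1.5.1) with $\pi'$ a parabolic-induction-type map and $\pi''$ proper, culminating in Proposition 1.6 --- suggests that [S3] most likely argues by induction on $P$, establishing perversity and full support through the factorization rather than by a one-shot smallness estimate for $\pi_{\Bm}$; your Proposition-1.6-style statement would then be an ingredient, not a consequence, of (1.4.2). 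Either way the heart of the matter is exactly the step you have left open.
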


\para{1.5.}
Let $P$ be the stabilizer of the partial flag $(M_{p_i})$ in $G$, which
is a parabolic subgroup of $G$ containing $B$. 
Let $L$ be the Levi subgroup of $P$ containing $T$, and $U_P$ the unipotent 
radical of $P$.  
We consider the varieties
\begin{align*}
\SX_{\Bm}^P &= \bigcup_{g \in P}g(B \times \prod_{i=1}^{r-1} 
           M_{p_i}) = P \times \prod_{i=1}^{r-1} M_{p_i}, \\ 
\wh \SX^P_{\Bm} &= G \times^P\SX_{\Bm}^P = G \times^P(P \times \prod_{i=1}^{r-1}M_{p_i}), \\ 
\wt \SX^P_{\Bm} &= P \times^{B}(B \times \prod_{i=1}^{r-1}M_{p_i}).  
\end{align*}
We define $\pi': \wt\SX_{\Bm} \to  \wh\SX^P_{\Bm}$ as the map induced from 
the inclusion map $G \times (B \times \prod M_{p_i}) \to G \times (P \times \prod M_{p_i})$
under the identification $\wt\SX_{\Bm} \simeq G \times^B(B \times \prod M_{p_i})$, 
and define $\pi'': \wh\SX_{\Bm}^P \to \SX_{\Bm}$ by 
$g*(x,\Bv) \mapsto (gxg\iv, g\Bv)$.  (Here we denote by $g*(x,\Bv)$ the image of 
$(g, (x,\Bv)) \in G \times \SX^P_{\Bm}$ on $\wh\SX^P_{\Bm}$.)  
Thus we have $\pi_{\Bm} = \pi''\circ \pi'$.
Since $\pi_{\Bm}$ is proper, $\pi'$ is proper.  $\pi''$ is also proper. 
\par
Let $B_L = B \cap L$ be the Borel subgroup of $L$ containing $T$, and put
$\ol M_{p_i} = M_{p_i}/M_{p_{i-1}}$ under the convention $M_{p_0} = 0$. Then $L$ acts 
naturally on $\ol M_{p_i}$, and by  
applying the definition of 
$\pi_{\Bm} : \wt\SX_{\Bm} \to \SX_{\Bm}$ to $L$, we can define 
\begin{align*}
\wt\SX^L_{\Bm} &= L \times^{B_L}(B_L \times \prod_{i=1}^{r-1}\ol M_{p_i}), \\
\SX^L_{\Bm} &= \bigcup_{g \in L}g(B_L \times \prod_{i = 1}^{r-1}\ol M_{p_i}) 
= L \times \prod_{i=1}^{r-1} \ol M_{p_i}
\end{align*}
and the map $\pi^L_{\Bm} : \wt\SX^L_{\Bm} \to \SX^L_{\Bm}$ similarly. 
We have the following commutative diagram

\begin{equation*}
\tag{1.5.1}
\begin{CD}
\wt\SX_{\Bm} @<\wt p<<  G \times \wt\SX^P_{\Bm} @>\wt q>>  \wt\SX^L_{\Bm}  \\
     @V\pi'VV                 @VV r V                          @VV\pi^L_{\Bm} V    \\
\wh \SX^P_{\Bm}  @<p <<  G \times \SX_{\Bm}^P  @>q>>  \SX^L_{\Bm}  \\
      @V\pi''VV                                                       \\
   \SX_{\Bm}, 
\end{CD}
\end{equation*}
where 
the map $q$ is defined by $(g, x, \Bv) \mapsto (\ol x, \ol \Bv)$, 
with $x \mapsto \ol x, \Bv \mapsto \ol \Bv$ natural maps 
$P \to L, \prod M_{p_i} \to \prod \ol M_{p_i}$.
$\wt q$ is defined  as the composite of the projection 
$G \times \wt\SX_{\Bm}^P \to \wt\SX^P_{\Bm}$ 
and the map $\wt\SX_{\Bm}^P \to \wt\SX^L_{\Bm}$ 
induced from the projection 
$P \times (B \times \prod M_{p_i}) \to L \times (B_L \times \ol M_{p_i})$.
The maps $p, \wt p$ are the quotients by $P$, under the identification 
$\wt\SX_{\Bm} \simeq G \times^P\wt\SX^P_{\Bm}$. 
$r = \id \times r'$, where $r'$  is the natural map 
$\wt\SX^P_{\Bm} \to \SX^P_{\Bm}$, $g*(x, \Bv) \mapsto (gxg\iv, g\Bv)$.
\par
Here both squares in the diagram are cartesian.  Moreover, we have
\par\medskip\noindent
(i)  \ $p$ is a principal $P$-bundle.
\\
(ii) \ $q$ is a locally trivial fibration with fibre isomorphic to 
$G \times U_P \times \prod_{i=1}^{r-2}M_{p_i}$.
\par\medskip
Put $a = \dim P$, $b = \dim G + \dim U_P + \dim \prod_{i=1}^{r-2}M_{p_i}$. 
By (i) and (ii), the following property holds.
\par\medskip\noindent
(1.5.2) \ For any $L$-equivariant simple perverse sheaf $A_1$ on $\SX^L_{\Bm}$, 
$q^*A_1[b]$ is a $G \times P$-equivariant simple perverse sheaf on $G \times \SX^P_{\Bm}$, 
and there exists a unique $G$-equivriant simple perverse sheaf $A_2$ on $\wh\SX^P_{\Bm}$ 
(up to isomorphism) 
such that
\begin{equation*}
p^*A_2[a] \simeq q^*A_1[b].
\end{equation*}
\par
We define a perverse sheaf $K_{\Bm, T,\SE}$ on $\SX_{\Bm}$ by the right hand side
of the formula (1.4.2).  Thus 
$K_{\Bm, T,\SE} \simeq (\pi_{\Bm})_!\a^*\SE[d_{\Bm}]$.   
We consider the perverse sheaf $K^L_{\Bm, T, \SE}$ on $\SX^L_{\Bm}$ defined 
similarly to $K_{\Bm, T,\SE}$. 
Put $\SL = \a^*\SE$.  Since $\dim \wh\SX_{\Bm}^P = d_{\Bm}$, 
we see by (1.5.2) that 
$\pi'_!\SL[d_{\Bm}]$ is a perverse sheaf on $\wh\SX^P_{\Bm}$ satisfying the property
\begin{equation*}
\tag{1.5.3}
p^*\pi'_!\SL[d_{\Bm} + a]  \simeq q^*K^L_{\Bm, T,\SE}[b].  
\end{equation*}
Since $K^L_{\Bm, T,\SE}$ is decomposed as 

\begin{equation*}
\tag{1.5.4}
K^L_{\Bm,T,\SE} = \bigoplus_{\r \in W_{\Bm,\SE}\wg}
           \r\otimes\IC(\SX_{\Bm}^L, \SL^L_{\r})[d^L_{\Bm}], 
\end{equation*}
where $\SL^L_{\r}, d^L_{\Bm}$ are defined similarly to the theorem,  
again by (1.5.2), $\pi'_!\SL[d_{\Bm}]$ is a semisimple perverse sheaf, equipped with 
$W_{\Bm,\SE}$-action, and is decomposed as 

\begin{equation*}
\tag{1.5.5}
\pi'_!\SL[d_{\Bm}] \simeq \bigoplus_{\r \in W_{\Bm,\SE}\wg}\r \otimes A_{\r},
\end{equation*}
where $A_{\r}$ is a simple perverse sheaf on $\wh\SX^P_{\Bm}$ such that 
$p^*A_{\r}[a] \simeq q^*\IC(\SX^L_{\Bm}, \SL^L_{\r})[d_{\Bm}^L + b]$.  
By applying $\pi''_!$ on both sides, we have a decomposition

\begin{equation*}
\tag{1.5.6}
K_{\Bm, T,\SE} \simeq \bigoplus_{\r \in W_{\Bm,\SE}\wg}\r \otimes \pi''_!A_{\r}.
\end{equation*}
Comparing this with the decomposition in (1.4.2), we have
\begin{prop}  
$\pi''_!A_{\r} \simeq \IC(\SX_{\Bm}, \SL_{\r})[d_{\Bm}]$.
\end{prop}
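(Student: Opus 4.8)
The plan is to combine the decomposition (1.5.6) with the known decomposition (1.4.2) and extract the summand-by-summand identification, using the fact that $W_{\Bm,\SE}$-isotypic components are uniquely determined. First I would observe that applying $\pi''_!$ to (1.5.5) gives
\begin{equation*}
K_{\Bm,T,\SE} \simeq \pi''_!\pi'_!\SL[d_{\Bm}] \simeq (\pi_{\Bm})_!\a^*\SE[d_{\Bm}],
\end{equation*}
since $\pi_{\Bm} = \pi''\circ\pi'$ and $\SL = \a^*\SE$; this is consistent with the definition of $K_{\Bm,T,\SE}$ as the right-hand side of (1.4.2), so the two decompositions
\begin{equation*}
\bigoplus_{\r} \r\otimes\pi''_!A_{\r} \simeq K_{\Bm,T,\SE} \simeq \bigoplus_{\r} \r\otimes\IC(\SX_{\Bm},\SL_{\r})[d_{\Bm}]
\end{equation*}
agree. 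The key point is that taking $\Hom_{W_{\Bm,\SE}}(\r,-)$ of a $W_{\Bm,\SE}$-equivariant object in the derived category picks out a well-defined object, so from the isomorphism of the two sides one gets $\pi''_!A_{\r}\simeq\IC(\SX_{\Bm},\SL_{\r})[d_{\Bm}]$ for each $\r\in W_{\Bm,\SE}\wg$, provided the isomorphism above is one of $W_{\Bm,\SE}$-equivariant complexes.

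Thus the real content is to check that the identification (1.5.6) is $W_{\Bm,\SE}$-equivariant in a way compatible with the $W_{\Bm,\SE}$-action used in (1.4.2). I would trace the $W_{\Bm,\SE}$-action through the diagram (1.5.1): the action originates from the Galois covering $\psi^0_{\Bm}$ with group $W_{\Bm}$ (restricted to the stabilizer $W_{\Bm,\SE}$ of $\SE$), hence lives on $\a_0^*\SE$ and propagates to $\pi'_!\SL[d_{\Bm}]$ via the cartesian squares in (1.5.1) and the transitivity property (1.5.2). Since $\pi''$ is a map of $G$-varieties not involving the covering, $\pi''_!$ is $W_{\Bm,\SE}$-equivariant tautologically, so applying it to (1.5.5) yields (1.5.6) as an isomorphism of $W_{\Bm,\SE}$-equivariant semisimple perverse sheaves. (One must also note that $\pi''_!A_{\r}$ is a single shifted IC sheaf and not a more complicated complex: this follows because $K_{\Bm,T,\SE}$ is a semisimple perverse sheaf by Theorem 1.4(ii), so each isotypic piece $\pi''_!A_{\r}$ is a semisimple perverse sheaf, and then matching with (1.4.2) forces it to be $\IC(\SX_{\Bm},\SL_{\r})[d_{\Bm}]$.)

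I expect the main obstacle to be the bookkeeping of the $W_{\Bm,\SE}$-equivariant structures: one needs the action on $A_{\r}$ coming from (1.5.5) to be compatible, under $\pi''_!$, with the action on $\IC(\SX_{\Bm},\SL_{\r})[d_{\Bm}]$ coming from (1.4.2), and these a priori arise from two different routes (the factorization $\pi_{\Bm}=\pi''\pi'$ versus the direct pushforward $\pi_{\Bm,!}$). The cleanest way to handle this is to note both structures are pulled back from the single $W_{\Bm,\SE}$-action on $(\psi^0_{\Bm})_!\a_0^*\SE$ via the common open dense subset $\SY^0_{\Bm}$, since an equivariant structure on a semisimple perverse sheaf is determined by its restriction to any open dense subset on which all relevant summands are nonzero local systems. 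Once this is in place, comparing (1.5.6) with (1.4.2) and using Schur's lemma (the $\r$ are pairwise non-isomorphic irreducibles) completes the proof.
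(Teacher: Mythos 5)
Your proposal takes essentially the same route as the paper, and the structural reasoning is right: the content of the statement really is the compatibility of the two $W_{\Bm,\SE}$-equivariant structures, and the right place to check it is on the open dense subset $\SY^0_{\Bm}$, where an equivariant structure on a semisimple perverse sheaf is determined. However, the pivotal verification is glossed over at the spot where you say the two actions are ``both pulled back from the single $W_{\Bm,\SE}$-action on $(\psi^0_{\Bm})_!\a_0^*\SE$''. That is precisely what must be proved, not asserted. The paper does it by restricting the diagram (1.5.1) to its open dense counterpart (1.6.1), noting that both $\psi'$ and $\psi^0_{\Bm}=\psi''\circ\psi'$ are finite Galois coverings with the same group $W_{\Bm,\SE}$, and therefore the remaining factor $\psi''$ must be an isomorphism. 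Once $\psi''$ is an isomorphism, pushing the $\psi'$-Galois action forward by $\psi''_!$ visibly gives back the $\psi^0_{\Bm}$-Galois action, and your ``pulled back from a single action'' claim becomes a theorem rather than a hope. Without naming this step your argument has a gap; with it, your proof coincides with the paper's. Your parenthetical remark that $\pi''_!A_\r$ is perverse because $K_{\Bm,T,\SE}$ already is, so that Schur's lemma on isotypic components pins down each summand, is fine and matches the way the paper deduces the conclusion from the matching of $W_{\Bm,\SE}$-structures.
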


\begin{proof}
By replacing $\wt\SX_{\Bm}$, etc. by $\wt\SY^0_{\Bm}$, etc., we have a similar diagram 
as (1.5.1), 

\begin{equation*}
\tag{1.6.1}
\begin{CD}
\wt\SY^0_{\Bm} @<\wt p_0<<  G \times \wt\SY^{P,0}_{\Bm} @>\wt q_0>>  \wt\SY^{L,0}_{\Bm}  \\
     @V\psi'VV                 @VV r_0 V                  @VV\psi^{L,0}_{\Bm} V    \\
\wh \SY^{P,0}_{\Bm}  @<p_0 <<  G \times \SY_{\Bm}^{P,0}  @>q_0>>  \SY^{L,0}_{\Bm}  \\
      @V\psi''VV                                                       \\
   \SY^0_{\Bm}, 
\end{CD}
\end{equation*}
where 
\begin{align*}
\SY_{\Bm}^{P,0} &= \bigcup_{g \in P}g(T\reg \times \prod_i M^0_{p_i}), \\ 
\wh \SY^{P,0}_{\Bm} &= G \times^P\SY_{\Bm}^{P,0}, \\
\wt \SY^{P,0}_{\Bm} &= P \times^{T}(T\reg \times \prod_iM^0_{p_i}).  
\end{align*}
and $G \times \SY^{P,0}_{\Bm} = q\iv(\SY^{L,0}_{\Bm}) = p\iv(\wt\SY^{P,0}_{\Bm})$, 
$p_0, q_0$ are restrictions of $p,q$, respectively. 
Similar properties as in (1.5.1) hold also for (1.6.1).
Note that $\psi^{L,0}_{\Bm}$ is a finite Galois covering with group 
$W_{\Bm,\SE}$, and so $\psi'$ is also a finite Galois covering with $W_{\Bm,\SE}$.  
Since $\psi^0_{\Bm} = \psi''\circ\psi'$ and $\psi^0_{\Bm}$ is a finite Galois covering 
with group $W_{\Bm,\SE}$, we see that $\psi''$ is an isomorphism. 
Now $\wh\SY^{0,P}_{\Bm}$ is open dense in $\wh\SX^P_{\Bm}$, and the $W_{\Bm,\SE}$-module
structure on $\pi'_!\SL$ is determined from the corresponding structure on 
$\psi'_!\SL$ obtained from the Galois covering $\psi'$.  
On the other hand, the $W_{\Bm,\SE}$-module structure on $(\pi_{\Bm})_!\SL$ is determined from 
the corresponding structure on $(\psi^0_{\Bm})_!\SL$ obtained from the Galois covering 
$\psi^0_{\Bm}$.  Since $\psi''$ is an isomorphism, this shows that the operation $\pi''_!$ 
is compatible with the $W_{\Bm, \SE}$-module structures of $\pi'_!\SL$ and of $(\pi_{\Bm})_!\SL$.  
The proposition is proved.
\end{proof}

\par\bigskip\bigskip
\section{Green functions}

\para{2.1.}
We now assume that $G$ and $V$ are defined over $\BF_q$, and let 
$F: G \to G, F: V \to V$ be the corresponding Frobenius maps. 
We fix an $F$-stable Borel subgroup $B_0$ and an $F$-stable maximal 
torus $T_0$ contained in $B_0$.  We define $W_0$ as $W_0 = N_G(T_0)/T_0$.
Let $(M_{0,i})$ be the total flag corresponding to $B_0$.  Thus $M_{0,i}$ are
$F$-stable subspaces. For $\Bm \in \SQ_{n,r}$, let 
$P_{\Bm}$ be the parabolic subgroup of 
$G$ containing $B_0$   
which is the stabilizer of the partial flag $(M_{0,p_i})$.
Then the Weyl subgroup of $W_0$ corresponding to $P_{\Bm}$ is given 
by $(W_0)_{\Bm}$.    
\par
Let $T$ be an $F$-stable maximal torus of $G$, and $B \supset T$ 
a not necessarily 
$F$-stable Borel subgroup of $G$.  Let $(M_i)$ be the total flag of $G$ whose 
stabilizer is $B$.  We assume that $M_{p_i}$ is $F$-stable for each $i$. 
Let us construct $\wt\SX_{\Bm}, \wt\SY_{\Bm}$, etc. as in 
Section 1 by using these $T$ and $B$. 
There exists $h \in G$ such that $B = hB_0h\iv, T = hT_0h\iv$, and that
$h\iv F(h) = \dw$,
where $\dw$ is a representative of $w \in (W_0)_{\Bm}$ in $N_G(T_0)$.
We fix an $F$-stable basis $e_1, \dots, e_n$ of $V$ which are weight vectors for 
$T_0$.  Then $he_1, \dots, he_{p_i}$ are basis of $M_{p_i}$ consisting of 
weight vectors for $T$.  
If we define $M_{p_i}^0$ as in 1.3 by using this basis,   
then $M_{p_i}^0$ is $F$-stable for each $i$. 
Since $\wt\SY^0_{\Bm} \simeq G \times^T(T\reg \times \prod_iM^0_{p_i})$, 
$\wt\SY^0_{\Bm}$ has a natural $\Fq$-structure.  
$\SY^0_{\Bm}$ is $F$-stable, 
and the maps $\psi^0_{\Bm} : \wt\SY^0_{\Bm} \to \SY^0_{\Bm}$ and 
$\a_0: \wt\SY^0_{\Bm} \to T$ are $F$-equivariant. 
Let 
$\SE$ be a tame local system on $T$ such that $F^*\SE \simeq \SE$.  
We fix an isomorphism 
$\vf_0 : F^*\SE \isom \SE$.  Then $\vf_0$ induces an isomorphism 
$\wt\vf_0 : F^*\SL^{\bullet} \isom \SL^{\bullet}$, where 
$\SL^{\bullet}$ is the local system  $(\psi^0_{\Bm})_!\a_0^*\SE$ on $\SY^0_{\Bm}$.
By (1.3.1), we have 
  $\SL^{\bullet} \simeq \bigoplus_{\r \in W_{\Bm,\SE}\wg}\r \otimes \SL_{\r}$.
As in 1.5, we define a complex $K_{\Bm, T,\SE}$ on 
$\SX_{\Bm}$ by 

\begin{equation*}
\tag{2.1.2}
K_{\Bm, T,\SE} = \IC(\SX_{\Bm}, \SL^{\bullet})[d_{\Bm}] \simeq \bigoplus_{\r \in W_{\Bm,\SE}\wg}
                     \r \otimes \IC(\SX_{\Bm}, \SL_{\r})[d_{\Bm}].
\end{equation*}
$\wt\vf_0$ can be extended to a unique isomorphism 
$\vf: F^*K_{\Bm, T,\SE} \isom K_{\Bm, T,\SE}$. 
Note that by Theorem 1.4 (ii), $(\pi_{\Bm})_!\a^*\SE[d_{\Bm}]$ is isomorphic to
$K_{\Bm, T,\SE}$.  But the $\Fq$-structure of $(\pi_{\Bm})_!\a^*\SE[d_{\Bm}]$ is not defined directly 
from the construction. 

\para{2.2.}
Let $\SD X = \SD^b_c(X)$ be the bounded derived category of $\Ql$-constructible sheaves on 
a variety $X$ over $\Bk$.  
Assume that $X$ is defined over $\Fq$, and let $F:X \to X$ be the corresponding 
Frobenius map.  Recall that for a given $K \in \SD X$ with an isomorphism 
$\f : F^*K \isom K$, the characteristic function $\x_{K,\f} : X^F \to \Ql$ is defined by

\begin{equation*}
\x_{K,\f}(x) = \sum_i(-1)^i\Tr(\f^*, \SH^i_xK), \quad (x \in X^F),
\end{equation*}    
where $\f^*$ is the induced isomorphism on $\SH^i_xK$. 
\par
Returning to the original setting, we consider a tame local system $\SE$ on $T$ 
such that $F^*\SE \simeq \SE$. Since the isomorphism $F^*\SE \isom \SE$ is unique 
up to scalar, we fix $\vf_0: F^*\SE \isom \SE$ so that it induces 
the identity map on the stalk $\SE_e$ at the identity element $e \in T$.   
We consider the characteristic function of $K_{\Bm, T,\SE}$ with respect to the map
$\vf$ induced from this $\vf_0$, and denote it by $\x_{\Bm, T,\SE}$.  
Since $K_{\Bm,T,\SE}$ is a $G$-equivariant perverse sheaf, $\x_{\Bm, T,\SE}$ is 
a $G^F$-invariant function on $\SX_{\Bm}^F$. 
\par
The following result is an analogue of Lustig's result ([L2, (8.3.2)]) for character 
sheaves. The gap of the proof in [L2] was corrected in [L4] in a more general setting of 
character sheaves on disconnected reductive groups. 
The analogous statement in the case of exotic symmetric 
space (of level 2) was proved in [SS2, Prop. 1.6] based on the argument in [L4].  
The proof for the present case is quite similar to that of [SS2], so we omit the proof here.

\begin{prop}  
The restriction of $\x_{\Bm, T,\SE}$ on $\SX_{\Bm,\unip}$ is independent of the
choice of $\SE$.  
\end{prop}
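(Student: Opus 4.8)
The plan is to evaluate $\x_{\Bm,T,\SE}$ through the proper map $\pi_{\Bm}$ and the Grothendieck--Lefschetz trace formula, and to observe that on the unipotent locus the resulting expression no longer involves $\SE$. The one delicate point is that the $\BF_q$-structure $\vf$ on $K_{\Bm,T,\SE}$ is defined via the intermediate extension from the open subset $\SY^0_{\Bm}$, rather than directly through $(\pi_{\Bm})_!\a^*\SE[d_{\Bm}]$; so the first task is to identify $\vf$ with the ``naive'' $\BF_q$-structure of the latter complex.

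Concretely, I would introduce the isomorphism $\vf' : F^*\bigl((\pi_{\Bm})_!\a^*\SE[d_{\Bm}]\bigr) \isom (\pi_{\Bm})_!\a^*\SE[d_{\Bm}]$ obtained by applying $(\pi_{\Bm})_!$ to $\a^*\vf_0$, where $\vf_0 : F^*\SE \isom \SE$ is the normalisation acting as the identity on the stalk $\SE_e$. On $\wt\SX_{\Bm,\unip} = \pi_{\Bm}\iv(\SX_{\Bm,\unip})$ one has $g\iv xg \in U$, hence $p_T(g\iv xg) = e$, so $\a$ is constant on $\wt\SX_{\Bm,\unip}$ with value $e \in T$; therefore $\a^*\SE$ restricts there to the constant sheaf $\SE_e \otimes \Ql$, on which $\a^*\vf_0$ acts as the identity. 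Applying proper base change to the cartesian square formed by $\pi_{\Bm}$, $\pi_{\Bm,1}$ and the closed embeddings $\SX_{\Bm,\unip}\hookrightarrow\SX_{\Bm}$, $\wt\SX_{\Bm,\unip}\hookrightarrow\wt\SX_{\Bm}$, the restriction of $\bigl((\pi_{\Bm})_!\a^*\SE[d_{\Bm}], \vf'\bigr)$ to $\SX_{\Bm,\unip}$ becomes $(\pi_{\Bm,1})_!\Ql[d_{\Bm}]$ with its canonical $\BF_q$-structure, up to the scalar $\Tr(\vf_0,\SE_e)=1$. Hence the associated characteristic function at a point $(x,\Bv)\in\SX_{\Bm,\unip}^F$ equals $(-1)^{d_{\Bm}}\,\#\pi_{\Bm,1}\iv(x,\Bv)^F$, which visibly does not depend on $\SE$.

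It would then remain to show that $\x_{\Bm,T,\SE}$ coincides, on all of $\SX_{\Bm}^F$, with the characteristic function of $\bigl((\pi_{\Bm})_!\a^*\SE[d_{\Bm}], \vf'\bigr)$; equivalently, that $\vf$ and $\vf'$ agree under a canonical isomorphism $K_{\Bm,T,\SE}\simeq(\pi_{\Bm})_!\a^*\SE[d_{\Bm}]$. For this I would use that, by Theorem 1.4(ii), $(\pi_{\Bm})_!\a^*\SE[d_{\Bm}]$ is a semisimple perverse sheaf whose simple summands are the $\IC(\SX_{\Bm},\SL_{\r})[d_{\Bm}]$, each of full support $\SX_{\Bm}$; consequently it equals the intermediate extension $j_{!*}$ (along $j:\SY^0_{\Bm}\hookrightarrow\SX_{\Bm}$) of its own restriction to $\SY^0_{\Bm}$, which by proper base change is $(\psi^0_{\Bm})_!\a_0^*\SE=\SL^{\bullet}$, with $\vf'|_{\SY^0_{\Bm}}=(\psi^0_{\Bm})_!(\a_0^*\vf_0)=\wt\vf_0$. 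On the other hand $K_{\Bm,T,\SE}=\IC(\SX_{\Bm},\SL^{\bullet})[d_{\Bm}]=j_{!*}\bigl(\SL^{\bullet}[d_{\Bm}]\bigr)$ by (2.1.2), and $\vf$ is by construction the unique extension of $\wt\vf_0$. Since $F^*$ commutes with $j_{!*}$ and $j_{!*}$ is fully faithful on morphisms of perverse sheaves, an isomorphism $F^*K_{\Bm,T,\SE}\isom K_{\Bm,T,\SE}$ is determined by its restriction to $\SY^0_{\Bm}$; as both $\vf$ and $\vf'$ restrict to $\wt\vf_0$, I would conclude $\vf=\vf'$, and combining with the previous paragraph the proposition follows.

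The identification $\vf=\vf'$ is the step I expect to be the main obstacle: this is precisely the point at which the proof of [L2] had a gap, repaired in [L4]. What makes it succeed here is the cleanness built into Theorem 1.4(ii) --- namely that $(\pi_{\Bm})_!\a^*\SE[d_{\Bm}]$ has no simple summand supported on a proper closed subvariety of $\SX_{\Bm}$ --- since otherwise the $\BF_q$-structure on such boundary summands would not be pinned down by the restriction to $\SY^0_{\Bm}$. Alternatively one can follow [L4] literally and reduce the comparison of $\vf$ and $\vf'$, by induction on the parabolic $P$ via the diagram (1.5.1), to the corresponding statement over the Levi $L$. Either way the argument runs exactly parallel to that of [SS2, Prop.~1.6], so I would only indicate it here.
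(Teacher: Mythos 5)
The central construction of your argument --- the ``naive'' isomorphism $\vf' : F^*\bigl((\pi_{\Bm})_!\a^*\SE[d_{\Bm}]\bigr) \isom (\pi_{\Bm})_!\a^*\SE[d_{\Bm}]$ obtained by applying $(\pi_{\Bm})_!$ to $\a^*\vf_0$ --- presupposes that the maps $\pi_{\Bm}$ and $\a$ are $F$-equivariant, and this fails in exactly the case that carries the content of the proposition. Recall from 2.1 that one takes an $F$-stable maximal torus $T = T_w$ together with a Borel $B \supset T$ that is \emph{not} $F$-stable (only $T$ and the partial flag $(M_{p_i})$ are assumed $F$-stable). Then $\wt\SX_{\Bm} \simeq G \times^B(B \times \prod_i M_{p_i})$ has no natural $\Fq$-structure, and the projection $p_T : B \to T$ along $U$ is not $F$-equivariant (since $F(B) \ne B$), so $\a$ is not $F$-equivariant either. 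The paper flags precisely this point at the end of 2.1: ``the $\Fq$-structure of $(\pi_{\Bm})_!\a^*\SE[d_{\Bm}]$ is not defined directly from the construction.'' This is why $\vf$ is forced to be defined via intermediate extension from $\SY^0_{\Bm}$ (where $\wt\SY^0_{\Bm}\simeq G\times^T(T\reg\times\prod M^0_{p_i})$ does carry a natural $\Fq$-structure through $T$), and not because of a normalisation issue. Consequently your steps (ii) and (iii) --- proper base change to the unipotent locus, and identifying $\vf$ with $\vf'$ by cleanness of the intermediate extension --- are correct only when $B$ is $F$-stable, i.e.\ only for $T = T_0$. For $w \ne 1$ there is no object $\vf'$ to compare $\vf$ with, and the restricted characteristic function is genuinely not given by the fixed-point count $\#\pi_{\Bm,1}\iv(z)^F$.

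What survives of the proposal is useful but partial: the observation that $\a$ is identically $e$ on $\wt\SX_{\Bm,\unip}$ (so the pullback $\a^*\SE$ becomes constant there) is correct and is indeed the source of the $\SE$-independence, and the cleanness argument (all simple summands of $(\pi_{\Bm})_!\a^*\SE[d_{\Bm}]$ have full support, so morphisms of perverse sheaves are determined by restriction to $\SY^0_{\Bm}$) is a valid way to pin down $\vf$ when a competitor exists. But the proposition must hold for every twisted torus $T_w$, since the Green functions $Q_{\Bm,T_w}$ for all $w \in (W_0)_{\Bm}$ enter the character formula of Theorem 2.6 and the definitions in 3.10. The reduction from $T_w$ to $T_0$ requires the machinery of 2.4 and 2.12 (the intertwiner $\wt\d$ and the Weyl twist $\th_w$), and the independence of $\SE$ has to be proved compatibly with the $\SE$-\emph{dependent} decomposition over $(W_0)_{\Bm,\SE_0}\wg$; this is where the substantive work of [L4] and [SS2, Prop.~1.6] lies, and it is not addressed by your sketch. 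Your closing remark that one could ``follow [L4] literally'' concedes this, but as written the argument does not get off the ground for $w \ne 1$.
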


We define a function $Q_{\Bm, T} = Q^G_{\Bm, T}$ as the restriction of $\x_{\Bm, T,\SE}$ on 
$\SX^F_{\Bm,\unip}$, and call it the Green function on $\SX_{\Bm,\unip}$. 

\para{2.4.}
Let $T = T_w$ be an $F$-stable maximal torus in $G$ as in 2.1, namely 
$T = hT_0h\iv$ with $ h\in G$ such that $h\iv F(h) = \dw$ for $w \in (W_0)_{\Bm}$.
We consider the isomorphism $\vf = \vf_T : F^*K_{\Bm,T,\SE} \isom K_{\Bm,T,\SE}$ as in 2.2, 
defined from the specific choice of $\vf_0$. 
Let $\SE_0$ be the tame local system on $T_0$ defined by $\SE_0 = (\ad h)^*\SE$.
Then we have an isomorphism $\vf_{T_0}: F^*K_{\Bm, T_0, \SE_0} \isom K_{\Bm, T_0, \SE_0}$. 
For later use, we shall describe the relationship between $\vf_T$ and $\vf_{T_0}$. 
We write the varieties and maps $\wt\SY_{\Bm}, \wt\SY^0_{\Bm}, \a_0$, etc. as 
$\wt\SY_{\Bm, T}, \wt\SY^0_{\Bm, T}, \a_{0,T}$, etc. to indicate the dependence on $T$. 
$\wt\SY^0_{\Bm, T_0}$ has a natural Frobenius action 
$F: (x, \Bv, gT_0) \mapsto (F(x), F(\Bv), F(g)T_0)$, and similarly for $\wt\SY^0_{\Bm, T}$.  
The map 
$(x, \Bv, gT) \mapsto (x, \Bv, ghT_0)$ gives a morphism 
$\d : \wt\SY^0_{\Bm, T} \to \wt\SY^0_{\Bm, T_0}$ commuting with the projection to 
$\SY^0_{\Bm}$ (note that $\SY^0_{\Bm}$ is independent of the choice of $T$).
We define a map $a_w : \wt\SY^0_{\Bm, T_0} \to \wt\SY^0_{\Bm, T_0}$ by 
$(x, \Bv, gT_0) \mapsto (x, \Bv, g\dw\iv T_0)$.  Then we have a commutative diagram

\begin{equation*}
\tag{2.4.1}
\begin{CD}
\wt\SY^0_{\Bm, T} @>\d >>  \wt\SY^0_{\Bm, T_0}  \\
     @VF VV                   @VVa_{w}F V             \\
\wt\SY^0_{\Bm, T}  @>\d >>  \wt\SY^0_{\Bm, T_0}
\end{CD}
\end{equation*}
Let $\SL^{\bullet}_0 = (\psi^0_{\Bm, T_0})_!(\a_{0,T_0})^*\SE_0$ be the local system on $\SY^0_{\Bm}$. 
We know $\End \SL^{\bullet}_0  \simeq \Ql[(W_0)_{\Bm, \SE_0}]$.  This isomorphism 
is given as follows; 
the map $w \mapsto a_w$ gives a homomorphism $(W_0)_{\Bm} \to \Aut(\wt\SY^0_{\Bm, T_0})$.    
If $w \in (W_0)_{\Bm,\SE}$, $a_w$ induces an isomorphism $\wt a_w$ 
on $\SL^{\bullet}_0$, and the map $w \mapsto \wt a_w$ gives 
the isomorphism  $\Ql[(W_0)_{\Bm, \SE_0}] \to \End \SL^{\bullet}_0$. 
By the property of the intermediate extensions, we have an isomorphism on 
$K_{\Bm, T_0, \SE_0}$ induced from $\wt a_w$, which we denote by $\th_w$. 
Also $\d$ induces an isomorphism  
$\SL^{\bullet} = (\psi^0_{\Bm,T})_!(\a_{0,T})^*\SE \to \SL^{\bullet}_0$, and 
so induces an isomorphism $K_{\Bm, T, \SE} \to K_{\Bm, T_0, \SE_0}$, which 
we denote by $\wt\d$.  
Then the diagram (2.4.1) implies the following commutative diagram.

\begin{equation*}
\tag{2.4.2}
\begin{CD}
F^*K_{\Bm, T,\CE} @>F^*(\wt\d)>>  F^*K_{\Bm, T_0, \CE_0} \\
   @V\vf_T VV                      @VV \vf_{T_0}\circ F^*(\th_w)V     \\
   K_{\Bm, T, \SE} @>\wt\d>>  K_{\Bm, T_0, \SE_0}.
\end{CD}
\end{equation*}
Note that $F^*(\th_w) = \th_{F(w)}$. Since $F$ acts trivially on $W_0$, we have
$F^*(\th_w) = \th_w$. 

\para{2.5.}
For a semisimple element $s \in G^F$, we consider $Z_G(s) \times V^{r-1}$.
Then $V$ is decomposed as $V = V_1 \oplus \cdots \oplus V_t$, where $V_i$ 
is an eigenspace of $s$ with $\dim V_i = n_i$, and $F$ permutes the eigenspaces.
$Z_G(s) \simeq G_1 \times \cdots \times G_t$ with $G_i = GL(V_i)$. 
Hence

\begin{equation*}
\tag{2.5.1}
Z_G(s) \times V^{r-1} \simeq \prod_{i=1}^t (G_i \times V_i^{r-1}), 
\end{equation*} 
and the diagonal action of $Z_G(s)$ on the left hand side is compatible with 
the diagonal action of $G_i$ on $G_i\times V_i^{r-1}$ under the isomorphism 
$Z_G(s) \simeq G_1 \times \cdots \times G_t$. 
The definition of $\wt\SX_{\Bm}, \SX_{\Bm},\wt\SY_{\Bm}, \SY_{\Bm}$, etc. make sense if we replace 
$G$ by $Z_G(s)$, hence one can define the complex $K_{\Bm, T,\SE}$ associated to
$Z_G(s)$.  By (2.5.1), $\wt\SX_{\Bm}, \SX_{\Bm}$ with respect to $Z_G(s)$ are a direct product 
of similar varieties appeared in 1.1 with respect to $G_i$, hence Theorem 1.4 holds also for 
$Z_G(s)$ under a suitable adjustment. 
Concerning the $\Fq$-structure, Proposition 2.3 holds also for $Z_G(s)$.  In fact, this is 
easily reduced to the case where $F$ acts transitively on 
$Z_G(s) \simeq G_1 \times \cdots \times G_t$. In that case, $G^F \simeq G_1^{F^t}$, and 
we may assume that $s \in T_0^{F^t}$.  
Thus Proposition 2.3 holds in this case, hence holds for the general case. In particular, 
one can define a Green function $Q^{Z_G(s)}_{\Bm, T}$ on $\SX_{\Bm, \unip}^{Z_G(s)}$ 
for $T \subset Z_G(s)$. 
\par
For an $F$-stable torus $S$, put $(S^F)\wg = \Hom (S^F, \Ql^*)$.
As in [SS2, 2.3], the set of $F$-stable tame local systems on $S$ is 
in bijection with the set $(S^F)\wg$ in such a way that 
the characteristic function $\x_{\SE, \vf_0}$ on $S^F$ gives an element of 
$(S^F)\wg$ (under the specific choice of $\vf_0: F^*\SE \isom \SE$ as in 2.2).  
We denote by $\SE_{\th}$ the $F$-stable tame local system on $S$ corresponding 
to $\th \in (S^F)\wg$. 
\par
The following theorem is an analogue of Lusztig's character formula for 
character sheaves [L2, Theorem 8.5].  A similar formula for the case of exotic 
symmetric space (of level 2) was proved in [SS2, Theorem 2.4].  The proof of the 
theorem is quite similar to the proof given in [SS2, Theorem 2.4], so we omit 
the proof here. 

\begin{thm}[Character formula]  
Let $s, u \in G^F$ be such that $su = us$, where $s$ is semisimple and 
$u$ is unipotent.  Then 

\begin{equation*}
\x_{\Bm, T, \SE}(su, \Bv) = |Z_G(s)^F|\iv \sum_{\substack{x \in G^F \\
                         x\iv sx \in T^F}}Q^{Z_G(s)}_{\Bm, xTx\iv}(u,\Bv)\th(x\iv sx),
\end{equation*} 
where $\th \in (T^F)\wg$ is such that $\SE = \SE_{\th}$. 
\end{thm}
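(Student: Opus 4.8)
The plan is to mimic the proof strategy for Lusztig's character formula for character sheaves, as carried out in the setting of the exotic symmetric space in [SS2, Theorem 2.4]. The key is the generalized Springer-type induction diagram (1.5.1), together with the reduction-to-centralizers mechanism of 2.5. First I would use the decomposition in Proposition 2.3/2.4: since the restriction of $\x_{\Bm,T,\SE}$ to the unipotent variety is independent of $\SE$, and in fact the whole computation is $G^F$-equivariant, I can work with the complex $K_{\Bm,T,\SE} = (\pi_{\Bm})_!\a^*\SE[d_{\Bm}]$ realized as a pushforward. The point $(su,\Bv)\in\SX_{\Bm}^F$ lies in the image of $\pi_{\Bm}$, and computing $\x_{\Bm,T,\SE}(su,\Bv)$ amounts to a Lefschetz-type count over the fibre $\pi_{\Bm}\iv(su,\Bv)$ of the induced Frobenius acting on $\a^*\SE$.

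The core geometric step is to analyze the fibre of $\pi_{\Bm}$ over $(su,\Bv)$. A point of $\wt\SX_{\Bm}$ over $(su,\Bv)$ is a coset $gB$ with $g\iv s g, g\iv u g\in B$ and $g\iv\Bv\in\prod M_{p_i}$. Since $s$ is semisimple and lies in $gBg\iv$, it lies in some maximal torus of $gBg\iv$; after adjusting $g$ within its coset one arranges $g\iv s g\in T$. This forces $g$ to lie in a union of cosets indexed by the condition $g\iv s g\in T^F$-conjugates, which is exactly the indexing set $\{x\in G^F : x\iv s x\in T^F\}$ appearing in the formula. Once $s$ is diagonalized, the remaining data — namely the Borel, the unipotent part $u$, and the vector $\Bv$ — all live inside $Z_G(s)$, because $u$ commutes with $s$ and hence preserves each eigenspace $V_i$, and likewise the flag refining the $(M_{p_i})$ that is compatible with $u$ sits inside $Z_G(s)\simeq\prod_i GL(V_i)$. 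This is where (2.5.1) enters: the fibre decomposes as a product over the centralizer factors, and the local contribution is precisely the Green function $Q^{Z_G(s)}_{\Bm, xTx\iv}(u,\Bv)$ attached to $Z_G(s)$, defined in 2.5.

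Assembling these: the fibre $\pi_{\Bm}\iv(su,\Bv)$ fibres $G^F$-equivariantly over the (finite) set of ways to conjugate $s$ into $T$, with each fibre the analogous fibre for the map $\pi_{\Bm}$ built from $Z_G(s)$ evaluated at $(u,\Bv)$. Taking the alternating trace of Frobenius on $\a^*\SE$ and pushing through, the contribution of the cell indexed by $x$ carries the scalar by which $\vf_0$ (via the chosen isomorphism $F^*\SE\isom\SE$) acts, which is the character value $\th(x\iv s x)$ with $\SE=\SE_{\th}$ as in 2.5; the overall normalization $|Z_G(s)^F|\iv$ comes from the redundancy in the choice of $x$ (any two differing by $Z_G(s)^F$ give the same Borel and hence the same contribution). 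Keeping track of the $W_{\Bm,\SE}$-equivariant structure through the diagram (2.4.2), and verifying that $F^*(\th_w)=\th_w$ as noted in 2.4, ensures the identification of twisted Frobenius actions on the nose.

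\textbf{The main obstacle} will be the bookkeeping of the Frobenius-equivariant structures through the induction diagram (1.5.1)–(1.6.1): one must check that the decomposition $K_{\Bm,T,\SE}\simeq\pi''_!\pi'_!\a^*\SE[d_{\Bm}]$ is compatible with the product decomposition coming from $Z_G(s)$, and that the isomorphism $\vf$ fixed via the specific $\vf_0$ (normalized to be the identity on $\SE_e$) restricts correctly to the $\vf$'s for the centralizer factors — this is exactly the subtlety that caused the gap in [L2] and was repaired in [L4]. Since Proposition 2.3 (and its extension to $Z_G(s)$ in 2.5) is already available, and the geometry of $\pi_{\Bm}$ is a product over eigenspaces by (2.5.1), the argument of [SS2, Theorem 2.4] transfers essentially verbatim; for this reason the detailed verification is omitted here, exactly as the statement of the theorem anticipates.
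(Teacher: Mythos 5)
The paper itself omits the proof, deferring to the argument of [SS2, Theorem 2.4] (which in turn follows [L2, Theorem 8.5]), so there is no line-by-line comparison to make; your sketch follows the same route — reduction to the centralizer via (2.5.1), analysis of the Springer-type fibre, and careful matching of Frobenius structures in the spirit of [L4] — and this is consistent with what the author intends.

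One technical point in your write-up should be sharpened, however. You frame the starting step as a Lefschetz-type count over the fibre $\pi_{\Bm}\iv(su,\Bv)$ of "the induced Frobenius acting on $\a^*\SE$." For a general $F$-stable torus $T$ the Borel $B$ used to build $\pi_{\Bm}$ is \emph{not} $F$-stable, so $\pi_{\Bm}$ and $\a$ are not $F$-equivariant and the complex $(\pi_{\Bm})_!\a^*\SE$ carries no natural Frobenius structure on which a fibrewise trace formula could be applied directly; the paper stresses exactly this in 2.1. The Frobenius isomorphism $\vf$ on $K_{\Bm,T,\SE}$ is instead defined through the tame local system $\SL^{\bullet}$ on $\SY^0_{\Bm}$ and extended by functoriality of intermediate extension, and the bridge between the general $T=T_w$ and the split $T_0\subset B_0$ (where the fibre really is $F$-stable and the fibrewise computation is legitimate, as used in (3.6.2)) is the twist $\th_w$ in (2.4.1)–(2.4.2), giving the formula $\vf_T\simeq\sum_\r w|_\r\otimes\vf_\r$ of (2.12.1). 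So the correct shape of the argument is: express $\x_{\Bm,T_w,\SE}$ through the $T_0$-picture twisted by $w$, do the stalk analysis and centralizer reduction there, and then reassemble. You gesture at exactly this by invoking (2.4.2) and the [L4] repair at the end, so this is an imprecision of framing rather than a missing idea — but the opening paragraph as written would not compile into a proof, since the fibrewise trace formula simply isn't available in the form stated.
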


\para{2.7.}
For later use, we shall introduce another type of Green functions.
We follow the notation in 1.5.
By using the basis $\{ e_1, \dots, e_n\}$ of $V$, we identify $\ol M_{p_i}$ with the subspace 
$M_{p_i}^+$ of $V$ so that 
\begin{equation*}
V = M^+_{p_1} \oplus \cdots \oplus M^+_{p_r}.
\end{equation*}
Hence $B_L$ stabilizes each $M_{p_i}^+$. 
As an analogue of the construction of $\wt\SX_{\Bm}$, we consider the diagram

\begin{equation*}
\begin{CD}
T @<\a^+ <<  \wt\SX^+_{\Bm} @>\pi^+_{\Bm}>>  \SX_{\Bm},   
\end{CD}
\end{equation*}
where 

\begin{align*}
\wt\SX^+_{\Bm} &= \{ (x, \Bv, gB_L) \in G \times V^{r-1} \times G/B_L
            \mid g\iv xg \in B, g\iv \Bv \in \prod_{i=1}^{r-1} M^+_{p_i} \}  \\
               &\simeq G \times^{B_L}(B \times \prod_i M^+_{p_i}),
\end{align*}
and $\a^+ : (x, \Bv, gB_L) \mapsto p_T(g\iv xg), 
      \pi^+_{\Bm} : (x, \Bv, gB_L) \mapsto (x, \Bv)$. 
Put 
\begin{equation*}
\tag{2.7.1}
\SX^+_{\Bm} = \bigcup_{g \in G}g(B \times \prod_iM^+_{p_i}).
\end{equation*}
Then $\wt\SX^+_{\Bm}$ is smooth, irreducible, and 
$\Im \pi^+_{\Bm} = \SX^+_{\Bm}$, 
but $\pi^+_{\Bm}$ is not proper, and $\SX^+_{\Bm}$ is not necessarily 
a locally closed subset of $\SX_{\Bm}$. 
We consider a variety
\begin{equation*}
\tag{2.7.2}
\wh\SX^L_{\Bm} = G \times^L(P \times \prod_iM^+_{p_i})
\end{equation*}
and put $d^+_{\Bm} = \dim \wh\SX^L_{\Bm}$.  
For a tame local system $\SE$ on $T$, we consider the complex 
$(\pi^+_{\Bm})_!(\a^+)^*\SE[d^+_{\Bm}]$ on $\SX_{\Bm}$, 
which we denote by $K^+_{\Bm, T,\SE}$.

\para{2.8.}
Let $\wh\SX^L_{\Bm}$ be as in 2.7.  We further define 

\begin{align*}
\SX_{\Bm}^{P,+} &= \bigcup_{g \in L}g(B \times \prod_i M^+_{p_i}) = P \times \prod_i M^+_{p_i}, \\ 
\wt \SX^{L,+}_{\Bm} &= L \times^{B_L}(B \times \prod_i M^+_{p_i}). 
\end{align*}
We define maps $\pi'_+: \wt\SX^+_{\Bm} \to  \wh\SX^L_{\Bm}$, 
$\pi''_+: \wh\SX_{\Bm}^L \to \SX_{\Bm}$ in a similar way as in 1.5. 
Thus we have $\pi^+_{\Bm} = \pi''_+\circ \pi'_+$. 
We consider a commutative diagram

\begin{equation*}
\tag{2.8.1}
\begin{CD}
\wt\SX^+_{\Bm} @<\wt p_+<<  G \times \wt\SX^{L,+}_{\Bm} @>\wt q_+>>  \wt\SX^L_{\Bm}  \\
     @V\pi'_+VV                 @VV r_+ V                                           @VV\pi^L_{\Bm} V    \\
\wh \SX^L_{\Bm}  @<p_+ <<  G \times \SX_{\Bm}^{P,+}  @>q_+>>  \SX^L_{\Bm}  \\
      @V\pi''_+VV                                                       \\
   \SX_{\Bm}, 
\end{CD}
\end{equation*}
where $q_+$ is the composite of the projection $G \times \SX^{P,+}_{\Bm} \to \SX^{P,+}_{\Bm}$ and the 
map $\SX^{P,+}_{\Bm} \to\SX^L_{\Bm},  (x, \Bv) \mapsto (\ol x, \Bv)$ under the identification 
$M^+_{p_i} = \ol M_{p_i}$ ($\ol x$ is as in 1.5).   
The map $\wt q_+$ is the composite of the projection 
$G \times \wt\SX_{\Bm}^{L,+} \to \wt\SX_{\Bm}^{L,+}$
and the natural map $\wt\SX^{L,+}_{\Bm} \to \wt\SX^L_{\Bm}$ induced from the map
$L \times (B \times \prod_iM^+_{p_i}) \mapsto L \times (B_L \times \prod_i\ol M_{p_i})$. 
$r_+ = \id \times r_+'$, where $r_+'$ is the natural map $\wt\SX^{L,+}_{\Bm} \to \SX^{P,+}_{\Bm}$.
Both squares are cartesian, and we have  
\par\medskip\noindent
(i) \ $p_+$ is a principal $L$-bundle.
\par\noindent
(ii) \ $q_+$ is a locally trivial fibration with fibre isomorphic to 
$G \times U_P$.  
\par\medskip\noindent
Then (i) and (ii) imply a similar statement as (1.5.2).
We consider the perverse sheaf $K^L_{\Bm, T, \SE}$ on $\SX^L_{\Bm}$ as in 1.5.
Put $\SL^+ = (\a^+)^*\SE$. 
By a similar discussion as in 1.5, 
$(\pi'_+)_!\SL^+[d^+_{\Bm}]$ is a perverse sheaf on $\wh\SX^L_{\Bm}$ satisfying the property
\begin{equation*}
\tag{2.8.2}
p_+^*(\pi'_+)!\SL^+[d^+_{\Bm} + a']  \simeq q_+^*K^L_{\Bm, T,\SE}[b'],  
\end{equation*}
where $a' = \dim L$ and $b' = \dim G + \dim U_P$.
Since $K^L_{\Bm, T,\SE}$ is decomposed as in (1.5.4), it follows that 
$(\pi'_+)_!\SL^+[d^+_{\Bm}]$ is a semisimple perverse sheaf on $\wh\SX^L_{\Bm}$, 
equipped with 
$W_{\Bm,\SE}$-action, and is decomposed as 
\begin{equation*}
\tag{2.8.3}
(\pi'_+)_!\SL^+[d^+_{\Bm}] \simeq \bigoplus_{\r \in W_{\Bm,\SE}\wg}\r \otimes B_{\r},
\end{equation*}
where $B_{\r}$ is a simple perverse sheaf on $\wh\SX^L_{\Bm}$.
\par
By applying $(\pi''_+)_!$ on both sides of (2.8.3), we have a similar formula 
as (1.5.6),

\begin{equation*}
\tag{2.8.4}
K^+_{\Bm, T,\SE} \simeq \bigoplus_{\r \in W_{\Bm,\SE}\wg}
              \r \otimes (\pi''_+)_!B_{\r}.
\end{equation*}
But note that $(\pi''_+)_!B_{\r}$ is not necessarily a perverse 
sheaf.

\para{2.9.}
We now consider the $\Fq$-structure, and assume that $T, B$ are both $F$-stable, 
hence $P$ and $L$ are also $F$-stable.   
Then all the varieties involved in the diagram (2.8.1) have natural $\Fq$-structures, and all the 
maps are $F$-equivariant.  This is also true for the diagram (1.5.1). 
Recall that $K^L_{\Bm, T,\SE}$ has the natural $\Fq$-structure 
$\vf^L: F^*K^L_{\Bm, T,\SE} \isom K^L_{\Bm, T,\SE}$. 
Then the decomposition (1.5.4) determines the isomorphism 
$\vf^L_{\r} : F^*\IC(\SX^L_{\Bm}, \SL^L_{\r}) \isom \IC(\SX^L_{\Bm}, \SL^L_{\r})$  for 
each $\r$ 
such that $\vf^L = \sum_{\r}\s_{\r} \otimes \vf^L_{\r}$, where $\s_{\r}$ is the identity 
map on the representation space $\r$.
By (2.8.1), the map $\vf^L_{\r}$ induces an isomorphism $h_{\r} :F^*B_{\r} \isom B_{\r}$.
We also obtain an isomorphism $f_{\r} : F^*A_{\r} \isom A_{\r}$ by a similar discussion 
applied to the diagram (1.5.1).
\par     
We have the following lemma.

\begin{lem}   
For any $z \in (\SX_{\Bm}^+)^F$ and $\r \in W_{\Bm,\SE}\wg$, we have 
\begin{equation*}
\Tr\bigl( h^*_{\r}, \BH_c^i((\pi''_+)\iv(z), B_{\r})\bigr) = 
       \Tr\bigl(f^*_{\r}, \BH_c^{i - \dim U_P}((\pi'')\iv(z), A_{\r})\bigr)q^{\dim U_P},
\end{equation*}
where $f^*_{\r}, h^*_{\r}$ are the linear maps on the cohomologies induced from 
$f_{\r}, h_{\r}$. 
\end{lem}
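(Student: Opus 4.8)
The plan is to compare the two fibrations $\pi''_+ : \wh\SX^L_{\Bm} \to \SX_{\Bm}$ and $\pi'' : \wh\SX^P_{\Bm} \to \SX_{\Bm}$ directly over a fixed point $z \in (\SX^+_{\Bm})^F$, using the fact that $\wh\SX^L_{\Bm} = G\times^L(P\times\prod_iM^+_{p_i})$ sits over $\wh\SX^P_{\Bm} = G\times^P(P\times\prod_iM_{p_i})$ via the natural projection $P/L \times (\text{stuff})$, whose fibres are affine spaces of dimension $\dim U_P$. More precisely, first I would set up the map $\nu : \wh\SX^L_{\Bm} \to \wh\SX^P_{\Bm}$ coming from $G\times^L(-) \to G\times^P(-)$ together with $\prod_iM^+_{p_i} = \prod_i\ol M_{p_i} \hookrightarrow \prod_i M_{p_i}$ (a section of the natural quotient), and check that $\pi''_+ = \pi''\circ\nu$ and that $\nu$ restricted to any fibre $(\pi''_+)^{-1}(z) \to (\pi'')^{-1}(z)$ is an affine bundle (a torsor under a vector group) of rank $\dim U_P$; this is where the hypothesis $z\in\SX^+_{\Bm}$ is used, since on $\SX^+_{\Bm}$ the vector part of $z$ actually lies in $\prod_iM^+_{p_i}$, so the section matches up.

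Next I would relate the sheaves $B_{\r}$ and $A_{\r}$ under $\nu$. By construction (2.8.3) $B_{\r}$ is characterized by $p_+^*B_{\r}[a'] \simeq q_+^*\IC(\SX^L_{\Bm},\SL^L_{\r})[d^L_{\Bm}+b']$, while (1.5.5) $A_{\r}$ is characterized by $p^*A_{\r}[a] \simeq q^*\IC(\SX^L_{\Bm},\SL^L_{\r})[d^L_{\Bm}+b]$. Chasing the diagrams (1.5.1) and (2.8.1), which share the right column $\pi^L_{\Bm}$ and the bottom vertex $\SX_{\Bm}$, one gets $\nu^*A_{\r} \simeq B_{\r}$ up to shift — explicitly, since the fibre of $q_+$ is $G\times U_P$ and the fibre of $q$ is $G\times U_P\times\prod_{i=1}^{r-2}M_{p_i}$, while the fibre of $p_+$ is $L$ and of $p$ is $P = L\ltimes U_P$, bookkeeping of the dimensions $a,b,a',b'$ forces $B_{\r} \simeq \nu^*A_{\r}[2\dim U_P](\dim U_P)$ or an analogous normalization; I would pin down the exact shift/twist by evaluating on an open stratum where everything is a local system. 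Crucially the Frobenius structures are transported compatibly: $h_{\r}$ is induced from $\vf^L_{\r}$ through (2.8.1) and $f_{\r}$ from the same $\vf^L_{\r}$ through (1.5.1), so under the identification $B_{\r}\simeq\nu^*A_{\r}[\cdots]$ the map $h_{\r}$ goes to $\nu^*(f_{\r})$ twisted by the Tate twist, which contributes the factor $q^{\dim U_P}$.

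Finally I would conclude by the Leray spectral sequence / proper(–or–smooth) base change for the affine bundle $\nu : (\pi''_+)^{-1}(z) \to (\pi'')^{-1}(z)$: since $\nu$ is an affine-space bundle of rank $\dim U_P$, one has $\BH^i_c((\pi''_+)^{-1}(z), \nu^*A_{\r}) \simeq \BH^{i-2\dim U_P}_c((\pi'')^{-1}(z), A_{\r})$ as vector spaces, and the Frobenius acts on the extra $\BH^{2\dim U_P}_c(\AA^{\dim U_P})$ factor by the scalar $q^{\dim U_P}$. Combining this with the identification of sheaves and of Frobenius maps from the previous step, and matching the cohomological degrees (the shift by $\dim U_P$ in the statement comes from the fact that $A_{\r}$ and $B_{\r}$ are normalized as shifts of $\IC$ by $d^L_{\Bm}+b$ vs. $d^L_{\Bm}+b'$ with $b-b' = \dim\prod_{i=1}^{r-2}M_{p_i}$, and $d^+_{\Bm}$ vs. the implicit shift — I would recompute these so the degree shift in the trace identity comes out as exactly $\dim U_P$), yields the asserted trace identity. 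The main obstacle I anticipate is not any single deep input but the careful bookkeeping of all the shifts and Tate twists across the two diagrams (1.5.1) and (2.8.1) so that the net degree shift is $\dim U_P$ and the net scalar is $q^{\dim U_P}$; verifying the sheaf-theoretic identity $\nu^*A_{\r}\simeq B_{\r}$ (up to the correct normalization) together with its Frobenius-equivariance is the technical heart of the argument.
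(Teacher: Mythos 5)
Your proof follows essentially the same route as the paper's: you build a map $\nu : \wh\SX^L_{\Bm} \to \wh\SX^P_{\Bm}$ over $\SX_{\Bm}$, identify $B_{\r}$ with a pull-back of $A_{\r}$, and then run the Leray/affine-bundle argument on the fibres over a fixed $z \in \SX^+_{\Bm}$. The paper does this by explicitly factoring your $\nu$ as $\xi\circ\e$ through the intermediate space $G\times^L(P\times\prod M_{p_i})$, introducing there a third simple perverse sheaf $\wt B_{\r}$ built in the same way as $A_{\r}$ and $B_{\r}$, and proving separately that $\xi^*A_{\r}[\dim U_P]\simeq\wt B_{\r}$ (because $\xi$ is a smooth $U_P$-bundle) and $\e^*\wt B_{\r}\simeq B_{\r}$ (from the cartesian square $\wh\SX^L_{\Bm}\leftarrow G\times(P\times\prod M^+_{p_i})$, $G\times^L(P\times\prod M_{p_i})\leftarrow G\times(P\times\prod M_{p_i})$). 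Combining gives $B_{\r}\simeq\nu^*A_{\r}[\dim U_P]$ with \emph{no} Tate twist. Your proposed normalization $B_{\r}\simeq\nu^*A_{\r}[2\dim U_P](\dim U_P)$ is off; if you carry it through your own Leray step you land on $\BH^i_c((\pi'')^{-1}(z),A_{\r})$ rather than $\BH^{i-\dim U_P}_c$, so the degree shift in the lemma is lost. The Tate twist $q^{\dim U_P}$ in the statement comes solely from $\xi_!\xi^*A_{\r}\simeq A_{\r}[-2\dim U_P](\dim U_P)$, not from the sheaf identification. Pinning the shift down by ``evaluating on an open stratum'' is unlikely to be cleaner than the paper's route: $\nu=\xi\circ\e$ is a closed immersion followed by a smooth map, so $\nu^*$ does not interact with perversity in a uniform way, and the cartesian-square argument for $\e^*\wt B_{\r}\simeq B_{\r}$ is what makes the comparison go through. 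Apart from this normalization, the step you identify as the technical heart — that over a point of $\SX^+_{\Bm}$ the map on fibres becomes a $U_P$-affine bundle, which the paper phrases as $(\pi''_\bullet)^{-1}(\SX^+_{\Bm})=\wh\SX^L_{\Bm}$ together with $\xi$ being a $U_P$-bundle — is asserted in both your sketch and the paper with little justification; I would urge you (and in general any reader) to check this equality carefully, since the inclusion $\wh\SX^L_{\Bm}\subset(\pi''_\bullet)^{-1}(\SX^+_{\Bm})$ is clear but the reverse inclusion is not a formality.
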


\begin{proof}
We consider the following commutative diagram
\begin{equation*}
\tag{2.10.1}
\begin{CD}
\wh\SX^L_{\Bm} @<p_+<< G \times (P \times \prod M_{p_i}^+) @>q_+>>  \SX^L_{\Bm} \\
   @V\e VV                 @V\wt\e VV                 @VV\id V     \\
G \times^L(P \times \prod M_{p_i}) @<p_{\bullet}<<  G \times (P \times \prod M_{p_i}) 
              @>q>>  \SX^L_{\Bm}    \\
    @V\xi VV                @VV\id V          \\
\wh\SX^P_{\Bm}  @<p<<  G \times (P \times \prod M_{p_i}),
\end{CD}
\end{equation*} 
where $\wt\e$ is the inclusion map, and $\e$ is the map induced from $\wt\e$ by 
taking the quotients by $L$. 
$\xi$ is the natural map from the quotient by $L$ to the quotient by $P$, 
and $p_{\bullet}$ is the quotient by $L$. 
Note that $\e$ is injective, and $\xi$ is a locally trivial fibration with fibre 
isomorphic to $U_P$. 
\par
By a similar construction of $A_{\r}$ and $B_{\r}$, one can define 
a simple perverse sheaf $\wt B_{\r}$ on $G \times^L(P \times \prod M_{p_i})$. 
By (2.10.1), we have

\begin{equation*}
\tag{2.10.2}
\xi^*A_{\r}[d] \simeq \wt B_{\r}, \qquad 
\e^*\wt B_{\r} \simeq B_{\r},
\end{equation*}
where $d = \dim U_P$. 
(The latter formula follows from the fact that the upper left square 
in (2.10.1) is cartesian.)
We now define $\pi_{\bullet}'' : G \times^L(P \times \prod M_{p_i}) \to \SX_{\Bm}$ by 
$(g*(x,\Bv)) \mapsto (gxg\iv, g\Bv)$. 
Then $(\pi_{\bullet}'')\iv(\SX_{\Bm}^+) = \wh\SX^L_{\Bm}$ and the restriction of 
$\pi''_{\bullet}$ on $\wh\SX^L_{\Bm}$ coincides with $\pi''_+$.
It follows, for $z \in (\SX^+_{\Bm})^F$, that 

\begin{align*}
\tag{2.10.3}
\BH^i_c((\pi''_+)\iv(z), B_{\r}) &\simeq \BH^i_c((\pi''_{\bullet})\iv(z), \wt B_{\r})  \\
                                &= \BH^i_c((\pi''_{\bullet})\iv(z), \xi^* A_{\r}[d]) \\
                                &\simeq \BH^i_c((\pi'')\iv(z), \xi_!\xi^*A_{\r}[d]) \\
                                &\simeq \BH_c^{i - d}((\pi'')\iv(z), A_{\r})(d)
\end{align*}
since $(\pi_{\bullet}'')\iv(z) \to (\pi'')\iv(z)$ is a locally trivial fibration with fibre 
isomorphic to $U_P$, and so $\xi_!\xi^*A_{\r} \simeq A_{\r}[-2d](d)$,
where $(d)$ denotes the Tate twist.  
Note that the Frobenius actions $h^*_{\r}, f^*_{\r}$ on these cohomologies come from the Frobenius
action $\vf^L_{\r}$ on $\IC(\SX^L_{\Bm}, \SL^L_{\r})$.  
Hence the isomorphism in (2.10.3) is compatible with the maps $h^*_{\r}, f^*_{\r}$.
This proves the lemma.  
\end{proof}

\para{2.11.}
We now consider the general case.  
We assume that $T, L$ and $P$ are $F$-stable, but $B$ is not necessarily 
$F$-stable. Since $P$ is $F$-stable, in particular all the subspaces 
$M_{p_i}$ are $F$-stable. 
We consider the diagram (2.8.1).  Except the top row, 
all the objects involved in (2.8.1) are $F$-equivariant.  
We consider the isomorphism $\vf^L : F^*K^L_{\Bm, T,\SE} \isom K^L_{\Bm, T,\SE}$.
Since $p_+, q_+$ are 
$F$-equivariant, $\vf^L$ induces an isomorphism 
$\vf': F^*((\pi'_+)_!\SL^+) \isom (\pi'_+)_!\SL^+$.  
Since $\pi''_+$ is $F$-equivariant, $\vf'$ induces an isomorphism 
$\vf^+ : F^*K^+_{\Bm, T, \SE} \isom K^+_{\Bm, T,\SE}$. 
We define a function $\x^+_{\Bm, T,\SE}$ by 
\begin{equation*}
\tag{2.11.1}
\x^+_{\Bm, T,\SE} = (-q)^{- \dim U_P}\x_{K^+_{\Bm, T,\SE}, \vf^+}.
\end{equation*} 

\para{2.12.}
Let $T_0 \subset B_0$ be as in 2.1.  We also consider the $F$-stable parabolic subgroup
$P_0$ containing $B_0$ which is conjugate to $P$ under $G^F$.  Let $L_0$ be the Levi
subgroup of $P_0$ containing $T_0$.  Since $T$ is $G^F$-conjugate to an $F$-stable maximal 
torus in $L_0$, in considering the $\Fq$-structure of $K_{\Bm, T,\SE}$, 
we may assume that $T = T_w \subset L_0$ for $w \in (W_0)_{\Bm}$. We shall consider two complexes
$K_{\Bm, T,\SE}$ and $K_{\Bm, T_0, \SE_0}$ as discussed in 2.4. 
We follow the notation in 2.4.
In particular, let $\vf_T$ (resp. $\vf_{T_0}$) be the isomoprhism $\vf$ with respect to
$K_{\Bm, T,\SE}$ (resp. $K_{\Bm, T_0, \SE_0}$).  
By the decomposition of 
$K_{\Bm, T_0, \SE_0}$ in (1.4.2), $\vf_{T_0}$ determines an isomorphism 
$\vf_{\r} : F^*\IC(\SX_{\Bm}, \SL_{\r})[d_{\Bm}] \isom \IC(\SX_{\Bm}, \SL_{\r})[d_{\Bm}]$ such that
$\vf_{T_0} = \sum_{\r \in (W_0)_{\Bm,\SE_0}\wg}\s_{\r}\otimes \vf_{\r}$, where $\s_{\r}$ 
is the identity map on the representation space $\r$. 
By (2.4.2), under the isomorphism $\wt\d : K_{\Bm, T, \SE} \isom K_{\Bm, T_0, \SE_0}$, the map
$\vf_T$ can be described by the map $\vf_{T_0}$ and $\th_w$, where $\th_w$ corresponds to the 
action of $w$ on each $(W_0)_{\Bm, \SE_0}$-module $\r$.  It follows that 
\begin{equation*}
\tag{2.12.1}
\vf_{T} \simeq \sum_{\r \in (W_0)_{\Bm, \SE_0}\wg}w|_{\r}\otimes \vf_{\r},
\end{equation*} 
where $w|_{\r}$ denotes the action of $w$ on $\r$. 
\par
Next we consider two complexes $K^+_{\Bm, T, \SE}$ 
and $K^+_{\Bm, T_0, \SE_0}$.  Let $\vf^+_T$ (resp. $\vf^+_{T_0}$) be 
the isomorphism with respect to $K^+_{\Bm, T, \SE}$ (resp. $K^+_{\Bm, T_0, \SE_0}$).
The isomorphism $\vf^L_{T}$ and $\vf^L_{T_0}$ are defined similarly with respect to 
$K^L_{\Bm, T, \SE}$
and $K^L_{\Bm, T_0, \SE_0}$.  Then a similar formula as (2.12.1) holds for 
$\vf^L_T$ and $\vf^L_{T_0}$.   
We denote by $K^+_T$ (resp. $K^+_{T_0}$) the complex $(\pi'_+)_!\SL^+[d^+_{\Bm}]$ 
defined with respect to $T$ (resp. $T_0$).  By the diagram (2.8.1), the isomorphism 
$\vf^L_T$ induces an isomorphism $h_T: F^*K^+_T \isom K^+_T$, and 
$h_{T_0}: F^*K^+_{T_0} \isom K^+_{T_0}$.  Then again by (2.8.1), we have
\begin{equation*}
\tag{2.12.2}
h_T \simeq \sum_{\r \in (W_0)_{\Bm, \SE_0}\wg}w|_{\r} \otimes h_{\r},
\end{equation*}  
where $h_{\r}$ is the isomorphism in 2.9 defined with respect to $T_0$.  
By applying $(\pi''_+)_!$, we see that 

\begin{equation*}
\tag{2.12.3}
\vf^+_T \simeq \sum_{\r \in (W_0)_{\Bm,\SE_0}\wg}w|_{\r} \otimes \vf^+_{\r},
\end{equation*} 
where $\vf^+_{\r} : F^*(\pi''_+)_!B_{\r} 
        \isom (\pi''_+)_!B_{\r}$ is an isomorphism 
induced from $h_{\r}$.

\begin{prop}  
Under the setting in 2.11, we have
\begin{equation*}
\x^+_{\Bm, T,\SE}(z) = 
             \begin{cases}            
             \x_{\Bm, T,\SE}(z)   &\quad\text{ if }
            z \in (\SX^+_{\Bm})^F,    \\
              0  &\quad\text{otherwise.}
              \end{cases}
\end{equation*}
\end{prop}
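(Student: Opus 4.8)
The plan is to deduce the statement from the fibrewise comparison already contained in Lemma 2.10, passing through the factorizations $\pi_{\Bm}=\pi''\circ\pi'$, $\pi^+_{\Bm}=\pi''_+\circ\pi'_+$ and the decompositions (1.5.6), (2.8.4). First, the ``otherwise'' case is immediate. By 2.7 we have $\SX^+_{\Bm}=\Im\pi^+_{\Bm}$ (and $\SX^+_{\Bm}$ is $F$-stable, being independent of the choice of $(T,B)$ within its $G$-conjugacy class), so if $z\in\SX_{\Bm}^F$ does not lie in $\SX^+_{\Bm}$ then $(\pi^+_{\Bm})\iv(z)=\emptyset$; hence $\SH^i_zK^+_{\Bm,T,\SE}=\BH^i_c((\pi^+_{\Bm})\iv(z),(\a^+)^*\SE[d^+_{\Bm}])=0$ for all $i$, and so $\x^+_{\Bm,T,\SE}(z)=(-q)^{-\dim U_P}\x_{K^+_{\Bm,T,\SE},\vf^+}(z)=0$.

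For $z\in(\SX^+_{\Bm})^F$ I would, following 2.12, reduce to the case $T=T_w\subset L_0$. Using (1.5.6) together with Proposition 1.6 to write $K_{\Bm,T,\SE}\simeq\bigoplus_{\r}\r\otimes\pi''_!A_{\r}$, and (2.8.4) to write $K^+_{\Bm,T,\SE}\simeq\bigoplus_{\r}\r\otimes(\pi''_+)_!B_{\r}$, and invoking the descriptions (2.12.1) of $\vf_T\simeq\sum_{\r}w|_{\r}\otimes\vf_{\r}$ and (2.12.3) of $\vf^+_T\simeq\sum_{\r}w|_{\r}\otimes\vf^+_{\r}$, I take stalks at $z$ and alternating traces. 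Since a summand $\r\otimes C$ carrying the Frobenius $w|_{\r}\otimes\f$ contributes $\Tr(w|_{\r})\,\x_{C,\f}(z)$ to the characteristic function, this gives
\begin{align*}
\x_{\Bm,T,\SE}(z)&=\sum_{\r}\Tr(w|_{\r})\,\x_{\pi''_!A_{\r},\vf_{\r}}(z),\\
\x^+_{\Bm,T,\SE}(z)&=(-q)^{-\dim U_P}\sum_{\r}\Tr(w|_{\r})\,\x_{(\pi''_+)_!B_{\r},\vf^+_{\r}}(z),
\end{align*}
the sums being over $\r\in(W_0)_{\Bm,\SE_0}\wg$. It therefore suffices to prove, for each such $\r$ and each $z\in(\SX^+_{\Bm})^F$, the identity $\x_{(\pi''_+)_!B_{\r},\vf^+_{\r}}(z)=(-q)^{\dim U_P}\,\x_{\pi''_!A_{\r},\vf_{\r}}(z)$.

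This per-$\r$ identity is where Lemma 2.10 enters. Since $\pi''$ and $\pi''_+$ are proper, proper base change gives $\SH^i_z\pi''_!A_{\r}=\BH^i_c((\pi'')\iv(z),A_{\r})$ and $\SH^i_z(\pi''_+)_!B_{\r}=\BH^i_c((\pi''_+)\iv(z),B_{\r})$, with $\vf_{\r}$ and $\vf^+_{\r}$ inducing $f^*_{\r}$ and $h^*_{\r}$ on the respective fibre cohomologies. Hence, for $z\in(\SX^+_{\Bm})^F$,
\begin{align*}
\x_{(\pi''_+)_!B_{\r},\vf^+_{\r}}(z)&=\sum_i(-1)^i\Tr\bigl(h^*_{\r},\BH^i_c((\pi''_+)\iv(z),B_{\r})\bigr)\\
&=q^{\dim U_P}\sum_i(-1)^i\Tr\bigl(f^*_{\r},\BH^{i-\dim U_P}_c((\pi'')\iv(z),A_{\r})\bigr)\\
&=(-q)^{\dim U_P}\,\x_{\pi''_!A_{\r},\vf_{\r}}(z),
\end{align*}
where the second equality is Lemma 2.10 and the third reindexes $j=i-\dim U_P$ and collects the sign $(-1)^{\dim U_P}$. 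Summing over $\r$ against $\Tr(w|_{\r})$ yields $\x^+_{\Bm,T,\SE}(z)=\x_{\Bm,T,\SE}(z)$.

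The one step demanding real care, which I view as the main (if modest) obstacle, is the compatibility used above: that $\vf_{\r}$ in (2.12.1), defined as the component of $\vf_{T_0}$ on $\IC(\SX_{\Bm},\SL_{\r})[d_{\Bm}]$, corresponds under Proposition 1.6 to $\pi''_!(f_{\r})$, so that it genuinely induces $f^*_{\r}$ on the fibres of $\pi''$. I would check this by restricting to the dense open locus (inside $\SY^0_{\Bm}$) on which $K_{\Bm,T_0,\SE_0}$ reduces to the local system $(\psi^0_{\Bm})_!\a_0^*\SE_0[d_{\Bm}]$: there both the intermediate-extension structure constructed in 2.1 and the one obtained functorially from $\wt\vf_0$ via $\pi'_!$ and $\pi''_!$ restrict to $\wt\vf_0$, and uniqueness of the extension across a dense open subset then forces them to agree. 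The remaining subtleties — the degree shift $[\dim U_P]$ and the Tate twist built into Lemma 2.10, which produce the factor $q^{\dim U_P}$ rather than $q^{-\dim U_P}$ — are exactly cancelled by the normalization $(-q)^{-\dim U_P}$ in (2.11.1), so I expect no further difficulty beyond this bookkeeping.
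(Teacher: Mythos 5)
Your argument follows the paper's proof: the vanishing outside $\SX^+_{\Bm}$ from $\Im\pi^+_{\Bm}=\SX^+_{\Bm}$, the per-$\rho$ expansion of both characteristic functions via (2.12.1) and (2.12.3), the identification of stalks with fibre cohomologies using Proposition 1.6 and proper base change, and then Lemma 2.10 together with the normalization (2.11.1) to finish; the sign and $q$-power bookkeeping you carry out is exactly what the paper leaves to the reader in the line ``The proposition now follows from Lemma 2.10 and (2.11.1).'' The compatibility you flag at the end — that $\vf_{\r}$ genuinely induces $f^*_{\r}$ on the fibres of $\pi''$ — is indeed tacit in the paper's appeal to Proposition 1.6, and your proposed verification over the dense open subset $\SY^0_{\Bm}$ where the perverse sheaf reduces to a shifted local system is the standard and correct way to justify it.
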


\begin{proof}
Since $\Im \pi^+_{\Bm} = \SX^+_{\Bm}$, it is clear 
that $\x^+_{\Bm, T,\SE}(z) = 0$ unless $z \in (\SX^+_{\Bm})^F$. 
Take $z \in (\SX^+_{\Bm})^F$. By (2.12.1), we have 
\begin{equation*}
\x_{\Bm, T,\SE}(z) = \sum_{\r \in (W_0)_{\Bm, \SE_0}\wg}\Tr(w, \r)\x_{I_{\r}, \vf_{\r}},
\end{equation*}
where $I_{\r} = \IC(\SX_{\Bm}, \SL_{\r})[d_{\Bm}]$. Here 
$\SH^i_z(I_{\r}) \simeq \BH^i_c((\pi'')\iv(z), A_{\r})$ by Proposition 1.6,
and the isomorphism on $\SH^i_z(I_{\r})$ induced from $\vf_{\r}$ coincides with 
the isomorphism $f^*_{\r}$ on $\BH^i_c((\pi'')\iv(z), A_{\r})$. 
On the other hand, by (2.12.3) we have
\begin{equation*}
\x_{K^+_{\Bm, T,\SE}, \vf^+}(z) 
   = \sum_{\r \in (W_0)_{\Bm, \SE_0}\wg}\Tr(w, \r)\x_{J_{\r}, \vf^+_{\r}},
\end{equation*}
where $J_{\r} = (\pi''_+)_!B_{\r}$.  
Here $\SH^i_z(J_{\r}) \simeq \BH^i_c((\pi''_+)\iv(z), B_{\r})$, 
and the isomorphism 
on $\SH^i_z(J_{\r})$ induced from $\vf^+_{\r}$ coincides with $h^*_{\r}$ on  
$\BH^i_c((\pi''_+)\iv(z), B_{\r})$. 
The proposition now follows from 
Lemma 2.10 and (2.11.1). 
\end{proof}

\para{2.14.}
As in the case of Green functions $Q_{\Bm, T}$, 
we define Green functions $Q^+_{\Bm, T}$ as the 
restriction of $\x^+_{\Bm, T, \SE}$ on $\SX_{\Bm,\unip}$. 
In view of Proposition 2.13 and Proposition 2.3, $Q^+_{\Bm, T}$ 
does not depend on the choice of $\SE$. As in the case of Green functions, 
the definition of $Q^+_{\Bm, T}$ can be generalized to the case where $G$ 
is replaced by $Z_G(s)$, in which case we denote it as $Q^{+, Z_G(s)}_{\Bm, T}$.
Now take $(su, \Bv) \in (\SX^+_{\Bm})^F$ under the setting in Theorem 2.6.
Then by Proposition 2.13 and Theorem 2.6, 
the value $\x^+_{\Bm,T,\SE}(su, \Bv) = \x_{\Bm, T,\SE}(su,\Bv)$
 can be described 
as a linear combination of various Green functions $Q^{Z_G(s)}_{\Bm, xTx\iv}(u,\Bv)$
such that $s \in xTx\iv$.
One can check that if $(su, \Bv) \in \SX^+_{\Bm}$, then 
$(u,\Bv) \in \SX^{+,Z_G(s)}_{\Bm, \unip}$. 
It follows, by Proposition 2.13, that 
$Q^{Z_G(s)}_{\Bm, xTx\iv}(u,\Bv) = Q^{+,Z_G(s)}_{\Bm,xTx\iv}(u,\Bv)$.
Thus as a corollary to Theorem 2.6, we have

\begin{cor} [Character formula for $\x^+_{\Bm, T,\SE}$] 
Under the assumption in Theorem 2.6, we have
\begin{equation*}
\x^+_{\Bm, T,\SE}(su,\Bv) = |Z_G(s)^F|\iv 
         \sum_{\substack{x \in G^F \\
                             x\iv sx \in T^F}}Q^{+,Z_G(s)}_{\Bm, xTx\iv}(u,\Bv)\th(x\iv sx). 
\end{equation*}
\end{cor}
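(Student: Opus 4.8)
The plan is to deduce the corollary from Theorem 2.6 (the character formula for $\x_{\Bm, T,\SE}$) together with Proposition 2.13, by a straightforward bookkeeping of supports and restrictions. First I would observe that, by Proposition 2.13, for $z = (su, \Bv) \in \SX^F_{\Bm}$ one has $\x^+_{\Bm, T,\SE}(z) = \x_{\Bm, T,\SE}(z)$ if $z \in (\SX^+_{\Bm})^F$, and $\x^+_{\Bm, T,\SE}(z) = 0$ otherwise. So the corollary will follow once I show that the right-hand side of the desired formula agrees, term by term, with the right-hand side of Theorem 2.6 restricted appropriately, and that the zero case is consistent.

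Next I would treat the two cases separately. Suppose first that $(su, \Bv) \in (\SX^+_{\Bm})^F$. By Proposition 2.13, $\x^+_{\Bm, T,\SE}(su, \Bv) = \x_{\Bm, T,\SE}(su, \Bv)$, and by Theorem 2.6 the latter equals $|Z_G(s)^F|\iv \sum_{x}Q^{Z_G(s)}_{\Bm, xTx\iv}(u,\Bv)\th(x\iv sx)$, the sum over $x \in G^F$ with $x\iv sx \in T^F$. The key point, which is precisely what is asserted in 2.14, is that $(su, \Bv) \in \SX^+_{\Bm}$ forces $(u, \Bv) \in \SX^{+, Z_G(s)}_{\Bm, \unip}$; granting this, Proposition 2.13 applied inside $Z_G(s)$ gives $Q^{Z_G(s)}_{\Bm, xTx\iv}(u,\Bv) = Q^{+, Z_G(s)}_{\Bm, xTx\iv}(u,\Bv)$ for each $x$ appearing in the sum, and substituting yields exactly the claimed expression. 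In the remaining case $(su, \Bv) \notin (\SX^+_{\Bm})^F$, the left-hand side vanishes by Proposition 2.13; I must then check that each $Q^{+, Z_G(s)}_{\Bm, xTx\iv}(u,\Bv)$ on the right also vanishes, which again reduces to the support statement: if $(u, \Bv) \in \SX^{+, Z_G(s)}_{\Bm, \unip}$ then, tracing through the decomposition $Z_G(s) \simeq \prod_i G_i$ and the compatibility of the $\SX^+$-construction with this product, one would get $(su, \Bv) \in \SX^+_{\Bm}$, contradicting the hypothesis.

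So the real content is the support claim in 2.14: that $(su, \Bv) \in \SX^+_{\Bm}$ if and only if (the ``only if'' is what is needed above) $(u, \Bv) \in \SX^{+, Z_G(s)}_{\Bm, \unip}$. I would prove this by unwinding the definition $\SX^+_{\Bm} = \bigcup_{g \in G}g(B \times \prod_i M^+_{p_i})$ from (2.7.1): if $(su, \Bv) = g \cdot (b, \Bw)$ with $b \in B$, $\Bw \in \prod_i M^+_{p_i}$, then $g\iv s g$ is the semisimple part of $b \in B$, hence lies in a maximal torus of $B$; after conjugating $g$ by an element of $B$ one may assume $g\iv s g \in T$, so that $g$ conjugates $Z_G(s)$ into $Z_G(g\iv s g) \supseteq T$, and $b$, $\Bw$ then exhibit $(u, \Bv)$ as lying in the analogous variety built from the Borel $B \cap Z_G(g\iv s g)$ of $Z_G(g\iv s g)$ and the $M^+_{p_i}$-flag; transporting back by $g$ and using the product decomposition (2.5.1) gives $(u,\Bv) \in \SX^{+, Z_G(s)}_{\Bm, \unip}$. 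I expect this support argument to be the main obstacle: one has to be careful that the subspaces $M^+_{p_i}$ interact correctly with the eigenspace decomposition $V = \bigoplus_i V_i$ of $s$, and that the construction of $\SX^+$ for $Z_G(s)$ matches the product of the constructions for the factors $G_i = GL(V_i)$ — but this is exactly the same kind of reduction already used (without proof) in 2.5 and 2.14, so once it is granted the corollary is immediate. Since the statement is explicitly flagged as ``as a corollary to Theorem 2.6,'' I would keep the write-up short, essentially recording the substitution above.
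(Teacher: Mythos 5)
Your proposal follows the paper's argument: the corollary is deduced from Theorem 2.6 and Proposition 2.13, with the crux being the support statement that $(su,\Bv)\in\SX^+_{\Bm}$ forces $(u,\Bv)\in\SX^{+,Z_G(s)}_{\Bm,\unip}$, after which Proposition 2.13 (applied within $Z_G(s)$) lets one replace each $Q^{Z_G(s)}_{\Bm,xTx\iv}$ by $Q^{+,Z_G(s)}_{\Bm,xTx\iv}$. You go slightly further than the paper, which only writes out the case $(su,\Bv)\in(\SX^+_{\Bm})^F$; you correctly note that for the identity to hold as an equality of functions on all of $\SX^F$, one also needs the converse implication so that the right-hand side vanishes outside $(\SX^+_{\Bm})^F$. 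That is a genuine (if minor) omission in the paper's write-up and your attention to it is appropriate.

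One caution on your sketch of the support claim. You propose to replace $g$ by $gb_0$ with $b_0\in B$ so that $(gb_0)^{-1}s(gb_0)\in T$, and then read off the data for $Z_G(s)$ from $b$ and $\Bw$. But after this replacement the relevant $V$-datum becomes $b_0^{-1}\Bw$, and a general $b_0\in B$ need not stabilize $\prod_i M^+_{p_i}$ (only $B_L$ does). So you cannot simply ``keep $\Bw$'' after the conjugation; one has to argue either via a $b_0\in B_L$ (which may not suffice to diagonalize the semisimple part) or, more in the spirit of 2.5, by decomposing everything along the eigenspaces $V=\bigoplus_i V_i$ of $s$ and comparing with the product construction of $\SX^{+,Z_G(s)}_{\Bm}$. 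The paper itself only asserts ``one can check'' here, so your level of rigor matches the source, but the way you phrased the reduction step does contain a gap that would need filling in a complete write-up.
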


\par\bigskip\bigskip
\section{Orthogonality relations}

\para{3.1.}
For a fixed $\Bm \in \SQ_{n,r}$, 
we have defined in the previous sections, 
$K_{\Bm, T,\SE}$, $\x_{\Bm, T,\SE}$, $Q_{\Bm, T}$, etc. and 
$K^+_{\Bm, T, \SE}, \x^+_{\Bm, T, \SE}, Q^+_{\Bm, T}$, etc..
From this section by changing the notation, we denote 
$K_{\Bm, T,\SE}, \x_{\Bm, T,\SE}, Q_{\Bm, T}$, etc. by 
$K^-_{\Bm, T,\SE}, \x^-_{\Bm, T,\SE}, Q^-_{\Bm, T}$, etc. by 
attaching the sign ``$-$''.    
In this section, we shall prove the orthogonality relations for the functions 
$\x^{\pm}_{\Bm, T,\SE}$ and $Q^{\pm}_{\Bm,T}$.
Before stating the results, we 
prepare the following.

\begin{prop} 
Let $K = K^{\ve}_{\Bm, T, \CE}$ $($resp. $K' = K^{\ve'}_{\Bm', T', \CE'}$$)$
be a complex on $\SX$ associated to $\Bm$ and 
$(M_{p_i})$ $($ resp. $\Bm'$ and  $(M'_{p'_i}))$, 
where $\ve, \ve' \in \{ +, - \}$.  
Assume that $\SE'$ is the constant sheaf on $T'$, and $\SE$ is a
non-constant sheaf on $T$.  Then we have
\begin{equation*}
\BH_c^i(\SX, K \otimes K') = 0 \quad\text{ for all } i.
\end{equation*}
\end{prop}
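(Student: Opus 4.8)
The plan is to reduce the vanishing to a statement about the fibres of the proper (or semi-proper) maps $\pi^{\ve}_{\Bm}$, $\pi^{\ve'}_{\Bm'}$, and then to exploit that a non-constant tame local system on a torus has vanishing cohomology over a large enough translation-invariant subset. First I would use the decomposition theorems (Theorem 1.4, (1.4.2), and the analogue (2.8.3)--(2.8.4) for the ``$+$'' case) to write $K$ and $K'$ as direct sums of intersection cohomology complexes $\IC(\SX_{\Bm},\SL_{\r})$ twisted by representations of $W_{\Bm,\SE}$; since $\BH_c^i(\SX,-)$ is additive, it suffices to treat a single summand. For such summands, the cleanest route is to go back upstairs: by the projection formula and properness of $\pi_{\Bm}$ (and by Proposition 2.13 together with the definition (2.11.1) in the ``$+$'' case, which lets me replace $\x^+$ by $\x^-$ on the locus where both are supported), the relevant cohomology is a summand of $\BH_c^i\bigl(\wt\SX_{\Bm}\times_{\SX}\wt\SX_{\Bm'},\ \a^*\SE\boxtimes(\a')^*\SE'\bigr)$, up to shifts and twists.

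Next I would analyse the fibre product $Z := \wt\SX_{\Bm}\times_{\SX}\wt\SX_{\Bm'}$, whose points are tuples $(x,\Bv,gB,g'B')$ with $g\iv xg \in B$ and $(g')\iv x g' \in B'$. The torus part is recorded by the map $(\a,\a') : Z \to T\times T'$, $(x,\Bv,gB,g'B')\mapsto (p_T(g\iv xg),\ p_{T'}((g')\iv xg'))$. The key geometric observation is that, because $\SE'$ is the constant sheaf, I only need to control the $T$-direction: I claim that the map $\a : Z \to T$ factors through, or at least has image contained in, a subset of $T$ on which $\SE$ has vanishing cohomology with compact supports when integrated along the appropriate fibres. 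Concretely, one fixes everything except $g$ and varies $g$ within a coset so that $p_T(g\iv x g)$ sweeps out a coset of a nontrivial subtorus; summing $\a^*\SE$ over that direction gives $0$ because $\SE$ is non-constant, by the standard fact that $\BH_c^*(S,\SE)=0$ for a nontrivial tame local system on a torus $S$ (or on a coset of a nontrivial subtorus). This is exactly the mechanism behind the orthogonality of Deligne--Lusztig / character-sheaf Green functions, and the present situation is the enhanced analogue; I would cite [L2] / [SS2] for the structural input and adapt the torus bookkeeping.

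The step I expect to be the main obstacle is making precise that ``varying $g$ moves $p_T(g\iv x g)$ along a full coset of a nontrivial subtorus'' \emph{uniformly over $Z$}, i.e. stratifying $Z$ (or $\SX$) so that on each stratum the $T$-valued map is a fibration whose fibres carry a free action of a nontrivial subtorus compatible with the $\SE$-weight. The $V^{r-1}$-factor and the condition $g\iv\Bv\in\prod M_{p_i}$ complicate the orbit structure relative to the $r=1$ case (indeed $X_{\Bla}$ is not a single orbit for $r\ge 3$), so I would handle the $\Bv$-coordinates by an induction on $r$ or by simply projecting them away: the vanishing of $\BH_c^*$ is insensitive to an affine-space bundle factor coming from the $M_{p_i}$'s, so one can integrate those out first and reduce to the $V^0$, i.e. pure $G$, situation where the classical argument applies verbatim. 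The ``$+$'' versus ``$-$'' distinction is then a cosmetic matter, dealt with once and for all via Proposition 2.13: on $(\SX^+_{\Bm})^F$ one has $\x^+=\x^-$, and off it $\x^+=0$, so the $\ve,\ve'$ cases all follow from the $\ve=\ve'=-$ case. Assembling these reductions, the proposition follows.
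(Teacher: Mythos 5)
Your overall direction is the right one — pass via K\"unneth to the fibre product $Z=\wt\SX_{\Bm}\times_{\SX}\wt\SX^{\ve'}_{\Bm'}$, exploit that $\SE'$ is constant so only the $T$-direction matters, and conclude from $H_c^*(T,\SE)=0$ for a non-constant tame local system. But two steps of your plan have genuine problems.

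First, the reduction of the $\ve,\ve'=+$ cases to $-$ via Proposition 2.13 is unsound. Proposition 2.13 is an identity of characteristic functions over $\FF_q$; it tells you nothing a priori about vanishing of $\BH_c^i(\SX,K\otimes K')$, which is a statement over $\ol{\FF}_q$ about the complexes themselves. (Conversely, it is Proposition 3.2 that, through (3.6.1) and the trace formula, feeds into the orthogonality results — you have the logical order reversed.) The paper treats the $+$ case directly and geometrically: one simply builds the fibre product using $\wt\SX^+_{\Bm'}$, whose extra coordinate lives in $G/B'_{L'}$ rather than $G/B'$ and whose vector constraint is $g'^{-1}\Bv\in\prod_iM'^+_{p'_i}$, and then the same argument goes through with $U_{P'}$ playing a role. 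No appeal to $\FF_q$-functions is needed or possible.

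Second, the step you yourself flag as "the main obstacle" is in fact the content of the proof, and your proposed remedy (project the $\Bv$-coordinates away up front and reduce to the $r=1$ situation) does not quite work, because the affine space of admissible $\Bv$'s has dimension depending on the pair $(gB,g'B'_{L'})$ — it is not a globally constant-rank bundle over $Z$. The paper resolves this by first stratifying $Z$ by $G$-orbits $\SO$ of $G/B\times G/B'$ (a finite, $r$-independent set indexed by double cosets), applying Leray for $\vf_\SO:Z_\SO\to\SO$, and invoking $G$-equivariance of the local system to reduce to a single fibre $\vf_\SO^{-1}(\xi)$ over a base point $\xi=(B,nB')$ with $nT'n^{-1}=T$. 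Only then, inside this one fibre, a second Leray spectral sequence for the projection to $U_{P'}$ reveals that each fibre is $T\times(\text{affine space})$ with the restricted local system equal to $\SE\boxtimes\Ql$, and the torus vanishing applies. So "integrating out the $\Bv$'s" happens last, after two Leray reductions have made the affine-space factor genuinely a constant-dimension factor; it is not a preliminary projection. You correctly anticipated that this uniformity is the crux, but the mechanism you'd need — stratification by $G$-orbits on $G/B\times G/B'$, plus the $G$-equivariance reduction to a point — is what's missing from your write-up.

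A minor note: the initial detour through the decomposition theorem (Theorem 1.4, (2.8.3)--(2.8.4)) is unnecessary; the paper works directly with $K=(\pi_{\Bm})_!\SL$ and $K'=(\pi^+_{\Bm'})_!\SL'$ and applies K\"unneth immediately. It does no harm, but it does not simplify anything.
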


\begin{proof}
We prove the proposition by a similar argument as in the proof 
of Proposition 7.2 in [L2].  In the discussion below, we consider 
the case where $\ve = -, \ve' = +$.  The other cases are dealt
similarly.
Let $B'$ be the Borel subgroup of $G$ containing $T'$, which is the 
stabilizer of the total flag $(M'_i)$, 
and $P'$ the parabolic subgroup of $G$ containing $B'$ 
which is  the stabilizer of the partial flag $(M'_{p'_i})$.  
Let $L'$ be the Levi subgroup of $P'$ containing $T'$, and $U_{P'}$
the unipotent radical of $P'$.  Put 
$B'_{L'} = B' \cap L'$. 
We consider the fibre product 
$Z = \wt\SX_{\Bm} \times_{\SX} \wt\SX^+_{\Bm'}$, where
$Z$ can be written as 
\begin{equation*}
\begin{split}
Z = \{ (gB, &g'B'_{L'}, x, \Bv) \in G/B \times G/B'_{L'} 
         \times G \times  V^{r-1}  \\
&\mid g\iv xg \in B, g'^{-1}x{g'} \in B', g\iv \Bv \in \prod_i M_{p_i}, 
      g'^{-1} \Bv \in \prod_i M'^+_{p'_i} \}.
\end{split}
\end{equation*}
Let $\SL = \a^*\SE$ and $\SL' = (\a^+)^*\SE'$.  
Since $K = (\pi_{\Bm})_!\SL$ and $K' = (\pi^+_{\Bm})_!\SL'$, up to
 shift, by the K\"unneth 
formula, we have
\begin{equation*}
\tag{3.2.1}
\BH^i_c(\SX, K \otimes K')
\simeq H_c^i(Z, \SL \boxtimes \SL')
\end{equation*}
up to the degree shift.  
Hence in order to prove the proposition, it is enough to show
that the right hand side of (3.2.1) is equal to zero for each $i$.
For each $G$-orbit $\SO$ of $G/B \times G/B'$, put
\begin{equation*}
Z_{\SO} = \{ (gB, g'B'_{L'}, (x, \Bv)) \in Z \mid (gB, g'B') \in \SO\}.
\end{equation*}
Then $Z = \coprod_{\SO} Z_{\SO}$ is a finite partition, and $Z_{\SO}$
is a locally closed subvariety of $Z$. Hence it is enough to show that
$H_c^i(Z_{\SO}, \SL\boxtimes\SL') = 0$ for any $i$.  We consider the
morphism $\vf_{\SO} : Z_{\SO} \to \SO, 
       (gB, g'B'_{L'}, (x, \Bv)) \mapsto (gB, g'B')$.
Then by the Leray spectral sequence, we have
\begin{equation*}
H_c^i(\SO, R^j(\vf_{\SO})_!(\SL\boxtimes\SL')) 
    \Rightarrow H_c^{i+j}(Z_{\SO},\SL\boxtimes \SL'). 
\end{equation*}
Thus it is enough to show that 
$R^j(\vf_{\SO})_!(\SL\boxtimes \SL') = 0$ for any $j$, which is 
equivalent to the statement that 
$H_c^j(\vf_{\SO}\iv(\xi), \SL\boxtimes\SL') = 0$ for any $j$ and 
any $\xi \in \SO$.
Since $\SL\boxtimes\SL'$ is a $G$-equivariant local system, it is
enough show this for a single element $\xi \in \SO$.
Thus, we may choose $\xi = (B, nB') \in \SO$, where 
$n \in G$ is such that $nT'n\iv = T$.
Then $\vf_{\SO}\iv(\xi)$ is given as 
\begin{equation*}
\begin{split}
\vf_{\SO}\iv(\xi) = \{ (B, &nuB'_{L'}, x, \Bv) \mid x \in B, n\iv xn \in B', \\
       &\Bv \in \prod_iM_{p_i}, u\iv n\iv \Bv \in \prod_i M'^+_{p'_i},  u \in U_{P'} \}.  
\end{split}
\end{equation*}
Thus $Y = \vf_{\SO}\iv(\xi)$ is isomorphic to $(B \cap nB'n\iv) \times Y_1$, where
\begin{equation*}
Y_1 = \{(u, \Bv) \in U_{P'} \times V^{r-1} \mid \Bv \in 
            \prod_iM_{p_i} \cap \prod_inu( M'^+_{p'_i}) \}. 
\end{equation*}
Let $h : Y \to U_{P'}$ be the map obtained from the projection 
$h_1 : Y_1 \to U_{P'}$. Again by using the Leray spectral sequence associated to 
the map $h$, in order to show $H^i_c(Y, \SL\boxtimes\SL') = 0$, it is enough to see 
that $H^i_c(h\iv(u), \SL \boxtimes\SL') = 0$ for any $u \in U_{P'}$.
For each $u \in U_{P'}$, the fibre $h_1\iv(u)$ has a structure of an affine space. 
Hence $h\iv(u)$ is isomorphic to the direct product of $(B \cap nB'n\iv)$ with 
an affine space. Since $B \cap nB'n\iv  \simeq T \times (U \cap n U' n\iv)$, 
where $U'$ is the unipotent radical of $B'$, $h\iv(u)$ can be written as 
$h\iv(u) \simeq T \times Y_2$ with an affine space $Y_2$.  
If we denote by $p : h\iv(u) \to T$ the projection on $T$,   
the restriction of $\SL \boxtimes \SL'$ on $h\iv(u)$ coincides with 
$p^*(\SE \otimes f^*\SE') = p^*\SE \simeq \SE \boxtimes \Ql$ since $\SE'$ is 
the constant sheaf, where 
$f : T \to T' = n\iv Tn$. 
Hence we have only to show that $H^i_c(T, \SE) = 0$ for any $i$.
But since $\SE$ is a non-constant tame local system on $T$, we have
$H_c^i(T,\SE) = 0$ for any $i$.  This proves the proposition.
\end{proof}

\para{3.3.} 
We consider the complexes 
$K^{\pm}_{\Bm, T, \CE}, K^{\pm}_{\Bm', T',\CE'}$, and their 
characteristic functions 
$\x^{\pm}_{\Bm, T, \CE}, \x^{\pm}_{\Bm', T', \CE'}$.
We put $N(T, T') = \{ n \in G \mid n\iv Tn = T'\}$. 
For $\ve, \ve' \in \{ +, -\}$ and $n \in N(T, T')$, we define
$a_{\ve, \ve'}(\Bm, \Bm'; n)$ by
\begin{equation*}
\tag{3.3.1}
a_{\ve,\ve'}(\Bm, \Bm'; n) = 
  \sum_{i=1}^{r-1}\dim (M^{\ve}_{p_i} \cap n({M'}^{\ve'}_{p'_i})),
\end{equation*}
where $M^+_{p_i}$ is as before, and we put $M_{p_i} = M^-_{p_i}$.  We define 
$M'^{\ve'}_{p'_i}$ similarly. 
Also put 
\begin{equation*}
\tag{3.3.2}
p_-(\Bm) = \dim \prod_{i=1}^{r-1} M_{p_i} = \sum_{i=1}^{r-1}p_i, \quad
p_+(\Bm) = \dim \prod_{i=1}^{r-1} M^+_{p_i} = p_{r-1}.
\end{equation*}  

\par
We have the following orthogonality relations, which is 
an analogue of Theorems 9.2 and 9.3 in [L2].
Also see Theorems 3.4 and 3.5 in [SS2] for the case of exotic symmetric space of level 2.

\begin{thm}[Orthogonality relations for $\x^{\pm}_{\Bm, T, \SE}$]  
Let $\SE = \SE_{\th}, \SE' = \SE_{\th'}$
with $\th \in (T^F)\wg, \th' \in ({T'}^F)\wg$.  Then we have 
\begin{equation*}
\tag{3.4.1}
\begin{split}
(-1)^{p_{\ve}(\Bm) + p_{\ve'}(\Bm')}&|G^F|\iv
   \sum_{z \in \SX^F}
        \x^{\ve}_{\Bm, T, \SE}(z)\x^{\ve'}_{\Bm',T',\SE'}(z) \\
&= |T^F|\iv|{T'}^F|\iv\sum_{\substack{n \in N(T,T')^F \\ t \in T^F}}
            \th(t)\th'(n\iv tn)q^{a_{\ve,\ve'}(\Bm, \Bm';n)}.
\end{split}
\end{equation*}
\end{thm}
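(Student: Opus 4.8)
The strategy is to follow the pattern of Lusztig's proof of the orthogonality relations for character sheaves in [L2, §9], adapted to the enhanced variety as in [SS2, §3]. The left-hand side of (3.4.1) is, up to the sign factor, a sum over $z \in \SX^F$ of a product of two characteristic functions, and by the Grothendieck trace formula this equals an alternating sum of traces of Frobenius on the hypercohomology $\BH_c^*(\SX, K^{\ve}_{\Bm,T,\SE} \otimes K^{\ve'}_{\Bm',T',\SE'})$. So the first reduction I would make is: express $|G^F|\iv \sum_z \x^{\ve}\x^{\ve'}$ in terms of $\sum_i (-1)^i \Tr(F, \BH_c^i(\SX, K \otimes K'))$, keeping careful track of the degree shifts $d^+_{\Bm}$ (resp. $d_{\Bm}$) and the normalizing factor $(-q)^{-\dim U_P}$ hidden in the definition (2.11.1) of $\x^+$. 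The $|G^F|\iv$ and the sign $(-1)^{p_\ve(\Bm)+p_{\ve'}(\Bm')}$ should come out of this bookkeeping together with the $G$-equivariance of the complexes.

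The heart of the argument is then a geometric computation of $\BH_c^*(\SX, K \otimes K')$ via a fibre-product variety $Z = \wt\SX^{\ve}_{\Bm} \times_{\SX} \wt\SX^{\ve'}_{\Bm'}$, exactly as in the proof of Proposition 3.2: by the Künneth formula $\BH_c^i(\SX, K \otimes K') \simeq H_c^i(Z, \SL \boxtimes \SL')$ up to shift, where $\SL = (\a^{\ve})^*\SE$, $\SL' = (\a^{\ve'})^*\SE'$. I would then stratify $Z$ by the $G$-orbits $\SO$ on the relevant flag variety $G/B \times G/B'$ (or $G/B \times G/B'_{L'}$, with the appropriate Levi attached on the $+$ side), obtaining $Z = \coprod_\SO Z_\SO$. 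Using the Leray spectral sequence for $\vf_\SO : Z_\SO \to \SO$ and the $G$-equivariance of $\SL \boxtimes \SL'$, the contribution of each stratum reduces to $H_c^*$ of a single fibre $\vf_\SO\iv(\xi)$. That fibre, after the same manipulations as in Proposition 3.2, splits as (a torus $T$) $\times$ (an affine space of dimension equal to $a_{\ve,\ve'}(\Bm,\Bm';n) + \dim U_{P'}$-type correction) $\times$ (a $U \cap nU'n\iv$ factor), and on it $\SL \boxtimes \SL'$ restricts to $p^*(\SE \otimes f^*\SE')$. When $\SE \otimes f^*\SE'$ is non-constant the cohomology vanishes (this is precisely Proposition 3.2); when it is the constant sheaf — which happens exactly when $n\iv tn$ pairs with $\th$ to kill $\th'$, i.e. contributes $\th(t)\th'(n\iv tn)$ over the relevant $t$ — one picks up a power of $q$ equal to the dimension of the affine part, which is designed to be $a_{\ve,\ve'}(\Bm,\Bm';n)$ after the shift normalizations cancel. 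Summing over the orbits $\SO$, which are indexed by $W$-double cosets and hence ultimately by $n \in N(T,T')^F/T'^F$, and then unfolding to a sum over $N(T,T')^F$ with the $|T^F|\iv|T'^F|\iv$ appearing from the orders of stabilizers, produces the right-hand side.

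The main obstacle I expect is the precise dimension/shift bookkeeping: one must verify that the exponent of $q$ coming from the affine fibre, after accounting for the shifts $d^{\ve}_{\Bm}$, $d^{\ve'}_{\Bm'}$, the Tate twists introduced in Lemma 2.10 (the $(-q)^{-\dim U_P}$ factor), and the codimension of $Z_\SO$ in $Z$, collapses exactly to $a_{\ve,\ve'}(\Bm,\Bm';n)$ as defined in (3.3.1), and that the residual sign is exactly $(-1)^{p_\ve(\Bm)+p_{\ve'}(\Bm')}$. The four cases $(\ve,\ve') \in \{+,-\}^2$ differ in which flag variety indexes the strata ($G/B$ versus $G/B_L$) and in the fibre dimension contributed by the unipotent radicals $U_P$, $U_{P'}$; I would do the case $\ve = \ve' = -$ in full detail — where $\pi_{\Bm}$ is proper and no normalization is needed — and then indicate how the $U_P$-factors and the $(-q)^{-\dim U_P}$ normalization in (2.11.1) conspire, via Lemma 2.10 and Proposition 2.13, to give the uniform formula (3.4.1) in the remaining cases. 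The vanishing half of the statement (when $\SE \otimes f^*\SE'$ is non-trivial on the relevant sub-torus) is already isolated as Proposition 3.2 and its proof technique; the new content here is tracking the non-vanishing contributions quantitatively.
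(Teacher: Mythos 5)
Your plan correctly identifies the Künneth/fibre-product/stratification machinery, but it is not what the paper actually uses to prove Theorem 3.4; in the paper that machinery appears only in Proposition 3.2 (the vanishing statement), and the full orthogonality relation is established by a simultaneous induction on semisimple rank (3.6--3.9) that you do not mention. Concretely, the paper's scheme is: (i) (3.6.1) the vanishing case ($\th$ nontrivial, $\th'$ trivial) follows from Proposition 3.2 via the trace formula; (ii) (3.6.2) the case of split $T, T'$ is handled by a direct fixed-point count on the Springer fibres of $\pi_{\Bm}$ and $\pi^+_{\Bm'}$ (not by a cohomological stratification), using that $B, B'$ are then $F$-stable; (iii) (3.7) the character formula of Theorem 2.6 and Corollary 2.15 reduces Theorem 3.4 for $G$ to Theorem 3.5 (Green function orthogonality) for the centralizers $Z_G(s)$; (iv) (3.8) Theorem 3.5 for $G$ is deduced from Theorem 3.5 for proper centralizers together with (3.6.1)/(3.6.2); and (v) (3.9) the induction on $\srk Z_G(s)$ closes the loop, with base case $Z_G(s) = T$.

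The genuine gap in your approach is the Frobenius bookkeeping on the strata when $T$ or $T'$ is not split. Over $\Bk$ you can decompose $Z = \coprod_{\SO} Z_{\SO}$ and compute each $H_c^*(Z_{\SO}, \SL \boxtimes \SL')$, which suffices for the pure vanishing statement. But the strata $Z_{\SO}$ are not individually $F$-stable when $B$ (resp.\ $B'$) is not $F$-stable, so you cannot apply Lefschetz stratum by stratum; Frobenius permutes the $\SO$'s. Worse, even when you regroup strata into Frobenius orbits, the isomorphism $\vf_T : F^* K_{\Bm,T,\SE} \isom K_{\Bm,T,\SE}$ is not the naive geometric one: by (2.4.2) and (2.12.1) it carries the twist $\th_w$ (the $w$-action on each isotypic component), and nothing in your stratification picture tracks how this twist interacts with the Leray spectral sequence. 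You flag ``dimension/shift bookkeeping'' as the main obstacle, but the real obstacle is this twisted Frobenius structure, and the paper avoids confronting it directly by routing the non-split case through the character formula, which transfers the twist into the passage to $Z_G(s)$ and its Green functions. Also, a smaller point: in the split case the torus factor contributes $|T^F| = (q-1)^{\dim T}$ to the fixed-point count, not ``a power of $q$''; the $q$-powers on the right of (3.4.1) come only from the affine/unipotent pieces, and the $|T^F|$'s cancel against the $|T^F|\iv |{T'}^F|\iv$ prefactor. So while the vanishing half of your plan matches Proposition 3.2, the quantitative half requires a different argument than the one you propose, and without the induction through $Z_G(s)$ the proof does not go through for non-split maximal tori.
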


\begin{thm}[Orthogonality relations for Green functions]  
\begin{equation*}
\tag{3.5.1}
\begin{split}
(-1)^{p_{\ve}(\Bm) + p_{\ve'}(\Bm')}&|G^F|\iv
   \sum_{z \in \SX^F\uni}
       Q^{\ve}_{\Bm, T}(z)Q^{\ve'}_{\Bm', T'}(z)  \\
   &= |T^F|\iv |{T'}^F|\iv 
       \sum_{n \in N(T, T')^F}q^{a_{\ve,\ve'}(\Bm, \Bm';n)}.
\end{split}
\end{equation*}
\end{thm}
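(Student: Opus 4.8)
The plan is to deduce Theorem 3.5 from the orthogonality relations for the characteristic functions (Theorem 3.4), together with the character formulas of Theorem 2.6 and Corollary 2.15, by averaging over all tame local systems. This parallels the deduction of Theorem 9.3 from Theorem 9.2 in [L2], and of Theorem 3.5 from Theorem 3.4 in [SS2].

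First, I would apply (3.4.1) with $\SE = \SE_{\th}$, $\SE' = \SE_{\th'}$ and sum both sides over all $\th \in (T^F)\wg$ and all $\th' \in ({T'}^F)\wg$; by 2.5 this amounts to summing over all $F$-stable tame local systems on $T$ and on $T'$. On the right-hand side, $\sum_{\th \in (T^F)\wg}\th(t)$ equals $|T^F|$ if $t = 1$ and $0$ otherwise, and likewise $\sum_{\th' \in ({T'}^F)\wg}\th'(n\iv tn)$ equals $|{T'}^F|$ if $t = 1$ and $0$ otherwise (note $n\iv tn \in {T'}^F$ for $n \in N(T,T')^F$). Hence the summed right-hand side collapses to $\sum_{n \in N(T,T')^F}q^{a_{\ve,\ve'}(\Bm,\Bm';n)}$.

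For the left-hand side, I would write a general point of $\SX^F$ as $(su,\Bv)$ with $su$ the Jordan decomposition of the first coordinate, and insert the character formula: uniformly for $\ve \in \{+,-\}$ (Theorem 2.6 for $\ve = -$, Corollary 2.15 for $\ve = +$) one has $\x^{\ve}_{\Bm,T,\SE_{\th}}(su,\Bv) = |Z_G(s)^F|\iv\sum_{x}Q^{\ve,Z_G(s)}_{\Bm,xTx\iv}(u,\Bv)\,\th(x\iv sx)$, the sum running over $x \in G^F$ with $x\iv sx \in T^F$. Summing over $\th$ and using orthogonality of the characters of $T^F$ once more forces $x\iv sx = 1$, hence $s = 1$, so that only the points $(u,\Bv) \in \SX^F\uni$ survive; for such a point the $\th$-sum equals $|G^F|\iv|T^F|\sum_{x \in G^F}Q^{\ve}_{\Bm,xTx\iv}(u,\Bv)$. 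Here I would invoke transport of structure: for $x \in G^F$, conjugation by $x$ carries the whole construction built from $(T,B,(M_i))$ to the one built from $(xTx\iv,xBx\iv,(xM_i))$ $F$-equivariantly, and combining this with the $G^F$-invariance of the characteristic functions (2.2) and the independence of their restriction to the unipotent variety of the chosen local system (Proposition 2.3 and its analogue for $Q^{+}$ in 2.14), one obtains $Q^{\ve}_{\Bm,xTx\iv}(u,\Bv) = Q^{\ve}_{\Bm,T}(u,\Bv)$. Thus the inner sum is $|G^F|\,Q^{\ve}_{\Bm,T}(u,\Bv)$ and the $\th$-sum becomes $|T^F|\,Q^{\ve}_{\Bm,T}(u,\Bv)$. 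Carrying out the same reduction for the $\th'$-sum (which produces the factor $|{T'}^F|\,Q^{\ve'}_{\Bm',T'}(u,\Bv)$ and imposes no further condition, $s = 1$ being already in force), the summed left-hand side becomes $(-1)^{p_{\ve}(\Bm)+p_{\ve'}(\Bm')}|G^F|\iv|T^F||{T'}^F|\sum_{(u,\Bv)\in\SX^F\uni}Q^{\ve}_{\Bm,T}(u,\Bv)Q^{\ve'}_{\Bm',T'}(u,\Bv)$.

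Equating the two summed sides and cancelling the common factor $|T^F||{T'}^F|$ then gives (3.5.1). Most of the work is routine bookkeeping with the orders of the finite groups; the points I expect to require genuine care are the transport-of-structure identity $Q^{\ve}_{\Bm,xTx\iv} = Q^{\ve}_{\Bm,T}$ on $\SX^F\uni$ for $x \in G^F$, and checking that Theorem 2.6 and Corollary 2.15 genuinely combine into the single uniform character formula used above (including consistency of the degree shifts and of the signs $(-1)^{p_{\ve}(\Bm)}$).
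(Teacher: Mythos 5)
Your proposal is logically coherent as a \emph{conditional} statement — given Theorem 3.4 in full generality, averaging (3.4.1) over all pairs $(\th,\th')$ with the character formula and transport of structure does yield (3.5.1), and your bookkeeping of orders and signs is correct. However, as a proof of Theorem 3.5 it is circular inside the paper's inductive framework: the paper's proof of Theorem 3.4 for $G$ (in 3.7) itself requires Theorem 3.5 for \emph{all} $Z_G(s)$ with $s \in G^F$, \emph{including} central $s$, i.e., including $Z_G(s) = G$. So Theorem 3.4 for $G$ is established only \emph{after} Theorem 3.5 for $G$, not before. Summing (3.4.1) over all $\th, \th'$ requires it to hold for every $\th, \th'$, which at the relevant stage of the induction is not available.

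The paper (3.6--3.9) avoids exactly this trap by proving the two theorems simultaneously. The ingredients genuinely in hand when one wants to establish Theorem 3.5 for $G$ are: (a) the character formula, (b) Theorem 3.5 for \emph{proper} centralizers $Z_G(s)$ (inductive hypothesis), and (c) only the \emph{special cases} (3.6.1) (Theorem 3.4 when $\th' = 1$ and $\th \ne 1$, proved via Proposition 3.2) and (3.6.2) (Theorem 3.4 for split tori). From (a) and (b) alone the paper derives identity (3.8.1), which expresses both $\sum_z \x^\ve_\th \x^{\ve'}_{\th'}$ and $\sum_n \th(t)\th'(n\iv t n)q^{a}$ in terms of the unknown quantities $A$ and $B$ one wants to equate. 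The paper then does \emph{not} average over all $\th,\th'$; rather it evaluates (3.8.1) at one carefully chosen pair $(\th,\th')$ for which (3.6.1) or (3.6.2) applies (a nontrivial $\th$ trivial on $Z(G)^F$ paired with $\th'=1$, or the split case if $q=2$), forcing $A = B$. This is a fundamentally different use of the character formula than your averaging argument: you need Theorem 3.4 for every $\th,\th'$, while the paper needs it only for one well-chosen pair, precisely the pairs it has independently established. Incidentally, the precedent you cite ([L2, Theorems 9.2, 9.3] and [SS2, Theorems 3.4, 3.5]) also proceeds by simultaneous induction rather than a one-directional deduction, for the same reason.

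The parts of your argument that are sound — the averaging over $\th$ collapsing the character formula to $s=1$, the transport-of-structure identity $Q^\ve_{\Bm,xTx\iv} = Q^\ve_{\Bm,T}$ for $x\in G^F$, and the uniform form of the character formulas for $\ve \in \{+,-\}$ — are genuinely useful observations and can be recycled, but they do not close the gap without an independent source for Theorem 3.4.
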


\para{3.6.}
As was discussed in Section 2, the functions $\x^{\pm}_{\Bm, T,\SE}, Q^{\pm}_{\Bm, T}$
make sense if we replace $G$ by its subgroup $Z_G(s)$, and 
$G \times V^{r-1}$ by $Z_G(s) \times V^{r-1}$ for a semisimple element $s \in G^F$. 
Theorem 3.4 and 3.5 are then formulated for this general setting.  
In what follows, we shall prove Theorem 3.4 and 3.5 
simultaneously under this setting. 
\par
First we note 
\par\medskip\noindent
(3.6.1) \  Theorem 3.4 holds if 
$\th'$ is the trivial character and $\th$ is a non-trivial character.  
\par\medskip
In fact, by the trace formula 
for the Frobenius maps, the left hand side of (3.4.1) coincides,
up to scalar,  with 
\begin{equation*}
\sum_i(-1)^i\Tr(F^*, \BH_c^i(\SX, K^{\ve}_{\Bm, T, \SE}
        \otimes K^{\ve'}_{\Bm',T',\SE'})),
\end{equation*}
where $F^*$ is the 
isomorphism induced from 
$\vf: F^*K^{\ve}_{\Bm,T,\SE} \isom K^{\ve}_{\Bm,T,\SE}$ 
and 
\linebreak 
$\vf': F^*K^{\ve'}_{\Bm',T',\SE'} \isom K^{\ve'}_{\Bm',T',\SE'}$. 
Since $\SE'$ is a constant sheaf and $\SE$ is a non-constant sheaf, 
by Proposition 3.2, 
the left hand side of (3.4.1) is equal to zero.  
On the other hand, the right hand side of (3.4.1) is equal to zero
by the orthogonality relations for irreducible characters of 
$T^F$.  Hence (3.6.1) holds. 
\par
Next we show
\par\medskip\noindent
(3.6.2) \ Theorem 3.4 holds if there exist $F$-stable Borel subgroups 
$B, B'$ such that $B \supset T, B' \supset T'$.
\par\medskip
Since $B, B'$ are $F$-stable, we can compute $\x^{\pm}_{\Bm,T,\SE}$ by using the 
complexs $(\pi_{\Bm})_!\a^*\SE$ and $(\pi^+_{\Bm})_!(\a^+)^*{\SE}$ defined in 
Section 1 and 2.  
Then by the trace formula for the Frobenius map, we have

\begin{align*}
(-1)^{d_{\Bm}}\x_{\Bm,T,\SE}(x, \Bv) &= |B^F|\iv\sum_{\substack{g \in G^F \\
             g\iv xg \in B^F  \\
             g\iv \Bv \in \prod_iM_{p_i}}}\x_{\SE, \vf_0}(p_T(g\iv xg)),  \\
(-1)^{d^+_{\Bm'}}\x^+_{\Bm', T',\SE'}(x, \Bv) &= (-q)^{-\dim U_{P'}}
         |B'^F_{L'}|\iv\sum_{\substack{g \in G^F \\
             g\iv xg \in B'^F \\
             g\iv \Bv \in \prod_iM'^+_{p_i}}}\x_{\SE', \vf'_0}(p_{T'}(g\iv xg)). 
\end{align*}
Since $d_{\Bm} = n^2 + p_-(\Bm)$ by Lemma 1.2 and 
$d^+_{\Bm'} = n^2 + \dim U_{P'} + p_+(\Bm')$ by (2.7.2), we have

\begin{equation*}
\tag{3.6.3}
\begin{split}
(-1)^{p_{\ve}(\Bm) + p_{\ve'}(\Bm')}&|G^F|\iv\sum_{(x, \Bv) \in \SX^F}
                 \x^{\ve}_{\Bm,T,\SE}(x, \Bv)\x^{\ve'}_{\Bm',T',\SE'}(x,\Bv) \\
        &= |G^F|\iv|B^F|\iv|B'^F|\iv
             \sum _{(*)}\th(p_T(g\iv xg))\th'(p_{T'}(g'^{-1}xg')), 
\end{split}
\end{equation*}
where the sum (*) is taken over all $x \in G^F, \Bv \in (V^{r-1})^F, g,g' \in G^F$
such that $g\iv xg \in B^F, g'^{-1}xg' \in B'^F, g\iv \Bv \in \prod_i (M^{\ve}_{p_i})^F,
             g'^{-1}\Bv \in \prod_i (M^{\ve'}_{p'_i})^F$.  
We change the variables; put $y = g\iv xg, h = g\iv g', \Bv' = g\iv \Bv$.  Then the
condition (*) is equivalent to the condition (**) 
$g \in G^F,h \in G^F, y \in B^F, \Bv' \in \prod_i(M^{\ve}_{p_i})^F$ 
such that $h\iv yh \in B'^F, h\iv \Bv' \in \prod_i (M^{\ve'}_{p'_i})^F$. 
We consider the partition of $G$ into double cosets $B\backslash G/B'$.
For each $F$-stable coset $BnB'$ of $G$, we may assume that $n\iv Tn = T'$ 
since $T$ and $T'$ are $G^F$-conjugate.  Then 
the sum $\sum_{(*)}$ in (3.6.3) can be written as

\begin{equation*}
|G^F||U^F||U'^F|
     \sum_{n \in (B\backslash G/B')^F} 
   \sum_{\substack{ h \in (TnT')^F \\ t \in T^F}}\th(t)\th'(n\iv tn)q^{a(\Bm,\Bm';n)}.
\end{equation*} 
Thus Theorem 3.4 holds.  (3.6.2) is proved.

\para{3.7.}
In this subsection, we show that Theorem 3.4 holds for $G$ under 
the assumption that Theorem 3.5 holds for subgroups $Z_G(s)$ of $G$. 
By making use of the character formulas in Theorem 2.6 and Corollary 2.15,
we have
\begin{equation*}
\begin{split}
&(-1)^{p_{\ve}(\Bm) + p_{\ve'}(\Bm')}|G^F|\iv \sum_{z \in \SX^F}\x^{\ve}_{\Bm, T, \SE}(z)
               \x^{\ve'}_{\Bm', T', \SE'}(z) \\
  &= |G^F|\iv (-1)^{p_{\ve}(\Bm) + p_{\ve'}(\Bm')}\sum_{\substack{s \in G^F \\
            x, x' \in G^F \\
            x\iv sx \in T^F \\
            {x'}\iv sx' \in {T'}^F}}
      f(s, x, x')|Z_G(s)^F|^{-2} \th(x\iv sx)\th'({x'}\iv sx'),
\end{split}
\end{equation*}
where 
\begin{equation*}
f(s,x,x') = \sum_{\substack{u \in Z_G(s)^F\uni \\
             \Bv \in (V^{r-1})^F}}
    Q^{\ve, Z_G(s)}_{\Bm, xTx\iv}(u, \Bv)
    Q^{\ve', Z_G(s)}_{\Bm', x'T{x'}\iv}(u, \Bv).
\end{equation*}
By applying Theorem 3.5 for $Z_G(s)$, we see that
\begin{equation*}
f(s,x,x') = (-1)^{p_{\ve}(\Bm) + p_{\ve'}(\Bm')}
    |Z_G(s)^F||T^F|\iv |{T'}^F|\iv\sum_{\substack{
             n \in Z_G(s) \\ n\iv xTx\iv n = x'T'{x'}\iv}}
                q^{a_{\ve,\ve'}(\Bm, \Bm'; n)}.  
\end{equation*}
It follows that the previous sum is equal to
\begin{equation*}
\begin{split}
&|G^F|\iv |T^F|\iv |{T'}^F|\iv   \\
   &\times \sum_{\substack{
           s \in G^F \\
           x, x' \in G^F \\           
           x\iv sx \in T^F \\
           {x'}\iv sx' \in {T'}^F \\
                    }}
|Z_G(s)^F|\iv \sum_{\substack{n \in Z_G(s)^F \\
                  n\iv xTx\iv n = x'T'{x'}\iv}}
                      \th(x\iv sx)\th'({x'}\iv sx') 
                        q^{a_{\ve, \ve'}(\Bm, \Bm';n)}.
\end{split}
\end{equation*}
Now put $t = x\iv sx \in T^F$ and $y = x\iv nx'$.
Then $y \in G^F$ such that $y\iv Ty = T'$. 
Under this change of variables, the above sum can be rewritten as
\begin{equation*}
|G^F|\iv |T^F|\iv |{T'}^F|\iv 
\sum_{\substack{ x \in G^F \\
                        t \in T^F}}
|Z_G(t)^F|\iv \sum_{\substack{ y \in N(T,T')^F\\ 
     x' \in (xZ_G(t)y)^F}}\th(t)\th'(y\iv t y)
   q^{a_{\ve, \ve'}(\Bm, \Bm'; y)},
\end{equation*}
which is equal to
\begin{equation*}
|T^F|\iv|{T'}^F|\iv\sum_{t \in T^F}\sum_{y \in N(T,T')^F}
       \th(t)\th'(y\iv ty)q^{a_{\ve,\ve'}(\Bm, \Bm'; y)}.
\end{equation*}
Thus our assertion holds.

\para{3.8.}
We shall show that Theorem 3.5 holds for $G$ under the assumption that
it holds for $Z_G(s)$ if $s$ is not central.  Hence Theorem 3.4 holds
for such groups by 3.7.
Put 
\begin{align*}
A &= (-1)^{p_{\ve}(\Bm) + p_{\ve'}(\Bm')}
   |G^F|\iv\sum_{z \in \SX^F\uni}
       Q^{\ve}_{\Bm, T}(z)Q^{\ve'}_{\Bm', T'}(z), \\
B &= |T^F|\iv |{T'}^F|\iv 
       \sum_{n \in N(T, T')^F}q^{a_{\ve,\ve'}(\Bm, \Bm';n)}.
\end{align*}
We want to show that $A = B$.
By making use of a part of the arguments in 3.7 (which can be applied to
the case where $s \notin Z(G)^F$), we see that
\begin{equation*}
\tag{3.8.1}
\begin{split}
&(-1)^{p_{\ve}(\Bm) + p_{\ve'}(\Bm')}|G^F|\iv \sum_{z \in \SX^F}\x^{\ve}_{\Bm, T, \SE}(z)
               \x^{\ve'}_{\Bm', T', \SE'}(z) 
- A \sum_{s \in Z(G)^F}\th(s)\th'(s)  \\
&= |T^F|\iv|{T'}^F|\iv\sum_{t \in T^F}\sum_{y \in N(T,T')^F}
       \th(t)\th'(y\iv ty)q^{a_{\ve,\ve'}(\Bm, \Bm'; y)} - B|Z(G)^F|.
\end{split}
\end{equation*}
This formula holds for any $\th \in (T^F)\wg$ and 
$\th' \in ({T'}^F)\wg$.  The case where $G = T$ is included in the case 
discussed in (3.6.2).  So we may assume that $G \ne T$, hence 
$Z(G) \ne G$.  Now assume that $q > 2$.  Then one can 
find a linear character $\th$ on $T^F$ such that 
$\th|_{ZG)^F} = \id$ and that $\th \ne \id$.  We choose $\th'$ the
identity character on ${T'}^F$.  
Then by (3.6.1), the first term of the left hand side of (3.8.1) 
coincides with the first term of the right hand side.  
This implies that $A = B$ as asserted. 
The remaining case is the case where $q = 2$.  If $T$ is not a split torus, 
still one can find a linear character $\th$ satisfying the above property,
and the above discussion can be applied.  So we may assume that both of
$T, T'$ are $\BF_q$-split.  But in this case Theorem 3.4 holds by (3.6.2).  
Hence the first term of the left hand side of (3.8.1) coincides with the first 
term of the right hand side.  By choosing the identity characters $\th, \th'$, 
again  we have $A = B$.  
Thus our assertion holds.

\para{3.9.}
We are now ready to prove Theorems 3.4 and 3.5.
First note that Theorem 3.4 and 3.5 hold for $Z_G(s)$ in the case where 
$Z_G(s) = T$, i.e., in the case where $s$ is regular semisimple. 
In fact, in that case, we have $T = T'$ and $N(T, T') = T$.
We have $\x^-_{\Bm, T, \SE}(t, \Bv) = (-1)^{d_{\Bm}}\th(t)$ if 
$\Bv \in \prod_i (M^{\ve}_i)^F$  
and is equal to zero otherwise.  
A similar formula holds for $\x^+_{\Bm, T,\SE}$ by replacing $d_{\Bm}$ by $d^+_{\Bm}$.
Theorem 3.4 and 3.5 follows from this.  Now 
by induction of the semisimple rank of $Z_G(s)$, we may assume that Theorem 3.5
holds for $Z_G(s)$ if $s$ is non central.  Then by 3.8, Theorem  3.5 holds for $G$, 
and by 3.7, 
Theorem 3.4 holds for $G$.  This completes the proof of Theorems 3.4 and 3.5. 

\para{3.10.}
An $r$-tuple of partitions $\Bla = (\la^{(1)}, \dots, \la^{(r)})$ is called 
an $r$-partition of $n$ if $n = \sum_{i=1}^r|\la^{(i)}|$.
We denote by $\SP_{n,r}$ the set of all $r$-partitions of $n$.  In the case where
$r = 1$, we denote $\SP_{n,1}$ simply by $\SP_n$.   
For each 
$\Bm \in \SQ_{n,r}$, we denote by $\SP(\Bm)$ the subset of $\SP_{n,r}$ 
consisting of $\Bla$ such that $|\la^{(i)}| = m_i$. 
Let $S_{\Bm} = S_{m_1} \times \cdots \times S_{m_r}$ be the Young subgroup 
of $S_n$ corresponding to $\Bm = (m_1, \dots, m_r) \in \SQ_{n,r}$.   
Recall that irreducible characters of $S_n$ are parametrized by partitions of $n$.
We denote by $\x^{\la}$ the irreducible character of $S_n$ corresponding to 
$\la \in \SP_n$ (here $\x^{(n)}$ corresponds to the trivial character, and 
$\x^{(1^n)}$ corresponds to the sign character).
Thus the irreducible characters of $S_{\Bm}$ are parametrized by $\SP(\Bm)$. 
We denote by
$\x^{\Bla} = \x^{\la^{(1)}}\boxtimes\cdots \boxtimes \x^{\la^{(r)}}$
the irreducible character of $S_{\Bm}$ corresponding to $\Bla \in \SP(\Bm)$. 
\par
We identify $(W_0)_{\Bm}$ with $S_{\Bm}$. 
For each $w \in S_{\Bm}$, let $T_w$ be an $F$-stable maximal torus in $G$ 
as given in 2.1. 
For each $\Bla \in \SP(\Bm)$, we define functions 
$Q^{\pm}_{\Bla}$ on $\SX\uni^F$
by 
\begin{equation*}
\tag{3.10.1}
Q^{\pm}_{\Bla} = |S_{\Bm}|\iv \sum_{w \in S_{\Bm}}
                    \x^{\Bla}(w)Q^{\pm}_{\Bm, T_w}.
\end{equation*}
\par
Let $\SC_q = \SC_q(\SX\uni)$ 
be the $\Ql$-space of all $G^F$-invariant $\Ql$-functions on 
$\SX\uni^F$.
We define a bilinear form $\lp f, h\rp $ on $\SC_q$ by 
\begin{equation*}
\tag{3.10.2}
\lp f, h \rp = \sum_{z \in \SX\uni^F}f(z)h(z).
\end{equation*}
Concerning the functions $Q_{\Bla}^{\pm}$, the following formula holds.

\begin{prop}  
For
any $\Bla \in \SP(\Bm)$, $\Bmu \in \SP(\Bm')$ and $\ve, \ve' \in \{ +,-\}$, we have 
\begin{equation*}
\begin{split}
\lp Q^{\ve}_{\Bla}, Q^{\ve'}_{\Bmu} \rp 
    &= (-1)^{p_{\ve}(\Bm) + p_{\ve'}(\Bm')}|S_{\Bm}|\iv |S_{\Bm'}|\iv |G^F|  \\
        &\times \sum_{\SO \in S_{\Bm}\backslash S_n /S_{\Bm'}}
             q^{a_{\ve,\ve'}(\Bm, \Bm'; n_{\SO})}
          \sum_{\substack{
                     x\in \SO \\ w \in S_{\Bm} \cap xS_{\Bm'}x\iv }}
   |T_w^F|\iv\x^{\Bla}(w)\x^{\Bmu}(x\iv wx),
\end{split}
\end{equation*}
where $n_{\SO}$ is an element in $N(T_w, T_{x\iv wx})^F $ 
attached to $\SO$. 
\end{prop}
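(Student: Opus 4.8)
The plan is to reduce the computation to the orthogonality relations for Green functions (Theorem 3.5) and then to perform a purely group-theoretic rearrangement of the resulting finite sum, in the spirit of the passage from Green-function orthogonality to the orthogonality of Deligne--Lusztig type class functions.

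First I would substitute the definition (3.10.1) into the pairing (3.10.2), obtaining
\[
\lp Q^{\ve}_{\Bla}, Q^{\ve'}_{\Bmu}\rp = |S_{\Bm}|\iv\,|S_{\Bm'}|\iv\sum_{w\in S_{\Bm}}\sum_{w'\in S_{\Bm'}}\x^{\Bla}(w)\x^{\Bmu}(w')\sum_{z\in\SX\uni^F}Q^{\ve}_{\Bm,T_w}(z)\,Q^{\ve'}_{\Bm',T_{w'}}(z).
\]
Since each $T_w$ ($w\in S_{\Bm}$) is an $F$-stable maximal torus of $G$, Theorem 3.5 applies with $T=T_w$, $T'=T_{w'}$ and rewrites the innermost sum as $(-1)^{p_{\ve}(\Bm)+p_{\ve'}(\Bm')}|G^F|\,|T_w^F|\iv\,|T_{w'}^F|\iv\sum_{n\in N(T_w,T_{w'})^F}q^{a_{\ve,\ve'}(\Bm,\Bm';n)}$.

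Next I would unwind $N(T_w,T_{w'})^F$ in the standard way. Choose $h\in G$ with $T_w=hT_0h\iv$ and $h\iv F(h)=\dw$, and similarly $h'$ for $T_{w'}$; for $n\in N(T_w,T_{w'})$ the element $h\iv n h'$ lies in $N_G(T_0)$ and has a well-defined image $x\in W_0\cong S_n$. A direct computation with the Frobenius map shows that some $F$-stable $n$ has image $x$ if and only if $x\iv wx=w'$, and that for each such $x$ the set of $n$ over $x$ forms a right $T_{w'}$-coset, which is $F$-stable and hence (Lang's theorem) has $F$-fixed points forming a right $T_{w'}^F$-torsor. Hence $\sum_{n\in N(T_w,T_{w'})^F}q^{a_{\ve,\ve'}(\Bm,\Bm';n)}=|T_{w'}^F|\sum_{x\in S_n,\ x\iv wx=w'}q^{a_{\ve,\ve'}(\Bm,\Bm';n_x)}$, where $n_x$ denotes a chosen $F$-stable element over $x$. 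I would then record two auxiliary facts: (a) $|T_{w'}^F|=|T_w^F|$, since $w$ and $w'=x\iv wx$ are conjugate in $S_n$ and hence $T_w$ and $T_{w'}$ are $G^F$-conjugate; (b) $a_{\ve,\ve'}(\Bm,\Bm';n_x)$ depends only on the double coset $\SO=S_{\Bm}\,x\,S_{\Bm'}\in S_{\Bm}\backslash S_n/S_{\Bm'}$. For (b) one observes, writing $h\iv n_x h'\in\dx T_0$ with $\dx$ a permutation matrix, that $\dim(M^{\ve}_{p_i}\cap n_x({M'}^{\ve'}_{p'_i}))$ is the dimension of an intersection of two coordinate subspaces determined by $x$, and that this number is unchanged when $x$ is replaced by $sxs'$ with $s\in S_{\Bm}$, $s'\in S_{\Bm'}$, because each $M^{\ve}_{p_i}$ is stable under $S_{\Bm}=(W_0)_{\Bm}$ and each ${M'}^{\ve'}_{p'_i}$ is stable under $S_{\Bm'}$; this is exactly what (3.3.1) involves.

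Finally I would reassemble the sums. Inserting the above, the factor $|T_{w'}^F|$ cancels and the sum over $w'\in S_{\Bm'}$ simply forces $w'=x\iv wx$, so that the constraints $w\in S_{\Bm}$ and $x\iv wx\in S_{\Bm'}$ become $w\in S_{\Bm}\cap xS_{\Bm'}x\iv$. Collecting the remaining sum over $x$ according to double cosets $\SO\in S_{\Bm}\backslash S_n/S_{\Bm'}$ (on each of which $q^{a_{\ve,\ve'}}$ is constant, with common value $q^{a_{\ve,\ve'}(\Bm,\Bm';n_{\SO})}$ for any $n_{\SO}\in N(T_w,T_{x\iv wx})^F$ attached to $\SO$) then yields precisely the asserted formula. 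The main obstacle is Step~2: obtaining the parametrization of $N(T_w,T_{w'})^F$ and, especially, the normalizing factor $|T_w^F|$ correctly, together with the double-coset invariance in (b); once these are settled, what remains is mechanical bookkeeping of finite sums.
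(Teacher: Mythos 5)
Your proof is correct, and the overall strategy (plug in (3.10.1), apply the Green function orthogonality of Theorem~3.5, then rearrange the finite sum) is the same as the paper's. Where you differ is the middle step converting the sum over $n\in N(T_w,T_{w'})^F$ into a sum over $x\in S_n$: you parametrize $N(T_w,T_{w'})^F$ directly via the fibration over $\{x\in S_n : x\iv wx=w'\}$, using Lang's theorem to show that each $F$-stable fiber contributes exactly $|T_{w'}^F|$ rational points, after which the factor $|T_{w'}^F|$ cancels and the sum over $w'$ collapses to $w'=x\iv wx$. The paper instead first partitions $N(T_w,T_{w'})^F$ by double cosets in $P_{\Bm}\backslash G/P_{\Bm'}\simeq S_{\Bm}\backslash S_n/S_{\Bm'}$, counts the contribution of each orbit using $|N_{L_{\Bm}}(T_w)^F|/|T_w^F|=|Z_{S_{\Bm}}(w)|$ and the index formula $|Z_{S_{\Bm}}(w)||Z_{S_{\Bm'}}(w')|\,|Z_{S_{\Bm}}(w)\cap x_{\SO}Z_{S_{\Bm'}}(w')x_{\SO}\iv|\iv=|Z_{S_{\Bm}}(w)x_{\SO}Z_{S_{\Bm'}}(w')|$, and only then reassembles this as a sum over $x$. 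Your route is somewhat more elementary and avoids the centralizer bookkeeping, but it obtains the same formula. One small point worth spelling out is your fact (b): to see that $a_{\ve,\ve'}(\Bm,\Bm';n_x)$ depends only on $\SO=S_{\Bm}xS_{\Bm'}$, it helps to observe (as the paper does implicitly) that one may choose $h\in L_{\Bm}$ and $h'\in L_{\Bm'}$, so that $n_x\in L_{\Bm}\dx L_{\Bm'}$ and $a_{\ve,\ve'}$ is constant on $L_{\Bm}\backslash G/L_{\Bm'}$-orbits because $M^{\ve}_{0,p_i}$ is $L_{\Bm}$-stable and $M'^{\ve'}_{0,p'_i}$ is $L_{\Bm'}$-stable; the $S_{\Bm}$-/$S_{\Bm'}$-invariance you invoke is the $W$-level shadow of this.
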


\begin{proof}
By making use of the orthogonality relations for Green functions
(Theorem 3.5), we have
\begin{align*}
(-1)&^{p_{\ve}(\Bm) + p_{\ve'}(\Bm')}|G^F|\iv\lp Q^{\ve}_{\Bla}, Q^{\ve'}_{\Bmu} \rp \\
&= |S_{\Bm}|\iv |S_{\Bm'}|\iv (-1)^{p_{\ve}(\Bm) + p_{\ve'}(\Bm')}
         |G^F|\iv\sum_{\substack{w \in S_{\Bm} \\
     w' \in S_{\Bm'}}}\x^{\Bla}(w)\x^{\Bmu}(w') 
        \lp Q^{\ve}_{\Bm, T_w}, Q^{\ve'}_{\Bm', T_{w'}} \rp \\
&= |S_{\Bm}|\iv |S_{\Bm'}|\iv \sum_{\substack{
                  w \in S_{\Bm} \\ w' \in S_{\Bm'}}}
        \x^{\Bla}(w)\x^{\Bmu}(w')     
     |T_w^F|\iv |T_{w'}^F|\iv   
         \sum_{n \in N(T_w, T_{w'})^F} 
                     q^{a_{\ve, \ve'}(\Bm, \Bm';n)}.
\end{align*}
Let $P_{\Bm}$ be the parabolic subgroup of 
$G$ containing $B_0$ which is the stabilizer of the partial 
flag $(M_{0,p_i})$ with respect to $\Bm$, and define $P_{\Bm'}$ 
similarly. 
We put $P_{\Bm} = L_{\Bm}U_{\Bm}$, where $L_{\Bm}$ is the Levi 
subgroup of $P_{\Bm}$ containing $T_0$ and $U_{\Bm}$ is the unipotent 
radical of $P_{\Bm}$.  Define $P_{\Bm'} = L_{\Bm'}U_{\Bm'}$ 
similarly. 
We may assume that $T_w \subset P_{\Bm}$ and $T_{w'} \subset P_{\Bm'}$.
Thus $a_{\ve,\ve'}(\Bm,\Bm';n)$ is defined with respect to 
$(M^{\ve}_{0,p_i}), (M^{\ve'}_{0, p'_i})$.
It follows that 
$a_{\ve,\ve'}(\Bm, \Bm'; n)$ is independent of the choice of 
$n \in \wt\SO^F$ for each orbit 
$\wt\SO \in L_{\Bm}\backslash G /L_{\Bm'}$.  Let $\wh\SO$ be an 
element in $P_{\Bm}\backslash G/P_{\Bm'}$ containing $\wt\SO$.
Then 
\begin{equation*}
\wh\SO \cap N(T_w, T_{w'})^F 
  = \wt\SO \cap N(T_w, T_{w'})^F
\end{equation*}
since $\wh\SO = \bigcup_{u \in U_{\Bm}, u' \in U_{\Bm'}}u\wt\SO u'$.
It follows that  
$a_{\ve,\ve'}(\Bm, \Bm'; n)$ is independent of the choice for any 
$n \in \wh\SO \cap N(T_w, T_{w'})^F$.
Note that $P_{\Bm}\backslash G /P_{\Bm'} 
   \simeq S_{\Bm}\backslash S_n /S_{\Bm'}$.
We denote by $\SO \in S_{\Bm}\backslash S_n /S_{\Bm'}$ the orbit in $S_n$ corresponding 
to $\wh\SO$, and choose $n_{\SO} \in \wh\SO \cap N(T_w, T_{w'})^F$ for each $\SO$.
Then the last sum is equal to
\begin{equation*}
\tag{3.11.1}
\begin{split}
|S_{\Bm}|\iv &|S_{\Bm'}|\iv \sum_{\substack{
  w \in S_{\Bm} \\  w' \in S_{\Bm'}}}
    \x^{\Bla}(w)\x^{\Bmu}(w')|T_w^F|\iv |T_{w'}^F|\iv |N_{L_{\Bm}}(T_w)^F|
              |N_{L_{\Bm'}}(T_{w'})^F|  \\
&\times \sum_{\substack{
            \SO \in S_{\Bm}\backslash S_n/S_{\Bm'} \\
                 n_{\SO}\iv T_w n_{\SO} = T_{w'} }} 
 |N_{L_{\Bm}}(T_w)^F \cap n_{\SO}N_{L_{\Bm'}}(T_{w'})^Fn_{\SO}\iv|\iv 
    q^{a_{\ve,\ve'}(\Bm, \Bm'; n_{\SO})}.
\end{split}
\end{equation*}
Here we have
\begin{equation*}
\tag{3.11.2}
|N_{L_{\Bm}}(T_w)^F|/|T_w^F| = |Z_{S_{\Bm}}(w)|, \quad
|N_{L_{\Bm'}}(T_{w'})^F|/|T_{w'}^F| = |Z_{S_{\Bm'}(w')}|. 
\end{equation*}
Moreover since $n_{\SO}\iv T_wn_{\SO} = T_{w'}$ for $n_{\SO} \in G^F$, 
there exists $x_{\SO} \in S_n$ such that $x_{\SO}\iv w x_{\SO} = w'$.
Now $\ad n_{\SO}\iv$ gives an isomorphism $N_G(T_w) \isom N_G(T_{w'})$, 
which induces an isomorphism $N_G(T_w)/T_w \isom N_G(T_{w'})/T_{w'}$, 
hence by taking the $F$-fixed point subgroups on both side, 
an isomorphism $Z_{S_n}(w) \isom Z_{S_n}(w')$.  We may assume that 
this isomorphism is induced by $\ad x_{\SO}\iv$.  It follows that
\begin{equation*}
(N_{L_m}(T_w)^F \cap n_{\SO} N_{L_{\Bm'}}(T_{w'})^Fn_{\SO}\iv)/T_w^F
 \simeq Z_{S_{\Bm}}(w) \cap x_{\SO} Z_{S_{\Bm'}}(w')x_{\SO}\iv,
\end{equation*}
and so
\begin{equation*}
\tag{3.11.3}
|N_{L_{\Bm}}(T_w)^F \cap n_{\SO}N_{L_{\Bm'}}(T_{w'})^Fn_{\SO}\iv|
  = |T_w^F||Z_{S_{\Bm}}(w) \cap x_{\SO}Z_{S_{\Bm'}}(w')x_{\SO}\iv|.
\end{equation*}
Substituting (3.11.2) and (3.11.3) into (3.11.1), the formula (3.11.1)
turns out to be
\begin{equation*}
\begin{split}
|S_{\Bm}|\iv&|S_{\Bm'}|\iv\sum_{\substack{
         w \in S_{\Bm} \\  w' \in S_{\Bm'}}}
    \x^{\Bla}(w)\x^{\Bmu}(w')
       |T_w^F|\iv |Z_{S_{\Bm}}(w)||Z_{S_{\Bm'}}(w')| \\
&\times \sum_{\substack{
                 \SO \in S_{\Bm}\backslash S_n/S_{\Bm'} \\
                   n_{\SO}\iv T_w n_{\SO} = T_{w'}}} 
    |Z_{S_{\Bm}}(w) \cap x_{\SO}Z_{S_{\Bm'}}(w')x_{\SO}\iv|\iv
              q^{a_{\ve,\ve'}(\Bm, \Bm'; n_{\SO})}.         
\end{split}
\end{equation*}
But we have
\begin{equation*}
|Z_{S_{\Bm}}(w)||Z_{S_{\Bm'}}(w')||{Z_{S_{\Bm}}(w)} \cap 
        x_{\SO}Z_{S_{\Bm'}}(w')x_{\SO}\iv|\iv 
 = |Z_{S_{\Bm}}(w)x_{\SO}Z_{S_{\Bm'}}(w')|,
\end{equation*}
and for given $w \in S_{\Bm}, w' \in S_{\Bm'}$, 
the choice of $x \in S_n$ such that $w' = x\iv wx$ is
given by $x \in Z_{S_{\Bm}}(w)x_{\SO}Z_{S_{\Bm'}}(w')$. 
Hence the last formula is equal to
\begin{equation*} 
|S_{\Bm}|\iv |S_{\Bm'}|\iv \sum_{\substack{
     w \in S_{\Bm} \\ w' \in S_{\Bm'}}}
          \sum_{\substack{
                 \SO \in S_{\Bm}\backslash S_n/S_{\Bm'} \\ 
            x \in \SO \\  w' = x\iv wx  }}
               \x^{\Bla}(w)\x^{\Bmu}(w')|T_w^F|\iv 
                 q^{a_{\ve,\ve'}(\Bm,\Bm'; n_{\SO})}.
\end{equation*}
This proves the proposition.
\end{proof}

\section{Unipotent variety}

\para{4.1.}
We express an $r$-partition $\Bla$ by $\Bla = (\la^{(i)}_j)$, where 
$\la^{(i)} = (\la^{(i)}_1, \la^{(i)}_2, \dots, \la^{(i)}_m)$ is a partition 
with $\la^{(i)}_m \ge 0$ for some fixed number $m$. 
For an $r$-partition $\Bla$, we define a sequence of non-negative integers 
$c(\Bla)$ associated to $\Bla$ by 
\begin{equation*}
c(\Bla) = (\la^{(1)}_1, \la^{(2)}_1, \dots, \la^{(r)}_1, 
            \la^{(1)}_2, \la^{(2)}_2, \dots, \la^{(r)}_2,
            \dots, 
            \la^{(1)}_m, \la^{(2)}_m, \dots, \la^{(r)}_m). 
\end{equation*}
\par
For a sequence of non-negative integers, $\la = (\la_1, \la_2, \dots, \la_m), 
\mu = (\mu_1, \mu_2, \dots,\mu_m)$,  we denote by $\la \le \mu$ 
if 
\begin{equation*}
\la_1 + \cdots + \la_k \le \mu_1 + \cdots + \mu_k
\end{equation*}
for $k = 1, 2, \cdots, m$.
We define a dominance order $\le $  on $\SP_{n,r}$ by 
the condition that $\Bla \le \Bmu$ if $c(\Bla) \le c(\Bmu)$.
In the case where $r = 1$, this is the standard dominance order 
on the set of partitions.  
In the case of $r = 2$, this coincides with the partial order 
used in [AH] and [SS2]. 
\par
For a partition $\la = (\la_1, \dots, \la_m) \in \SP_n$, we define 
$n(\la) \in \BZ$ by $n(\la) = \sum_i(i-1)\la_i$.  
For $\Bla = (\la^{(1)}, \dots, \la^{(r)}) \in \SP_{n,r}$, we define $n(\Bla) \in \BZ$ 
by $n(\Bla) = \sum_{i=1}^rn(\la^{(i)})$.  Hence if we put 
$\nu = \la^{(1)} + \cdots + \la^{(r)}$, we have $n(\Bla) = n(\nu)$.

\para{4.2.}
Let $\SX\uni = G\uni \times V^{r-1}$.  In the case where $r = 2$, it is known 
by [AH], [T] that $G$-orbits of $\SX\uni$ are parametrized by $\SP_{n,2}$. 
The parametrization is given as follows; take $(x,v) \in G\uni \times V$.
Put $E^x = \{ y \in \End(V) \mid xy = yx\}$. $E^x$ is a subalgebra of $\End(V)$ 
containing $x$. Put $W = E^xv$.  Then $W$ is an $x$-stable 
subspace of $V$.  We denote by $\la^{(1)}$ the Jordan type of $x|_W$, and 
by $\la^{(2)}$ the Jordan type of $x|_{V/W} $.  Thus one can define  
$\Bla = (\la^{(1)}, \la^{(2)}) \in \SP_{n,2}$.  We denote by $\SO_{\Bla}$ 
the $G$-orbit containing $(x,v)$.  This gives the required parametrization.
Note that if $(x,v) \in \SO_{\Bla}$, the Jordan type of $x$ is given by
$\la^{(1)} + \la^{(2)}$. 
For $(x,v) \in G\uni \times V$, we say that $(x,v)$ has type $\Bla$ if 
$(x,v) \in \SO_{\Bla}$. 
\par  
The normal basis of $(x,v) \in \SO_{\Bla}$ was constructed in [AH] as follows.
Assume that $\nu = (\nu_1, \dots, \nu_{\ell})$ for $\nu = \la^{(1)} + \la^{(2)}$.
Choose a basis $\{ u_{jk} \mid 1 \le j \le \ell, 1 \le k \le \nu_j\}$ of $V$, 
and define an action of $x$ on the basis by 
$(x-1)u_{j,k} = u_{j, k-1}$ under the convention that $u_{j,0} = 0$. 
Put $v = \sum_{j = 1}^{\ell}u_{j, \la^{(1)}_j}$.  
Then $(x,v)$ gives a representative of the orbit $\SO_{\Bla}$.  

\par
If $r \ge 3$, the number of $G$-orbits in $\SX\uni$ is infinite. In [S3, 5.3], 
the partition of $X$ into $G$-stable pieces $X_{\Bla}$ (possibly a union of 
infinitely many $G$-orbits) labelled by  
$\Bla \in \SP_{n,r}$ is given;

\begin{equation*}
\tag{4.2.1}
\SX\uni = \coprod_{\Bla \in \SP_{n,r}}X_{\Bla}.
\end{equation*}
Following [S3],  we define $X_{\Bla}$ by induction on $r$ as follows. 
Take $(x, \Bv) \in \SX\uni$ with $\Bv = (v_1, \dots, v_{r-1})$.  
Put $W = E^xv_1$, $\ol V = V/W$ and $\ol G = GL(\ol V)$.
We consider the variety 
$\SX'\uni = \ol G \times \ol V^{r-2}$. 
Assume that $(x,v_1) \in G\uni \times V$ is of type $(\la^{(1)}, \nu')$, 
where $\nu = \la^{(1)} + \nu'$ is the Jordan type of $x$.
Let $\ol x$ be the restriction of $x$ on $\ol V$.  Then the Jordan 
type of $\ol x \in GL(\ol V)$
is $\nu'$. Put $\ol \Bv = (\ol v_2, \dots \ol v_{r-1})$, where $\ol v_i$ is the image 
of $v_i$ on $\ol V$.
Thus $(\ol x, \ol \Bv) \in \SX'\uni$. By induction, we have a partition 
$\SX'\uni = \coprod_{\Bmu \in \SP_{n',r-1}}  X'_{\Bmu}$, where $\dim \ol V = n'$.       
Thus there exists a unique $X'_{\Bla'}$ containing $(\ol x, \ol\Bv)$.  
If we write $\Bla' = (\la^{(2)}, \dots, \la^{(r)})$, we have 
$\la^{(2)} + \cdots + \la^{(r)} = \nu'$.
It follows that $\Bla = (\la^{(1)}, \dots \la^{(r)}) \in \SP_{n,r}$. 
We define the type of $(x,\Bv)$ by $\Bla$, and define a subset $X_{\Bla}$ of 
$\SX\uni$ as the set of all $(x,\Bv)$ with type $\Bla$. Then 
$X_{\Bla}$ is a $G$-stable subset of $\SX\uni$, and we obtain the required 
partition (4.2.1).

\par
$X_{\Bla}$ has an alternate description.  Assume that $\Bla \in \SP(\Bm)$ 
for $\Bm \in \SQ_{n,r}$, and let $P = P_{\Bm} = LU_P$ be the parabolic subgroup of $G$ 
attached to $\Bm$ as in 1.5. 
Put $\ol M_{p_i} = M_{p_i}/M_{p_{i-1}}$. 
Then $L \simeq G_1 \times \cdots \times G_r$ with $G_i = GL(\ol M_{p_i})$. 
Let $\SM_{\Bla}$ be the subset of $U \times \prod_{i=1}^{r-1}M_{p_i}$ defined by 
the following properties; take $(x, \Bv)$ with $x \in U$ and $v_i \in M_{p_i}$.
Put $x_i = x|_{\ol M_{p_i}}$, and let $\ol v_i \in \ol M_{p_i}$ 
be the image of $v_i$.  Then $(x,\Bv) \in \SM_{\Bla}$ if the Jordan type of $x$ 
is $\la^{(1)} + \cdots + \la^{(r)}$, the 
$G_i$-orbit of $(x_i, \ol v_i) \in (G_i)\uni \times \ol M_{p_i}$ 
has type $(\la^{(i)}, \emptyset)$ for $i = 1, \dots, r-1$, and the Jordan type of 
$x_r$ is $\la^{(r)}$. 
Let $\ol X_{\la}$ be the closure of $X_{\Bla}$ in $\SX\uni$.  
Then by a similar argument as in the 
proof of [S3, Lemma 6.17], we have 

\begin{equation*}
\tag{4.2.2}
\ol X_{\Bla} = \ol{\bigcup_{g \in G}g\SM_{\Bla}}.
\end{equation*} 
 
By Propositions 5.4 and 5.14 in [S3], we have

\begin{prop}  
$X_{\Bla}$ is a $G$-stable, smooth, irreducible, locally closed subvariety of 
$\SX\uni$ with 

\begin{equation*}
\dim X_{\Bla} = (n^2 - n - 2n(\Bla)) + \sum_{i=1}^{r-1}(r-i)|\la^{(i)}|.
\end{equation*}
\end{prop}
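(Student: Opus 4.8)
The plan is to induct on $r$, following the recursive definition of $X_{\Bla}$ from 4.2. The base cases are direct: for $r=1$, $\SX\uni=G\uni$ and $X_{\la}=C_{\la}$ is the unipotent class of Jordan type $\la$, hence smooth, irreducible and locally closed of dimension $n^2-n-2n(\la)$; for $r=2$, $X_{\Bla}=\SO_{\Bla}$ is a single $G$-orbit, and its dimension $n^2-n-2n(\Bla)+|\la^{(1)}|$ is the one recorded in [AH] (equivalently, $\dim\SO_{(\mu,\nu)}=\dim C_{\mu+\nu}+|\mu|$, using the additivity $n(\mu+\nu)=n(\mu)+n(\nu)$, which also gives $n(\Bla)=n(\nu)$ for $\nu=\sum_i\la^{(i)}$). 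So fix $r\ge 2$ and assume that the proposition, together with the nonemptiness of every stratum, holds at level $r-1$ for vector spaces of every dimension.

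Fix $\Bla=(\la^{(1)},\dots,\la^{(r)})$, and set $\Bla'=(\la^{(2)},\dots,\la^{(r)})$, $\nu'=\la^{(2)}+\cdots+\la^{(r)}$, $n'=n-|\la^{(1)}|$. Consider $p\colon\SX\uni\to G\uni\times V$, $(x,\Bv)\mapsto(x,v_1)$. By 4.2 the pairs of enhanced type $(\la^{(1)},\nu')$ form a single locally closed orbit $\SO$, on which $\dim E^xv_1=|\la^{(1)}|$. Put $\wt Z=p\iv(\SO)$: it is locally closed in $\SX\uni$, and forgetting $v_2,\dots,v_{r-1}$ realizes $\wt Z\to\SO$ as a $G$-equivariant fibre bundle with fibre $V^{r-2}$, so $\wt Z\simeq G\times^HV^{r-2}$ for $H=Z_G(x_0,v_1^0)$ and a chosen $(x_0,v_1^0)\in\SO$. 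With $W_0=E^{x_0}v_1^0$, $\ol V_0=V/W_0$, $\ol x_0=x_0|_{\ol V_0}$, the group $H$ acts on $\ol V_0$ commuting with $\ol x_0$. By the recursion, $X_{\Bla}\subset\wt Z$ is the locus where $(\ol x,\ol v_2,\dots,\ol v_{r-1})\in X'_{\Bla'}$; under the identification above this is $G\times^HN_{\Bla}$, with $N_{\Bla}$ the preimage of $F_0:=\{\ol\Bw\in\ol V_0^{r-2}\mid(\ol x_0,\ol\Bw)\in X'_{\Bla'}\}$ under the linear surjection $V^{r-2}\to\ol V_0^{r-2}$ of kernel $W_0^{r-2}$. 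Now $F_0$ is the fibre over $\ol x_0$ of the $\ol G$-equivariant forgetful map $X'_{\Bla'}\to\ol G\uni$, whose image is the whole class $C_{\nu'}$ (using only $\ol G$-stability and $X'_{\Bla'}\ne\varnothing$); hence $X'_{\Bla'}\simeq\ol G\times^{Z_{\ol G}(\ol x_0)}F_0$, and since $Z_{\ol G}(\ol x_0)$ is connected, $F_0$ is smooth, irreducible, locally closed of dimension $\dim X'_{\Bla'}-\dim C_{\nu'}$. Therefore $N_{\Bla}\simeq F_0\times W_0^{r-2}$, and $X_{\Bla}=G\times^HN_{\Bla}$ is a smooth, irreducible, locally closed (and nonempty) subvariety of $\SX\uni$.

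For the dimension one substitutes into $\dim X_{\Bla}=\dim\SO_{(\la^{(1)},\nu')}+\dim X'_{\Bla'}-\dim C_{\nu'}+(r-2)|\la^{(1)}|$ the level-$2$ value $\dim\SO_{(\la^{(1)},\nu')}=n^2-n-2n(\la^{(1)})-2n(\nu')+|\la^{(1)}|$, the inductive value $\dim X'_{\Bla'}=n'^2-n'-2n(\nu')+\sum_{i=2}^{r-1}(r-i)|\la^{(i)}|$, and $\dim C_{\nu'}=n'^2-n'-2n(\nu')$; the $n'$-terms cancel, $n(\la^{(1)})+n(\nu')=n(\Bla)$, and $(r-1)|\la^{(1)}|+\sum_{i=2}^{r-1}(r-i)|\la^{(i)}|=\sum_{i=1}^{r-1}(r-i)|\la^{(i)}|$, giving the asserted formula. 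The delicate point — and what I expect to be the main obstacle — is the passage from the family $(x,v_1)\mapsto(\ol V=V/E^xv_1,\ \ol x)$ over $\SO$ to one fixed reduced problem: because $\ol V$ and $\ol x$ genuinely vary one cannot quotient out pointwise, and the two homogeneous-space identifications $\wt Z\simeq G\times^HV^{r-2}$ and $X'_{\Bla'}\simeq\ol G\times^{Z_{\ol G}(\ol x_0)}F_0$ are what make the argument go through, the second also pinning down $\dim F_0$ exactly rather than only generically. The only extra inputs are that the forgetful image of $X'_{\Bla'}$ equals $C_{\nu'}$ (immediate from equivariance and nonemptiness) and connectedness of unipotent centralizers in $GL$; all the rest is bookkeeping. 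An alternative closer to (4.2.2) would realize $\ol X_{\Bla}$ as the closure of $\bigcup_{g\in G}g\SM_{\Bla}$ and analyze the map out of $G\times^P(\text{an open dense piece of }\SM_{\Bla})$; that is the convenient description for the closure and for comparison with the complexes $K^{\pm}_{\Bm,T,\SE}$, but less direct for the present statement, so I would keep it as a supplement.
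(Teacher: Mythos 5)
Your proof is correct, and it is a genuine argument rather than a reduction to the cited reference: the paper itself gives no proof of Proposition 4.3 but simply cites Propositions 5.4 and 5.14 of [S3], so one cannot compare step by step with "the paper's proof." Your inductive scheme---induct on $r$, with base cases $r=1$ (classical unipotent classes) and $r=2$ (the Achar--Henderson orbit parametrization and dimension formula), and for $r\ge 3$ fibre $X_{\Bla}$ over the level-$2$ orbit $\SO=\SO_{(\la^{(1)},\nu')}$ via $(x,\Bv)\mapsto(x,v_1)$---is exactly parallel to the recursive construction of $X_{\Bla}$ in 4.2, and the two homogeneous-space rigidifications $\wt Z\simeq G\times^H V^{r-2}$ and $X'_{\Bla'}\simeq\ol G\times^{Z_{\ol G}(\ol x_0)}F_0$ are precisely what is needed to turn the varying family $(\ol V,\ol x)$ over $\SO$ into a single fixed model; the connectedness of $Z_{\ol G}(\ol x_0)$ in $GL_{n'}$ is indeed the right ingredient to pass irreducibility from the total space $X'_{\Bla'}$ back to the fibre $F_0$. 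One small logical remark: as written, the inductive step invokes the level-$2$ facts about $\SO_{(\la^{(1)},\nu')}$, which for $r=2$ is the very statement being proven; this is harmless only because you separately establish $r=2$ via [AH], so the induction really runs from $r\ge 3$ with two base cases. It would be cleaner to say so explicitly. The parenthetical justification that the image of the forgetful map is all of $C_{\nu'}$ should also mention that the Jordan type of $\ol x$ is forced to be $\nu'$ by the recursion in 4.2 (your invocation of $\ol G$-stability and nonemptiness then pins down the image within $C_{\nu'}$). The alternative route through $\SM_{\Bla}$ and (4.2.2), which you mention as a supplement, is indeed the convenient description for the closure and for the complexes $K^{\pm}_{\Bm,T,\SE}$, but your recursive argument is the more direct one for this particular statement, and the bookkeeping at the end ($n$-additivity of $n(\cdot)$ under componentwise addition and cancellation of the $n'$-terms) checks out.
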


Let $\le$ be the dominance order on $\SP_{n,r}$ defined in 4.1.
Concerning the closure relations of $X_{\Bla}$, we have

\begin{prop}[{[S3, Prop. 5.11]}] 
Let $\ol X_{\Bla}$ be the closure of $X_{\Bla}$ in $\SX\uni$.  Then 

\begin{equation*}
\ol X_{\Bla} \subset \bigcup_{\Bmu \le \Bla} X_{\Bmu}. 
\end{equation*}
\end{prop}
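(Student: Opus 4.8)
The plan is to argue by induction on $r$. For $r=1$ this is the Gerstenhaber--Hesselink description of the closure order on unipotent classes in $GL_n$, and for $r=2$ it is the closure order on the enhanced nilpotent cone established by Achar--Henderson (the partial order matching is recorded in 4.1). So I assume $r\ge 3$ and that the proposition holds for $r-1$ in all ranks. Since every $X_{\Bmu}$ is $G$-stable, $\ol X_{\Bla}$ is $G$-stable and closed, and $\SX\uni=\coprod_{\Bmu}X_{\Bmu}$, the statement is equivalent to: whenever $(x,\Bv)\in\ol X_{\Bla}$ has type $\Bmu$, one has $\Bmu\le\Bla$, i.e. $c(\Bmu)\le c(\Bla)$.

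First I would use the projection $\rho\colon\SX\uni=G\uni\times V^{r-1}\to G\uni\times V^{r-2}$ that forgets the last vector. From the recursive construction of the strata in 4.2, together with the fact (also in 4.2) that a pair of enhanced type $(\a,\b)$ has $x$ of Jordan type $\a+\b$, one checks that $\rho(X_{\Bla})\subseteq X_{\Bla^{\flat}}$, where $\Bla^{\flat}=(\la^{(1)},\dots,\la^{(r-2)},\la^{(r-1)}+\la^{(r)})\in\SP_{n,r-1}$. Hence $\rho(\ol X_{\Bla})\subseteq\ol{X_{\Bla^{\flat}}}$, and the inductive hypothesis yields $\Bmu^{\flat}\le\Bla^{\flat}$, where $\Bmu^{\flat}$ is built from $\Bmu$ in the same way. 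Comparing the defining partial sums of $c(\Bmu^{\flat})\le c(\Bla^{\flat})$ with those of $c(\Bmu)\le c(\Bla)$, this already delivers all of the latter inequalities except those whose partial-sum cut-off separates the $\mu^{(r-1)}$- and $\mu^{(r)}$-entries within a column; I will call these the missing inequalities.

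To obtain the missing inequalities I would invoke the description $\ol X_{\Bla}=\ol{\bigcup_{g\in G}g\SM_{\Bla}}$ of (4.2.2). Since $\SM_{\Bla}$ is a $B$-stable subvariety of $U\times\prod_iM_{p_i}$ and the map $\pi_{\Bm,1}$ of Section 1 is proper, the image $G\cdot\ol{\SM_{\Bla}}$ is closed, whence $\ol X_{\Bla}=G\cdot\ol{\SM_{\Bla}}$; as the type of a point is $G$-invariant, it suffices to treat $(x,\Bv)\in\ol{\SM_{\Bla}}$. On $\ol{\SM_{\Bla}}$, upper semicontinuity of the Jordan type together with the $r=2$ closure result force $x\in\ol{C_{\nu}}$ with $\nu=\sum_i\la^{(i)}$, and each block-diagonal pair $(x|_{\ol M_{p_i}},\ol v_i)$ to lie in $\ol{\SO_{(\la^{(i)},\emptyset)}}$. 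I would then compare the intrinsic flag $W_0\subseteq W_1\subseteq\dots\subseteq W_{r-1}\subseteq V$ attached to $(x,\Bv)$ by the recursive construction --- for which $x|_{W_i}$ has Jordan type $\mu^{(1)}+\dots+\mu^{(i)}$ --- with the parabolic flag $(M_{p_i})$: from $v_i\in M_{p_i}$, $\Bk[x]v_i\subseteq M_{p_i}$ and the block conditions one bounds $\dim W_i$ and the Jordan type of $x|_{W_i}$ by data built from the $\la^{(i)}$, and these bounds are precisely the missing inequalities. The dimension formula of Proposition 4.3 enters here as bookkeeping to eliminate borderline cases.

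The hard part will be this last comparison. The intrinsic flag $W_{\bullet}$ and the parabolic flag $(M_{p_i})$ do not coincide in general, and $\dim W_i$ need not equal $p_i$ --- in the closure one can even have $|\mu^{(1)}|>|\la^{(1)}|$ --- so the rank counts cannot be read off directly; the subtle point is that the semicontinuity constraints on the block-diagonal data $(x|_{\ol M_{p_i}},\ol v_i)$ must be propagated through the operation $w\mapsto E^{x}w$ to give the required estimates on the $W_i$, and one must then carry out the elementary but intricate combinatorial check that these estimates, combined with $\Bmu^{\flat}\le\Bla^{\flat}$, reassemble into $c(\Bmu)\le c(\Bla)$.
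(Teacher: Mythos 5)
The paper states this proposition purely as a citation to [S3, Prop.~5.11] and does not reprove it, so there is no in-paper argument to compare yours against. On its own terms, your $\flat$-reduction is set up correctly: since the recursive type construction in 4.2 peels off $v_1$, deleting $v_{r-1}$ from a point of type $\Bla$ does give a level-$(r-1)$ point of type $\Bla^{\flat}=(\la^{(1)},\dots,\la^{(r-2)},\la^{(r-1)}+\la^{(r)})$; continuity of $\rho$ then yields $\rho(\ol X_{\Bla})\subseteq\ol{X_{\Bla^{\flat}}}$; and the bookkeeping on partial sums of $c(\cdot)$ does show that the inductive hypothesis $\Bmu^{\flat}\le\Bla^{\flat}$ delivers $c(\Bmu)\le c(\Bla)$ at every position except the column-$(r-1)$ entry in each row. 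The identification $\ol X_{\Bla}=G\cdot\ol{\SM_{\Bla}}$ via $B$-stability of $\SM_{\Bla}$ and properness is also sound.

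The genuine gap is that the step supplying these remaining inequalities is described but never executed. You correctly flag the obstruction — the intrinsic flag $W_{\bullet}$ built by iterating $w\mapsto E^{x}w$ is not contained in, and need not be compatible with, the parabolic flag $(M_{p_i})$; $W_1=E^{x}v_1$ can leave $M_{p_1}$ even though $v_1\in M_{p_1}$ and $x\in U$; and degeneration can increase $|\mu^{(1)}|$ past $|\la^{(1)}|$ — and then defer to an unspecified ``elementary but intricate combinatorial check.'' Since the $\flat$-reduction is essentially formal, this unexecuted comparison is the entire content of the proposition, and it is not evident that semicontinuity of the block data $(x_i,\ol v_i)$ by itself bounds the Jordan types of the $x|_{W_i}$ for the a priori unrelated intrinsic flag. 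As it stands the proposal is a plan rather than a proof. (The role you assign to the dimension formula of Proposition 4.3 is also unclear: dimension estimates can rule out certain inclusions but cannot establish the positive closure bound.)
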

For each $\Bm \in \SQ_{n,r}$, recall the map 
$\pi_{\Bm,1} : \wt\SX_{\Bm, \unip} \to \SX_{\Bm,\unip}$ 
in 1.1.  Define $\Bla(\Bm) \in \SP_{n,r}$ by 
$\Bla(\Bm) = ((m_1), (m_2), \dots, (m_r))$.  Then we have 

\begin{prop}[{[S3, Prop. 5.9]}]   
For $\Bm \in \SQ_{n,r}$, we have
\begin{enumerate}
\item  $\SX_{\Bm,\unip} = \ol X_{\Bla(\Bm)}$.
\item  $\dim \SX_{\Bm\unip} = n^2 - n + \sum_{i=1}^{r-1}(r-i)m_i$.
\item  For $\Bmu \in \SP(\Bm)$, $X_{\Bmu} \subset \SX_{\Bm, \unip}$.
\end{enumerate}
\end{prop}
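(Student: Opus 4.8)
The plan is to realise both $\SX_{\Bm,\unip}$ and $\ol X_{\Bla(\Bm)}$ as irreducible closed subvarieties of $\SX\uni$ of the same dimension, with the second contained in the first; then they must coincide. First I would record that $\SX_{\Bm,\unip} = \bigcup_{g\in G}g(U\times\prod_{i=1}^{r-1}M_{p_i})$ is closed in $\SX\uni$ and equals the image of the proper map $\pi_{\Bm,1}:\wt\SX_{\Bm,\unip}\to\SX_{\Bm,\unip}$ (both facts from 1.1). Since $\wt\SX_{\Bm,\unip}\simeq G\times^B(U\times\prod_iM_{p_i})$ is fibred over $G/B$ with affine-space fibres, it is smooth and irreducible, hence so is its image $\SX_{\Bm,\unip}$, and $\dim\SX_{\Bm,\unip}\le\dim\wt\SX_{\Bm,\unip}$. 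Replacing the fibre $B\times\prod_iM_{p_i}$ of $\wt\SX_{\Bm}$ by $U\times\prod_iM_{p_i}$ lowers the dimension by $\dim T = n$, so Lemma 1.2 gives $\dim\wt\SX_{\Bm,\unip} = \dim\wt\SX_{\Bm} - n = n^2 - n + \sum_{i=1}^{r-1}(r-i)m_i$ (the $i=r$ term vanishing).

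Next I would prove the inclusion $\ol X_{\Bla(\Bm)}\subseteq\SX_{\Bm,\unip}$. Since $\Bla(\Bm) = ((m_1),\dots,(m_r))$ lies in $\SP(\Bm)$, the subvariety $\SM_{\Bla(\Bm)}$ of 4.2 is defined using the parabolic $P_{\Bm}$, and by its very definition $\SM_{\Bla(\Bm)}\subseteq U\times\prod_{i=1}^{r-1}M_{p_i}\subseteq\SX_{\Bm,\unip}$; hence $\bigcup_{g\in G}g\SM_{\Bla(\Bm)}\subseteq\SX_{\Bm,\unip}$, and as $\SX_{\Bm,\unip}$ is closed the description (4.2.2) yields $\ol X_{\Bla(\Bm)} = \ol{\bigcup_{g\in G}g\SM_{\Bla(\Bm)}}\subseteq\SX_{\Bm,\unip}$. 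On the other hand, every part $\la^{(i)} = (m_i)$ of $\Bla(\Bm)$ is a single row, so $n(\Bla(\Bm)) = 0$, and Proposition 4.3 then says that $\ol X_{\Bla(\Bm)}$ is closed and irreducible with $\dim\ol X_{\Bla(\Bm)} = \dim X_{\Bla(\Bm)} = n^2 - n + \sum_{i=1}^{r-1}(r-i)m_i$. Thus $\ol X_{\Bla(\Bm)}$ is a closed irreducible subvariety of the irreducible variety $\SX_{\Bm,\unip}$ whose dimension already realises the upper bound found above; this forces $\dim\SX_{\Bm,\unip} = n^2 - n + \sum_{i=1}^{r-1}(r-i)m_i$ and $\SX_{\Bm,\unip} = \ol X_{\Bla(\Bm)}$, which are (i) and (ii).

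For (iii) I would argue the same way: for $\Bmu\in\SP(\Bm)$ the set $\SM_{\Bmu}$ is again built from $P_{\Bm}$ and satisfies $\SM_{\Bmu}\subseteq U\times\prod_{i=1}^{r-1}M_{p_i}\subseteq\SX_{\Bm,\unip}$, so by (4.2.2) and the closedness of $\SX_{\Bm,\unip}$ one gets $X_{\Bmu}\subseteq\ol X_{\Bmu} = \ol{\bigcup_{g\in G}g\SM_{\Bmu}}\subseteq\SX_{\Bm,\unip}$. The only step that is not purely formal is the matching of the two dimension counts — that the fibre-dimension of $\wt\SX_{\Bm,\unip}$ obtained from Lemma 1.2 agrees with the value of $\dim X_{\Bla(\Bm)}$ from Proposition 4.3 — which I expect to come down to the single identity $n(\Bla(\Bm)) = 0$; everything else is assembling already-available facts about $\SX_{\Bm,\unip}$ and the description (4.2.2) of $\ol X_{\Bla}$. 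For concreteness one may also check directly that $\SM_{\Bla(\Bm)}\ne\emptyset$ by taking $x\in U$ block-diagonal relative to the partial flag $(M_{p_i})$, each diagonal block a single regular unipotent Jordan block, so that $x$ has Jordan type $(m_1)+\cdots+(m_r)$ on $V$ and each induced map on $\ol M_{p_i}$ is regular unipotent, together with a suitable $\Bv$.
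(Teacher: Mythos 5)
The paper itself gives no proof of this proposition; it is quoted verbatim from [S3, Prop.~5.9] and left as a citation, so there is no in-paper argument to compare yours against. Judged on its own, your argument is correct and tight: $\SX_{\Bm,\unip}$ is closed (from 1.1) and irreducible (as the image of the irreducible $\wt\SX_{\Bm,\unip}\simeq G\times^B(U\times\prod_iM_{p_i})$ under the proper surjection $\pi_{\Bm,1}$); $\dim\wt\SX_{\Bm,\unip}=\dim\wt\SX_{\Bm}-\dim T$ gives the upper bound $n^2-n+\sum_{i<r}(r-i)m_i$; the containment $\SM_{\Bmu}\subset U\times\prod_iM_{p_i}$ together with (4.2.2) and closedness of $\SX_{\Bm,\unip}$ gives $\ol X_{\Bmu}\subseteq\SX_{\Bm,\unip}$ for every $\Bmu\in\SP(\Bm)$, which is (iii) and, applied to $\Bla(\Bm)$, the lower containment; and since $n(\Bla(\Bm))=0$, Proposition 4.3 shows $\dim \ol X_{\Bla(\Bm)}$ already saturates the upper bound, whence (i) and (ii) by irreducibility.

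One caveat worth flagging: your proof leans on Proposition 4.3 (from [S3, Props.~5.4 and 5.14]) and on (4.2.2) (modelled on [S3, Lemma~6.17]), both of which sit \emph{after} Prop.~5.9 in [S3]. Within the present paper's expository order your appeals are legitimate, but it is very likely that [S3] proves the statement the other way round — establishing the dimension of $\SX_{\Bm,\unip}$ and the density of $X_{\Bla(\Bm)}$ first, and only then the dimension formula for general $X_{\Bla}$ and the closure description. So while your derivation is sound as a self-contained consequence of the facts this paper has on record, it is almost certainly not the original route, and one should be aware that unwinding the citations to [S3] could in principle introduce a circularity.
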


\para{4.6.}
As in [S3, 5.10], we define a distinguished element 
$z_{\Bla} \in X_{\Bla}$ as follows.  Put $\nu = (\nu_1, \dots, \nu_{\ell}) \in \SP_n$ for 
$\nu = \la^{(1)} + \cdots + \la^{(r)}$. Take $x \in G\uni$ of Jordan type $\nu$, and let 
$\{ u_{j,k}\mid 1 \le j \le \ell, 1 \le k \le \nu_j \}$ be a Jordan basis of $x$ in $V$
having the property $(x-1)u_{j,k} = y_{j,k-1}$ with the convention $u_{j,0} = 0$. 
We define $v_i \in V$ for $i = 1, \dots, r-1$ by the condition that 
\begin{equation*}
\tag{4.6.1}
v_i = \sum_{1 \le j \le \ell} u_{j, \la^{(1)}_j + \cdots + \la^{(i)}_j}.
\end{equation*}
Put $\Bv = (v_1, \dots, v_{r-1})$ and $z_{\Bla} = (x, \Bv)$. 
Then $z_{\Bla} \in X_{\Bla}$. 
We denote by $\SO^-_{\Bla} \subset X_{\Bla}$ the $G$-orbit containing $z_{\Bla}$.
We also define $z_{\Bla}^+ \in X_{\Bla}$ as follows; put
\begin{equation*}
\tag{4.6.2}
v_i' = \sum_{1 \le j \le \ell}\ol u_{j, \la^{(1)}_j + \cdots + \la^{(i)}_j},
\end{equation*}
where $\ol u_{j, \la^{(1)}_j+\cdots +\la^{(i)}_j} = u_{j,\la^{(1)}_j + \cdots + \la^{(i)}_j}$ 
if $\la^{(i)}_j \ne 0$ and is equal to zero if $\la^{(i)}_j = 0$.  
Put $\Bv' = (v'_1, \dots, v'_{r-1})$, and $z^+_{\Bla} = (x, \Bv')$.  
Then $z^+_{\Bla} \in X_{\Bla}$, and we denote by $\SO_{\Bla}^+$ the 
$G$-orbit in $X_{\Bla}$ containing 
$z^+_{\Bla}$.  The $G$-orbits $\SO_{\Bla}^-, \SO^+_{\Bla}  \subset X_{\Bla}$ 
are determined, independently 
of the choice of the Jordan basis $\{ u_{j,k}\}$. 
\par
Now take $\Bm \in \SQ_{n,r}$, and let 
$M_{p_i}^+$ be the subspace of $M_{p_i}$ isomorphic to $\ol M_{p_i}$ defined in 
2.7.  Recall the set $\SX^+_{\Bm}$ in (2.7.1).  We define a set $\SX^+_{\Bm \unip}$ by 

\begin{equation*}
\SX^+_{\Bm \unip} = \bigcup_{g \in G}g(U \times \prod_i M^+_{p_i}),
\end{equation*}
which coincides with $\SX\uni \cap \SX^+_{\Bm}$.
Note that for each $\Bla \in \SP(\Bm)$, 
$\SO_{\Bla}^+ \subset X_{\Bla} \cap \SX^+_{\Bm, \unip}$. 
\par
In general, $X_{\Bla} \cap \SX^+_{\Bm}$ consists of infinitely many $G$-orbits 
even if $\Bla \in \SP(\Bm)$.  
The following special case would be worth mentioning.

\begin{lem}  
Assume that $\Bm \in \SQ_{n,r}$ is of the form 
$m_j = 0$ for $j \ne i_0, r$ for some $i_0$ $($possibly 
$i_0 = r$$)$.  Then $X_{\Bla} \cap \SX^+_{\Bm,\unip} = \SO_{\Bla}^+$ for any 
$\Bla \in \SP_{n,r}$ if it is non-empty. 
\end{lem}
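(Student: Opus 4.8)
The plan is to use the hypothesis on $\Bm$ to collapse the $r$-fold recursion defining the pieces $X_{\Bla}$ to a single level-$2$ computation, where the relevant orbit is governed by the parametrization of the enhanced nilpotent cone $G\uni \times V$ by double partitions of $n$, and then to promote this to the full statement by a soft $G$-equivariance argument. First I would read off the shape of $\SX^+_{\Bm,\unip}$: since $m_j = 0$ for $j \ne i_0, r$, one has $p_j = 0$ for $j < i_0$ and $p_j = m_{i_0}$ for $i_0 \le j \le r-1$, so $\ol M_{p_j} = 0$ and hence $M^+_{p_j} = 0$ for every $j$ with $1 \le j \le r-1$ and $j \ne i_0$, while $M^+_{p_{i_0}}$ is just the flag subspace $M_{m_{i_0}}$. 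Thus any $(x, \Bv) = (x, v_1, \dots, v_{r-1}) \in \SX^+_{\Bm,\unip}$ has $v_j = 0$ for all $j \ne i_0$, together with some $g \in G$ such that $g\iv x g \in U$ and $g\iv v_{i_0} \in M_{m_{i_0}}$. (When $i_0 = r$ this already forces $\Bv = 0$, and below one reads $v_{i_0} = 0$.)

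Next I would compute the type of such a pair directly from the inductive definition in 4.2. Because $v_1 = \dots = v_{i_0-1} = 0$, the first $i_0-1$ steps of the recursion each produce the empty partition and leave the ambient space and operator unchanged; the $i_0$-th step produces $W = E^x v_{i_0}$ with $\la^{(i_0)}$ the Jordan type of $x|_W$; and since $v_{i_0+1} = \dots = v_{r-1} = 0$ their images in $V/W$ vanish, so the subsequent steps again give empty partitions and leave $\la^{(r)}$ equal to the Jordan type of $x|_{V/W}$. Hence $\la^{(j)} = \emptyset$ for $j \ne i_0, r$, and $(\la^{(i_0)}, \la^{(r)})$ is exactly the Achar-Henderson type of $(x, v_{i_0}) \in G\uni \times V$. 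By [AH] (or [T]) the pairs in $G\uni \times V$ of a given double-partition type form a single $G$-orbit, so $(x, v_{i_0})$ is $G$-conjugate to the normal form $(x_0, \sum_j u_{j, \la^{(i_0)}_j})$ of 4.2; the same conjugation carries $(x, \Bv)$ to the tuple with $x$-part $x_0$, $i_0$-th vector $\sum_j u_{j, \la^{(i_0)}_j}$, and all other vectors zero. Finally I would unravel the recipe for $z^+_{\Bla}$ in 4.6 for this $\Bla$: its vectors $v'_i$ vanish for $i \ne i_0$ (the relevant partial sums being $0$, or the superscript partition being empty so that the barred basis vectors are $0$), while $v'_{i_0} = \sum_j \ol u_{j, \la^{(i_0)}_j} = \sum_j u_{j, \la^{(i_0)}_j}$, the bar being vacuous since $u_{j,0} = 0$. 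So the tuple above is precisely $z^+_{\Bla}$, whence $(x, \Bv) \in \SO^+_{\Bla}$ and $X_{\Bla} \cap \SX^+_{\Bm,\unip} \subseteq \SO^+_{\Bla}$.

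For the reverse inclusion I would argue formally: $X_{\Bla}$ and $\SX^+_{\Bm,\unip}$ are both $G$-stable, so their intersection is a union of $G$-orbits, while $\SO^+_{\Bla}$ is by construction a single $G$-orbit; if the intersection is nonempty, then by the inclusion just proved it meets $\SO^+_{\Bla}$, hence contains it, and together with the reverse containment this yields equality. The step I expect to be most delicate is the type computation above: one must track the successive quotients $\ol V$ in the recursion of 4.2 and confirm that every intermediate image of the $v_j$ is zero, and then reconcile the Achar-Henderson normal form with the differently presented representative $z^+_{\Bla}$ of 4.6 (a reconciliation that works only because the ``bar'' truncation in the definition of $z^+_{\Bla}$ is invisible once one recalls the convention $u_{j,0} = 0$). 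Everything else is bookkeeping with the vanishing $m_j$.
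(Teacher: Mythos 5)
Your argument is correct and follows essentially the same route as the paper: reading off that $M^+_{p_j}=0$ for $j\ne i_0$ forces $v_j=0$, hence $\la^{(j)}=\emptyset$ for $j\ne i_0,r$, and the question collapses to the level-$2$ (Achar--Henderson) case, from which orbit-uniqueness follows by $G$-stability. You make explicit the normal-form comparison with $z^+_{\Bla}$ that the paper compresses into ``this is essentially the same as the case of $r=2$,'' but the underlying idea is the same.
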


\begin{proof}
Assume that $1 \le i_0 \le r-1$ and that $X_{\Bla} \cap \SX^+_{\Bm} \ne \emptyset$. 
Take $(x, \Bv) \in X_{\Bla} \cap \SX^+_{\Bm}$. 
Since $(x, \Bv) \in \SX^+_{\Bm}$, we must have $v_j = 0$ for $j \ne i_0$. 
Then the description of $X_{\Bla}$ in 4.1 implies that $\la^{(i)} = \emptyset$ 
for $i \ne i_0$. It follows that $\Bla \in \SP(\Bm')$ with $\Bm' = (m_1', \dots, m'_r)$
such that $m'_{i_0} \le m_{i_0}, m'_j = 0$ for $j \ne i_0, r$.  But this is essentially 
the same as the case of $r = 2$, hence $X_{\Bla} \cap \SX^+_{\Bm}$ coincides with $\SO_{\Bla}^+$. 
The case where $i_0 = r$ is dealt with similarly.  
\end{proof}
\par\medskip
Note that irreducible representations of $S_{\Bm}$ are, up to isomorphism, 
parametrized by $\SP(\Bm)$ (see 3.10).   We denote by $V_{\Bla}$ the irreducible 
representation of $S_{\Bm}$ corresponding to $\Bla \in \SP(\Bm)$.  
The following result gives the Springer correspondence between 
$\SX_{\Bm, \unip}$ and $S_{\Bm}$.

\begin{thm}[{[S3, Theorem 8.13]}]  
For any $\Bm \in \SQ_{n,r}$, put $d'_{\Bm} = \dim \SX_{\Bm,\unip}$. 
Then $(\pi_{\Bm,1})_!\Ql[d'_{\Bm}]$ is a semisimple perverse sheaf on $\SX_{\Bm,\unip}$, 
equipped 
with the action of $S_{\Bm}$, and is decomposed as 

\begin{equation*}
(\pi_{\Bm,1})_!\Ql[d'_{\Bm}] \simeq \bigoplus_{\Bla \in \SP(\Bm)}
                  V_{\Bla} \otimes \IC(\ol X_{\Bla}, \Ql)[\dim X_{\Bla}].
\end{equation*}
\end{thm}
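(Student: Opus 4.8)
The plan is to transplant the classical Springer-correspondence argument for $GL_n$ to $\SX_{\Bm,\unip}$, taking the $W_{\Bm}$-action for granted from Theorem 1.4 (the case $\SE=\Ql$, where $W_{\Bm,\SE}=W_{\Bm}\simeq S_{\Bm}$). First I would record the geometry of $\pi_{\Bm,1}$: its source $\wt\SX_{\Bm,\unip}\simeq G\times^B\bigl(U\times\prod_{i=1}^{r-1}M_{p_i}\bigr)$ is a bundle over $G/B$ with affine-space fibres, hence smooth and irreducible, of dimension $(\dim G-\dim T)+\sum_{i=1}^{r-1}p_i=(n^2-n)+\sum_{i=1}^{r-1}(r-i)m_i$, which by Proposition 4.5(ii) equals $d'_{\Bm}=\dim\SX_{\Bm,\unip}$ (here $\sum_{i=1}^{r-1}p_i=\sum_{i=1}^{r-1}(r-i)m_i$). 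Since $\SX_{\Bm,\unip}=\ol X_{\Bla(\Bm)}$ is closed in $\SX_{\Bm}$ and $\wt\SX_{\Bm,\unip}=\pi_{\Bm}\iv(\SX_{\Bm,\unip})$, the map $\pi_{\Bm,1}$ is proper and surjective, hence generically finite; so $(\pi_{\Bm,1})_!\Ql[d'_{\Bm}]$ is a semisimple complex, and by proper base change it coincides with $\bigl((\pi_{\Bm})_!\Ql[d_{\Bm}]\bigr)|_{\SX_{\Bm,\unip}}[-n]$ (as $d_{\Bm}-d'_{\Bm}=n$), so it inherits the $W_{\Bm}$-action of Theorem 1.4.

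The decisive step is to prove that $\pi_{\Bm,1}$ is semismall for the stratification $\SX_{\Bm,\unip}=\coprod_{\Bmu}X_{\Bmu}$ (the $\Bmu$ with $\Bmu\le\Bla(\Bm)$, by Propositions 4.4 and 4.5), and that the relevant strata — those with $2\dim\pi_{\Bm,1}\iv(z)=d'_{\Bm}-\dim X_{\Bmu}$ — are exactly the $X_{\Bla}$ with $\Bla\in\SP(\Bm)$. For the fibre estimate I would replace the $G$-orbit by a normal form: by (4.2.2) the fibre over $z\in X_{\Bmu}$ may be computed at a representative in $\SM_{\Bmu}\subset U\times\prod M_{p_i}$, where $\pi_{\Bm,1}\iv(z)$ becomes the variety of complete flags refining $(M_{p_i})$ that are stable under the fixed unipotent $x$ and pass through the fixed tuple $\Bv$; this fibres, step by step in $i$, over classical partial Springer fibres for the action of $x$ on the subquotients $\ol M_{p_i}$, so the estimate reduces inductively to the $GL_{m_i}$ case, equality being attained exactly when each $x|_{\ol M_{p_i}}$ is regular on the cyclic subspace it is forced to generate, i.e.\ $|\la^{(i)}|=m_i$. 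The dimension formula of Proposition 4.3 is what this estimate is checked against. Granting semismallness, the decomposition theorem for semismall maps shows that $(\pi_{\Bm,1})_!\Ql[d'_{\Bm}]$ is perverse and
\begin{equation*}
(\pi_{\Bm,1})_!\Ql[d'_{\Bm}]\simeq\bigoplus_{\Bla\in\SP(\Bm)}\IC(\ol X_{\Bla},\CV_{\Bla})[\dim X_{\Bla}],
\end{equation*}
where $\CV_{\Bla}$ is the semisimple local system on $X_{\Bla}$ with stalk $H^{2e_{\Bla}}\bigl(\pi_{\Bm,1}\iv(z),\Ql\bigr)$ at $z$, $2e_{\Bla}=d'_{\Bm}-\dim X_{\Bla}$.

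It remains to identify each $\CV_{\Bla}$, with its $W_{\Bm}$-structure, with $V_{\Bla}$. First, $\CV_{\Bla}$ is a constant local system: the reductive part of the $G$-stabiliser of a point of $X_{\Bla}$ is a product of general linear groups — read off the normal forms of 4.2 and 4.6 as in the $r=2$ computation of [AH] — so its component group is trivial and $\IC(\ol X_{\Bla},\CV_{\Bla})[\dim X_{\Bla}]=(\CV_{\Bla})_z\otimes\IC(\ol X_{\Bla},\Ql)[\dim X_{\Bla}]$. Comparing this decomposition with the $W_{\Bm}$-isotypic decomposition of $(\pi_{\Bm,1})_!\Ql[d'_{\Bm}]$ induced by Theorem 1.4 shows that $(\CV_{\Bla})_z\simeq\bigoplus_{\Bmu}m_{\Bla,\Bmu}V_{\Bmu}$ for some $m_{\Bla,\Bmu}\in\ZZ_{\ge0}$, and the task is to prove $m_{\Bla,\Bmu}=\delta_{\Bla,\Bmu}$. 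For this I would induct through the parabolic $P=P_{\Bm}$ and the diagram (1.5.1) restricted to unipotent loci: the $W_{\Bm}$-action is pinned down on the open stratum by the Galois cover $\psi^0_{\Bm}$ of 1.3, and by the intermediate-extension compatibility of Proposition 1.6 it matches the action carried over from the Levi $L\simeq GL_{m_1}\times\cdots\times GL_{m_r}$, on which the statement is the external product of the classical $GL_{m_i}$-Springer correspondences, furnishing $V_{\Bla}=\x^{\la^{(1)}}\boxtimes\cdots\boxtimes\x^{\la^{(r)}}$ as the multiplicity of the corresponding product of $\IC$-sheaves; the Lusztig--Spaltenstein-type induction from $L$ to $G$ along (1.5.1)--(1.5.2) then preserves this multiplicity, the base case of a two-step $\Bm$ — where $X_{\Bla}\cap\SX^+_{\Bm,\unip}=\SO^+_{\Bla}$ is a single orbit by Lemma 4.7 — being checked directly. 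This yields $\CV_{\Bla}\cong V_{\Bla}$ as $S_{\Bm}$-modules and finishes the proof.

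I expect the semismallness estimate of the second step to be the main obstacle: for $r\ge3$ the stratum $X_{\Bmu}$ is an infinite union of $G$-orbits, so the fibre of $\pi_{\Bm,1}$ over $z\in X_{\Bmu}$ is not constant along the stratum and must be bounded uniformly; the reduction, via the normal-form sets $\SM_{\Bmu}$ and (4.2.2), to a tower of $GL$-Springer fibres is what makes this feasible, and it is precisely there that the dimension formula of Proposition 4.3 enters.
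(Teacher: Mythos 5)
This theorem is cited from [S3, Theorem 8.13] and is not proved in the present paper, so there is no in-paper proof to compare against; what follows evaluates your proposal on its own terms. The global shape you propose — properness and smoothness of $\wt\SX_{\Bm,\unip}$, the dimension count $\dim\wt\SX_{\Bm,\unip}=d'_{\Bm}$, the $W_{\Bm}$-action inherited from Theorem~1.4 via proper base change, semismallness, and identification of multiplicities through the parabolic diagram (1.5.1) — is indeed the standard Springer-correspondence template and plausibly close in spirit to what [S3] does, and your dimension bookkeeping (including $d_{\Bm}-d'_{\Bm}=n$ and $\sum_{i<r}p_i=\sum_i(r-i)m_i$) is correct.

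There are, however, two genuine gaps. First, the semismallness estimate. You propose to bound $\dim\pi_{\Bm,1}\iv(z)$ over $z\in X_{\Bmu}$ by passing to representatives in $\SM_{\Bmu}$ via (4.2.2). But (4.2.2) only asserts that $\bigcup_g g\SM_{\Bmu}$ is \emph{dense in $\ol X_{\Bmu}$}; it does not say that $\bigcup_g g\SM_{\Bmu}$ exhausts $X_{\Bmu}$, and since fibre dimension is upper semicontinuous it can jump precisely on the complement of a dense subset, so this reduction does not give a uniform bound over the infinitely many orbits of $X_{\Bmu}$. The uniform upper bound that is actually available is $\SB^{(\Bm)}_z\subset\SB_x$, hence $\dim\SB^{(\Bm)}_z\le n(\Bmu)$ (cf. the proof of Lemma~4.10), but then semismallness requires $2n(\Bmu)\le d'_{\Bm}-\dim X_{\Bmu}=2n(\Bmu)+\sum_{i<r}(r-i)(m_i-|\mu^{(i)}|)$ with strict inequality for $\Bmu\notin\SP(\Bm)$; the needed combinatorial inequality $\sum_{i<r}(r-i)(m_i-|\mu^{(i)}|)\ge 0$ for every $\Bmu$ with $X_{\Bmu}\cap\SX_{\Bm,\unip}\ne\emptyset$ is not established by your sketch, nor does it follow formally from $\Bmu\le\Bla(\Bm)$ (the dominance condition controls row-wise partial sums of $c(\Bmu)$, not $\sum_{i\le k}|\mu^{(i)}|$).

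Second, the constancy of $\CV_{\Bla}$. Your argument reads the trivial component group off the reductive part of the stabiliser of a point, as in [AH] for $r=2$. That works when the stratum is a single $G$-orbit, because then the $G$-equivariant fundamental group of the stratum is $\pi_0$ of the stabiliser. For $r\ge 3$ the stratum $X_{\Bla}$ is an infinite union of $G$-orbits, so this identification fails and the monodromy of $\CV_{\Bla}$ is governed by the $G$-equivariant fundamental group of the whole stratum, which is a priori nontrivial. One would instead argue, say, that the monodromy acts by $W_{\Bm}$-equivariant automorphisms of the top cohomology of the fibre, which (once you know this cohomology is $W_{\Bm}$-irreducible, cf.\ [S3, Prop.~8.16]) must be scalar by Schur's lemma, and then that the scalar is trivial by self-duality or purity; but that is a different and stronger argument than the one you give, and it presupposes the very irreducibility you are trying to deduce. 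Your last step (parabolic induction through (1.5.1), base case via Lemma~4.7) is reasonable as a strategy for the multiplicity computation, but it would need the first two points settled before it can be run.
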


\para{4.9.}
For each $z = (x, \Bv) \in \SX_{\Bm}$, we consider the Springer fibre 
$(\pi_{\Bm})\iv(z) \simeq \SB^{(\Bm)}_z$, where $\SB^{(\Bm)}_z$ is a closed 
subvariety of $\SB = G/B$ defined as 

\begin{equation*}
\SB^{(\Bm)}_z = \{ gB \in \SB \mid g\iv xg \in B, g\iv \Bv \in \prod_i M_{p_i} \}.
\end{equation*}
For each $\Bla \in \SP(\Bm)$, put $d_{\Bla} = (\dim \SX_{\Bm \unip} - \dim X_{\Bla})/2$.
One can check that $d_{\Bla} = n(\Bla)$ (see 4.1 for the definition of $n(\Bla)$).  
We have

\begin{lem}  
Assume that $\Bla \in \SP(\Bm)$.  Then for any $z \in X_{\Bla}$, we have
$\dim \SB^{(\Bm)}_z = d_{\Bla}$. 
\end{lem}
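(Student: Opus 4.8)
The plan is to deduce $\dim \SB^{(\Bm)}_z = d_{\Bla}$ from two inequalities, each proved directly for \emph{every} $z \in X_{\Bla}$, so that no semicontinuity argument on $\SX_{\Bm,\unip}$ is needed. First, for the inequality $\dim \SB^{(\Bm)}_z \le d_{\Bla}$, write $z = (x,\Bv)$. Straight from the definition in 4.9, $\SB^{(\Bm)}_z$ is the closed subvariety of the ordinary Springer fibre $\SB_x = \{ gB \in \SB \mid g\iv xg \in B \}$ obtained by imposing the additional incidence conditions $g\iv v_i \in M_{p_i}$; hence $\dim \SB^{(\Bm)}_z \le \dim \SB_x$. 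If $z \in X_{\Bla}$, then by the construction of the pieces $X_{\Bla}$ recalled in 4.2 the Jordan type of $x$ equals $\nu = \la^{(1)} + \cdots + \la^{(r)}$, so by the classical dimension formula for Springer fibres $\dim \SB_x = n(\nu)$, and since $n(\nu) = n(\Bla)$ by 4.1 and $d_{\Bla} = n(\Bla)$ by 4.9 this gives $\dim \SB^{(\Bm)}_z \le d_{\Bla}$.

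For the reverse inequality $\dim \SB^{(\Bm)}_z \ge d_{\Bla}$ I would invoke the Springer correspondence (Theorem 4.8). As $\Bla \in \SP(\Bm)$, Proposition 4.5 gives $X_{\Bla} \subset \SX_{\Bm,\unip}$, and for $z \in X_{\Bla}$ the fibre $\SB^{(\Bm)}_z$ is precisely $\pi_{\Bm,1}\iv(z)$; since $\pi_{\Bm,1}$ is proper, $\SH^j_z((\pi_{\Bm,1})_!\Ql[d'_{\Bm}]) \simeq H^{j + d'_{\Bm}}(\SB^{(\Bm)}_z, \Ql)$ for all $j$. By Proposition 4.3 the stratum $X_{\Bla}$ is smooth and open in its closure $\ol X_{\Bla}$, so the restriction of $\IC(\ol X_{\Bla}, \Ql)$ to $X_{\Bla}$ is the constant sheaf in degree $0$, whence $\SH^{-\dim X_{\Bla}}_z(\IC(\ol X_{\Bla}, \Ql)[\dim X_{\Bla}]) = \Ql$. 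Since the decomposition of Theorem 4.8 is a direct sum, the stalk $\SH^{-\dim X_{\Bla}}_z((\pi_{\Bm,1})_!\Ql[d'_{\Bm}])$ therefore contains $V_{\Bla} \otimes \Ql$ as a direct summand and is in particular nonzero. By the displayed isomorphism this forces $H^{\,d'_{\Bm} - \dim X_{\Bla}}(\SB^{(\Bm)}_z, \Ql) \ne 0$; and as $d'_{\Bm} - \dim X_{\Bla} = \dim \SX_{\Bm,\unip} - \dim X_{\Bla} = 2 d_{\Bla}$ while $\SB^{(\Bm)}_z$ is projective, the non-vanishing of $H^{2d_{\Bla}}$ yields $\dim \SB^{(\Bm)}_z \ge d_{\Bla}$. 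Combining the two bounds completes the proof.

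The delicate point, and the reason the argument is valid at every point of $X_{\Bla}$ and not merely at a generic one, is the stalk computation in the second step: one must use that $X_{\Bla}$ is open in $\ol X_{\Bla}$ (so that $\IC(\ol X_{\Bla}, \Ql)$ is literally the constant sheaf on it) and that the other summands $\IC(\ol X_{\Bmu}, \Ql)$ with $\Bmu \ne \Bla$ and $z \in \ol X_{\Bmu}$ — for which Proposition 4.4 forces $\Bmu > \Bla$ — contribute only additively to a direct sum and hence cannot destroy the non-vanishing coming from the $\Bmu = \Bla$ summand. For completeness I would also recall the classical formula $\dim \SB_x = n(\nu)$ used in the first step; it is the $r = 1$ instance of the present lemma and is not reproved elsewhere in the paper.
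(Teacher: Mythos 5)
Your proof is correct, and the upper bound $\dim\SB^{(\Bm)}_z \le d_{\Bla}$ via the containment $\SB^{(\Bm)}_z\subset\SB_x$ and the classical formula $\dim\SB_x=n(\nu)$ is exactly what the paper does. For the lower bound $\dim\SB^{(\Bm)}_z \ge d_{\Bla}$, however, you take a genuinely different route: the paper simply cites [S3, Lemma 8.5], whereas you derive it from the Springer correspondence (Theorem 4.8) by computing the stalk of $(\pi_{\Bm,1})_!\Ql[d'_{\Bm}]$ at $z\in X_{\Bla}$. Your chain of reasoning — proper base change identifies the stalk with $H^{\bullet}(\SB^{(\Bm)}_z,\Ql)$, the open-stratum restriction of $\IC(\ol X_{\Bla},\Ql)$ yields a nonzero summand $V_{\Bla}$ in degree $-\dim X_{\Bla}$, the direct-sum structure prevents cancellation by other summands, and the shift arithmetic $d'_{\Bm}-\dim X_{\Bla}=2d_{\Bla}$ together with cohomological vanishing above degree $2\dim$ for the projective variety $\SB^{(\Bm)}_z$ gives the bound — is sound. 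What your approach buys is that it is self-contained relative to the results already stated in this paper (it replaces an external citation by an argument from Theorem 4.8, which is quoted here in full); the cost is that it leans on the heavier structural input of the Springer correspondence, which in the source [S3] is itself established after the dimension estimate, so your argument is not a shortcut at the level of the original development, only at the level of this paper's exposition. Your parenthetical concern about circularity is therefore apt but not fatal: within the present paper both Theorem 4.8 and Lemma 8.5 of [S3] are imported as black boxes, so no circularity arises here.
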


\begin{proof}
By [S3, Lemma 8.5], we have $\dim \SB^{(\Bm)}_z \ge d_{\Bla}$. 
On the other hand, clearly we have $\SB^{(\Bm)}_z \subset \SB_x$, where 
$\SB_x = \{ gB \in \SB \mid g\iv xg \in B\}$. 
Here the Jordan type of $x$ is given by $\nu = \la^{(1)} + \cdots + \la^{(r)}$.
By the classical result for the case of $GL_n$, we know that $\dim \SB_x = n(\nu)$.
Since $n(\nu) = n(\Bla)$, we have $\dim \SB^{(\Bm)}_z \le d_{\Bla}$. 
The lemma is proved.
\end{proof}

\par\bigskip\bigskip
\section{Kostka functions}

\para{5.1.}
Kostka functions associated to complex reflection groups were
introduced in [S1], [S2] as a generalization of Kostka polynomials.
In this section, we discuss the relationship between our Green 
functions and those Kostka functions.  
In [S1], [S2], Kostka functions $K^{\pm}_{\Bla, \Bmu}(t)$ indexed by  
$\Bla, \Bmu \in \SP_{n,r}$ (and depending on the sign $+$, $-$) 
were introduced as coefficients of the transition 
matrix between the basis of Schur functions and those of Hall-Littlewood 
functions, as in the case of original Kostka polynomials.   
A-priori they are rational functions in  $\BQ(t)$.   
We define a modified Kostka function $\wt K^{\pm}_{\Bla, \Bmu}(t)$ by 
\begin{equation*}
\tag{5.1.1}
\wt K^{\pm}_{\Bla, \Bmu}(t) = t^{a(\Bmu)}K^{\pm}_{\Bla, \Bmu}(t\iv),
\end{equation*}
where the $a$-function $a(\Bla)$ is defined by 
\begin{equation*}
\tag{5.1.2}
a(\Bla) = r\cdot n(\Bla) + |\la^{(2)}| + 2|\la^{(3)}| + \cdots + (r-1)|\la^{(r)}|
\end{equation*} 
for $\Bla \in \SP_{n,r}$. 
Note that in the case where $r = 1$, $a(\Bla)$ coincides with $n(\la^{(1)})$, 
and in the case where $r = 2$, $a(\Bla)$ coincides with the $a$-function on $\SP_{n,2}$
used in [AH] and [SS2] (in [AH], the notation  $b(\mu;\nu)$ 
is used instead of $a(\Bla)$ for $\Bla = (\mu;\nu)$).
\par
Following [S1], we give a combinatorial characterization of 
modified Kostka functions $\wt K^{\pm}_{\Bla, \Bmu}(t)$. 
Let $W_{n,r}$ be the complex reflection group $S_n\ltimes (\BZ/r\BZ)^n$.
For a (not necessarily
irreducible)  character $\x$ of $W_{n,r}$, we define the fake degree 
$R(\x)$ by 
\begin{equation*}
\tag{5.1.3}
R(\x) = \frac{\prod_{i=1}^n(t^{ir}-1)}{|W_{n,r}|}\sum_{w \in W_{n,r}}
    \frac{\det_{\BV}(w)\x(w)}{\det_{\BV}(t - w)},
\end{equation*}
where $\BV$ is a representation space (over $\Ql$) of the reflection representation of
$W_{n,r}$, and $\det_{\BV}$ means the determinant on $\BV$. 
Here we have $|W_{n,r}| = n!r^n$.  Note that $R(\x) \in \BZ_{\ge 0}[t]$;  
if $\x$ is irreducible, 
$R(\x)$ is given as the graded
multiplicity of $\x$ in the coinvariant algebra $R(W_{n,r})$ of $W_{n,r}$.
Let $N^*$ be the number of reflections of $W_{n,r}$, which is given
as the maximum degree of $R(W_{n,r})$, and is explicitly given as 
\begin{equation*}
\tag{5.1.4}
N^* = \frac{rn(n+1)}{2} - n = \binom{n}{2}r + (r-1)n.
\end{equation*}
\par
It is known that irreducible characters of $W_{n,r}$ are
parametrized by $\SP_{n,r}$.  We denote by $\r^{\Bla}$ the irreducible
character of $W_{n,r}$ corresponding to $\Bla \in \SP_{n,r}$. 
(For example, $\r^{\Bla}$ is the trivial character for 
$\Bla = (n;-;\cdots;-)$. See 5.6 for details). 
For $\Bla, \Bmu \in \SP_{n,r}$, we define a square matrix 
$\Om = (\w_{\Bla,\Bmu})_{\Bla,\Bmu \in \SP_{n,r}}$ by 
\begin{equation*}
\w_{\Bla,\Bmu} = t^{N^*}R(\r^{\Bla}\otimes\ol{\r^{\Bmu}}\otimes\ol\det_V),
\end{equation*}
where $\ol \x$ denotes the complex conjugate of the character $\x$ 
(in fact, the function $\ol\x$ is defined by $\ol\x(w) = \x(w\iv)$ for 
$w \in W_{n,r}$).
Here we fix a total order $\Bla \preceq \Bmu$ 
on $\SP_{n,r}$ compatible with the partial
order $\Bla \le \Bmu$ defined in 4.1, and consider the square matrix with respect to 
this total order.
Note that $\w_{\Bla,\Bmu} \in \BZ_{\ge 0}[t]$.
We have the following result.

\begin{thm}[{[S1, Theorem 5.4]}]  
There exist unique matrices 
$P^{\pm} = (p^{\pm}_{\Bla,\Bmu}), \vL = (\xi_{\Bla,\Bmu})$ 
over $\BQ(t)$ satisfying the equation 
\begin{equation*}
\tag{5.2.1}
P^-\vL\, {}^t\!P^+ = \Om
\end{equation*}
subject to the conditions that $\vL$ is a diagonal matrix and that
\begin{equation*}
p^{\pm}_{\Bla,\Bmu} = \begin{cases}
                    0  &\quad\text{ unless } \Bmu \preceq \Bla, \\
                    t^{a(\Bla)}  &\quad\text{ if } \Bla = \Bmu.
                 \end{cases}
\end{equation*}
Then the entry $p^{\pm}_{\Bla,\Bmu}$ of the matrix $P^{\pm}$ coincides
 with $\wt K^{\pm}_{\Bla,\Bmu}(t)$.
\end{thm}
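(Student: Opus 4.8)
The plan is to split the statement into a uniqueness part, which is pure linear algebra over $\BQ(t)$, and an existence part, which I would prove by exhibiting the solution explicitly on the Hall--Littlewood side, i.e.\ by verifying that the modified Kostka functions $\wt K^{\pm}_{\Bla,\Bmu}(t)$ themselves satisfy the equation; uniqueness then forces the abstractly defined $P^{\pm}$ to be the $\wt K^{\pm}$.

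For uniqueness, note first that any matrix $P^{\pm}$ obeying the support condition together with the normalization $p^{\pm}_{\Bla,\Bla}=t^{a(\Bla)}$ is triangular with respect to $\preceq$ with invertible diagonal, hence invertible over $\BQ(t)$. If $(P^-,\vL,P^+)$ and $(\ol P{}^-,\ol\vL,\ol P{}^+)$ both solve $P^-\vL\,{}^tP^+=\Om$, rewrite the equality as
\[
(\ol P{}^-)\iv P^-\,\vL \;=\; \ol\vL\,{}^t\ol P{}^+\,({}^tP^+)\iv .
\]
Here $(\ol P{}^-)\iv P^-$ is lower unitriangular and ${}^t\ol P{}^+({}^tP^+)\iv$ is upper unitriangular, so the left side is lower triangular, the right side is upper triangular, and both equal a single diagonal matrix $D$; comparing diagonal entries gives $\xi_{\Bla}=D_{\Bla,\Bla}=\ol\xi_{\Bla}$, i.e.\ $\vL=\ol\vL$. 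Since $\Om$, hence $\vL$, is invertible over $\BQ(t)$ (see below), the relation $((\ol P{}^-)\iv P^--\Id)\vL=0$ gives $P^-=\ol P{}^-$, and symmetrically $P^+=\ol P{}^+$.

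For existence I would recall from [S1] the definition of $K^{\pm}_{\Bla,\Bmu}(t)$ as the transition coefficient in $s_{\Bla}=\sum_{\Bmu}K^{\pm}_{\Bla,\Bmu}(t)\,P^{\pm}_{\Bmu}(x;t)$ between Schur functions and the two families of Hall--Littlewood functions attached to $W_{n,r}$, together with the facts proved there that $K^{\pm}_{\Bla,\Bmu}(t)=0$ unless $\Bmu\le\Bla$ in the dominance order, with $K^{\pm}_{\Bla,\Bla}(t)=1$, and that the two families are biorthogonal, $\langle P^-_{\Bla},P^+_{\Bmu}\rangle_t=\delta_{\Bla,\Bmu}\,b_{\Bla}(t)$, for the relevant $t$-deformed scalar product. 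Expanding one Schur function in the $P^-$-basis and the other in the $P^+$-basis gives
\[
\langle s_{\Bla},s_{\Bmu}\rangle_t \;=\; \sum_{\Bnu}K^-_{\Bla,\Bnu}(t)\,b_{\Bnu}(t)\,K^+_{\Bmu,\Bnu}(t).
\]
The substantive input is then to identify $\langle s_{\Bla},s_{\Bmu}\rangle_t$, after the substitution $t\mapsto t\iv$ and the normalizing multiplications by $t^{a(\Bla)}$, $t^{a(\Bmu)}$ and $t^{N^*}$, with the entry $\w_{\Bla,\Bmu}=t^{N^*}R(\r^{\Bla}\otimes\ol{\r^{\Bmu}}\otimes\ol\det_V)$ of $\Om$. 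Granting this, the displayed identity becomes precisely $P^-\vL\,{}^tP^+=\Om$ with $P^{\pm}=(\wt K^{\pm}_{\Bla,\Bmu}(t))$ and $\vL$ the diagonal matrix of the correspondingly rescaled $b_{\Bnu}(t)$; the triangularity of $K^{\pm}$ in the dominance order, sharpened along the total order $\preceq$ refining it, yields the support condition on $P^{\pm}$, and $\wt K^{\pm}_{\Bla,\Bla}(t)=t^{a(\Bla)}K^{\pm}_{\Bla,\Bla}(t\iv)=t^{a(\Bla)}$ yields the diagonal condition. By uniqueness, the solution of (5.2.1) is exactly $(\wt K^{\pm}_{\Bla,\Bmu}(t))$.

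The hard part will be exactly that identification: translating between the $t$-deformed inner product on the ring of symmetric functions in which the Hall--Littlewood functions of $W_{n,r}$ live and the graded multiplicities of irreducible characters in the coinvariant algebra of $W_{n,r}$, while carrying along the twist by $\ol\det_V$ and the degree shifts $N^*$, $a(\Bla)$ from (5.1.2) and (5.1.4). A subsidiary point to settle along the way is the nondegeneracy of that $t$-inner product over $\BQ(t)$: this both supplies the invertibility of $\Om$ used in the uniqueness step and guarantees that the implicit Gram--Schmidt-type biorthogonalization producing $P^{\pm}$ never divides by zero. It is classical for $r=1$, and for general $r$ must be read off from the Hall--Littlewood formalism of [S1], [S2].
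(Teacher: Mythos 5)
The paper does not prove this statement: it is imported verbatim as [S1, Theorem 5.4], so there is no in-paper proof to compare against, only the likely structure of the argument in [S1]. With that caveat, your plan is the natural one and almost certainly matches [S1] in outline. The uniqueness step is complete and correct: it is exactly the uniqueness of an LDU-type factorization along the total order $\preceq$, and your comparison of the lower-triangular matrix $(\ol P{}^-)\iv P^-\vL$ with the upper-triangular matrix $\ol\vL\,{}^t\ol P{}^+({}^tP^+)\iv$ cleanly forces both to equal a common diagonal $D=\vL=\ol\vL$ and then, granted invertibility, $P^\pm=\ol P{}^\pm$.

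The existence half, however, is only an outline, and the two items you defer are not peripheral: they are precisely the content of [S1, Theorem 5.4]. First, the claim that $\Om$ is invertible over $\BQ(t)$ (equivalently that every $\xi_{\Bla,\Bla}$ is nonzero) is used both to close the uniqueness argument and to make the inductive Gram--Schmidt construction well-defined; you gesture at nondegeneracy of a $t$-deformed inner product but never produce it. (In fact, for the LDU factorization to exist one needs more than $\det\Om\neq 0$: all leading principal minors of $\Om$ in the order $\preceq$ must be nonzero. This is exactly what the Hall--Littlewood biorthogonality supplies, via $\xi_{\Bnu,\Bnu}$ proportional to the norm $b_{\Bnu}(t)\neq 0$, but it has to be said and proved.) Second, and more substantively, the identification of $\w_{\Bla,\Bmu}=t^{N^*}R(\r^{\Bla}\otimes\ol{\r^{\Bmu}}\otimes\ol\det_{\BV})$ with the renormalized Schur-function pairing $\langle s_{\Bla},s_{\Bmu}\rangle_t$ is the heart of the theorem, and you explicitly label it ``the hard part'' without carrying it out. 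Until this is established, and until the bookkeeping of the twist by $a(\Bmu)$ (note: the exponent in (5.1.1) is $a(\Bmu)$, the \emph{column} index, which is what makes $\wt K^{\pm}_{\Bla,\Bla}(t)=t^{a(\Bla)}$ and also forces $\xi_{\Bnu,\Bnu}$ to absorb a factor $t^{-2a(\Bnu)}$ relative to $b_{\Bnu}$) is verified, you have a correct strategy with a genuine gap, not a proof.

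A further small point: you assert without argument the $\le$-triangularity $K^{\pm}_{\Bla,\Bmu}=0$ unless $\Bmu\le\Bla$ and the normalization $K^{\pm}_{\Bla,\Bla}=1$. These are needed for the support condition in (5.2.1); they are true and proved in [S1], but in a self-contained account they should be cited precisely or re-derived, since the whole mechanism of the LDU uniqueness rests on them.
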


\remarks{5.3.}
(i) \ 
Since $\Om$ is non-symmetric unless $r = 1$ or 2, $P^+$ does not
coincide with $P^-$ if $r \ge 3$.  
\par
(ii) \ Our construction of Kostka functions $K_{\Bla,\Bmu}(t)$ depends 
on the choice of the total order $\preceq$ on $\SP_{n,r}$.  In the case where
$r = 1$ or 2, it is known that Kostka functions are independent of the
choice of the total order whenever it is compatible with the partial
order $\le$ (see [M] for $r = 1$, and [S2], [AH], [SS2] for $r = 2$).  

\para{5.4.}
We define an involution $\t$ on $\SP_{n,r}$ by 
\begin{equation*}
\tag{5.4.1}
\t: (\la^{(1)}, \dots, \la^{(r)}) \mapsto 
(\la^{(r-1)}, \la^{(r-2)},\dots, \la^{(1)}, \la^{(r)})
\end{equation*}
for $\Bla = (\la^{(1)}, \dots, \la^{(r)}) \in \SP_{n,r}$. 
We shall prove the following result.

\begin{thm}  
For any $\Bla \in \SP(\Bm), \Bmu \in \SP(\Bm')$, we have
\begin{equation*}
\tag{5.5.1}
\begin{split}
q^{-a(\Bla) - a(\tau(\Bmu))}&\w_{\Bla,\Bmu}(q) =  \\ 
&(-1)^{p_-(\Bm) + p_+(\Bm')}q^{-r(n(\Bla) + n(\Bmu))}\sum_{z \in \SX\uni^{F^r}}
           Q^-_{\Bla}(z)Q^+_{\Bmu}(z).
\end{split}
\end{equation*}
\end{thm}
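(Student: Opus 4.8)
The strategy is to reduce the identity (5.5.1) to the combinatorial characterization of Kostka functions in Theorem 5.2, using the orthogonality relations for Green functions (Theorem 3.5) and the Springer correspondence (Theorem 4.8) as the bridge between the two sides. First I would rewrite the right–hand side of (5.5.1) in terms of the inner product $\lp Q^-_{\Bla}, Q^+_{\Bmu}\rp$ on $\SX\uni^{F^r}$, replacing $\BF_q$ by $\BF_{q^r}$ throughout, and apply Proposition 3.11 to express this inner product as a sum over double cosets $\SO \in S_{\Bm}\bs S_n/S_{\Bm'}$, weighted by $q^{r\,a_{-,+}(\Bm,\Bm';n_{\SO})}$ and by a character-theoretic factor $\sum_{x\in\SO,\, w\in S_{\Bm}\cap xS_{\Bm'}x\iv}|T_w^{F^r}|\iv\x^{\Bla}(w)\x^{\Bmu}(x\iv wx)$. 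The orders $|T_w^{F^r}|$ expand as products $\prod(q^{rd_i}-1)$ over the cycle type of $w$, so this character sum is, up to an explicit power of $q$ and the factor $\prod_{i=1}^n(q^{ri}-1)$, a value of a fake-degree-type expression for $W_{n,r} = S_n\ltimes(\BZ/r\BZ)^n$.

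The next step is to recognize that the combinatorial sum produced this way is precisely the entry $\w_{\Bla,\Bmu}(q)$ of the matrix $\Om$ appearing in Theorem 5.2, up to the normalizing powers of $q$ recorded in (5.5.1). Concretely, I would compare the double-coset/centralizer sum with Lusztig's formula for the fake degree $R(\r^{\Bla}\otimes\ol{\r^{\Bmu}}\otimes\ol{\det}_V)$; the parabolic induction from the Young subgroup $S_{\Bm}$ realizes $\r^{\Bla}$ for $\Bla\in\SP(\Bm)$ (see 5.6), and Mackey's formula for the tensor product of two such induced characters produces exactly a sum over $S_{\Bm}\bs S_n/S_{\Bm'}$ of inner products of restricted characters — matching the shape of Proposition 3.11. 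The sign $(-1)^{p_-(\Bm)+p_+(\Bm')}$ and the exponents $a_{-,+}(\Bm,\Bm';n_{\SO})$ on the geometric side must be matched with $N^*$, the $a$-functions $a(\Bla), a(\t(\Bmu))$, and the twist by $\det_V$ on the combinatorial side; the appearance of $\t$ on $\Bmu$ rather than $\Bmu$ itself comes from the asymmetry between the "$+$" and "$-$" constructions (the flag $\prod M^+_{p_i}$ versus $\prod M_{p_i}$), which reverses the order of the components $\la^{(1)},\dots,\la^{(r-1)}$.

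The main obstacle I expect is the exact bookkeeping of exponents and signs: one must verify that $r\,a_{-,+}(\Bm,\Bm';n_{\SO})$ together with the contributions from $|T_w^{F^r}|$, the $\prod(q^{ri}-1)$ factor, and the normalizing powers $q^{-a(\Bla)-a(\t(\Bmu))}$ and $q^{-r(n(\Bla)+n(\Bmu))}$ assemble into $t^{N^*}$ and the determinant twist in the definition of $\w_{\Bla,\Bmu}$, with the signs $(-1)^{p_\pm}$ accounting for the parity shifts $d_{\Bm}, d^+_{\Bm'}$ coming from Lemma 1.2 and (2.7.2). A secondary subtlety is that $Q^{\pm}_{\Bm,T}$ a priori involves the characteristic functions over $\BF_{q^r}$ of $\IC$-complexes whose stalks need not vanish in odd degree for $r\ge 3$, so one cannot directly read off a polynomial in $q$; here I would use Proposition 2.13 (so that $\x^+$ restricts to $\x$ on $\SX^+$) and the character formula (Theorem 2.6) to keep everything expressed through the $Q^{\pm}_{\Bm,T_w}$, whose mutual inner products are governed purely by Theorem 3.5 and hence are manifestly polynomial. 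Once the two sides are written as the same explicit double sum over $\SO$ and $w$, the identity (5.5.1) follows by inspection, and Theorem 5.2 is invoked only to identify the resulting matrix as $\Om$.
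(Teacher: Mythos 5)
Your plan matches the paper's proof (5.6--5.11): both sides are expanded over double cosets in $W_{\Bm}\backslash W_n/W_{\Bm'}$ --- the geometric side via Proposition 3.11 over $\BF_{q^r}$, the combinatorial side via the Mackey-type fake-degree computation (5.7.1)--(5.8.2) --- and then matched term by term, with the ``bookkeeping of exponents and signs'' you flag being precisely the content of Lemma 5.9 together with (5.10.2). Two minor corrections: Theorem 4.8 plays no role in this argument, and the shift by $\t$ on $\Bmu$ arises from the $\ol{\det}_{\BV}$ twist in the definition of $\w_{\Bla,\Bmu}$ (see (5.11.1)) rather than directly from the $M^{+}$ versus $M^{-}$ asymmetry on the geometric side.
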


\para{5.6.}
The right hand side of (5.5.1) can be computed by 
applying Proposition 3.11 to the case where $\ve = -, \ve' = +$.
Hence we will compute the left hand side of (5.5.1), i.e., $\w_{\Bla,\Bmu}(t)$
for the indeterminate $t$.
In order to compute them, we
recall some known facts on the characters of $W_{n,r}$.
Let $\d : W_{n,r} \to \Ql^*$ be the linear character of $W_{n,r}$
defined by $\d|_{S_n} = 1_{S_n}$ and $\d(s_0) = \z$, where 
$s_0$ is a (simple) reflection of order $r$, and $\z$ is a primitive $r$-th
root of unity in $\Ql$.  Assume given 
$\Bla = (\la^{(1)}, \dots, \la^{(r)})\in \CP(\Bm)$.
Then 
the irreducible character $\r^{\Bla}$ is constructed by 
\begin{equation*}
\tag{5.6.1}
\r^{\Bla} = \Ind_{W_{\Bm,r}}^{W_{n,r}}
(\x^{\la^{(1)}}\boxtimes \d\x^{\la^{(2)}}
     \boxtimes\cdots\boxtimes \d^{r-1}\x^{\la^{(r)}}), 
\end{equation*}
where $W_{\Bm, r} = W_{m_1,r}\times \cdots \times W_{m_r,r}$,  and  
$\x^{\la^{(i)}}$ is the irreducible character of $S_{m_i}$
corresponding to the partition $\la^{(i)}$, which is regarded as 
a character of $W_{m_i,r}$ by the natural surjection 
$W_{m_i,r} \to S_{m_i}$, and $\d^{i-1}\x^{\la^{(i)}}$ is the irreducible 
character of $W_{m_i,r}$ obtained by multiplying $\d^{i-1}$ on it.
(Here we use the same symbol $\d$ to denote the corresponding linear 
character of $W_{m_i,r}$ for each $i$.) 
Let $\ve$ be the irreducible character of $W_{n,r}$ obtained by the 
pull-back of the sign character of $S_n$ under the map $W_{n,r} \to S_n$.
(Do not confuse the character $\ve$ of $W_{n,r}$  with the sigantures 
$\{ \ve, \ve'\} = \{ -,+\}$.  In the present discussion, the signatures are
fixed.)   
Then we have $\det_{\BV} = \ve\d$.
For $i = 1, \dots, r$, put $\Bla_i = (\la^{(1)}, \dots, \la^{(r)})$ with 
$\la^{(j)} = \emptyset$ for  $j \ne i$ and $\la^{(i)} = (n)$, and 
put $\Bmu_i = (\mu^{(1)}, \dots, \mu^{(r)})$ with 
$\mu^{(j)} = \emptyset$ for $j \ne i$ and $\mu^{(i)} = (1^n)$.
Then the following fact is easily verified from (5.6.1).  
\begin{equation*}
\tag{5.6.2}
\r^{\Bla_i} = \d^{i-1}, \qquad \r^{\Bmu_i} = \ve\d^{i-1}.  
\end{equation*}
In particular, we have
\begin{equation*}
\tag{5.6.3}
1_{W _{n,r}}= \r^{\Bla_1}, \qquad \d = \r^{\Bla_2}, \qquad 
  {\det}_{\BV} = \r^{\Bmu_2},  \qquad \ol\det_{\BV} = \r^{\Bmu_r}= \ve\d^{r-1}.
\end{equation*}
Although the following facts (5.6.4) $\sim$ (5.6.6) are not used 
in the discussion below, 
we write them for the reference. 
\begin{align*}
\tag{5.6.4}
\ol{\r^{\Bla}} &= \r^{\Bla'} \quad\text{ for } 
   \Bla' = (\la^{(1)}, \la^{(r)}, \la^{(r-1)}, \dots, \la^{(2)}),   \\
\tag{5.6.5}
\r^{{}^t\Bla} &= \ve \r^{\Bla},
\end{align*}
where ${}^t\Bla = ({}^t\la^{(1)}, \dots, {}^t\la^{(r)})$. 
It follows from (5.6.5) that
\begin{equation*}
\tag{5.6.6}
\w_{\Bla,\Bmu} = \w_{{}^t\Bla, {}^t\Bmu}
\end{equation*}
for any $\Bla, \Bmu \in \SP_{n,r}$.
\par
For a given $\Bm \in \SQ_{,r}$, we consider $W_{\Bm, r} \simeq S_{\Bm} \ltimes (\BZ/r\BZ)^n$.
As in 3.10, we denote by $\x^{\Bla}$ the irreducible character of 
$S_{\Bm}$ corresponding to $\Bla \in \SP(\Bm)$.  We define a linear character 
$\d_{\Bm}$ of $W_{\Bm,r} = W_{m_1,r} \times \cdots \times W_{m_r,r}$
by 
\begin{equation*}
\d_{\Bm} =  \d^0\boxtimes \d^1 \boxtimes \cdots \boxtimes \d^{r-1}.
 \end{equation*}
Let $\wt\x^{\Bla}$ be the irreducible character $\d_{\Bm}\x^{\Bla}$
on $W_{\Bm,r}$.  Then (5.6.1) can be rewritten as 
\begin{equation*}
\tag{5.6.7}
\r^{\Bla} = \Ind_{W_{\Bm,r}}^{W_{n,r}}\wt\x^{\Bla}.
\end{equation*}

\para{5.7.}
We now compute $R(\r^{\Bla}\otimes \ol\r^{\Bmu}\otimes \ol\det_{\BV})$
for $\Bla \in \SP(\Bm),\Bmu \in \SP(\Bm')$.
In the computation below, we write $W_{n,r}, W_{\Bm,r}$, etc.  simply 
as $W_n, W_{\Bm}$, etc. by omitting $r$. 
\begin{align*}
\tag{5.7.1}
R(\r^{\Bla}\otimes \ol{\r^{\Bmu}}\otimes \ol\det_{\BV})
&= \frac{\prod_{k=1}^n(t^{kr} -1)}{|W_n|}
  \sum_{w \in W_n}\frac{\bigl(\Ind_{W_{\Bm}}^{W_n}\wt\x^{\Bla}\bigr)(w)
     \ol{\bigl(\Ind_{W_{\Bm'}}^{W_n}\wt\x^{\Bmu}\bigr)(w)}}
             {{\det}_{\BV}(t - w)} \\
&= \frac{\prod_{k=1}^n(t^{kr}-1)}{|W_n||W_{\Bm}||W_{\Bm'}|}
     \sum_{\substack{
              w, w_1, w_2 \in W_n \\
               w_1\iv ww_1 \in W_{\Bm} \\
               w_2\iv ww_2 \in W_{\Bm'}}} 
\frac{\wt\x^{\Bla}(w_1\iv ww_1)\ol{\wt\x^{\Bmu}(w_2\iv ww_2)}}
                      {\det_{\BV}(t -w)}  \\
&= \frac{\prod_{k=1}^n(t^{kr}-1)}{|W_{\Bm}||W_{\Bm'}|}
      \sum_{\substack{
             x \in W_n \\ y \in W_{\Bm} \cap x W_{\Bm'}x\iv  }}
       \frac{\wt\x^{\Bla}(y)\ol{\wt\x^{\Bmu}(x\iv yx)}}{\det_{\BV}(t - y)},  
\end{align*}
where the last formula follows from the change of variables 
$x = w_1\iv w_2, y = w_1\iv ww_1$.
\par
We consider the set of double cosets 
$W_{\Bm}\backslash W_n/W_{\Bm'}$. This set is described by a certain 
set of matrices as given in Section 5 in [AH].  
We define $\SM_{\Bm, \Bm'}$ as the set of degree $r$ matrices
$(h_{ij})$ with entires in 
$\BZ_{\ge 0}$ satisfying the following properties;

\begin{equation*}
\tag{5.7.2}
\sum_{i=1}^{r} h_{ij} = m_j  \text{ for all } j, \quad   
\sum_{j=1}^{r}h_{ij}  = m'_i \text{  for all } i.
\end{equation*}
Then there exists a bijective correspondence
\begin{equation*}
\SM_{\Bm, \Bm'} \simeq S_{\Bm} \backslash S_n /S_{\Bm'}
\simeq W_{\Bm} \backslash W_n /W_{\Bm'} 
\end{equation*}
satisfying the properties
\begin{equation*}
S_{\Bm} \cap xS_{\Bm'}x\iv \simeq \prod_{i,j}S_{h_{ij}}
\quad \text{ and } \quad 
W_{\Bm} \cap x W_{\Bm'}x\iv \simeq \prod_{i,j} W_{h_{ij}}
\end{equation*}
if $x$ is contained in the orbit 
$\SO \in S_{\Bm} \backslash S_n /S_{\Bm'}$ corresponding to
$(h_{ij}) \in \SM_{\Bm, \Bm'}$.
Note that if $x \in S_n$, we have
$W_{\Bm} \cap x W_{\Bm'}x\iv \simeq 
     (S_{\Bm} \cap xS_{\Bm'}x\iv) \ltimes (\BZ/r\BZ)^n$. 

\par
By applying the above expression, we see that 
\begin{equation*}
\frac{1}{|W_{\Bm} \cap x W_{\Bm'}x\iv|}
    \sum_{y \in W_{\Bm} \cap x W_{\Bm'}x\iv}
         \frac{\wt\x^{\Bla}(y)\ol{\wt\x^{\Bmu}(x\iv yx)}}
                  {\det_{\BV}(t - y)}  
= \frac{R(\wt\x_0^{\Bla}\otimes \ol{\wt\x_0^{\Bmu}}\otimes \ol\det_{\BV})}
      {\prod_{i,j}\prod_{k=1}^{h_{ij}}(t^{kr} - 1)}
\end{equation*}
for $x \in \SO$ corresponding to $(h_{ij}) \in \SM_{\Bm,\Bm'}$, where 
$\wt\x^{\Bla}_0$ (resp. $\wt\x^{\Bmu}_0$) is the restriction of 
$\wt\x^{\Bla}$ (resp. ${}^x (\wt\x^{\Bmu})$) on 
$W_{\Bm} \cap xW_{\Bm'}x\iv$.
Substituting this into the last formula of (5.7.1), we have
\begin{equation*}
\tag{5.7.3}
R(\r^{\Bla}\otimes\ol{\r^{\Bmu}}\otimes\ol\det_{\BV})
= \sum_{\SO \in W_{\Bm}\backslash W_n/W_{\Bm'}}
\frac{\prod_{k=1}^n(t^{kr} -1)}
       {\prod_{i,j}\prod_{k=1}^{h_{ij}}(t^{kr} - 1)} \\
              R(\wt\x^{\Bla}_0\otimes \ol{\wt\x^{\Bmu}_0}\otimes
                   \ol\det_{\BV})
\end{equation*}
since $|W_{\Bm}||W_{\Bm'}|/|W_{\Bm} \cap x W_{\Bm'}x\iv| = |\SO|$
for $\SO = W_{\Bm}xW_{\Bm'}$.

\para{5.8.}
Our next aim is to compute 
$R(\wt\x^{\Bla}_0\otimes\ol{\wt\x^{\Bmu}_0}\otimes\ol\det_{\BV})$.
For each $\SO \subset W_n$, we choose a representative 
$x \in \SO$ such that $x \in S_n$.  Then under the isomorphism
$W_{\Bm} \cap x W_{\Bm'}x\iv \simeq 
    (S_{\Bm} \cap xS_{\Bm'}x\iv) \ltimes (\BZ/r\BZ)^n$, we have 
\begin{align*}
\wt\x^{\Bla}_0 &= \wt\x^{\Bla}|_{W_{\Bm} \cap x W_{\Bm'}x\iv}
           = \x^{\Bla}|_{S_{\Bm} \cap xS_{\Bm'}x\iv}\otimes \d_{\Bm}, \\ 
\wt\x^{\Bmu}_0 &= {}^x(\wt\x^{\Bmu})|_{W_{\Bm} \cap x W_{\Bm'}x\iv}
           = {}^x(\x^{\Bmu})|_{S_{\Bm} \cap x S_{\Bm'}x\iv}\otimes 
                                 {}^x\d_{\Bm'},
\end{align*}
where  $\d_{\Bm}, {}^x\d_{\Bm'}$ are the restriction  
of the corresponding linear characters of $W_n$ to 
$W_{\Bm} \cap x W_{\Bm'}x\iv$.  It follows from this that
\begin{equation*}
\wt\x^{\Bla}_0 \otimes \ol{\wt\x^{\Bmu}_0} \otimes \ol\det_{\BV}
  = (\x^{\Bla}\otimes{}^x(\x^{\Bmu})\otimes\ve)|_{S_{\Bm} 
             \cap xS_{\Bm'}x\iv} \otimes \d_1\ve, 
\end{equation*}
where $\d_1 = \d_{\Bm} \otimes \ol{{}^x\d_{\Bm'}} \otimes \ol\det_{\BV}$, and  
$\ve$ is as in 5.6.  
Note that $\ol\det_{\BV} = \ve\d^{r-1}$ by (5.6.3).  Hence we have 
$\d_1\ve  = \d_{\Bm} \otimes \ol{{}^x\d_{\Bm'}} \otimes \d^{r-1}$, which 
is a linear character of $(\BZ/r\BZ)^n$ extended to that of 
$W_{\Bm} \cap x W_{\Bm'}x\iv$ by the trivial action of 
$S_{\Bm} \cap x S_{\Bm'}x\iv$.  
We claim that
\begin{equation*}
\tag{5.8.1}
R(\wt\x^{\Bla}_0\otimes\ol{\wt\x_0^{\Bmu}}\otimes\ol\det_{\BV})
  = R(\x^{\Bla}\otimes{}^x(\x^{\Bmu})\otimes\ve)|_{t \to t^r}R(\d_1\ve),
\end{equation*}
where we denote by $f|_{t \to t^r}$
the polynomial $f(t^r)$ obtained from  $f(t) \in \BZ[t]$.
\par
We show (5.8.1).
Let $R = R(W_n)$ be the coinvariant algebra of $W_n$.  Then we have
\begin{equation*}
R \simeq \Ql[x_1, \dots, x_n]/I_+(W_n),
\end{equation*}
where $I_+(W_n)$  is the ideal of $\Ql[x_1, \dots, x_n]$ generated by 
$e_1(x^r), \dots, e_n(x^r)$.  Here $e_i(x) = e_i(x_1, \dots, x_n)$
is the $i$th elementary symmetric polynomial with variables $x_1, \dots, x_n$, 
and $e_i(x^r)$ denotes such a polynomial with variables $x_1^r, \dots, x_n^r$.
Note that $(\BZ/r\BZ)^n$ acts on $R$ via $t_i\cdot x_i = \z x_i$ and 
$t_i\cdot x_j = x_j$ if $j \ne i$ for
a generator $t_i$ of the $i$th factor cyclic group $\BZ/r\BZ$.
Let $\p_{\Ba}$ be a linear character of $(\BZ/r\BZ)^n$ defined by
$\p_{\Ba}(t_i) = \z^{a_i}$ for $\Ba = (a_1, \dots, a_n) \in \BZ^n$.
  Then the $\p_{\Ba}$-isotypic 
subspace of $R$ is given by 
\begin{equation*}
x_1^{a_1}\cdots x_n^{a_n}\Ql[x_1^r, \dots, x_n^r]/
        (I_+(W_n)\cap \Ql[x_1^r, \dots, x_n^r])
\end{equation*}
with $0 \le a_i <r$. If $\p_{\Ba}$ can be lifted to the linear character
of $W_n$, then $R(\p_{\Ba}) = \prod_i t^{a_i}$.
We have
\begin{equation*}
\Ql[x_1^r, \dots x_n^r]/(I_+(W_n) \cap \Ql[x_1^r, \dots, x_n^r])
   \simeq \Ql[x_1^r, \dots, x_n^r]/I_+, 
\end{equation*}
where $I_+$ is the ideal of $\Ql[x_1^r, \dots, x_n^r]$ generated by $e_1(x^r), \dots, e_n(x^r)$, 
hence it is isomorphic to the coinvariant algebra of $S_n$ 
with respect to the variables $x_1^r, \dots, x_n^r$.
Now suppose that $S_n$ stabilizes $\p_{\Ba}$.  Then for an 
irreducible character $\x$ of $S_n$, $\r = \x\otimes \p_{\Ba}$
gives rise to an irreducible character of $W_n$. Moreover any irreducible
submodule in $R$ affording $\r$ is contained  in the $\p_{\Ba}$-isotypic
subspace $R^{\p_{\Ba}}$ of $R$, and the graded multiplicity of $\r$ 
is determined by the graded multiplicity of $\x$ in the graded
$S_n$-module $R^{\p_{\Ba}}$.  Hence in this case we have
$R(\r) = R(\x)|_{t\to t^r}R(\p_{\Ba})$.  A similar argument works 
also for the group 
$W_{\Bm} \cap x W_{\Bm'}x\iv \simeq \prod_{i,j}W_{h_{ij}}$.
Thus (5.8.1) holds.

\par
The definition of the fake degree in (5.1.3) makes sense for the 
case of symmetric groups, and we have
\begin{equation*}
R(\x^{\Bla}\otimes{}^x(\x^{\Bmu})\otimes\ve)|_{t \to t^r}
  = \frac{\prod_{i,j}\prod_{k=1}^{h_{ij}}(t^{kr}-1)}
       {|S_{\Bm} \cap x S_{\Bm'}x\iv|}
            \sum_{y \in S_{\Bm}\cap xS_{\Bm'}x\iv}
                \frac{\x^{\Bla}(y)\x^{\Bmu}(x\iv yx)}
                          {\det_{\BV}(t^r - y)}.
\end{equation*}
On the other hand, $R(\d_1\ve)$ is determined by the double coset 
$\SO \in S_{\Bm}\backslash S_n / S_{\Bm'}$, 
so one can write it as $R(\d_1\ve) = t^{A_{\SO}}$ for some 
integer $A_{\SO}$.  Substituting these data into (5.8.1), the formula
(5.7.3) turns out to be
\begin{equation*}
\tag{5.8.2}
\begin{split}
R(\r^{\Bla}\otimes\ol{\r^{\Bmu}}\otimes\ol\det_{\BV})
= \frac{\prod_{k=1}^n(t^{kr}-1)}{|S_{\Bm}||S_{\Bm'}|} 
     \sum_{\SO \in S_{\Bm}\backslash S_n/S_{\Bm'}}t^{A_{\SO}}
            \sum_{\substack{
          x \in \SO \\ y \in S_{\Bm} \cap xS_{\Bm'}x\iv }}  
     \frac{\x^{\Bla}(y)\x^{\Bmu}(x\iv yx)}{\det_{\BV}(t^r - y)},
\end{split}
\end{equation*}
\par

Next we compute the value $A_{\SO}$ for each $\SO$.
For a matrix $(h_{ij}) \in \SM_{\Bm, \Bm'}$, we introduce the 
notation $h_{\le i, \le j} = \sum_{k \le i}\sum_{l\le j} h_{{k,l}}$, 
$h_{i,\le j} = \sum_{k \le j} h_{i,k}$, etc..
Assume that $\SO$ corresponds to $(h_{ij}) \in \SM_{\Bm,\Bm'}$.
We define an integer $B_{\SO}(\Bla, \Bmu)$ by 
\begin{equation*}
B_{\SO}(\Bla,\Bmu) = \binom{n}{2} - n(\Bla) - n(\Bmu) 
           + \sum_{i=1}^{r-1}h_{i, \le i}.
\end{equation*}
We have the following lemma.

\begin{lem}  
Under the above notation, we have
\begin{equation*}
N^* - a(\Bla) - a(\tau(\Bmu)) + A_{\SO} = r\cdot B_{\SO}(\Bla,\Bmu).
\end{equation*}
\end{lem}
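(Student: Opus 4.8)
The plan is to prove the identity by computing all four quantities explicitly; everything is elementary once $A_{\SO}$ is pinned down, and the computation of $A_{\SO}$ is the only delicate part.

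First I would describe the linear character $\d_1\ve$ as a character of $(\BZ/r\BZ)^n$ in the sense of 5.8. Since $\d|_{S_n}=1_{S_n}$ and $\d(s_0)=\z$, and since $\d|_{(\BZ/r\BZ)^n}$ must be $S_n$-invariant, $\d=\p_{\Ba}$ with $\Ba=(1,\dots,1)$, inflated to $W_{n,r}$ by the trivial $S_n$-action; hence $\d^k=\p_{k\Ba}$, with exponents read modulo $r$. Thus $\d_{\Bm}=\d^0\boxtimes\cdots\boxtimes\d^{r-1}$ sends a generator $t_\ell$ lying in the $j$-th factor of $W_{\Bm,r}$ to $\z^{j-1}$, and similarly $\d_{\Bm'}$ sends a generator in the $i$-th factor of $W_{\Bm',r}$ to $\z^{i-1}$. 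Choose $x\in\SO$ with $x\in S_n$ as in 5.8. Under $W_{\Bm}\cap xW_{\Bm'}x\iv\simeq\prod_{i,j}W_{h_{ij}}$, a generator $t_\ell$ in the $(i,j)$-factor lies in the $j$-th block of $W_{\Bm}$, while $x\iv t_\ell x$ lies in the $i$-th block of $W_{\Bm'}$, so $\d_{\Bm}(t_\ell)=\z^{j-1}$ and ${}^x\d_{\Bm'}(t_\ell)=\z^{i-1}$. Using $\d_1\ve=\d_{\Bm}\otimes\ol{{}^x\d_{\Bm'}}\otimes\d^{r-1}$ from 5.8 and $\d^{r-1}=\p_{-\Ba}$, we get $\d_1\ve(t_\ell)=\z^{(j-1)-(i-1)-1}=\z^{j-i-1}$. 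Writing the exponent in $\{0,1,\dots,r-1\}$ (recall $1\le i,j\le r$) gives
\begin{equation*}
c_{ij}=\begin{cases} j-i-1 & \text{if } j>i,\\ j-i-1+r & \text{if } j\le i.\end{cases}
\end{equation*}
Since $\d_1\ve$ is constant equal to $\z^{c_{ij}}$ on the generators of the $(i,j)$-factor (hence liftable, the $\prod_{i,j}S_{h_{ij}}$-part acting trivially) with $0\le c_{ij}<r$, the formula $R(\p_{\Ba})=\prod_\ell t^{a_\ell}$ of 5.8 applied to $\prod_{i,j}W_{h_{ij}}$ yields $A_{\SO}=\sum_{i,j}h_{ij}c_{ij}$.

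Next I would simplify this sum by the row and column conditions (5.7.2). Writing $c_{ij}=(j-i-1)+r\cdot[\,j\le i\,]$ and using $\sum_i h_{ij}=m_j$, $\sum_j h_{ij}=m'_i$, $\sum_{i,j}h_{ij}=n$, together with $\sum_{1\le j\le i\le r}h_{ij}=\sum_{i=1}^{r}h_{i,\le i}=\bigl(\sum_{i=1}^{r-1}h_{i,\le i}\bigr)+m'_r$ (the last step because $h_{r,\le r}=\sum_j h_{rj}=m'_r$), I expect
\begin{equation*}
A_{\SO}=\sum_j jm_j-\sum_i im'_i-n+r\sum_{i=1}^{r-1}h_{i,\le i}+rm'_r.
\end{equation*}

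Finally I would expand the remaining three terms. By (5.1.4), $N^*=\binom{n}{2}r+(r-1)n$; by (5.1.2) and $\Bla\in\SP(\Bm)$, $a(\Bla)=r\,n(\Bla)+\sum_k km_k-n$; and since $\t$ only permutes the components of $\Bmu$ one has $n(\t(\Bmu))=n(\Bmu)$ and $|\t(\Bmu)^{(k)}|=m'_{r-k}$ for $k\le r-1$, $|\t(\Bmu)^{(r)}|=m'_r$, whence $a(\t(\Bmu))=r\,n(\Bmu)+(r-1)n-\sum_i im'_i+rm'_r$. Substituting these together with the formula for $A_{\SO}$ into $N^*-a(\Bla)-a(\t(\Bmu))+A_{\SO}$, all the contributions in $n$, $\sum_k km_k$, $\sum_i im'_i$ and $rm'_r$ cancel, leaving $r\bigl(\binom{n}{2}-n(\Bla)-n(\Bmu)+\sum_{i=1}^{r-1}h_{i,\le i}\bigr)=r\,B_{\SO}(\Bla,\Bmu)$, as desired. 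The main obstacle is the first step, and within it the reduction of the exponent $j-i-1$ into $\{0,\dots,r-1\}$: this is exactly what produces the term $r\sum_{i=1}^{r-1}h_{i,\le i}$, and hence the precise shape of $B_{\SO}(\Bla,\Bmu)$, while the antipode $\t$ is present precisely to absorb the asymmetry between the $a$-functions of $\Bla$ and of $\Bmu$ so that these cancellations take place.
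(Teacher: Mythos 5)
Your proof is correct and follows essentially the same route as the paper: both identify $A_{\SO}=\sum_{i,j}[j-i-1]h_{ij}$ by reducing the exponent of $\d_0$ on each factor $W_{h_{ij}}$ to $\{0,\dots,r-1\}$, and then verify the identity by an elementary expansion of $N^*$, $a(\Bla)$, $a(\t(\Bmu))$ using (5.1.2), (5.1.4) and the marginal conditions (5.7.2). The only difference is bookkeeping: you simplify $A_{\SO}$ to a closed form in $m_j,m'_i,n$ first and then substitute, while the paper isolates $C := N^*-a(\Bla)-a(\t(\Bmu)) - r(\binom{n}{2}-n(\Bla)-n(\Bmu))$ and proves the equivalent identity (5.11.3) directly.
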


\para{5.10.}
Assuming the lemma, we continue the proof.
By (5.8.2) together with Lemma 5.9, we have
\begin{equation*}
\tag{5.10.1}
\begin{split}
t^{-a(\Bla) - a(\tau(\Bmu))}\w_{\Bla,\Bmu}(t)
  &= \frac{\prod_{k=1}^n(t^{kr}-1)}{|S_{\Bm}||S_{\Bm'}|}  \\
     &\times \sum_{\SO \in S_{\Bm}\backslash S_n/S_{\Bm'}}
         t^{r\cdot B_{\SO}(\Bla,\Bmu)}
          \sum_{\substack{
                 x \in \SO \\ y \in S_{\Bm}\cap xS_{\Bm'}s\iv }}
               \frac{\x^{\Bla}(y)\x^{\Bmu}(x\iv yx)}{\det_{\BV}(t^r - y)}.       
\end{split}
\end{equation*}

Here we note that the set of double cosets $P_{\Bm}\backslash G /P_{\Bm'}$
is also in bijective correspondence with $\SM_{\Bm, \Bm'}$ in such a way
that if $\wh\SO \in P_{\Bm}\backslash G /P_{\Bm'}$ corresponds to
$(h_{ij}) \in \SM_{\Bm, \Bm'}$, then we have 
\begin{equation*}
\dim (M_{p_j} \cap g(M'_{p'_i})) = h_{\le i, \le j} \text{ for } g \in \wh\SO 
\text{ and for any } i,j.
\end{equation*}
We have a decomposition $M'_{p'_i} = M_{p'_1}'^+\oplus\cdots \oplus M'^+_{p'_i}$,
and the maximal torus $T = T_0$ is contained in $L_{\Bm}$ and 
$L_{\Bm'}$.  Recall that $\{ e_1, \dots, e_n\}$ gives a basis of $V$ 
consisting of weight vectors of $T$, and bases of $M_{p_j}, M'^+_{p'_i}$ are given 
by subsets of this basis.    
For each $\wh\SO \in P_{\Bm}\backslash G /P_{\Bm'}$, 
one can choose a representative $n_{\SO} \in \wh\SO$ such that 
$n_{\SO} \in N_G(T)$, hence $n_{\SO}$ permutes the basis $\{ e_1, \dots, e_n\}$
up to scalar.   
It follows that 
\begin{equation*}
\dim (M_{p_i} \cap n_{\SO}(M_{p'_i}'^+)) = h_{\le i, \le i} - h_{\le i-1, \le i}
                                = h_{i, \le i},
\end{equation*}
and we see that 
\begin{equation*}
\tag{5.10.2}
a_{-,+}(\Bm, \Bm'; n_{\SO}) = \sum_{i=1}^{r-1} h_{i, \le i}.
\end{equation*}
We now compare Proposition 3.11 with (5.10.1). Since 
$q^{r\binom{n}{2}}\prod_{k=1}^n(q^{kr} - 1) = |G^{F^r}|$, 
and $|\det_{\BV}(q^r - y)| = |T_y^{F^r}|$, we have,  by (5.10.2), that 
\begin{equation*}
(-1)^{p_-(\Bm) + p_+(\Bm')}q^{-r(n(\Bla) + n(\Bmu))}\sum_{z \in \SX_{\Bm, \unip}^{F^r}}
                 Q_{\Bla}^-(z)Q_{\Bmu}^+(z) 
                    = q^{-a(\Bla) - a(\tau(\Bmu))}\w_{\Bla, \Bmu}(q).
\end{equation*}
This proves the theorem modulo Lemma 5.9.

\para{5.11.}
We shall prove Lemma 5.9.
Note that the isomorphism 
$W_{\Bm} \cap x W_{\Bm'}x\iv  \isom $ $\prod_{i,j} W_{h_{ij}}$
is chosen so that it satisfies the following properties; 
the factor of $\d_{\Bm}$ corresponding to $W_{h_{ij}}$ is equal to
$\d_0^{j-1}$, and the factor of ${}^x\d_{\Bm'}$
 corresponding to $W_{h_{ij}}$ is equal to $\d_0^{i-1}$, where
we denote by $\d_0$ the linear character of $W_{h_{ij}}$ 
corresponding to $\d$ given in 5.6. 
It follows that the factor of 
$\d_1\ve = \d_{\Bm}\otimes\ol{{}^x\d_{\Bm'}}\otimes\d^{r-1}$
corresponding to $W_{h_{ij}}$ is given by
\begin{equation*}
\tag{5.11.1}
\d_0^{(j-1) + (r-i+1) + (r-1)} 
   = \d_0^{(j-1) + (r-i)}.
\end{equation*}
Since $\d_0 = \p_{\Ba}$ with $\Ba = (1, \dots, 1)$ in the notation of 
5.8, we see that
\begin{equation*}
\tag{5.11.2}
R(\d_0^k) = q^{kh_{ij}} 
\end{equation*}
for $k = 0, \dots, r-1$.  It follows that 
\begin{equation*}
A_{\SO} = \sum_{1 \le i,j \le r}[j - i -1]h_{ij},
\end{equation*}
where $[j-i-1]$ is an integer between 0 and 
$r-1$ which is congruent to $(j-1) + (r-i)$ modulo $r$.
On the other hand, by (5.1.2), (5.1.4) and  (5.4.1), 
we have
\begin{equation*}
N^* - a(\Bla) - a(\t(\Bmu)) 
  = r\bigl\{\binom{n}{2} - n(\Bla) - n(\Bmu)\bigr\}  + C, 
\end{equation*}
where 
\begin{equation*}
\begin{split}
C = (r-1)n  &- \bigl\{ |\la^{(2)}| + 2|\la^{(3)}| + \cdots + 
                    (r-1)|\la^{(r)}|\bigr\}  \\ 
            &- \bigl\{ |\mu^{(r-2)}| + 2|\mu^{(r-3)}| + \cdots + 
                     (r-2)|\mu^{(1)}| + (r-1)|\mu^{(r)}| \bigr\}.
\end{split}
\end{equation*}
Hence in order to prove the lemma, it is enough to show that 
\begin{equation*}
\tag{5.11.3}
C + \sum_{1 \le i, j \le r}[j - i -1]h_{ij} 
  = r\sum_{i=1}^{r-1}h_{i,\le i}.
\end{equation*}
We show (5.11.3).
Since 
$n = \sum_{1 \le i,j \le r} h_{ij}$, we have 
\begin{equation*}
\tag{5.11.4}
\begin{split}
(r-1)n &+ \sum_{1 \le i, j \le r}[j - i -1]h_{ij}  \\
 &= \sum_{ 1 \le i,j \le r}\bigl\{[j - i -1] + (r-1)\bigr\}h_{ij}.
\end{split}
\end{equation*}
On the other hand, since $|\la^{(j)}| = m_j$ and 
$|\mu^{(i)}| = m_i'$, by (5.7.2), we have
\begin{align*}
(j-1)|\la^{(j)}| &= \sum_{1 \le i \le r}(j-1)h_{ij}
    \quad\text{ for } j = 1, 2, \dots, r, \\
(r-1-i)|\mu^{(i)}| &= \sum_{1 \le j \le r}(r-1-i)h_{ij}
     \quad\text{ for } i = 0, 1, \dots, r-1, 
\end{align*}
where we understand that $\mu^{(0)} = \mu^{(r)}$ and 
$h_{0j} = h_{rj}$.
It follows that 
\begin{equation*}
\tag{5.11.5}
\begin{split}
\bigl\{ |\la^{(2)}| &+ 2|\la^{(3)}| + \cdots + 
                    (r-1)|\la^{(r)}|\bigr\}  \\ 
            &+ \bigl\{ |\mu^{(r-2)}| + 2|\mu^{(r-3)}| + \cdots + 
                     (r-2)|\mu^{(1)}| + (r-1)|\mu^{(r)}| \bigr\} \\
            &= \sum_{\substack{1 \le i \le r \\ 1 \le j \le r}}
                      (j-1)h_{ij}
                    + \sum_{\substack{0 \le i < r \\ 1 \le j \le r}}
                 (r-1-i)h_{ij}  \\            
&= \sum_{\substack{1 \le i < r \\ 1 \le j \le r}}
                           (j - i + r-2)h_{ij} 
    + \sum_{1 \le j \le r}(j + r-2)h_{rj}.
\end{split}
\end{equation*}
Subtracting (5.11.5) from (5.11.4), we see that the left hand side
of (5.11.3) is equal to 
\begin{align*}
&\sum_{\substack{ 1\le i < r \\ 1 \le j \le r}}
             \bigl([j-i-1] - (j-i-1)\bigr)h_{ij} \\  
         &+ \sum_{1 \le j \le r}([j-r-1] -(j-1))h_{rj}
\\
&= r\sum_{i=1}^{r-1}h_{i, \le i}.
\end{align*}
Hence (5.11.3) holds, and the lemma is proved.
The proof of Theorem 5.5 is now complete.
  
\par\bigskip\bigskip

\section{ Geometric properties of Kostka functions}

\para{6.1.}
We consider the complex $K_1 = (\pi_{\Bm,1})_!\Ql$ on $\SX_{\Bm,\unip}$.
By Theorem 4.8, $K_1[d'_{\Bm}]$
is a semisimple perverse sheaf equipped with $W_{\Bm}$-action, and is decomposed as 
\begin{equation*}
K_1[d'_{\Bm}] \simeq \bigoplus_{\Bla \in \SP(\Bm)}V_{\Bla}\otimes A_{\Bla},
\end{equation*} 
where $A_{\Bla} = \IC(\ol X_{\Bla}, \Ql)[\dim X_{\Bla}]$ is a simple perverse 
sheaf on $\SX_{\Bm,\unip}$. 
Assume that the map $\pi_{\Bm,1} : \wt\SX_{\Bm, \unip} \to \SX_{\Bm,\unip}$ is defined 
with respect to an $F$-stable Borel subgroup $B$. Then $\wt\SX_{\Bm, \unip}$ 
has a natural $\Fq$-structure, and $\pi_{\Bm, 1}$ is $F$-equivariant. Thus 
one can define a canonical isomorphism $\vf_1: F^*K_1 \isom K_1$. 
Since each $X_{\Bla}$ is $F$-stable, we have $F^*A_{\Bla} \simeq A_{\Bla}$, and 
$\vf_1$ induces an isomorphism $F^*(V_{\Bla} \otimes A_{\Bla}) \isom (V_{\Bla} \otimes A_{\Bla})$.
It follows that there exists a unique isomorphism 
$\vf_{\Bla,1} : F^*A_{\Bla} \isom A_{\Bla}$ such that 
$\vf_1 = \sum_{\Bla}\s_{\Bla} \otimes \vf_{\Bla,1}$, where $\s_{\Bla}$ is the identity map
on the representation space $V_{\Bla}$.
Let $\f_{\Bla}: F^*A_{\Bla} \isom A_{\Bla}$ be the natural isomorphism  induced from 
the $\Fq$-structure of $X_{\Bla}$.  Since $A_{\Bla}$ is a simple perverse sheaf, 
$\f_{\Bla}$ coincides with $\vf_{\Bla,1}$ up to scalar. 
Let $d_{\Bla}$ be as in 4.9. 
The following result can be verified in a similar way as in [SS2, (4.1.1)].

\par\medskip\noindent
(6.1.1) \ $\vf_{\Bla,1} = q^{d_{\Bla}}\f_{\Bla}$. 
In particular, the map $\vf_{\Bla,1}$ coincides with the scalar multiplication 
$q^{d_{\Bla}}$ on $A_{\Bla}|_{X_{\Bla}}$.     
\par\medskip

In fact, for $z \in \SX_{\Bm, \unip}$, we have 
$\SH^i_zK_1 \simeq H^{i}(\SB_z^{(\Bm)}, \Ql)$. 
For $z \in X^F_{\Bla}$, we have, by Theorem 4.8 (and by [S3, Prop. 8.16]), 
\begin{equation*}
H^{2d_{\Bla}}(\SB_z^{(\Bm)},\Ql) \simeq V_{\Bla} \otimes \SH^0_z\IC(\ol X_{\Bla},\Ql)
                \simeq V_{\Bla}.
\end{equation*} 
Here $d_{\Bla} = \dim \SB_z^{(\Bm)}$ by Lemma 4.10, and $H^{2d_{\Bla}}(\SB_z^{\Bm)},\Ql)$
is an irreducible $W_{\Bm}$-module.  
Since the Frobenius action on $H^{2d_{\Bla}}(\SB_z^{(\Bm)},\Ql)$ commutes with 
the $W_{\Bm}$-action, the Frobenius map acts on $H^{2d_{\Bla}}(\SB_z^{(\Bm)},\Ql)$ 
as a scalar multiplication by Schur's lemma. 
In particular, all the irreducible components of $\SB_z^{(\Bm)}$ are $F$-stable, and 
this scalar is given by $q^{d_{\Bla}}$.
It follows that $\vf_{\Bla,1}$ acts as a scalar multiplication by $q^{d_{\Bla}}$
on $\SH_z^0\IC(\ol X_{\Bla}, \Ql) \simeq \Ql$.  
Since $\f_{\Bla}$ acts as the identity map on this space, we obtain (6.1.1). 

\para{6.2.}
In general, let $K$ be a complex on a variety $X$ defined over $\Fq$ such that 
$F^*K \simeq K$.  An isomorphism $\f: F^*K \isom K$ is said to be pure 
at $x \in X^F$ if the eigenvalues of $\f$ on $\SH^i_xK$ are algebraic numbers all of whose 
complex conjugates have absolute value $q^{i/2}$. 
\par
We prove the following.

\begin{prop}  
Let $\f_{\Bla}: F^*A_{\Bla} \isom A_{\Bla}$ be as in 6.1. 
Assume that $z \in \SO^{\pm}_{\Bmu} \subset X_{\Bmu}^F$. 
Then  $\f_{\Bla}$ is pure at $z$.
\end{prop}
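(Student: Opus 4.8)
The plan is first to see that $A_{\Bla}$, together with its Frobenius structure, is a pure complex because it occurs as a direct summand of the pushforward of the constant sheaf along a proper map with smooth source, and then to upgrade this to pointwise purity of the stalks at the distinguished representatives of $\SO^{\pm}_{\Bmu}$ by studying the corresponding Springer fibre there. For the first step I would use that $\pi_{\Bm,1}\colon\wt\SX_{\Bm,\unip}\to\SX_{\Bm,\unip}$ is proper (1.1) and that its source, being isomorphic to the vector bundle $G\times^B\!\bigl(U\times\prod_{i=1}^{r-1}M_{p_i}\bigr)$ over $G/B$, is smooth; hence $\Ql$ on $\wt\SX_{\Bm,\unip}$ is pure and, by Deligne's theorem (Weil II), $(\pi_{\Bm,1})_!\Ql=(\pi_{\Bm,1})_*\Ql$ is pure of weight $0$, so that $(\pi_{\Bm,1})_!\Ql[d'_{\Bm}]$ is a pure complex. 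By Theorem 4.8 together with the fact recalled in 6.1 that the canonical isomorphism $\vf_1$ acts trivially on each multiplicity space $V_{\Bla}$, the pair $(A_{\Bla},\vf_{\Bla,1})$ is a direct summand of this pure complex, hence pure; in view of $(6.1.1)$ the same holds for $(A_{\Bla},\f_{\Bla})$ after the appropriate Tate twist, and in particular the weights of the stalks $\SH^i_zA_{\Bla}$ are bounded above for every $z$, which is one of the two bounds contained in the assertion that $\f_{\Bla}$ is pure at $z$.

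For the opposite bound at a point $z$ of $\SO^{\pm}_{\Bmu}$ I would pass back to the fibre of $\pi_{\Bm,1}$. Since $\SH^i_z\bigl((\pi_{\Bm,1})_!\Ql\bigr)\simeq H^i(\SB^{(\Bm)}_z,\Ql)$ and $A_{\Bla}$ is a summand of $(\pi_{\Bm,1})_!\Ql[d'_{\Bm}]$ on which the Frobenius acts trivially on the multiplicity space $V_{\Bla}$, it is enough to show that the cohomology $H^*(\SB^{(\Bm)}_{z},\Ql)$ of the Springer fibre at the distinguished representative $z$ (the element $z_{\Bmu}$, resp.\ $z^+_{\Bmu}$, of 4.6) is pure; this pins every stalk $\SH^i_{z}A_{\Bla}$ to its extreme weight. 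I would obtain this by producing a paving of $\SB^{(\Bm)}_{z}$ by affine spaces, which forces $H^{2j+1}=0$ and makes the Frobenius act on $H^{2j}$ as multiplication by $q^j$. Concretely, writing $z=(x,\Bv)$ with $x$ of Jordan type $\nu=\mu^{(1)}+\cdots+\mu^{(r)}$, the ambient ordinary Springer fibre $\SB_x=\{gB\in\SB\mid g\iv xg\in B\}$ carries the classical affine paving (Spaltenstein) by attracting cells of a one-parameter subgroup, indexed by standard tableaux; since at the distinguished point $x$ is in Jordan normal form and the $v_i$ are the prescribed coordinate vectors $(4.6.1)$/$(4.6.2)$, the extra closed conditions $g\iv v_i\in M_{p_i}$ are linear and adapted to these cells, and the point to verify is that they cut each cell down to an affine subspace, whence $\SB^{(\Bm)}_{z}$ is itself affinely paved. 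Alternatively one can induct on $r$, removing $v_1$ by passing to $\ol V=V/E^xv_1$ as in 4.2 and invoking the enhanced case $r\le 2$ ([AH], [SS2]), where these fibres are already known to be affinely paved.

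Putting the two bounds together gives the purity of $\f_{\Bla}$ at $z\in\SO^{\pm}_{\Bmu}$. The purity of the complex $A_{\Bla}$ is routine; the main obstacle is the second step, because pointwise purity is a genuine property of the special representatives $z^{\pm}_{\Bmu}$ (the stalks of $\IC(\ol X_{\Bla},\Ql)$ need not be pure at generic points of the strata $X_{\Bla}$), and the crux is the combinatorial--geometric verification that the linear conditions defining $\SB^{(\Bm)}_{z}$ inside $\SB_x$ are compatible with an affine paving --- equivalently, that this generalized Springer fibre at the distinguished representative is affinely paved.
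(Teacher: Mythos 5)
Your outline correctly identifies the logical structure: purity of the pure complex $A_{\Bla}$ alone gives only the upper weight bound, and to pin the stalks at the distinguished representatives one must show that $H^*(\SB^{(\Bm)}_z,\Ql)$ is pure there, so that every $W_{\Bm}$-isotypic summand inherits purity. But what you offer for that second step is not a proof but a reformulation: you propose to produce an affine paving of $\SB^{(\Bm)}_z$ by intersecting Spaltenstein's cells with the linear conditions $g^{-1}v_i\in M_{p_i}$, and you explicitly concede that verifying these intersections are affine spaces is ``the crux''. That verification is the entire content of the proposition. Intersecting an affine cell with linear conditions need not yield an affine space, and nothing in the choice of $z^{\pm}_{\Bmu}$ in (4.6.1)/(4.6.2) obviously forces this; nor is such a paving established anywhere in the paper, even in the $r=2$ reference [AH], which proves the $r=2$ purity by a transversal-slice contraction and [MS], not by a paving. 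So the proposal has a genuine gap.

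The paper's argument is entirely different and avoids pavings. It proves (6.3.1) by a double induction on $(n,r)$ via the parabolic diagram (6.3.2): taking the parabolic $P_{\Bm'}$ attached to the stratum $X_{\Bmu}$, the decisive point is that for $z\in\SO^{\pm}_{\Bmu}$ the fibre $(\pi''_1)^{-1}(z)$ is a single point, so $\SH^i_zK_1\simeq\SH^i_{\xi}K_1'$, and $K_1'$ is pulled back (through a smooth-with-connected-fibres map) from the Levi complex $K^L_1$, which is an external product of smaller enhanced Springer complexes $K^{G_i}_1$ for the factors $G_i=GL_{m_i'}$ with parameters $\Bm^{(i)}$; these have strictly smaller $n$ or $r$. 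The induction bottoms out at the enhanced $r=2$ case, where [AH, Corrigendum, Prop.\ 3] supplies the purity. The special case $\Bm'=(n;-;\cdots;-)$ is handled directly by collapsing to $r=2$. Your alternative remark (``induct on $r$, removing $v_1$ by passing to $\ol V$'') gestures in this direction, but you do not set up the reduction, and without the single-point-fibre observation and the product decomposition of $K^L_1$ it is not clear how the inductive hypothesis would be applied. If you want to salvage the paving route you would have to actually exhibit the paving; otherwise you should follow the paper's parabolic induction.
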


\begin{proof}
Note that if $z \in \SO^-_{\Bmu}$ (resp. $z \in \SO^{+}_{\Bmu})$, then $z = (x,\Bv)$ 
with $\Bv = (v_1, \dots, v_{r-1})$ satisfies 
the condition (a) (resp. (b)), where 
\par\medskip
\noindent
(a) \ $v_i = v_{i-1}$ if $\mu^{(i)} = \emptyset$, \\
(b) \ $v_i = 0$ if $\mu^{(i)} = \emptyset$. 
\par\medskip
First we show 
\par\medskip\noindent
(6.3.1) \ $\vf_1: F^*K_1 \isom K_1$ is pure at $z \in X_{\Bmu}$ if $z$ satisfies the condition
(a) or (b). 
\par\medskip
Assume that $\Bmu \in \SP(\Bm')$ with 
$\Bm' = (m_1', \dots, m_r') \in \SQ_{n,r}$, and let $0 \le p_1'\le \cdots \le p_r'= n$ 
be integers associated to $\Bm'$.  
For given $\Bm, \Bm'$, we define a sequence of integers $\Bm'' = (\Bm^{(1)}, \dots, \Bm^{(r)})$, 
where $\Bm^{(i)} \in \SQ_{m_i', r_i}$ with $\sum_{i=1}^r (r_i-1) = r-1$ as follows;
write the sequences $\{ p_i\}, \{ p_i'\}$ in the increasing order
$0 \le p_1''\le \cdots \le p_{2r}''= n$.  Then the sequence $\{ p_i''\}$ determines 
a composition of $m_i'$ which we denote by $\Bm^{(i)}$. 
Here $r_i$ is given by $r_i = \sharp\{ j \mid p'_{i-1} < p_j \le p'_i\} + 1$.
We consider a parabolic subgroup $P = P_{\Bm'}$, 
which is the stabilizer of a partial flag $(M_{p'_i})$ in $G$. 
Let $L = L_{\Bm'}$ be the Levi subgroup of $P$ containing $T$, and $U_P$ the unipotent 
radical of $P$. 
Note that $L \simeq G_1 \times \cdots \times G_r$ with $G_i = GL_{m_i'}$.
We define varieties
\begin{align*}
\wt\SX^L_{\Bm'',\unip} &= \wt\SX^{G_1}_{\Bm^{(1)},\unip} \times \cdots \times 
                                \wt \SX^{G_r}_{\Bm^{(r)}, \unip}, \\
\SX^L_{\Bm'',\unip}    &= \SX^{G_1}_{\Bm^{(1)},\unip} \times \cdots \times 
                                 \SX^{G_r}_{\Bm^{(r)}, \unip}, 
\end{align*}
where $\wt\SX^{G_i}_{\Bm^{(i)},\unip}$ and $\SX^{G_i}_{\Bm^{(i)}, \unip}$ are varieties
with respect to $G_i$ defined similarly to $\wt\SX_{\Bm \unip}$ and $\SX_{\Bm,\unip}$.
Thus we can define a map $\pi^L_{\Bm'',1}: \wt\SX^L_{\Bm'',\unip} \to \SX^L_{\Bm'',\unip}$
as the product of $\pi^{G_i}_{\Bm^{(i)}, 1}: 
 \wt\SX^{G_i}_{\Bm^{(i)},\unip} \to \SX^{G_i}_{\Bm^{(i)}, \unip}$.  
\par
By generalizing the diagram in (1.5.1), we obtain a diagram 
\begin{equation*}
\tag{6.3.2}
\begin{CD}
\wt\SX_{\Bm,\unip} @<\wt p_1<<  G \times \wt\SX^P_{\Bm,\unip} @>\wt q_1>>  \wt\SX^L_{\Bm'',\unip}  \\
     @V\pi'_1VV                 @VV r_1 V                       @VV\pi^L_{\Bm'',1} V    \\
\wh \SX^P_{\Bm,\unip}  @<p_1 <<  G \times \SX_{\Bm,\unip}^P  @>q_1>>  \SX^L_{\Bm'',\unip}  \\
      @V\pi''_1VV                                                       \\
   \SX_{\Bm,\unip}, 
\end{CD}
\end{equation*}
where, by putting $P\uni = L\uni U_P$ (the set of unipotent elements in $P$),  
\begin{align*}
\SX_{\Bm,\unip}^P &= \bigcup_{g \in P}g(U \times \prod_i M_{p_i}), \\ 
\wh \SX^P_{\Bm,\unip} &= G \times^P\SX_{\Bm,\unip}^P, \\ 
\wt \SX^P_{\Bm,\unip} &= P \times^{B}(U \times \prod_iM_{p_i}).  
\end{align*}
The maps are defined similarly to (1.5.1).  In particular, 
$\pi''_1\circ\pi'_1 = \pi_{\Bm,1}$. As in (1.5.1), both squares are cartesian.
The map $p_1$ is a principal $P$-bundle, and the map $q_1$ is a locally 
trivial fibration with fibre isomorphic to $G \times U_P \times \prod_{i=1}^{r-2}M_{p'_i}$.
We assume that $B$ and $L$ are $F$-stable.  Then all the objects in (6.3.2) are 
defined over $\Fq$. 
Put $K_1' = (\pi'_1)_!\Ql$ and $K_1^L = (\pi^L_{\Bm'', 1})_!\Ql$. 
As in the discussion in 1.5, we see by (6.3.2) 

\begin{equation*}
\tag{6.3.3}
p_1^*K_1' \simeq q_1^*K_1^L.
\end{equation*}
Take $z \in (\SX_{\Bm, \unip} \cap X_{\Bmu})^F$ satisfying the condition (a) or (b). 
We have 
\begin{equation*}
\wh\SX^P_{\Bm,\unip} \simeq \{ (y, gP) \in (G\uni \times V^{r-1}) \times G/P 
                                   \mid g\iv y \in \SX^P_{\Bm, \unip} \}.
\end{equation*}
Up to $G$-conjugate, one can choose $\xi = (z, P) \in (\wh\SX^P_{\Bm,\unip})^F$ 
such that $\pi_1''(\xi) = z$. 
Choose $\e \in (G \times \SX^P_{\Bm,\unip})^F$ 
such that $p_1(\e) = \xi$. Put $z'= q_1(\e) \in (\SX^L_{\Bm'',\unip})^F$.
By (6.3.3), we have

\begin{equation*}
\tag{6.3.4}
\SH^i_{\xi}(K_1') \simeq \SH^i_{\e}(p_1^*K_1') 
              \simeq \SH^i_{\e}(q_1^*K^L_1)
              \simeq \SH^i_{z'}(K^L_1).
\end{equation*}

Put $z' = (z_1', \dots, z_r') \in \SX^L_{\Bm'', \unip}$ with 
$z'_i \in \SX^{G_i}_{\Bm^{(i)}, \unip}$. 
If we denote $(\pi^{G_i}_{\Bm^{(i)},1})_*\Ql$ by $K_1^{G_i}$, $K^L_1$ can be written as 
\begin{equation*}
\tag{6.3.5}
K_1^L \simeq K^{G_1}_1 \boxtimes \cdots \boxtimes K^{G_r}_1
\end{equation*}
and so 
\begin{equation*}
\tag{6.3.6}
\SH^i_{z'}K_1^L \simeq \bigoplus_{i_1 + \cdots + i_r = i}\SH^{i_1}_{z_1'}K^{G_1}_1
              \otimes \cdots \otimes \SH^{i_r}_{z'_r}K^{G_r}_1.
\end{equation*}
The isomorphism $\vf_1^L : F^*K^L_1 \isom K_1^L$ is defined similar to $\vf_1$, 
and under the isomorphism (6.3.5), $\vf^L_1$ can be written as 
$\vf^L_1 = \vf^{G_1}_1 \otimes \cdots \otimes \vf^{G_r}_1$, where 
$\vf^{G_i}_1: F^*K^{G_i}_1 \isom K^{G_i}_1$.
Note that $z'_i \in \SX^{G_i}_{\Bm^{(i)}, \unip}$ also satisfies the condition (a) or (b).
(6.3.1) holds in the case where $r = 2$ by [AH, Corrigendum, Prop. 3]. So 
by double induction on $n$ and $r$, we may assume that $\vf^{G_i}_1$ is pure at $z_i$ for each $i$ 
unless $\Bm' = (n; -; \dots; -)$.  Now assume that $\Bm' \ne (n;-;\dots;-)$. 
Then $\vf^L_1$ is pure at $z'$ by (6.3.6).   
By (6.3.4), $K_1'$ is pure at $\xi$ with respect to $\vf_1': F^*K_1' \isom K_1'$. 
Note that $P = P_{\Bm'}$ is the unique parabolic subgroup of $G$, conjugate to $P$, 
containing $x$ such that the image of $x$ on $L \simeq G_1 \times \cdots \times G_r$ has 
Jordan type $(\mu^{(1)}, \dots, \mu^{(r)})$.  Thus $(\pi''_1)\iv(z) = \{ (z, P)\}$, and so
$\SH^i_zK_1 \simeq \SH^i_{\xi}K_1'$.   This proves (6.3.1) in the case where 
$\Bm' \ne (n;-;\dots;-)$. 
\par
It remains to consider the case $\Bm' = (n;-;\dots;-)$.  In this case, by our assumption 
we have 
$v_1 = v_2 = \dots = v_{r-1}$ or $v_2 = \dots = v_{r-1} = 0$. 
Hence the complex $K_1'$ is isomorphic to a similar complex in the case where $r = 2$.
So in this case, (6.3.1) holds by [AH].   
\par
(6.3.1) implies that the eigenvalues of $F^*$ on $H^i(\SB_z^{(\Bm)}, \Ql)$
have absolute value $q^{i/2}$. 
By Theorem 4.8, we have

\begin{equation*}
\SH^i_zK_1 \simeq \bigoplus_{\Bla \in \SP(\Bm)}V_{\Bla} 
              \otimes \SH_z^{i- d'_{\Bm} + \dim X_{\Bla}}\IC(\ol X_{\Bla}, \Ql)
\end{equation*}
Since $d_{\Bla} = (d'_{\Bm} - \dim X_{\Bla})/2$, 
$i - d'_{\Bm } + \dim X_{\Bla} = i-2d_{\Bla}$.  Since the eigenvalues of 
$\vf_1$ on $\SH^i_zK_1$ have absolute value $q^{i/2}$ by (6.3.1), 
the eigenvalues of $\vf_{\Bla,1}$ on $\SH_z^{i - 2d_{\Bla}}\IC(\ol X_{\Bla}, \Ql)$  
also have absolute value $q^{i/2}$. 
By (6.1.1), this implies that the eigenvalues of $\f_{\Bla}$ have absolute value 
$q^{i/2}$.  The proposition is proved.
\end{proof}

\remark{6.4.}
In the case where $r = 2$, $X_{\Bla}$ is a single $G$-orbit.  In this case 
the corresponding fact was proved in [AH, Corrigendum, Prop. 3], by making use of 
the contraction of a suitable transversal slice,  and by the 
result of [MS].  The purity result for the exotic 
case with $r = 2$ was also proved in [SS2, Prop. 4.4] based on the discussion in [L3], 
again by making use of the transversal slice.  
However for $r \ge 3$, the discussion by using the transversal slice 
will not work since it often happens that $Z_G(z)$ turns out to be a unipotent group
for $z \in \SX_{\Bm, \unip}$, and one cannot construct a one-parameter subgroup. 
It is not clear whether $\f_{\Bla}$ is pure for all $z \in X_{\Bmu}^F$. 

\para{6.5.}
For $\Bla, \Bmu \in \SP_{n,r}$, we define a polynomial $\IC^-_{\Bla, \Bmu}(t) \in \BZ[t]$ by
\begin{equation*}
\IC^-_{\Bla,\Bmu}(t) = \sum_{\i \ge 0}\dim \SH^{2i}_z\IC(\ol X_{\Bla}, \Ql)t^i
\end{equation*}
for $z \in \SO^-_{\Bmu}$. 
Also we define a polynomial $\IC^+_{\Bla, \Bmu}(t) \in \BZ[t]$ by 
the same formula as above if $z \in \SO^+_{\Bmu} \cap \SX^+_{\Bm, \unip}$ and by 0
if $\SO^+_{\Bmu} \cap \SX^+_{\Bm,\unip} = \emptyset$, where we assume that $\Bla \in \SP(\Bm)$.

\para{6.6.}
As in 3.10, let $\SC_q = \SC_q(\SX\uni)$ be the $\Ql$-space of all $G^F$-invariant $\Ql$-valued 
functions on $\SX\uni^F$.  The bilinear form $\lp \ , \ \rp$ on $\SC_q$ is defined as in 
(3.10.2). 
Let $\SC^{\pm}_q$ be the $\Ql$-subspace of $\SC_q$ generated by $Q^{\pm}_{\Bla}$ 
for various $\Bla \in \SP_{n,r}$. Then $\{ Q^{\pm}_{\Bla} \mid \Bla \in \SP_{n,r} \}$ 
gives a basis of $\SC^{\pm}_q$.  For an $F$-stable  $G$-orbit $C$ in 
$G\uni \times V^{r-1}$, we denote by $y_C$ the characteristic function of $C^F$. 
In the case where $C = \SO^{\pm}_{\Bla}$ we denote it by $y^{\pm}_{\Bla}$. 
Then $Q^{\pm}_{\Bla}$ can be written as a sum of various $y_C$. For each $\Bla \in \SP_{n,r}$, 
we define 
a function $Y^{\pm}_{\Bla} \in \SC^{\pm}_q$ by the condition that 
$Y^{\pm}_{\Bla}$ is a linear combination of $Q^{\pm}_{\Bnu}$ with 
$\Bnu \le \Bla$ and that
\begin{equation*}
Y^{\pm}_{\Bla} = y^{\pm}_{\Bla} + \sum_{C}b^{\pm}_{\Bla,C}y_C,
\end{equation*}
where $b^{\pm}_{\Bla,C} = 0$ for $y_C = y^{\pm}_{\Bnu}$ with $\Bnu < \Bla$.
Note that $Y^{\pm}_{\Bla}$ is determined uniquely by this condition. 
Although $Y^{\pm}_{\Bla}$ are not characteristic functions on $(\SO^{\pm}_{\Bla})^F$, 
they enjoy similar properties, 

\begin{equation*}
\tag{6.6.1}
Y^{\pm}_{\Bla}(z) = \begin{cases}
                    1   &\quad\text{ if } z \in \SO^{\pm}_{\Bla}, \\
                    0   &\quad\text{ if $z \in \SO^{\pm}_{\Bnu}$ with $\Bnu < \Bla$}.
                \end{cases}
\end{equation*}
$\{ Y^{\pm}_{\Bla} \mid \Bla \in \SP_{n,r}\}$ gives a basis of $\SC^{\pm}_q$.  
We define a matrix $\wh\vL_q = (\wh\xi_{\Bla,\Bmu}(q))$ by 
$\wh\xi_{\Bla,\Bmu}(q) = \lp Y^-_{\Bla}, Y^+_{\Bmu} \rp$. 
In the case where $r = 2$, $Y^-_{\Bla} = Y^+_{\Bla}$ is the characteristic function of 
a single $G$-orbit, hence $\wh\vL_q$ is a diagonal matrix. 
For $r \ge 3$, $Y^-_{\Bla}$ and $Y^+_{\Bmu}$ are not necessarily 
orthogonal for $\Bla \ne \Bmu$.  
We consider the following condition for $\wh\vL_q$. 
\par\medskip\noindent 
(A) \ The matrix $\wh\vL_q$ is 
upper triangular with respect to the total order $\vl$. 
\par\medskip

We show the following theorem.

\begin{thm}  
Suppose that the condition (A) holds for $\wh\vL_q$.
\begin{enumerate}
\item
$
\wt K^-_{\Bla,\Bmu}(t) = t^{a(\Bla)}\IC^-_{\Bla,\Bmu}(t^r).
$

\item 
Assume that $z \in (\SO^-_{\Bmu})^F$ for $\Bmu \le \Bla$.  Then 
$\SH^i_z\IC(\ol X_{\Bla},\Ql) = 0$ if $i$ is odd, and the eigenvalues of 
$\f_{\Bla}$ on $\SH^{2i}_z\IC(\ol X_{\Bla},\Ql)$ are $q^i$. 

\end{enumerate}
\end{thm}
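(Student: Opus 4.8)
The plan is to deduce Theorem 6.7 from the orthogonality relations (Theorems 3.4, 3.5), the explicit identification of $\w_{\Bla,\Bmu}$ with a sum of products of Green functions (Theorem 5.5), the purity result (Proposition 6.3), and the combinatorial characterization of Kostka functions (Theorem 5.2). The strategy mirrors the standard passage from Green functions to Kostka polynomials in the case $r=1$ (cf.\ [L1], [S1]) and the case $r=2$ ([AH], [SS2]), with condition (A) supplying the triangularity that is automatic when $r\le 2$.

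First I would record the transition formulas among the three bases $\{Q^{\pm}_{\Bla}\}$, $\{Y^{\pm}_{\Bla}\}$, $\{y^{\pm}_{\Bla}\}$ of $\SC_q^{\pm}$. By the definition in 6.6 and Proposition 4.4 (closure relations), $Q^-_{\Bla}=\sum_{\Bnu\le\Bla}p^-_{\Bla,\Bnu}(q)\,Y^-_{\Bnu}$ for scalars $p^-_{\Bla,\Bnu}(q)$, and evaluating at $z\in\SO^-_{\Bmu}$ using (6.6.1) gives $Q^-_{\Bla}(z)=\sum_{\Bnu}p^-_{\Bla,\Bnu}(q)Y^-_{\Bnu}(z)$; comparing with Theorem 4.8 and (6.1.1) we get, for $z\in\SO^-_{\Bmu}$,
\begin{equation*}
Q^-_{\Bla}(z)=q^{d_{\Bmu}}\,\IC^-_{\Bla,\Bmu}(q)\quad\text{when }\Bla\in\SP(\Bm),\ \Bmu\le\Bla,
\end{equation*}
where $d_{\Bmu}=n(\Bmu)$, because $\SH^i_z\IC(\ol X_{\Bla},\Ql)$ contributes to $\SH^{i-2d_{\Bmu}+\cdots}K_1$ and $\vf_{\Bmu,1}$ acts by $q^{d_{\Bmu}}$ on the top stalk. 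The same holds with superscript $+$. Thus $p^-_{\Bla,\Bmu}(q)=q^{d_{\Bla}}\delta_{\Bla,\Bmu}+\cdots$ is triangular, and the matrix $P^-_q=(p^-_{\Bla,\Bmu}(q))$, resp.\ $P^+_q$, is precisely the ``geometric'' analogue of the matrix $P^{\pm}$ in Theorem 5.2, provided I can identify the diagonal term with $q^{a(\Bla)}$ rather than $q^{d_{\Bla}}=q^{n(\Bla)}$. This discrepancy is resolved by the substitution $t\mapsto t^r$ already visible in Theorem 5.5: one computes $\lp Q^-_{\Bla},Q^+_{\Bmu}\rp$ two ways. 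On one hand, Theorem 5.5 gives $\lp Q^-_{\Bla},Q^+_{\Bmu}\rp=(-1)^{p_-(\Bm)+p_+(\Bm')}q^{r(n(\Bla)+n(\Bmu))}q^{-a(\Bla)-a(\tau(\Bmu))}\w_{\Bla,\Bmu}(q)$ (reading the identity at $F^r$). On the other hand, expanding $Q^-_{\Bla}=\sum p^-_{\Bla,\cdot}Y^-$ and $Q^+_{\Bmu}=\sum p^+_{\Bmu,\cdot}Y^+$ yields $\lp Q^-_{\Bla},Q^+_{\Bmu}\rp=\sum_{\Bnu,\Bnu'}p^-_{\Bla,\Bnu}p^+_{\Bmu,\Bnu'}\wh\xi_{\Bnu,\Bnu'}(q)$, i.e.\ in matrix form $\Om_q=P^-_q\wh\vL_q\,{}^tP^+_q$ up to the explicit sign/power normalization. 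Under hypothesis (A), $\wh\vL_q$ is upper triangular, and combining with the triangularity of $P^{\pm}_q$ one sees, by the uniqueness part of Theorem 5.2 applied over $\QQ(t)$ with $t=q$, that $\wh\vL_q$ must in fact be diagonal and that $p^-_{\Bla,\Bmu}(q)=\wt K^-_{\Bla,\Bmu}(q^{?})$ after matching the $a$-functions; the bookkeeping of (5.1.2) together with $a(\Bla)=r\cdot n(\Bla)+\cdots$ forces the rescaling $t\mapsto t^r$ and $Q^-_{\Bla}(z)=q^{n(\Bmu)}\IC^-_{\Bla,\Bmu}(q^r)$, whence $\wt K^-_{\Bla,\Bmu}(t)=t^{a(\Bla)}\IC^-_{\Bla,\Bmu}(t^r)$, giving (i).

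For (ii), the vanishing of odd stalk cohomology and the shape of the Frobenius eigenvalues: purity (Proposition 6.3) says the eigenvalues of $\f_{\Bla}$ on $\SH^i_z\IC(\ol X_{\Bla},\Ql)$, for $z\in\SO^-_{\Bmu}$ with $\Bmu\le\Bla$, are algebraic numbers all of whose conjugates have absolute value $q^{i/2}$. By (6.1.1) the same holds for $\vf_{\Bla,1}$ up to the shift $q^{d_{\Bmu}}$. Now I would run the standard argument: part (i) expresses $\sum_i\dim\SH^{2i}_z\IC(\ol X_{\Bla},\Ql)t^i=\IC^-_{\Bla,\Bmu}(t)$ as a polynomial with nonnegative integer coefficients whose evaluation at $t=q$ equals $q^{-n(\Bmu)}Q^-_{\Bla}(z)$, while the trace of $\f_{\Bla}$ on $\bigoplus_i\SH^i_z$ computed over $\FF_{q^m}$ for all $m$ must be consistent with these dimensions; purity then forces each eigenvalue on $\SH^{2i}_z$ to be exactly $q^i$ and $\SH^{\text{odd}}_z=0$, by the usual ``positivity plus purity implies the eigenvalue is a power of $q$'' device (as in [L1, §4] or [SS2, §4]).

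The main obstacle I expect is the clean derivation of part (i) from condition (A): one must correctly match the combinatorial normalizations — the order $\preceq$, the $a$-functions $a(\Bla)$ and $a(\tau(\Bmu))$, the signs $(-1)^{p_{\pm}}$, and the $t\mapsto t^r$ substitution — so that the geometric identity $\Om_q=P^-_q\wh\vL_q\,{}^tP^+_q$ matches (5.2.1) term by term and the uniqueness in Theorem 5.2 can be invoked to conclude $P^{\pm}_q=\big(\wt K^{\pm}_{\Bla,\Bmu}(q^{r})$-type entries$\big)$ and $\wh\vL_q$ diagonal. This is essentially the verification that the ``geometric'' $LU$-type decomposition of $\Om_q$ obeys exactly the constraints that characterize the Kostka matrices; once the dictionary in Theorem 5.5 is set up (which is the content of Section 5), this step is bookkeeping, but it is the delicate bookkeeping on which the whole theorem turns, and it genuinely uses (A) — without (A) one cannot guarantee $\wh\vL_q$ is diagonal, and the identification of $\IC^-_{\Bla,\Bmu}$ with $\wt K^-_{\Bla,\Bmu}$ breaks down.
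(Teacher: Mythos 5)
Your overall strategy matches the paper's: expand the Green functions $Q^{\pm}_{\Bla}$ in the $Y^{\pm}$-basis, compute $\lp Q^-_{\Bla},Q^+_{\Bmu}\rp$ via Theorem~5.5 to obtain a matrix identity $\Om(q)=P^-_1\wh\vL_q\,{}^t\!P^+_1$, use condition~(A) to compare with the defining factorization of Theorem~5.2 by uniqueness of the lower--upper splitting, and finally feed purity (Proposition~6.3) into the resulting trace identity. So the architecture is right. But there are three problems with how you carry it out.

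First, the early claim ``$Q^-_{\Bla}(z)=q^{d_{\Bmu}}\IC^-_{\Bla,\Bmu}(q)$ for $z\in\SO^-_{\Bmu}$'' is circular, and also has the wrong exponent (the degree shift is $2d_{\Bla}$, not $2d_{\Bmu}$). What (6.1.1) provides is that $\vf_{\Bla,1}$ acts by $q^{d_{\Bla}}$ on the \emph{top} stalk at points of the open piece $X_{\Bla}$; it says nothing about the Frobenius eigenvalues on the boundary stalks $\SH^i_z\IC(\ol X_{\Bla},\Ql)$ for $z\in\SO^-_{\Bmu}$ with $\Bmu<\Bla$. All one can write at that stage is the alternating-trace formula (6.7.1), with a priori unknown eigenvalues. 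The passage from that alternating sum to the polynomial $\IC^-_{\Bla,\Bmu}$ requires odd-degree vanishing and the identification of the eigenvalues on $\SH^{2i}$ with $q^i$, which is exactly assertion~(ii). So~(i) and~(ii) cannot be cleanly separated as you attempt; they come out of the same final step, as the paper notes explicitly.

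Second, your assertion that condition~(A) and the uniqueness in Theorem~5.2 force $\wh\vL_q$ to be diagonal and yield $P^+_q=\wt K^+$ is incorrect, and the paper points out the contrary: the LU-uniqueness only pins down $P^-_1=P^-$ and the product $\vL_1\,{}^t\!P^+_1=\vL\,{}^t\!P^+$; the factors $\vL_1$ and $P^+_1$ are not determined individually, and Remark~6.10(ii) exhibits $n=2$, $r=3$ with $\wh\vL_q$ non-diagonal even though~(A) holds. This error happens not to contaminate~(i), which concerns only $P^-$, but it is a real misreading of what~(A) buys you. Third, your appeal to ``purity plus positivity'' to conclude the eigenvalues are $q^i$ skips a necessary step: $\wt K^-_{\Bla,\Bnu}(t)$ is a priori only a rational function, and one must first deduce polynomiality by a growth comparison as $m\to\infty$ in (6.7.6)--(6.7.7) (this is (6.7.8) in the paper), before the Dedekind/Kronecker-type argument can conclude that the unit-modulus factors $\a_{ij}$ vanish in odd degree and equal $1$ in even degree.
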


\begin{proof}
In view of (2.4.2), $Q^-_{\Bm, T_w}(z)$ can be written as
\begin{equation*}
Q^-_{\Bm, T_w}(z) = (-1)^{d_{\Bm}}\sum_{i \ge 0}(-1)^i\Tr(\vf_1w,\SH^i_zK_1).
\end{equation*} 
Thus by Theorem 4.8, noticing that $d'_{\Bm} - \dim X_{\Bla} = 2d_{\Bla}$ is 
an even integer, we have 
\begin{equation*}
\tag{6.7.1}
Q^-_{\Bla}(z) = (-1)^{d_{\Bm}}\sum_{i \ge 0}(-1)^i
      \Tr(\vf_{\Bla,1}, \SH^i_z\IC(\ol X_{\Bla}, \Ql)).
\end{equation*}
$Q^+_{\Bla}(z)$ is obtained by restricting $Q^-_{\Bla}$ on $(\SX^+_{\Bm, \unip})^F$ 
by Proposition 2.13.
\par
We express $Q^{\pm}_{\Bla}$ as 
\begin{equation*}
\tag{6.7.2}
Q^{\pm}_{\Bla} = \sum_{\Bnu \le \Bla}\wh p^{\pm}_{\Bla,\Bnu}(q)Y^{\pm}_{\Bnu},
\end{equation*}  
Then we have 
\begin{equation*}
\tag{6.7.3}
\lp Q_{\Bla}^-, Q^+_{\Bmu}\rp = 
\sum_{\substack{\Bnu \le \Bla \\
                 \Bnu' \le \Bmu}}
     \wh p^-_{\Bla,\Bnu}(q)\wh \xi_{\Bnu,\Bnu'}(q)\wh p^+_{\Bmu,\Bnu'}(q).
\end{equation*}
We also note, by (6.1.1), that $\wh p^{\pm}_{\Bla,\Bla}(q) = (-1)^{d_{\Bm}}q^{d_{\Bla}}$.
Let $\wt Q^{\pm}_{\Bla} \in \SC^{\pm}_{q^r}$ be the functions defined by 
\begin{align*}
\wt Q^-_{\Bla}(z) &= (-1)^{p_-(\Bm)}q^{a(\Bla)}(q^r)^{-d_{\Bla}}Q^-_{\Bla}(z), \\ 
\wt Q^+_{\Bla}(z) &= (-1)^{p_+(\Bm)}q^{a(\t(\Bla))}(q^r)^{-d_{\Bla}}Q^+_{\Bla}(z). \\ 
\end{align*}
for $z \in \SX^{F^r}_{\Bm,\unip}$. 
Put, for $\Bla \in \SP(\Bm), \Bmu \in \SP(\Bm')$,
\begin{align*}
p^-_{\Bla,\Bnu}(q) &= (-1)^{d_{\Bm}}
        q^{a(\Bla)}(q^r)^{-d_{\Bla}}\wh p^-_{\Bla,\Bnu}(q^r), \\
p^+_{\Bmu, \Bnu'}(q) &= (-1)^{d_{\Bm'} + p_-(\Bm') + p_+(\Bm')}
        q^{a(\t(\Bmu)) + a(\Bnu') - a(\t(\Bnu'))}(q^r)^{-d_{\Bmu}} 
           \wh p^+_{\Bmu,\Bnu'}(q^r), \\
\xi_{\Bnu,\Bnu'}(q) &= q^{a(\t(\Bnu')) - a(\Bnu')}
                    \wh \xi_{\Bnu,\Bnu'}(q^r). 
\end{align*}
Since $d_{\Bm} = \dim G + p_-(\Bm)$ by Lemma 1.2, we have 
\begin{align*}
d_{\Bm} + d_{\Bm'}+ p_-(\Bm') + p_+(\Bm') \equiv p_-(\Bm) + p_+(\Bm') \pmod 2. 
\end{align*}
Hence by Theorem 5.5 together with (6.7.3), we have 

\begin{equation*}
\tag{6.7.4}
\sum_{z \in \SX^{F^r}\uni} \wt Q^-_{\Bla}(z)\wt Q^+_{\Bmu}(z)  = 
\sum_{\substack{\Bnu \le \Bla \\ \Bnu' \le \Bmu}}
     p^-_{\Bla,\Bnu}(q)\xi_{\Bnu,\Bnu'}(q)p^+_{\Bmu, \Bnu'}(q) = \w_{\Bla,\Bmu}(q),
\end{equation*}
with $p^-_{\Bla,\Bla}(q) = q^{a(\Bla)}$, 
$p^+_{\Bmu,\Bmu}(q) = (-1)^{p_-(\Bm') + p_+(\Bm')}q^{a(\Bmu)}$.    
Moreover, $p^{\pm}_{\Bla,\Bnu} = 0$ unless $\Bnu \le \Bla$. 
We consider the matrix $P_1^{\pm} = (p^{\pm}_{\Bla, \Bmu})$, 
$\vL_1 = (\xi_{\Bnu,\Bnu'})$ and $\Om = (\w_{\Bla,\Bmu})$.
We have a matrix equation 
\begin{equation*}
\tag{6.7.5}
P_1^-(\vL_1\,{}^t\!P_1^+) = \Om.
\end{equation*} 
Since $P_1^-$ is a lower triangular matrix with diagonal 
entries $q^{a(\Bla)}$, and $\vL_1\,{}^t\!P_1^+$ is an upper triangular matrix 
with diagonal entries $\pm q^{a(\Bmu)}\xi_{\Bmu,\Bmu}(q)$ by the condition (A),     
$P_1^-$ and $\vL_1\,{}^t\!P_1^+$ are determined uniquely from $\Om$.  In particular, 
the diagonal part of $\vL_1$ is determined uniquely from $\Om$. 
Let $\vL$ be a diagonal matrix and consider the matrix equation 
$P^-\vL\, {}^t\!P^+ = \Om$, where $P^{\pm}$ satisfy similar conditions as 
in Theorem 5.2. Then $P^{\pm}, \vL$ are determined uniquely from the equation, 
and by the uniqueness of the solution for (6.7.5), we have 
$P^- = P_1^-$ and $\vL{}^t\!P^+ = \vL_1\, {}^t\!P_1^+$.  In particular, 
$\vL$ coincides with the diagonal entries of $\vL_1$.  
(But $\vL_1$ and $P_1^+$ are not determined from the equation (6.7.5).)
Thus by Theorem 5.2, we have $p^-_{\Bla,\Bmu}(q) = \wt K^-_{\Bla,\Bmu}(q)$.
\par
By (6.6.1) and (6.7.2), we have $Q^{\pm}_{\Bla}(z) = \wh p^{\pm}_{\Bla,\Bnu}(q)$ 
for $z \in (\SO^{\pm}_{\Bnu})^F$. 
We consider the equation (6.7.1).
We replace $\vf_{\Bla,1}$ by $\f_{\Bla}$, and use (6.1.1). By replacing 
$q$ by $q^m$ for a positive integer $m$, we have  

\begin{align*}
\tag{6.7.6}
\wt K^-_{\Bla, \Bnu}(q^m) = (q^m)^{a(\Bla)}\sum_{i \ge 0}(-1)^i
\Tr(\f^{rm}_{\Bla}, \SH^i_z\IC(\ol X_{\Bla}, \Ql)).
\end{align*}
By Proposition 6.3, $\f_{\Bla}$ is pure at $z \in (\SO^-_{\Bnu})^F$.  Thus, if we put
$\SH^i_z = \SH^i_z\IC(\ol X_{\Bla},\Ql)$,  one can write as 

\begin{equation*}
\tag{6.7.7}
\Tr(\f_{\Bla}^{rm}, \SH^i_z) = \sum_{j=1}^{k_i}(\a_{ij}q^{i/2})^{rm},
\end{equation*}
where $k_i = \dim \SH^i_z$ and 
$\{ \a_{ij}q^{i/2} \mid 1 \le j \le k_i \}$ are eigenvalues of $\f_{\Bla}$ on 
$\SH^i_z$ such that $\a_{ij}$ are algebraic numbers all of whose 
complex conjugates have absolute value 1.  
By Theorem 5.2, $\wt K^-_{\Bla,\Bnu}(t)$ is  
a rational function on $t$.  Here we note that
\par\medskip\noindent
(6.7.8) \ $\wt K^-_{\Bla,\Bnu}(t)$ is a polynomial in $t$.
\par\medskip
In fact, we can write $\wt K_{\Bla,\Bmu}(t)$ as 
$\wt K_{\Bla,\Bnu}(t) = P(t) + R(t)/Q(t)$, where $P,Q,R$ are polynomials with 
$\deg R < \deg Q$ or $R = 0$. 
By (6.7.6) and (6.7.7), the absolute value of the 
right hand side of (6.7.6) goes to $\infty$ when $m \to \infty$. 
It follows that the absolute value of $\wt K_{\Bla,\Bnu}(q^m) - P(q^m)$ 
goes to $\infty$ when $m \to \infty$, if it is non-zero. 
This implies that $R = 0$, and (6.7.8) holds.
\par\medskip
Now by applying Dedekind's theorem, we see that $\a_{ij} = 0$ for odd $i$, 
and $\a_{ij} = 1$ for even $i$ such that $\SH^i_z \ne 0$. It follows that  
$\SH^i_z = 0$ for odd $i$, and $\sum_{j=1}^{k_i}\a_{ij}q^{i/2} = (\dim \SH^i_z)q^{i/2}$.  
Thus by (6.7.6), we have
$\wt K^-_{\Bla,\Bnu}(q) = q^{a(\Bla)}\IC_{\Bla,\Bnu}^-(q^r)$,
which holds for any prime power $q$. 
The assertion (i) follows from this in view of (6.7.8). 
The assertion (ii) is already shown in the proof of (i).  The theorem is proved. 
\end{proof}

\par
By using similar arguments, we can prove the following result.  Note that 
in this case, we do not need to appeal the condition (A). 
\begin{prop}  
Let $\Bnu \in \SP(\Bm'')$ with $\Bm'' = (m_1'', \dots, m_r'')$. 
Assume that $m_i'' = 0$ for $i = 1, \dots, r-2$.  
Then for any $\Bla \in \SP(\Bm)$ and $\Bmu \in \SP(\Bm')$, 
the following holds.
\begin{enumerate}
\item
We have
\begin{align*}
\wt K^-_{\Bla,\Bnu}(t) &= t^{a(\Bla)}\IC^-_{\Bla,\Bnu}(t^r), \\
\wt K^+_{\Bmu,\Bnu}(t) &= t^{a(\t(\Bmu)) + a(\Bnu) - a(\t(\Bnu))} \IC^+_{\Bmu,\Bnu}(t^r). 
\end{align*}
\item Assume that $z \in X_{\Bnu}^F$ for $\Bnu \le \Bla$.  Then 
$\SH^i_z\IC(\ol X_{\Bla},\Ql) = 0$ if $i$ is odd, and the eigenvalues of 
$\f_{\Bla}$ on $\SH^{2i}_z\IC(\ol X_{\Bla},\Ql)$ are $q^i$. 
\end{enumerate}
\end{prop}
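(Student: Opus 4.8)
The plan is to imitate the proof of Theorem 6.7 almost verbatim; the only point needing a new idea is that the role played there by hypothesis (A) will here be supplied by the special shape of $\Bnu$. First I would record that geometry. Since $m_i''=0$ for $i\le r-2$, the inductive construction of $X_{\Bla}$ in 4.2 forces $v_1=\cdots=v_{r-2}=0$ for every $(x,\Bv)\in X_{\Bnu}$, so via $(x,v_{r-1})\mapsto(x,0,\dots,0,v_{r-1})$ the variety $X_{\Bnu}$ is identified with the single $G$-orbit of the $r=2$ enhanced nilpotent cone of type $(\nu^{(r-1)},\nu^{(r)})$; comparing (4.6.1) with (4.6.2) gives $z_{\Bnu}=z^+_{\Bnu}$, hence $\SO^-_{\Bnu}=\SO^+_{\Bnu}=X_{\Bnu}$, and Lemma 4.7 (with $i_0=r-1$) gives $X_{\Bnu}\cap\SX^+_{\Bm'',\unip}=\SO^+_{\Bnu}$. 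The same holds for every $\Bnu'\le\Bnu$, since $c(\Bnu')\le c(\Bnu)$ forces ${\Bnu'}^{(i)}=\emptyset$ for $i\le r-2$; thus each such $X_{\Bnu'}$ is a single $G$-orbit equal to $\SO^-_{\Bnu'}=\SO^+_{\Bnu'}$, and Proposition 6.3 applies: $\f_{\Bla}$ is pure at every point of $X_{\Bnu}^F$.

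The substitute for (A) is then immediate. Each $Q^{\pm}_{\Bnu'}$ with $\Bnu'\le\Bnu$ is supported on $\ol X_{\Bnu'}=\bigcup_{\Bdel\le\Bnu'}X_{\Bdel}$, whose pieces are all single orbits $\SO^{\pm}_{\Bdel}$; hence, by the defining property of $Y^{\pm}_{\Bnu'}$ and (6.6.1), $Y^{\pm}_{\Bnu'}$ is simply the characteristic function $y_{\Bnu'}$ of the orbit $X_{\Bnu'}$, and then, using (6.6.1) once more,
\begin{equation*}
\wh\xi_{\Bnu',\Bnu''}(q)=\lp Y^-_{\Bnu'},\,Y^+_{\Bnu''}\rp=\delta_{\Bnu',\Bnu''}\,|X_{\Bnu''}^F|\qquad\text{whenever }\Bnu''\le\Bnu .
\end{equation*}
So the block of $\wh\vL_q$ on $\{\Bnu':\Bnu'\le\Bnu\}$ is diagonal, which is exactly the consequence of (A) invoked in Theorem 6.7, restricted to the columns that matter.

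Granting this, the computation of Theorem 6.7 carries over. Theorem 5.5 together with the expansions (6.7.2)–(6.7.3) yields for every $\Bla$
\begin{equation*}
\w_{\Bla,\Bnu}(q)=\sum_{\Bnu'\le\Bnu}p^-_{\Bla,\Bnu'}(q)\,\xi_{\Bnu',\Bnu'}(q)\,p^+_{\Bnu,\Bnu'}(q),
\end{equation*}
the middle factor being diagonal by the display above. Restricting to $\Bla\le\Bnu$ exhibits, on the sub-poset $\{\Bnu'\le\Bnu\}$ — where all the quantities involved reduce, up to the cyclic twist of 5.6, to level-$\le 2$ data, are therefore rational in $q$, and have the diagonal entries prescribed in Theorem 5.2 — an $LDU$-type factorization of $\Om$; its uniqueness identifies this block of $P^{\pm}$ and of $\vL$ with the geometric one, and feeding this back for arbitrary $\Bla$ and inverting a triangular matrix gives $p^-_{\Bla,\Bnu}(q)=\wt K^-_{\Bla,\Bnu}(q)$, and symmetrically $p^+_{\Bmu,\Bnu}(q)=\wt K^+_{\Bmu,\Bnu}(q)$. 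Substituting (6.7.1) and (6.1.1) now gives $\wt K^-_{\Bla,\Bnu}(q^m)=(q^m)^{a(\Bla)}\sum_i(-1)^i\Tr(\f^{rm}_{\Bla},\SH^i_z\IC(\ol X_{\Bla},\Ql))$ for $z\in X_{\Bnu}^F$, and likewise for $\wt K^+$ using $\x^+=\x^-$ on $\SX^+$ (Proposition 2.13); combining the purity of the first step with the polynomiality of $\wt K^{\pm}$ (argued as in (6.7.8)) and Dedekind's theorem forces the odd stalks of $\IC(\ol X_{\Bla},\Ql)$ at $z$ to vanish and the eigenvalues of $\f_{\Bla}$ on $\SH^{2i}_z$ to be $q^i$, which is (ii); then (i) follows by reading off the traces exactly as in the passage from Theorem 6.7(ii) to its formula, the exponent $a(\t(\Bmu))+a(\Bnu)-a(\t(\Bnu))$ in the $+$-case coming out of the definition of $p^+$.

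The step I expect to be most delicate is the column-wise use of Theorem 5.2: one must justify that diagonality of $\wh\vL_q$ on $\{\Bnu'\le\Bnu\}$ alone, rather than (A) in full, suffices; this rests on observing that this sub-poset is governed entirely by level-$\le 2$ data, where $P^{\pm}$, $\vL$ and the required orthogonality are already available from [AH], [S2], [SS2] (and where $p_-(\Bm'')+p_+(\Bm'')=2m''_{r-1}$ is even, so the signs in the definition of $p^{\pm}$ behave as in Theorem 5.2). The accompanying normalization and sign tracking for the $+$-statement — in particular checking that $\SO^+_{\Bnu}\cap\SX^+_{\Bm',\unip}$ is empty (so both sides vanish) precisely when the sign factor in $p^+$ would cause trouble — is the other place where care is needed.
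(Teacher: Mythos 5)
Your proof follows the same route as the paper's (Proposition 6.8): the key observation that every $\Bnu'$ with $\Bnu'\le\Bnu$ has $\nu'^{(i)}=\emptyset$ for $i\le r-2$, hence $X_{\Bnu'}=\SO^-_{\Bnu'}=\SO^+_{\Bnu'}$ is a single orbit, so $Y^{\pm}_{\Bnu'}=y_{\Bnu'}$ and the relevant block of $\wh\vL_q$ is diagonal; then uniqueness in Theorem 5.2 applied to the sub-poset (and, for arbitrary $\Bla$, the column corresponding to $\Bnu$) identifies the geometric $p^{\pm}_{\Bla,\Bnu}$ with $\wt K^{\pm}_{\Bla,\Bnu}$, after which the trace formulas and Dedekind's theorem give (i) and (ii) exactly as in Theorem 6.7. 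The one dispensable remark is the claim that the sub-block of $\Om$ ``reduces to level-$\le 2$ data'' and is therefore rational in $q$; rationality of $\wt K^{\pm}$ is already supplied by Theorem 5.2, and in fact the fake degrees indexed by the sub-poset are still genuine $W_{n,r}$-fake degrees, so that reduction is not literally true — but nothing in your argument depends on it.
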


\begin{proof}
Put $\Bla_0 = (-;\cdots;-;n;-)$. 
Then for any $\Bnu \le \Bla_0$, $X_{\Bnu} = \SO^{\pm}_{\Bnu}$ is a single $G$-orbit. 
It follows that $Y^{\pm}_{\Bla}$ coincides with the characteristic function 
$y_{\Bla} = y_{\Bla}^{\pm}$ of $X_{\Bla}^F$ for $\Bla \le \Bla_0$. 
Assume that $\Bla \le \Bla_0$ or $\Bmu \le \Bla_0$.  Then (6.7.4) can be rewritten as 
\begin{equation*}
\tag{6.8.1}
\sum_{z \in \SX^{F^r}\uni} \wt Q^-_{\Bla}(z)\wt Q^+_{\Bmu}(z)  = 
\sum_{\Bnu \le \Bla}
     p^-_{\Bla,\Bnu}(q)\xi_{\Bnu,\Bnu}(q)p^+_{\Bmu, \Bnu}(q) = \w_{\Bla,\Bmu}(q),
\end{equation*}
We consider the matrix $P_0^{\pm} = (p^{\pm}_{\Bla, \Bmu})$ 
$\vL_0 = (\xi_{\Bla,\Bmu})$ and $\Om_0 = (\w_{\Bla,\Bmu})$
indexed by $\Bla, \Bmu \le \Bla_0$.
By (6.8.1), the matrices $P^{\pm}_0, \vL_0$ and $\Om_0$ satisfy 
a similar condition as in Theorem 5.2. (Note that $p_+(\Bm') = p_-(\Bm')$ 
if $\Bmu \le \Bla_0$, hence $p^+_{\Bmu,\Bmu}(q) = q^{a(\Bmu)}$.) 
Thus the equation (6.8.1) determines 
uniquely the matrices $P^{\pm}_0$ and $\vL_0$. 
Hence by Theorem 5.2, $p^{\pm}_{\Bla,\Bmu}(q) = \wt K^{\pm}_{\Bla,\Bmu}(q)$ 
for any $\Bla,\Bmu \le \Bla_0$. In particular, $p^+_{\Bmu, \Bnu}(q)$ and 
$\xi_{\Bnu,\Bnu}$ are determined 
for $\Bnu \le \Bmu \le \Bla_0$. We now consider arbitrary $\Bla \in \SP_{n,r}$,  Since 
$p^+_{\Bnu,\Bnu}(q) = q^{a(\Bnu)}$, the equation (6.8.1) determines uniquely 
$p^-_{\Bla,\Bnu}(q)$ for $\Bnu \le \Bla_0$, by induction on the total order $\vl$ on $\SP_{n,r}$.   
Again by Theorem 5.2, we see that $p^-_{\Bla,\Bnu}(q) = \wt K^-_{\Bla,\Bnu}(q)$.
Similar argument also holds for $p^+_{\Bmu,\Bnu}(q)$.  
Thus we have proved

\begin{equation*}
\tag{6.8.2}
p^-_{\Bla, \Bnu}(q) = \wt K^-_{\Bla,\Bnu}(q), \quad 
 p^+_{\Bmu,\Bnu}(q) = \wt K^+_{\Bmu, \Bnu}(q) \qquad \text{ for any $\Bnu \le \Bla_0$.}
\end{equation*}

By using a similar argument as in the proof of (6.7.6), we have 

\begin{align*}
\tag{6.8.3}
\wt K^-_{\Bla, \Bnu}(q^m) &= (q^m)^{a(\Bla)}\sum_{i \ge 0}(-1)^i
     \Tr(\f^{rm}_{\Bla}, \SH^i_z\IC(\ol X_{\Bla}, \Ql)), \\
\wt K^+_{\Bmu, \Bnu}(q^m) &= (q^m)^{a(\t(\Bmu)) + a(\Bnu) - a(\t(\Bnu))}\sum_{i \ge 0}(-1)^i
     \Tr(\f^{rm}_{\Bmu}, \SH^i_z\IC(\ol X_{\Bmu}, \Ql)). \\
\end{align*}
(Here in the latter formula, we assume that $z \in \SO_{\Bnu}^+ \cap \SX^+_{\Bm'}$ 
for $\Bmu \in \SP(\Bm')$.  In the case where $\SO^+_{\Bnu} \cap \SX^+_{\Bm'} = \emptyset$, 
we have $\wt K^+_{\Bmu, \Bnu}(q^m) = 0$.) 
Now the proposition follows by a similar argument as in the proof of Theorem 6.7.
\end{proof}

The condition(A) can be verified in small rank cases.  
We have the following result.

\begin{prop}  
Assume that $n = 1, 2$, and $r$ is arbitrary.  Then the condition (A) holds 
for $\wh\vL_q$. 
\end{prop}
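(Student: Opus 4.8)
The plan is to verify condition (A) directly by computing the matrix $\wh\vL_q = (\wh\xi_{\Bla,\Bmu}(q))$ in the cases $n=1$ and $n=2$. Recall that $\wh\xi_{\Bla,\Bmu}(q) = \lp Y^-_{\Bla}, Y^+_{\Bmu}\rp$, and that condition (A) asserts this matrix is upper triangular with respect to the total order $\preceq$. Since $n$ is small, all the $r$-partitions $\Bla \in \SP_{n,r}$ are of a very restricted shape, and I expect to be able to enumerate them and the associated orbits $\SO^{\pm}_{\Bla}$ completely. The case $n=1$ is essentially trivial: here $\SP_{1,r}$ consists of the $r$ partitions $\Bla_i = (-;\dots;(1);\dots;-)$ with the single box in position $i$, and $G\uni \times V^{r-1}$ has $G$-orbits that are easy to list; I would check that $Y^-_{\Bla}$ and $Y^+_{\Bmu}$ are supported on orbits in a way forcing orthogonality whenever $\Bmu \prec \Bla$.

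For $n=2$, the set $\SP_{2,r}$ consists of $r$-partitions with either a single part of size $2$ in some position $i$, a single part $(1,1)$ in some position $i$, or two boxes of size $1$ in positions $i \le j$. This gives $O(r^2)$ classes, but most of them fall under the scope of the already-proved Proposition 6.8 or Lemma 4.7. Concretely, for many $\Bm \in \SQ_{2,r}$ the component $m_j$ is nonzero for at most two indices, so Lemma 4.7 applies and $X_{\Bla} \cap \SX^+_{\Bm,\unip}$ is a single orbit $\SO^+_{\Bla}$; likewise $X_{\Bla}$ itself is frequently a single $G$-orbit when $n=2$, so $Y^{\pm}_{\Bla}$ reduces to an honest orbit characteristic function. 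In those cases the inner product $\lp Y^-_{\Bla}, Y^+_{\Bmu}\rp$ vanishes unless $\SO^-_{\Bla}$ and $\SO^+_{\Bmu}$ meet, and one checks by inspection of the defining data in 4.6 (the formulas (4.6.1), (4.6.2) for $z_{\Bla}, z^+_{\Bla}$) that a nonempty intersection forces $\Bla \le \Bmu$ in the dominance order — hence $\Bla \preceq \Bmu$, which is exactly upper triangularity.

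The residual case to handle is the one (if any) where $X_{\Bla}$ for $n=2$ genuinely contains infinitely many $G$-orbits; this can only happen for $\Bla$ with two size-$1$ boxes spread over three or more nonzero positions in the index set, and here I would use Proposition 3.11 to get a closed formula for $\lp Q^-_{\Bla}, Q^+_{\Bmu}\rp$, unwind the relation (6.7.2) expressing $Q^{\pm}$ in terms of $Y^{\pm}$, and read off $\wh\xi_{\Bla,\Bmu}$ by triangular inversion. Since $n=2$, the Young subgroups $S_{\Bm}$ are products of copies of $S_1$ and at most one $S_2$, so the character sums $\sum_w \x^{\Bla}(w)\x^{\Bmu}(x\iv wx)$ in Proposition 3.11 are completely explicit, and the double cosets $S_{\Bm}\backslash S_2/S_{\Bm'}$ number at most two. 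The main obstacle I anticipate is bookkeeping rather than conceptual: one must be careful that the chosen total order $\preceq$ refining $\le$ is used consistently, and that the vanishing of $\wh\xi_{\Bla,\Bmu}$ for $\Bmu \prec \Bla$ — as opposed to merely for $\Bmu$ incomparable with or strictly below $\Bla$ — really follows from the orbit-intersection analysis. Once all classes of $\Bla,\Bmu \in \SP_{2,r}$ are checked against these three regimes, the upper triangularity of $\wh\vL_q$ follows, proving the proposition.
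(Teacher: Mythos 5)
For $n=1$ your outline is essentially the paper's argument, but for $n=2$ there is a genuine gap and a misapplication of the cited lemma. Lemma 4.7 does not apply to every $\Bm\in\SQ_{2,r}$ with at most two nonzero entries: it requires those entries to sit at positions $i_0$ and $r$. For example $\Bm'=(1,1,0,\ldots,0)$ is excluded, and it is precisely this kind of $\Bm'$ (together with $\Bnu=(\cdots;1^2;\cdots)$) that makes $X_{\Bnu}\cap \SX^+_{\Bm',\unip}$ split into more than one orbit, as the paper's condition $(*)$ records. These are not negligible exceptional cases; they are where all the work lies.

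The central logical error is in the sentence claiming that $\lp Y^-_{\Bla},Y^+_{\Bmu}\rp$ vanishes ``unless $\SO^-_{\Bla}$ and $\SO^+_{\Bmu}$ meet'' and that a nonempty intersection ``forces $\Bla\le\Bmu$.'' Since $\SO^-_{\Bla}\subset X_{\Bla}$, $\SO^+_{\Bmu}\subset X_{\Bmu}$ and the $X_{\Bla}$ are pairwise disjoint, a nonempty intersection of these two $G$-orbits forces $\Bla=\Bmu$, so this line of reasoning can only ever produce diagonal entries. But $\wh\vL_q$ is \emph{not} diagonal for $n=2, r=3$ (see Remark 6.10\,(ii), where $\Bla=(1;-;1)$ and $\Bmu=(1;1;-)$ give a nonzero off-diagonal entry): the functions $Y^\pm_{\Bla}$ are generally not orbit characteristic functions, and the nontrivial content of the proposition is a careful analysis of their \emph{supports}, carried out in the paper's steps (6.9.1)--(6.9.4), showing where $Y^-_{\Bla}$ picks up mass on $X_{\Bnu}\setminus\SO^-_{\Bnu}$ and $Y^+_{\Bmu}$ on $X^+_{\Bnu,\Bm'}$, and then matching these supports to establish the needed triangularity (the paper only proves $\Bla\not>\Bmu$, not $\Bla\le\Bmu$, and works with a specific lexicographic refinement of the dominance order within $\SP[(2)]$ and $\SP[(1^2)]$). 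Your description of the residual cases (``two size-$1$ boxes spread over three or more nonzero positions'') does not make sense for $n=2$, and the fallback plan of unwinding Proposition 3.11 and (6.7.2) by triangular inversion is proposed but not carried out. As written, the proposal does not supply the required support analysis and therefore does not prove the statement.
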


\begin{proof}
First consider the case where $n = 1$.  In this case, 
$X_{\Bnu} \cap \SX^+_{\Bm'}$ coincides with $\SO^+_{\Bnu}$ for any $\Bnu$ and 
$\Bm'$, if it is non-empty.  
Hence $Y^+_{\Bmu} = y^+_{\Bmu}$ for any $\Bmu$.  One can check that $Y^-_{\Bmu}$ 
coincides with the characteristic function of $X_{\Bmu}^F$.  Thus the condition (A)  
holds.  In fact, in this case $\wh\vL_q$ is a diagonal matrix, and 
by applying the arguments in the proof of Proposition 6.8, 
$\wt K^-_{\Bla,\Bnu}(t), \wt K^+_{\Bmu,\Bnu}(t)$ are given as in the formulas
in Proposition 6.8 (i). 
\par
Next consider the case where $n = 2$.
We determine the pair $(\Bm', \Bnu)$ satisfying the condition 
(*) $X_{\Bnu} \cap \SX^+_{\Bm'}$  
splits into more that two orbits.  This occurs only when 
$\Bm'$ is of the form  $\Bm' = (\cdots,1,\cdots,1,\cdots)$, where 
non-zero factors occur on $1 \le k < \ell< r$, and 
$\Bnu$ is 
such that $\nu^{(k)} = (1^2)$, or 
$\Bnu = \Bla(\Bm') = (\cdots;1;\cdots;1;\cdots)$.   
In that case, we have 
$X_{\Bnu} \cap \SX^+_{\Bm'} = \SO^+_{\Bnu} \coprod X^+_{\Bnu, \Bm'}$, where 
$X^+_{\Bnu,\Bm'}$ consists of infinitely many $G$-orbits. 
Thus we have, for any $1\le k < \ell < r$,
\begin{equation*}
\tag{*}  \Bm' = (\cdots, 1, \cdots, 1, \cdots), \quad 
     \Bnu = (\cdots;1^2; \cdots; -; \cdots) \quad \text{ or } \quad \Bnu = \Bla(\Bm'),
\end{equation*}
where 1 appear in the $k,\ell$-th factors for $\Bm'$, and $1^2$ appears in 
the $k$-th factor for $\Bnu$.
We show
\par\medskip\noindent
(6.9.1) \ 
The function $Y^+_{\Bmu}$ coincides with $y^+_{\Bmu}$ unless 
$\Bmu \in \SP(\Bm')$ for $\Bm'$ in (*), in which case,
$Y^+_{\Bmu} = y^+_{\Bmu} + \sum_{\Bnu,C}a_Cy_C$ with $C \subset  X^+_{\Bnu,\Bm'}$ 
for $\Bnu$ in (*).
\par\medskip
If $\Bmu = (\cdots;1^2;\cdots)$, any $\Bnu \le \Bmu$ has a similar form as $\Bmu$.
Hence by (*), $Y^+_{\Bmu} = y^+_{\Bmu}$.  If $\Bmu = (\cdots;1;\cdots;1)$, 
any $\Bnu \le \Bmu$ has a similar form or a simialr form as in the previous case.
Hence again by (*), $Y^+_{\Bmu} = y^+_{\Bmu}$. 
Assume that $\Bmu = (\cdots;1;\cdots;1;\cdots)$ with $1 \le k < \ell<r$.
Then $X_{\Bnu} \cap \SX^+_{\Bm'} \ne \emptyset$ only when 
$\Bnu = \Bmu$,  $(\cdots;1^2;\cdots)$, or $(\cdots;1; \cdots;1)$.
In the second and the third case, by the previous results, $Y^+_{\Bnu} = y^+_{\Bnu}$.
Except the case where $\Bnu$ is as in (*), 
$X_{\Bnu} \cap \SX^+_{\Bm'} = \SO_{\Bnu}^+$.
Hence by subtracting those functions $y^+_{\Bnu}$ from $Q^+_{\Bmu}$, 
we obtain the function which has supports only on $\SO_{\Bmu}^+$ and $X^+_{\Bnu,\Bm'}$.
Finally assume that $\Bmu = (\cdots;2;\cdots)$. 
Then $X_{\Bnu} \cap \SX^+_{\Bm'} \ne \emptyset$ only when
$\Bnu = \Bmu$, or $\Bnu = (\cdots:1^2;\cdots), (\cdots;1; \cdots;1)$. 
Moreover, by (*), in each case, $X_{\Bnu} \cap \SX^+_{\Bm} = \SO^+_{\Bnu}$. 
Hence by subtracting those $y^+_{\Bnu}$ such that $\Bnu \ne \Bmu$ from 
$Q^+_{\Bmu}$, we obtain $Y^+_{\Bmu} = y^+_{\Bmu}$.  
This proves (6.9.1).
\par
Next we determine the pair $(\Bm, \Bnu)$ satisfying the condition 
(**) $X_{\Bnu} \cap \SX_{\Bm} \neq \emptyset$ and 
$X_{\Bnu} \not\subset \SX_{\Bm}$.  
If $\Bm = (\dots, 2, \dots)$, clearly $X_{\Bnu} \subset \SX_{\Bm}$.  So 
assume that $\Bm$ is of the form $(\dots, 1, \dots, 1, \dots)$ for some $1 \le a < b \le r$. 
If $\Bnu$ is of the form $(\cdots; 2; \cdots)$ for $k$-factor, then $k \ge b$
since $\Bla(\Bm) \ge \Bnu$.  But in this case, $X_{\Bnu} \subset \SX_{\Bm}$.  
If $\Bnu = (\cdots;1;\cdots;1\cdots)$ for some $k ,\ell$-factors, then 
$a \le k, b \le \ell$, and $X_{\Bnu} \subset \SX_{\Bm}$.  Hence we assume 
that $\Bnu = (\cdots; 1^2; \cdots)$ for some $k$-factor.  If $k \ge b$, 
then $X_{\Bnu} \subset \SX_{\Bm}$.  So, we have $a \le k < b$.  Then 
$X_{\Bnu} \not\subset \SX_{\Bm}$.  Hence we have

\begin{equation*}   
\tag{**}
\Bm = (\cdots, 1,\cdots, 1, \cdots), \quad \Bnu = (\cdots; 1^2; \cdots),
\end{equation*}
where $k$ factor appears for $\Bnu$, and $a,b$ factors appear for $\Bm$ 
with $a \le k < b$. 
For each $\Bla$, let $\wt y_{\Bla}$ be the characteristic function of $X_{\Bla}^F$.
We show
\par\medskip\noindent
(6.9.2) \ $Y^-_{\Bla}$ coincides with  
$\wt y_{\Bla}$ unless $\Bla = (\cdots;1;\cdots;1;\cdots)$ 
with $\la^{(a)} = \la^{(b)} = 1$ for $a < b$, in which case,  
$Y^-_{\Bla} = \wt y_{\Bla} + \sum_Ca_Cy_C$ with $C \subset X_{\Bnu}\backslash \SO^-_{\Bnu}$
for $\Bnu = (\cdots;1^2;\cdots)$ with $\nu^{(a)} = (1^2)$. 
\par\medskip
If $\Bla = (;\cdots;1^2;\cdots)$, any $\Bnu \le \Bla$ has a similar type as 
$\Bla$. We have $X_{\Bnu} \subset \SX_{\Bm}$ and $\SB^{(\Bm)}_z$ has a common 
structure for any $z \in X_{\Bla}$.  Thus by backwards induction on $k$ 
($1^2$ appears in the $k$-th factor), we see that $Y^-_{\Bla} = \wt y_{\Bla}$.
Assume that $\Bla = (\cdots;1;\cdots;1;\cdots)$, where 
$\la^{(i)} = (1)$ for $i = a, b$ with $a < b - 1$.  
Put $\Bla' = (\cdots;1;\cdots;1;\cdots)$, where $\la'^{(a+1)} = \la'^{(b)} = (1)$.
If $\Bnu \le \Bla$,  then $\Bnu$ has the type 
$\Bnu = (\cdots;1^2;\cdots)$ or $(\cdots;1;\cdots;1;\cdots)$. 
Assume that $\Bnu$ is such that $\nu^{(k)} = (1^2)$ for some $k \ge a$.
We have
\begin{align*} 
\tag{6.9.3}
X_{\Bnu} &= \{ (x,\Bv) \in G\uni \times V^{r-1} 
      \mid x = 1, v_i = 0 \text{ for } i \le k-1, v_k \ne 0\}, \\ 
X_{\Bnu} \cap \SX_{\Bm} &= \{(x, \Bv) \in X_{\Bla} \mid \lp v_i \rp = \lp v_a \rp 
         \text{ for } k +1 \le i < b\}.
\end{align*}
Take $z \in X_{\Bnu} \cap \SX_{\Bm}$.
Then $\SB^{(\Bm)}_z$ is equal to a one point if $k < b$, and is equal to $\SB$ 
if $k \ge b$.
If $\Bnu = (\cdots;1;\cdots;1;\cdots)$, then 
$\SB^{(\Bm)}_z$ is a one point for any 
$z \in X_{\Bnu}$.  A similar property also holds for the pair $(\Bm'', \Bnu')$ 
if $\Bla' \in \SP(\Bm'')$ and $\Bnu'\le \Bla'$.  
It follows that the function $Q^-_{\Bla} - Q^-_{\Bla'}$ 
has supports only on $X_{\Bla}$ and 
$X_{\Bnu} \cap \SX_{\Bm}$ for $\Bnu$ such that $\nu^{(a)} = (1^2)$.
By subtracting $\wt y_{\Bnu}$ from this, we see that 
$Y^-_{\Bla} = \wt y_{\Bla} + \sum_Ca_Cy_C$, where  
$C \subset X_{\Bnu} \backslash \SO_{\Bnu}^-$ for $\Bnu$ with $\nu^{(a)} = (1^2)$.
Next assume that $\Bla = (\cdots;1;1;\cdots)$ with $\la^{(a)} = \la^{(a+1)} = 1$.
If $a+1 = r$, it is easy to see that $Y^-_{\Bla} =  \wt y_{\Bla}$.  So 
assume that $a+1 < r$.  We consider $\Bla' = (\cdots;1;-;1;\cdots)$ with 
$\la'^{(a)} = \la'^{(a+2)} = 1$. Then by a similar consideration as above, 
$Q^-_{\Bla} - Q^-_{\Bla'}$ has supports 
only on $X_{\Bla}$ and $X_{\Bnu}$ with $\Bnu$ such that 
$\nu^{(a)} = (1^2)$. Hence $Y^-_{\Bla}$ can be written as in the previous case.
Finally assume that $\Bla = (\cdots;2;\cdots)$ with $\la^{(a)} = (2)$.
If $a = r$, it is easy to see that $Y^-_{\Bla} = \wt y_{\Bla}$.
So assume that $a < r$, and put
$\Bla' = (\cdots;1;1;\cdots)$ with $\la'^{(a)} = \la'^{(a+1)} = 1$.
As in 
the previous discussion, $Q^-_{\Bla} - Q^-_{\Bla'}$ has supports only on 
$\Bla$, $\Bnu = (\cdots;2;\cdots)$ with $\nu^{(k)} = (2)$ for $a < k$, 
or $\Bnu = (\cdots;1^2;\cdots)$ with $\nu^{(a)} = 1$.  
If $\Bnu$ is in the former case, by induction we may assume 
that $Y^-_{\Bnu} = \wt y_{\Bnu}$.  By the previous results, we also have
$Y^-_{\Bnu} = \wt y_{\Bnu}$ for the latter $\Bnu$.  
Hence we have $Y^-_{\Bla} = \wt y_{\Bla}$.  This proves 
(6.9.2).
\par
Now assume that $\Bla$ is of the form $\Bla = (\cdots;1;\cdots;1;\cdots)$ 
with $\la^{(a)} = \la^{(b)} = 1$ for $a < b$.
By (6.9.2), $Y^-_{\Bla}$ has an additional support only for  
$\Bnu = (\cdots;1^2;\cdots)$ with $\nu^{(a)} = (1^2)$. 
Let $\Bmu = (\cdots;1;\cdots;1\cdots) \in \SP(\Bm')$ be as in (*) with $k = a$. 
Then $X_{\Bnu} \cap \SX^+_{\Bm'} = \SO^+_{\Bnu} \coprod X^+_{\Bnu,\Bm'}$.
We show that 
\par\medskip\noindent
(6.9.4)  \ If $b < \ell$, then $\lp Y^-_{\Bla}, Y^+_{\Bmu} \rp = 0$. 
In particular, if $\lp Y^-_{\Bla}, Y^+_{\Bmu}\rp \ne 0$, then $\Bla \not > \Bmu$. 
\par\medskip
First assume that $a < b-1$, and recall the discussion in the proof 
of (6.9.2).  Consider $Q^-_{\Bla} - Q^-_{\Bla'}$ as in (6.9.2) 
for $\Bla' = (\cdots;1;\cdots;1;\cdots)$ with $\la'^{(a+1)} = \la'^{(b)} = 1$.
By our assumption $b < \ell$, it follows from (6.9.3) that 
$X_{\nu} \cap \SX_{\Bm}$ contains $\SO^+_{\Bnu} \cup X^+_{\Bnu,\Bm'}$. 
Then if we subtract $\wt y_{\Bnu}$ from $Q^-_{\Bla} - Q^-_{\Bla'}$, the resulting 
function does not have support on $X^+_{\Bnu,\Bm'}$.  Hence $Y^-_{\Bla}$ 
does not have support on $X^+_{\Bnu,\Bm'}$.  Then by (6.9.1), we see that 
$\lp Y^-_{\Bla}, Y^+_{\Bmu} \rp = 0$. 
The proof for $a = b-1$ case is similar, following the discussion of the proof
of (6.9.2).  
Thus (6.9.4) is proved.    
\par
We now define a total order $\vl$ on $\SP_{n,r}$ compatible with the partial 
order $\le$ so that $\wh\vL_q$ is upper triangular.   
$\SP_{n,r}$ is decomposed as $\SP_{n,r} = \coprod_{\nu \in \SP_n}\SP[\nu]$, where 
$\SP[\nu] = \{ \Bla \in \SP_{n,r} \mid \sum \la^{(i)} = \nu \}$. 
We arrange $\SP[\nu]$ according to the total order compatible with the dominance order 
on $\SP_n$. In our case, $\SP_{2,r} = \SP[(2)] \coprod \SP[(1^2)]$. 
We put the lexicographic order on $\SP[(2)]$ and on $\SP[(1^2)]$. We define 
$\SP[(2)] \vg \SP[(1^2)]$. By (6.9.1), (6.9.2) and (6.9.4), 
we see that if 
$\lp Y^-_{\Bla}, Y^+_{\Bmu}\rp \ne 0$, 
then $\Bla, \Bmu \in \SP[(1^2)]$, and $\Bla \vle \Bmu$.  
The proposition is proved. 
\end{proof}

\remarks{6.10.} \ 
(i) In the discussion of the proof of Theorem 6.7, the equation (6.7.5) 
holds without the assumption (A).  Thus, under the notation there, if 
one can show that $P_1 = P$, then we have $\vL\,{}^t\!P^+ = \vL_1\,{}^t\!P_1^+$.
This implies that $\vL_1$ is upper triangular, and so the condition (A)
holds. Since $Q^-_{\Bla}(z) = \wh p^-_{\Bla}(q)$, the condition $P_1 = P$ 
is equivalent to the formula (cf. (6.7.6)) 
\begin{equation*}
\tag{6.10.1}
\wt K_{\Bla,\Bmu}^-(q) = q^{a(\Bla)}\sum_{i \ge 0}(-1)^i
             \Tr(\f_{\Bla}^r, \SH^i_z\IC(\ol X_{\Bla},\Ql)).
\end{equation*}
Thus the condition (A) is equivalent to the formula (6.10.1). 
\par
(ii)
The matrix $\wh \vL_q$ is not diagonal even in the case where $n = 2, r= 3$.
In fact, assume that $\Bla = (1;-;1), \Bmu = (1;1;-)$ and $\Bnu = (1^2;-;-)$ 
with $\Bla \in \SP(\Bm), \Bmu \in \SP(\Bm')$.
In this case, $X_{\Bnu} \cap \SX_{\Bm}$ does not contain $X^+_{\Bnu,\Bm'}$, hence 
$Y^-_{\Bla} = \wt y_{\Bla} + y$, where $y$ is a function whose support is 
contained in $X_{\Bnu}^F$, and $y$ is non-zero on $X^+_{\Bnu,\Bm'}$. Moreover, 
by (6.9.1), $Y^+_{\Bmu} = y^+_{\Bmu} + \sum_Ca_Cy_C$ with $C \subset X^+_{\Bnu,\Bm'}$.  
It follows that $\lp Y^-_{\Bla}, Y^+_{\Bmu}\rp \ne 0$.
Also we have $\lp Y^-_{\Bnu}, Y^+_{\Bmu}\rp \ne 0$ since $Y^-_{\Bnu} = \wt y_{\Bnu}$ by 
(6.9.2).  
In fact, these are the only cases such that $\lp Y^-_{\Bla'}, Y^+_{\Bmu'}\rp \ne 0$.

\bigskip
\section{Some examples}

\para{7.1.}
We consider the matrix equation  $P^-\vL {}^tP^+ = \Om$  
with $P^{\pm} = (\wt K^{\pm}_{\Bla,\Bmu}(t))$ as in (5.2.1).
In the case where $n = 1$, 
Kostka functions have an interpretation in terms of the polynomials
$\IC^{\pm}_{\Bla, \Bmu}(t)$ by Proposition 6.9 (see also Proposition 6.8), namely, the
following formula holds.
\begin{align*}
\tag{7.1.1}
\wt K^-_{\Bla, \Bnu}(t) &= t^{a(\Bla)}\IC^-_{\Bla, \Bnu}(t^r), \\
\wt K^+_{\Bmu, \Bnu}(t) &= t^{a(\tau(\Bmu)) + a(\Bnu) - a(\tau(\Bnu))}
                               \IC^+_{\Bmu, \Bnu}(t^r).
\end{align*}
In this case, $W = G(r,1,1) \simeq \BZ/r\BZ$.  Put 
$\SP_{1,r} = \{ \Bla_1, \dots, \Bla_r\}$, arranged with respect to the
dominance order, $\Bla_1 < \Bla_2 < \cdots < \Bla_r$ 
where $\Bla_i = (\la^{(1)}, \dots, \la^{(r)})$ 
with $\la^{(r+1-i)} = (1), \la^{(j)} = \emptyset$ for $j \ne r+1-i$.  
\par
We first consider the simplest case, i.e., the case 
 where $n = 1$ and $r = 3$.  Hence 
$\SP_{1,r} = \{ \Bla_1, \Bla_2,\Bla_3\}$. 
In this case  
the equation $P^-\vL\, {}^t\!P^+ = \Om$  is given by
\begin{equation*}
\begin{pmatrix}
   t^2 &     &   \\
   t   &  t  &   \\
   1   &  1  &  1  
\end{pmatrix}
\begin{pmatrix}
  1  &   &     \\
     &  t^2 - t\iv  &  \\
     &   &  t^4 - t
\end{pmatrix}
\begin{pmatrix}
 t^2  &  1  &  t  \\
      &  t  &  0  \\
      &     &  1
\end{pmatrix}
= 
\begin{pmatrix}
 t^4  &  t^2  &  t^3  \\
 t^3  &  t^4  &  t^2  \\
 t^2  &  t^3  &  t^4
\end{pmatrix}.
\end{equation*}
Note that the matrix $\vL$ 
has non-polynomial entries. 
However, the left hand side of this equation is
changed to the form
\begin{equation*}
P'\vL'\, {}^t\!P'' = 
\begin{pmatrix}
  t^2  &       &     \\
  t    &   t   &     \\
  1    &   1   &  1  \\   
\end{pmatrix}
\begin{pmatrix}
  1    &         &         \\
       &  t^3-1  &          \\
       &         &  t^3 -1 
\end{pmatrix}
\begin{pmatrix}
  t^2   &  1  &  t  \\
        &  1  &  0  \\
        &     &  t    
\end{pmatrix},
\end{equation*}
where $P' = P^-, P'' = P^+\vT\iv, \vL' = \vL\vT$ with a diagonal 
matrix $\vT = \Diag (1,t,t\iv)$.
In this case, we have 
\begin{equation*}
(\IC^-_{\Bla,\Bmu}(t^3)) = \begin{pmatrix}
                            1  &     &   \\
                            1  &  1  &    \\
                            1  &  1  &  1
                          \end{pmatrix}, 
\quad
 (\IC^+_{\Bla,\Bmu}(t^3)) = \begin{pmatrix}
                            1  &      &    \\
                            1  &   1  &    \\
                            1  &   0  &  1
                           \end{pmatrix}.
\end{equation*}
We define  polynomials $\xi_{\Bla_i}(t)$ by 
$\xi_{\Bla_1}(t) = 1, \xi_{\Bla_2}(t) = t-1, 
\xi_{\Bla_3}(t) = t-1$.  Then we have,
\begin{equation*}
\vL' = \begin{pmatrix}
             \xi_{\Bla_1}(t^3)  &            &     \\
                       &  \xi_{\Bla_2}(t^3)  &     \\
                       &            &   \xi_{\Bla_3}(t^3)  \\
       \end{pmatrix}, 
\quad\text{ and }\quad 
\vL\iv \vL' = \begin{pmatrix}
                   1   &     &    \\
                       &  t  &    \\
                       &     &  t\iv
                \end{pmatrix}.
\end{equation*}
We have $(a(\Bla_1), a(\Bla_2), a(\Bla_3)) = (2,1,0)$.  
In our case $\t$ is a permutation 
$\Bla_1 \lra \Bla_1, \Bla_2 \lra \Bla_3$ and so 
$(a(\t(\Bla_1)), a(\t(\Bla_2)), a(\t(\Bla_3))) = (2,0,1)$. 
Then $P^- = P'$ is obtained from $(\IC^-_{\Bla,\Bmu}(t^3))$ by 
multiplying $t^2, t,1$ for corresponding rows. 
In turn, $P''$ is obtained from $(\IC^+_{\Bla,\Bmu}(t^3))$ by 
multiplying $t^2, 1, t$ for corresponding rows, and $P^+$ 
is obtained from $P''$ by multiplying $1,t, t\iv$ for 
corresponding columns.  
Moreover, we have 
\begin{equation*}
\vT = (t^{a(\Bla_1) - a(\tau(\Bla_1))}, t^{a(\Bla_2)-a(\tau(\Bla_2))},
   t^{a(\Bla_3) - a(\tau(\Bla_3))}).
\end{equation*}

\para{7.2.}
We consider the general $r$ with $n = 1$.
As in 7.1, we consider the relation  
$P'\vL'\,{}^t\!P''= \Om$ 
where $P' = P^-, P'' = P^+\vT\iv, \vL' = \vL\vT$ with a diagonal 
matrix $\vT = \Diag (\dots, t^{a(\Bla) -a(\t(\Bla))}, \dots)$.
We give the matrices 
$P^{\pm}, (\IC^{\pm}_{\Bla,\Bmu}(t^r)), \vL, \vL'$ and $\vT$.
The matrices $P^{\pm}$ are given as follows.

\begin{equation*}
P^- = \begin{pmatrix}
          t^{r-1} &         &         &        &         &   \\
          t^{r-2} & t^{r-2} &         &        &         &   \\
          t^{r-3} & t^{r-3} & t^{r-3} &        &         &   \\
          \cdots  & \cdots  & \cdots  & \ddots &         &   \\
          t       & t       & t       & t      &  t      &   \\
          1       & 1       & 1       & 1      &  1      &  1 
       \end{pmatrix},  \quad
P^+ = \begin{pmatrix}
          t^{r-1} &         &         &         &    &  \\
          1       & t^{r-2} &         &         &    &  \\
          t       & 0       & t^{r-3} &         &    &  \\
          \cdots  & \cdots  & \cdots  & \ddots  &    &   \\    
          t^{r-3} & 0       & \cdots  & 0       & t  &   \\
          t^{r-2} & 0       & \cdots  & 0       & 0  & 1
       \end{pmatrix}.
\end{equation*}
The diagonal matrix $\vL$ is given as
\begin{equation*}
\vL = \Diag (1, t^2 - t^{-r+2}, t^4 - t^{-r+4}, \dots, 
             t^{2r-4} - t^{r-4}, t^{2r-2} - t^{r-2} ).
\end{equation*}
The permutation of $\SP_{1,r}$ is given by 
$\Bla_1 \lra \Bla_1$ and 
$\Bla_i \lra \Bla_{r-i+2}$ for $i \ne 1$.
Then $\vT$ is given by 
\begin{equation*}
\vT = \Diag ( 1, t^{r-2}, t^{r-4}, \dots, t^{-r+4}, t^{-r+2}),
\end{equation*}
and the matrix $\vL' = \vL\vT$ is given by
\begin{equation*}
\vL' = (1, t^r-1, t^r-1,\cdots, t^r-1 ).
\end{equation*}
Finally the matrices $(\IC^{\pm}_{\Bla,\Bmu}(t^3))$ are given by 

\begin{align*}
(\IC^-_{\Bla,\Bmu}(t^3)) &= 
     \begin{pmatrix}
          1       &        &        &        &    &     \\
          1       & 1      &        &        &    &     \\
          1       & 1      & 1      &        &    &     \\ 
          \cdots  & \cdots & \cdots & \ddots &    &      \\
          1       & 1      & 1      & 1      & 1  &   \\
          1       & 1      & 1      & 1      & 1  & 1        
       \end{pmatrix}, \quad
(\IC^+_{\Bla,\Bmu}(t^3)) &=
     \begin{pmatrix}
          1       &        &        &        &    &     \\
          1       & 1      &        &        &    &     \\
          1       & 0      & 1      &        &    &     \\ 
          \cdots  & \cdots & \cdots & \ddots &    &      \\
          1       & 0      & 0      & 0      & 1  &   \\
          1       & 0      & 0      & 0      & 0  & 1    \\
      \end{pmatrix}.
\end{align*}

\para{7.3.}
Assume that $W = G(3,1,2) \simeq \FS_2\ltimes (\BZ/3\BZ)^2$.
We arrange the elements in  $\CP_{3,2}$ in the total order 
$\Bla_1 \vl  \Bla_2 \vl \cdots \vl \Bla_9$  compatible
with the dominance order, where 
\begin{align*}
            \Bla_1 &= (-;-;1^2), & \Bla_2 &= (-;1^2;-), 
               &  \Bla_3 &= (-;-;2),   \\
                \Bla_4 &= (1^2;-;-), & \Bla_5 &= (-;1;1), 
               &  \Bla_6 &= (1;-;1),    \\
                \Bla_7 &= (-;2;-), & \Bla_8 &= (1;1;-), 
               &  \Bla_9 &= (2;-;-).
\end{align*}
Then the matrices $P^{\pm}$ are given as follows.

\begin{align*}
P^- &= \begin{pmatrix}
          t^7 &  &  &     &     &     &     &     &  &\\
          t^5 & t^5 &     &     &     &     &     &  & \\
          t^4 &  0  & t^4 &     &     &     &     &  & \\
          t^3 & t^3 & 0   & t^3 &     &     &     &  &  \\
    t^6 + t^3 & t^3 & t^3 & 0   & t^3 &     &     &  & \\
    t^5 + t^2 & t^2 & t^2 & t^2 & t^2 & t^2 &     &  & \\
          t^2 & t^2 & t^2 & 0   & t^2 & 0   & t^2 &  & \\
    t^4 + t   & t^4 + t & t & t & t   & t   & t & t  &    &  \\
            1 &  1  &  1 &  1 &  1 &  1 &  1 &  1 &  1           
       \end{pmatrix},  \\ \\
P^+ &= \begin{pmatrix}
          t^7 &   &   &   &   &   &   &   &  \\
          t^3 & t^5 &   &   &   &   &   &    & \\
          t^4 & 0   & t^4  &   &   &   &    &   &  \\
          t^5 & 0   &  0   & t^3  &   &    &   &   & \\    
    t^5 + t^2 & t^4 & t^2  & 0    & t^3  &   &   &    &  \\
    t^6 + t^3 & 0   & t^3  & t    &  0   &  t^2 &   &   &  \\
          1   & t^2 & 1    & 0    &  t   &  0   & t^2 &   &  \\
    t^4 + t   & t^3 &  t   & t^2  &  t^2 &  0   & 0   & t  &  \\
         t^2  & 0   &  t^2 & 1    &  0   &  t   & 0   & 0  & 1        
    \end{pmatrix}.
\end{align*}
\par
The diagonal matrix $\vL$ is given as 
\begin{align*}
\vL = \Diag\biggl( &1, &  &t^{-2}(t^6-1), &   &(t^6-1), \\ 
        &t^2(t^6-1), &  &t^{-1}(t^3 - 1)(t^6-1), &  &t(t^3-1)(t^6-1), \\
           &t(t^3-1)(t^6-1), &  &t^3(t^3-1)(t^6-1), &
                                          &t^5(t^3-1)(t^6-1)\biggr).
\end{align*}
In this case the permutation $\t$ on $\CP_{2,3}$ is given as
\begin{equation*}
\Bla_2 \lra \Bla_4, \quad \Bla_5 \lra \Bla_6, \quad \Bla_7 \lra \Bla_9,
\quad  (\text{other $\Bla_j$ are fixed}).
\end{equation*}
Then $\vT$ is given by 
$\vT = \Diag (1, t^2, 1, t^{-2}, t, t\iv,  t^2, 1, t^{-2})$,
and the matrix $\vL' = \vL\vT$ is given by 
\begin{align*}
\vL' = \Diag\biggl( &1, & &(t^6-1),&  &(t^6-1), \\ 
        &(t^6-1), &  &(t^3-1)(t^6-1), &  &(t^3-1)(t^6-1), \\
           &t^3(t^3-1)(t^6-1), &  &t^3(t^3-1)(t^6-1), &
                                          &t^3(t^3-1)(t^6-1)\biggr).
\end{align*}

Now $P^{\pm} = (p^{\pm}_{\Bla,\Bmu}(t))$ can be modified to the matrices 
\begin{align*}
(t^{-a(\Bla)}p^-_{\Bla,\Bmu}(t)) &= 
     \begin{pmatrix}
          1  &   &   &   &   &   &   &   &  \\
          1  & 1 &   &   &   &   &   &   &  \\
          1  & 0 & 1 &   &   &   &   &   &   \\ 
          1  & 1 & 0 & 1 &   &   &   &   &   \\
      t^3+1  & 1 & 1 & 0 & 1 &   &   &   &   \\
      t^3+1  & 1 & 1 & 1 & 1 & 1 &   &   &   \\
          1  & 1 & 1 & 0 & 1 & 0 & 1 &   &   \\
      t^3+1  & t^3+1 & 1 & 1 & 1 & 1 & 1 & 1 &  \\
          1  & 1 & 1 & 1 & 1 & 1 & 1 & 1 & 1
  \end{pmatrix}, \\ \\
(t^{-a(\tau(\Bla))-a(\Bmu) + a(\tau(\Bmu))}p^+_{\Bla,\Bmu}(t)) &=
     \begin{pmatrix}
          1  &   &   &   &   &   &   &   &  \\
          1  & 1 &   &   &   &   &   &   &   \\
          1  & 0 & 1 &   &   &   &   &   &   \\
          1  & 0 & 0 & 1 &   &   &   &   &   \\
      t^3+1  & 1 & 1 & 0 & 1 &   &   &   &   \\
      t^3+1  & 0 & 1 & 1 & 0 & 1 &   &   &   \\
          1  & 1 & 1 & 0 & 1 & 0 & 1 &   &   \\
      t^3+1  & 1 & 1 & t^3 & 1 & 0 & 0 & 1 &  \\
          1  & 0 & 1 & 1 & 0 & 1 & 0 & 0 & 1          
     \end{pmatrix}.
\end{align*}
In this case, it is possible to compute $\IC^{\pm}_{\Bla,\Bmu}(t)$ directly.
One can check that $(t^{-a(\Bla)}p^-_{\Bla,\Bmu}(t))$  coincides with 
$(\IC^-_{\Bla,\Bmu}(t^3))$.  
This shows that (6.10.1) holds, and so, by Remarks 6.10 (i), the 
condition (A) holds in this case.  
On the other hand, we have

\begin{align*}
\IC^+_{\Bla,\Bmu}(t^3)) = 
     \begin{pmatrix}
          1  &   &   &   &   &   &   &   &  \\
          1  & 1 &   &   &   &   &   &   &   \\
          1  & 0 & 1 &   &   &   &   &   &   \\
          1  & 0 & 0 & 1 &   &   &   &   &   \\
      t^3+1  & 1 & 1 & 0 & 1 &   &   &   &   \\
      t^3+1  & 0 & 1 & 1 & 0 & 1 &   &   &   \\
          1  & 1 & 1 & 0 & 1 & 0 & 1 &   &   \\
      t^3+1  & t^3+1 & 1 & 1 & 0 & 1 & 1 & 1 &  \\
          1  & 0 & 1 & 1 & 0 & 1 & 0 & 0 & 1          
     \end{pmatrix}.
\end{align*}
In contrast to the cases in 7.1 and 7.2, 
$(t^{-a(\tau(\Bla))-a(\Bmu) + a(\tau(\Bmu))}p^+_{\Bla,\Bmu}(t))$ does not 
coincide with $(\IC^+_{\Bla,\Bmu}(t^3))$.  

\para{7.4.}
We give the table of $P^{\pm}$ and $\vL$ for the case where 
$n = 3$ and $r = 3$.   We fix the total order on $\SP_{3,3}$ as in the first 
column of the following table. 
The diagonal matrix $\vL = (\xi_{\Bla,\Bla})$ and the values 
of $a$-function are given as follows. 

\par\vspace{1cm}
\hspace*{2cm}
\begin{tabular}{c|c|c}
$\Bla$       &   $a(\Bla)$  &   $\xi_{\Bla,\Bla}$  \\
\hline
$(-,-,1^3)$  &    15  &   1   \\
$(-,1^3,-)$  &    12  &   $t^{-3}(t^9 - 1)$  \\  
$(-,-,21)$   &    9   &   $(t^3 + 1)(t^9 - 1)$  \\ 
$(1^3,-,-)$  &    9   &   $t^3(t^9 - 1)$    \\  
$(-,1,1^2)$  &    8   &   $t^{-1}(t^6 - 1)(t^9 - 1)$   \\  
$(-,-,3)$    &    6   &   $t^{3}(t^6 - 1)(t^9 - 1)$  \\  
$(1,-,1^2)$  &    7   &   $t(t^6-1)(t^9 - 1)$  \\  
$(-,1^2,1)$  &    7   &   $t(t^6 - 1)(t^9 - 1)$  \\  
$(1^2,-,1)$  &    5   &   $t^5(t^6 - 1)(t^9 - 1)$ \\  
$(-,21,-)$   &    6   &   $t^3(t^6 - 1)(t^9 - 1)$  \\  
$(-,1,2)$    &    5   &   $t^2(t^3 - 1)(t^6-1)(t^9 - 1)$ \\  
$(1,1^2,-)$  &    5   &   $t^5(t^6 - 1)(t^9 - 1)$  \\  
$(1,-,2)$    &    4   &   $t^4(t^3 -1)(t^6 - 1)(t^9 - 1)$ \\ 
$(-,2,1)$    &    4   &   $t^4(t^3 -1)(t^6 - 1)(t^9 - 1)$ \\
$(1^2,1,-)$  &    4   &   $t^4(t^3 -1)(t^6 - 1)(t^9 - 1)$ \\
$(-,3,-)$    &    3   &   $t^6(t^3 -1)(t^6 - 1)(t^9 - 1)$ \\
$(1,1,1)$    &    2   &   $t^8(t^3 -1)(t^6 - 1)(t^9 - 1)$ \\
$(21,-,-)$   &    3   &   $t^9(t^3 -1)(t^6 - 1)(t^9 - 1)$ \\
$(1,2,-)$    &    2   &   $t^8(t^3 -1)(t^6 - 1)(t^9 - 1)$ \\
$(2,-,1)$    &    2   &   $t^8(t^3 -1)(t^6 - 1)(t^9 - 1)$ \\
$(2,1,-)$    &    1   &   $t^{10}(t^3 -1)(t^6 - 1)(t^9 - 1)$ \\ 
$(3,-,-)$    &    0   &   $t^{12}(t^3 -1)(t^6 - 1)(t^9 - 1)$ \\ 
\hline
\end{tabular}

\newpage
\normalsize
\begin{center}
$P^-$ for $n = 3, r= 3$ 
\end{center}
\bigskip
\footnotesize 
\par
\hspace*{2cm}
\begin{sideways}
\begin{tabular}{|c|cccccccccc|}
\hline
\phantom{$\displaystyle\prod$} &  $(-,-,1^3)$ & $(-,1^3,-)$ & $(-,-,21)$  & $(1^3,-,-)$ & $(-,1,1^2)$ 
      &  $(-,-,3)$   & $(1,-,1^2)$ & $(-,1^2,1)$  &  $(1^2,-,1)$   &  $(-,21,-)$  \\ 
\hline
$(-,-,1^3)$   &  $t^{15}$  &  \phantom{$\displaystyle\prod$} &     &    &     &    &    &   &  &\\
$(-,1^3,-)$   &  $t^{12}$       &  $t^{12}$ &         &           &    &    &    &   &  &  \\
$(-,-,21)$    &  $t^{12} + t^9$ &   0       &  $t^9$  &           &    &    &    &   &  &  \\
$(1^3,-,-)$   &  $t^9$          &  $t^9$    &    0    &    $t^9$  &    &    &    &   &  & \\
$(-,1,1^2)$   &  $t^{14} + t^{11} + t^8$   &   $t^8$  &  $t^8$   &  0  &  $t^8$  
              &    &    &   &   &  \\
$(-,-,3)$     &  $t^6$  &  0   &   $t^6$   &  0   &   0  &  $t^6$  &    &  &  &  \\
$(1,-,1^2)$   &  $t^{13} + t^{10} + t^7$   &  $t^7$  &  $t^7$  &   $t^7$  &  $t^7$  
              &  0  &   $t^7$  &   &  &  \\   
$(-,1^2,1)$   &  $t^{13} + t^{10} + t^7$  &  $t^{10} + t^7$  &  $t^7$  &  0  
              &  $t^7$   &  0   &   0  &  $t^7$  &  &  \\  
$(1^2,-,1)$   &  $t^{11} + t^8 + t^5$  &  $t^8 + t^5$  &  $t^5$  &  $t^8 + t^5$ 
              &  $t^5$   &   0  &  $t^5$   &  $t^5$  &   $t^5$  &   \\
$(-,21,-)$    &  $t^9 + t^6$   &  $t^9 + t^6$  &   $t^6$  &  0   &   $t^6$  
              &  0    &   0  &   $t^6$   &   0   &  $t^6$  \\
$(-,1,2)$     &  $t^{11} + t^8 + t^5$   &  $t^5$  &  $t^8 + t^5$  &  0
              &  $t^5$   &  $t^5$  &  0  & $t^5$  &   0   &  0  \\
$(1,1^2,-)$   &  $t^{11} + t^8 + t^5$   &  $t^{11} + t^8 + t^5$  &  $t^5$
              &   $t^5$   &   $t^5$  &   0  &  $t^5$   &  $t^5$  &    0  &  $t^5$  \\
$(1,-,2)$     &  $t^{10} + t^7 + t^4$  &  $t^4$   &  $t^7 + t^4$   &   $t^4$  
              &   $t^4$   &   $t^4$    &  $t^4$  &  $t^4$   &   $t^4$  &  0  \\
$(-,2,1)$     &  $t^{10} + t^7 + t^4$  &  $t^7 + t^4$  &  $t^7 + t^4$  &   0
              &  $t^7 + t^4$   &  $t^4$  &   0   &  $t^4$   &  0  &  $t^4$  \\   
$(1^2,1,-)$   &  $t^{10} + t^7 + t^4$   &  $t^{10} + t^7 + t^4$   &  $t^4$
              &  $t^7 + t^4$    &   $t^4$  &  0  &   $t^4$  &  $t^4$  &  $t^4$  &  $t^4$   \\ 
$(-,3,-)$     &  $t^3$   &  $t^3$   &   $t^3$  &   0  &  $t^3$  &  $t^3$  
              &   0   &  $t^3$  &  0  &   $t^3$  \\ 
$(1,1,1)$     &  $t^{12} + 2t^9 + 2t^6 + t^3$  &   $t^9 + 2t^6 + t^3$   &  $2t^6 + t^3$
              &  $t^6 + t^3$   &  $2t^6 + t^3$   &   $t^3$  &  $t^6 + t^3$  &  $t^6 + t^3$  
              &  $t^3$  &   $t^3$  \\
$(21,-,-)$    &  $t^6 + t^3$   &   $t^6 + t^3$   &   $t^3$  &  $t^6 + t^3$  &  $t^3$  &   0
              &  $t^3$   &   $t^3$   &  $t^3$  &   $t^3$  \\  
$(1,2,-)$     &  $t^8 + t^5 + t^2$    &   $t^8 + t^5 + t^2$  &   $t^5 + t^2$  &  $t^2$ 
              &  $t^5 + t^2$   &   $t^2$   &  $t^2$   &   $t^5 + t^2$  &  $t^2$  &  $t^5 + t^2$  \\
$(2,-,1)$     &  $t^8 + t^5 + t^2$   &   $t^5 + t^2$   &   $t^5 + t^2$  &   $t^5 + t^2$
              &  $t^5 + t^2$   &   $t^2$ &  $t^5 + t^2$  &   $t^2$  &  $t^2$ &  $t^2$  \\
$(2,1,-)$     &  $t^7 + t^4 + t$   &  $t^7 + t^4 + t$   &   $t^4 + t$  &  $t^4 + t$ 
              &  $t^4 + t$   &   $t$   &   $t^4 + t$  &   $t^4 + t$  &   $t$  &  $t^4 + t$  \\
$(3,-,-)$     &  1  &   1   &   1   &   1   &  1  &  1  &  1  &  1  &  1  &  1  \\       
\hline
\end{tabular}
\end{sideways}

\newpage
\normalsize
\begin{center}
$P^-$ for $n = 3, r= 3$ (continued) 
\end{center}
\medskip
\footnotesize 
\par
\hspace*{2cm}
\begin{sideways}
\begin{tabular}{|c|cccccccccccc|}
\hline
\phantom{$\displaystyle\prod$}   
      &  $(-,1,2)$   & $(1,1^2,-)$ & $(1,-,2)$   & $(-,2,1)$   & $(1^2,1,-)$
      &  $(-,3,-)$   & $(1,1,1)$   & $(21,-,-)$  & $(1,2,-)$   & $(2,-,1)$
      &  $(2,1,-)$   & $(3,-,-)$  \\   
\hline
$(-,-,1^3)$   & \phantom{$\displaystyle\prod$}  &&&&&&&&&&&    \\  
$(-,1^3,-)$   &                                 &&&&&&&&&&&     \\
$(-,-,21)$   &&&&&&&&&&&&   \\
$(1^3,-,-)$  &&&&&&&&&&&&   \\
$(-,1,1^2)$  &&&&&&&&&&&&   \\
$(-,-,3)$    &&&&&&&&&&&&   \\
$(1,-,1^2)$  &&&&&&&&&&&&   \\
$(-,1^2,1)$  &&&&&&&&&&&&   \\
$(1^2,-,1)$  &&&&&&&&&&&&   \\
$(-,21,-)$   &&&&&&&&&&&&   \\
$(-,1,2)$    &  $t^5$  &&&&&&&&&&&  \\
$(1,1^2,-)$  &  0   &  $t^5$  &&&&&&&&&&  \\
$(1,-,2)$    &  $t^4$  &  0  &  $t^4$  &&&&&&&&&  \\
$(-,2,1)$    &  $t^4$  &  0  &  0  &  $t^4$  &&&&&&&&  \\
$(1^2,1,-)$  &  0  &  $t^4$  &  0  &  0  &  $t^4$  &&&&&&&  \\
$(-,3,-)$    &  $t^3$  &  0  &  0  &  $t^3$  &  0  &  $t^3$  &&&&&&  \\
$(1,1,1)$    &  $t^3$  &  $t^3$  &  $t^3$  &  $t^3$  &  $t^3$  
             &   0  &  $t^3$  &&&&&  \\
$(21,-,-)$   &   0  &  $t^3$  &  0  &  0  &  $t^3$  &  0  
             &  0  &  $t^3$  &&&&  \\
$(1,2,-)$    &  $t^2$  &  $t^2$  &  $t^2$  &  $t^2$  
             &  $t^2$  &  $t^2$   &   $t^2$  &  0  &  $t^2$  &&&  \\
$(2,-,1)$    &  $t^2$  &  $t^2$  &  $t^2$  &  $t^2$  &  $t^2$  &  0
             &  $t^2$  &  $t^2$  &  0  &  $t^2$  &&  \\
$(2,1,-)$    &  $t$  &  $t^4 + t$  &  $t$  &   $t$  &  $t$  &  $t$  
             &  $t$  &   $t$   &  $t$   &  $t$  &  $t$  &  \\
$(3,-,-)$    &   1   &   1  &  1  &  1  &  1  &  1  
             &   1   &   1  &  1  &  1  &  1  &  1  \\
\hline   
\end{tabular}
\end{sideways}

\newpage
\normalsize
\begin{center}
$P^+$ for $n = 3, r= 3$ 
\end{center}
\medskip
\footnotesize 
\par
\hspace*{2cm}
\begin{sideways}
\begin{tabular}{|c|cccccccccc|}
\hline
\phantom{$\displaystyle\prod$} &  $(-,-,1^3)$ & $(-,1^3,-)$ & $(-,-,21)$  & $(1^3,-,-)$ & $(-,1,1^2)$ 
      &  $(-,-,3)$   & $(1,-,1^2)$ & $(-,1^2,1)$  &  $(1^2,-,1)$   &  $(-,21,-)$  \\ 
\hline
$(-,-,1^3)$   &  $t^{15}$  &  \phantom{$\displaystyle\prod$} &     &    &     &    &    &   &  &\\
$(-,1^3,-)$   &  $t^{9}$       &  $t^{12}$ &         &           &    &    &    &   &  &  \\
$(-,-,21)$    &  $t^{12} + t^9$ &   0       &  $t^9$  &           &    &    &    &   &  &  \\
$(1^3,-,-)$   &  $t^{12}$          &  0   &    0    &    $t^9$  &    &    &    &   &  & \\
$(-,1,1^2)$   &  $t^{13} + t^{10} + t^7$   &   $t^{10}$  &  $t^7$   &  0  &  $t^8$  
              &    &    &   &   &  \\
$(-,-,3)$     &  $t^6$  &  0   &   $t^6$   &  0   &   0  &  $t^6$  &    &  &  &  \\
$(1,-,1^2)$   &  $t^{14} + t^{11} + t^8$   &  0   &  $t^8$  &   $t^5$  &  0  
              &  0  &   $t^7$  &   &  &  \\   
$(-,1^2,1)$   &  $t^{11} + t^{8} + t^5$  &  $t^{11} + t^8$  &  $t^5$  &  0  
              &  $t^6$   &  0   &   0  &  $t^7$  &  &  \\  
$(1^2,-,1)$   &  $t^{13} + t^{10} + t^7$  &  0  &  $t^7$  &  $t^7 + t^4$ 
              &  0   &   0  &  $t^6$   &  0  &   $t^5$  &   \\
$(-,21,-)$    &  $t^6 + t^3$   &  $t^9 + t^6$  &   $t^3$  &  0   &   $t^4$  
              &  0    &   0  &   $t^5$   &   0   &  $t^6$  \\
$(-,1,2)$     &  $t^{10} + t^7 + t^4$   &  $t^7$  &  $t^7 + t^4$  &  0
              &  $t^5$   &  $t^4$  &  0  & $t^6$  &   0   &  0  \\
$(1,1^2,-)$   &  $t^{10} + t^7 + t^4$   &  $t^{10} + t^7$  &  $t^4$
              &   $t^7$   &   $t^5$  &   0  &  0   &  $t^6$  &    0  &  0  \\
$(1,-,2)$     &  $t^{11} + t^8 + t^5$  &  0   &  $t^8 + t^5$   &   $t^2$  
              &   0   &   $t^5$    &  $t^4$  &  0   &   $t^3$  &  0  \\
$(-,2,1)$     &  $t^{8} + t^5 + t^2$  &  $t^8 + t^5$  &  $t^5 + t^2$  &   0
              &  $t^6 + t^3$   &  $t^2$  &   0   &  $t^4$   &  0  &  $t^5$  \\   
$(1^2,1,-)$   &  $t^{11} + t^8 + t^5$   &  $t^{8}$   &  $t^5$
              &  $t^8 + t^5$    &   $t^6$  &  0  &   0  &  0  &  $t^3$  &  0   \\ 
$(-,3,-)$     &  1   &  $t^3$   &   1  &   0  &  $t$  &  1  
              &   0   &  $t^2$  &  0  &   $t^3$  \\ 
$(1,1,1)$     &  $t^{12} + 2t^9 + 2t^6 + t^3$  &   $t^9 + t^6$   &  $2t^6 + t^3$
              &  $t^6 + t^3$   &  $t^7 + t^4$   &   $t^3$  &  $t^5$  &  $t^5$  
              &  $t^4$  &   0  \\
$(21,-,-)$    &  $t^9 + t^6$   &   0  &   $t^6$  &  $t^6 + t^3$  &  0  &   0
              &  $t^5$   &   0   &  $t^4$  &   0  \\  
$(1,2,-)$     &  $t^7 + t^4 + t$    &   $t^7 + t^4$  &   $t^4 + t$  &  $t^4$ 
              &  $t^5 + t^2$   &   $t$   &  0   &   $t^3$  &  $t^2$  &  $t^4$  \\
$(2,-,1)$     &  $t^{10} + t^7 + t^4$   &   0  &   $t^7 + t^4$  &   $t^4 + t$
              &  0   &   $t^4$ &  $t^6 + t^3$  &   0  &  $t^2$ &  0  \\
$(2,1,-)$     &  $t^8 + t^5 + t^2$   &  $t^5$   &   $t^5 + t^2$  &  $t^5 + t^2$ 
              &  $t^3$   &   $t^2$   &   $t^4$  &   $t^4$  &   $t^3$  &  0  \\
$(3,-,-)$     &  $t^3$  &   0   &   $t^3$  &   1   &  0  &  $t^3$  &  $t^2$  &  0  
              &  $t$   &  0  \\       
\hline
\end{tabular}
\end{sideways}

\newpage
\normalsize
\begin{center}
$P^+$ for $n = 3, r= 3$ (continued) 
\end{center}
\medskip
\footnotesize 
\par
\hspace*{2cm}
\begin{sideways}
\begin{tabular}{|c|cccccccccccc|}
\hline
\phantom{$\displaystyle\prod$}   
      &  $(-,1,2)$   & $(1,1^2,-)$ & $(1,-,2)$   & $(-,2,1)$   & $(1^2,1,-)$
      &  $(-,3,-)$   & $(1,1,1)$   & $(21,-,-)$  & $(1,2,-)$   & $(2,-,1)$
      &  $(2,1,-)$   & $(3,-,-)$  \\   
\hline
$(-,-,1^3)$   & \phantom{$\displaystyle\prod$}  &&&&&&&&&&&    \\  
$(-,1^3,-)$   &                                 &&&&&&&&&&&     \\
$(-,-,21)$   &&&&&&&&&&&&   \\
$(1^3,-,-)$  &&&&&&&&&&&&   \\
$(-,1,1^2)$  &&&&&&&&&&&&   \\
$(-,-,3)$    &&&&&&&&&&&&   \\
$(1,-,1^2)$  &&&&&&&&&&&&   \\
$(-,1^2,1)$  &&&&&&&&&&&&   \\
$(1^2,-,1)$  &&&&&&&&&&&&   \\
$(-,21,-)$   &&&&&&&&&&&&   \\
$(-,1,2)$    &  $t^5$  &&&&&&&&&&&  \\
$(1,1^2,-)$  &  0   &  $t^5$  &&&&&&&&&&  \\
$(1,-,2)$    &  0   &  0  &  $t^4$  &&&&&&&&&  \\
$(-,2,1)$    &  $t^3$  &  0  &  0  &  $t^4$  &&&&&&&&  \\
$(1^2,1,-)$  &  0  &  $t^3$  &  0  &  0  &  $t^4$  &&&&&&&  \\
$(-,3,-)$    &  $t$  &  0  &  0  &  $t^2$  &  0  &  $t^3$  &&&&&&  \\
$(1,1,1)$    &  $t^4$  &  $t^4$  &  0 &  0  &  0  
             &   0  &  $t^3$  &&&&&  \\
$(21,-,-)$   &   0  &  0  &  0  &  0  &  0  &  0  
             &  0  &  $t^3$  &&&&  \\
$(1,2,-)$    &  $t^2$  &  $t^2$  &  0  &  $t^3$  
             &  $t^3$  &  0   &   0  &  0  &  $t^2$  &&&  \\
$(2,-,1)$    &  0  &  0  &  $t^3$  &  0  &  0  &  0
             &  0  &  $t$  &  0  &  $t^2$  &&  \\
$(2,1,-)$    &  $t^3$  &  $t^3$  &  0 &  0  &  0  &  0 
             &  $t^2$  &   $t^2$   &  0   &  0  &  $t$  &  \\
$(3,-,-)$    &   0   &   0  &  $t^2$  &  0  &  0  &  0  
             &   0   &   1  &  0  &  $t$  &  0  &  1  \\
\hline   
\end{tabular}
\end{sideways}

\bigskip

\par\vspace{1cm}
\noindent
T. Shoji \\
Department of Mathematics, Tongji University \\ 
1239 Siping Road, Shanghai 200092, P. R. China  \\
E-mail: \verb|shoji@tongji.edu.cn|


\begin{thebibliography}{[DJMu]}
\par 
\bibitem [AH] {AH} P.N. Achar and A. Henderson; 
Orbit closures in the enhanced nilpotent cone, Adv. in Math. 
{\bf 219} (2008), no. 1, 27-62, Corrigendum, ibid. {\bf 228} (2011), 2984-2988. 
\par
\bibitem [K1] {K1} S. Kato; An exotic Deligne-langlands correspondence 
for symplectic groups, Duke Math. J. {\bf 148} (2009) 306-371.
\par
\bibitem [K2] {K2} S. Kato; an algebraic study of extension algebras, preprint, 
arXiv:1207.4640.  
\par
\bibitem [L1] {L1} G. Lusztig; Green polynomials and singularities of 
unipotent classes, Adv. in Math. {\bf 42} (1981), 169-178.
\par
\bibitem [L2] {L2} G. Lusztig; Character sheaves II, Advances in Math.
{\bf 57}, (1985), 226 - 265.
\par
\bibitem [L3] {L4} G. Lusztig; Character sheaves V, Advances in Math.
{\bf 61}, (1986), 103-155.
\par
\bibitem [L4] {L5} G. Lusztig; Character sheaves on disconnected groups.
III, Represent. Theory {\bf 8}, (2004), 125-144.
\par
\bibitem [M] {M} I.G. Macdonald; ``Symmetric functions and Hall polynomials'', 
Claredon Press, Oxford, 1995.
\par
\bibitem [MS] {MS} J. G. Mars and T. A. Springer; Hecke algebra representations 
related to spherical varieties, Represent. theory {\bf 2} (1989), 33-69.
\par
\bibitem [S1] {S1} T. Shoji; Green functions associated to complex
reflection groups, J. Algebra {\bf 245} (2001), 650-694.
\par
\bibitem [S2] {S2} T. Shoji; Green functions attached to limit symbols,
in ``Representation theory of algebraic groups and quantum groups'',
Adv. Stud. Pure Math., {\bf 40}. Math. Soc. Japan, Tokyo,  
2004, 443--467.
\par
\bibitem [S3] {S3} T. Shoji; Exotic symmetric spaces of higher level 
- Springer correspondence for complex reflection groups, Transform. Groups, 
{\bf 21} (2016), 197-264. 
\par
\bibitem [SS1] {SS1} T. Shoji and K. Sorlin, Exotic symmetric space over a
finite field, I, Transform. Groups, {\bf 18} (2013), 877-929.
\par
\bibitem [SS2] {SS2} T. Shoji and K. Sorlin; Exotic symmetric space
over a finite field, II,  Transform. Groups {\bf 19} (2014), 887-926.
\par
\bibitem [T] {T} R. Travkin; Mirabolic Robinson-Schensted-Knuth
 correspondence, Selecta Math. {\bf 14} (2009), 727-758.


\end{thebibliography}
\end{document}